\DeclareMathAlphabet{\mathpzc}{OT1}{pzc}{m}{it}
\newcommand{\EO}[1]{{\color{black}#1}}
\newcommand{\AJS}[1]{{\color{black}#1}}
\newcommand{\RR}[1]{{\color{black}#1}}
\newcommand{\AAF}[1]{{\color{black}#1}}
\newtheorem{remark}[theorem]{Remark}
\numberwithin{equation}{section}
\title{Adaptive finite element methods for an optimal control problem involving Dirac measures\thanks{AA is partially supported by USM through project 116.12.1. EO is supported by CONICYT through FONDECYT project 3160201 and Anillo ACT-1106. RR is supported by BASAL PFB03 CMM project, Universidad de Chile. AJS is partially supported by NSF grant DMS-1418784. Part of this work was carried out while EO was visiting the University of Tennessee through a special visitors program for the academic year 2015--2016.}}
\author{Alejandro Allendes\thanks{Departamento de Matem\'atica, Universidad T\'ecnica Federico Santa Mar\'ia, Valpara\'iso, Chile.
\texttt{alejandro.allendes@usm.cl}}
\and
Enrique Ot\'arola\thanks{Departamento de Matem\'atica, Universidad T\'ecnica Federico Santa Mar\'ia, Valpara\'iso, Chile.
\texttt{enrique.otarola@usm.cl}}
\and
Richard Rankin\thanks{Departamento de Matem\'atica, Universidad T\'ecnica Federico Santa Mar\'ia, Valpara\'iso, Chile.
\texttt{richard.rankin@usm.cl}}
\and
Abner J.~Salgado\thanks{Department of Mathematics, University of Tennessee, Knoxville, TN 37996, USA. \texttt{asalgad1@utk.edu}}
}
\date{Draft version of \today.}
\begin{document}

\maketitle

\begin{abstract}
The purpose of this work is the design and analysis of a reliable and efficient a posteriori error estimator for the so-called pointwise tracking optimal control problem. This linear-quadratic optimal control problem entails the minimization of a cost functional that involves point evaluations of the state, thus leading to an adjoint problem with Dirac measures on the right hand side; control constraints are also considered. The proposed error estimator relies on a posteriori error estimates in the maximum norm for the state and in Muckenhoupt weighted Sobolev spaces for the adjoint state. We present an analysis that is valid for two and three-dimensional domains. We conclude by presenting several numerical experiments which reveal the competitive performance of adaptive methods based on the devised error estimator.
\end{abstract}

\begin{keywords}
pointwise tracking optimal control problem, Dirac measures, a posteriori error analysis, adaptive finite elements, maximum norm, Muckenhoupt weights, weighted Sobolev spaces.
\end{keywords}

\begin{AMS}
49J20,    
49M25,    
65K10,    
65N15,    
65N30,    
65N50,    
65Y20.    
\end{AMS}

\section{Introduction}
\label{sec:introduccion}

The design of efficient techniques to approximate the solution of an optimal control problem is of paramount importance in science and engineering. When the optimal control problem is based on the minimization of a quadratic functional subject to a linear partial differential equation (PDE) and control/state constraints, several solution techniques have been proposed and analyzed in the literature. We refer to \cite{HPUU:09,HT:10,IK:08,JCarlos,Tbook} for an overview and an up-to-date discussion. A class of numerical methods that has proven useful for approximating the solution to such problems, and the ones we will use in this work, are so-called adaptive finite element methods (AFEMs).

Over the last three decades, the numerical approximation of the solution to a PDE based on AFEMs has become an important tool in modern scientific and engineering computation: it allows for the resolution of PDEs with relatively modest computational resources. An essential ingredient of AFEMs is an posteriori error estimator, which is a computable quantity that depends on the discrete solution and data, and provides information about the local quality of the approximate solution. Therefore, it can be used for adaptive mesh refinement and coarsening, error control and equidistribution of the computational effort. We refer to \cite{AObook,MR1770058,NSV:09,NV,Verfurth} for an up-to-date discussion of a posteriori error analysis for linear elliptic PDEs and the construction of AFEMs, their convergence and optimal complexity.

As opposed to the well-established theory for linear elliptic PDEs, the a posteriori error analysis for finite element approximations of a constrained optimal control problem is far from complete. The main source of difficulty is its inherent nonlinear feature. In fact, the optimality conditions that characterize the solution to a constrained linear-quadratic optimal control problem consist of a \emph{state equation}, an \emph{adjoint equation} and a \emph{variational inequality} \cite{IK:08,Tbook}. This heuristically implies that an AFEM driven by an a posteriori error indicator based only on the \emph{state equation} cannot be applied confidently, and also \RR{sets} the need for the development and analysis of a posteriori error estimators for optimal control problems; see \cite{LiuYan} for a discussion. 

In the context of a distributed optimal control problem, to the best of our knowledge, the first work that provides an advance concerning a posteriori error analysis is \cite{LiuYan}. In this work, the authors derive a residual-type a posteriori error estimator \cite[(3.10)--(3.11)]{LiuYan} and prove that it yields an upper bound for the error \cite[Theorem 3.1]{LiuYan}. However, the sharpness of such a bound was not analyzed. In the linear elliptic PDE case, it is well known that a residual-type a posteriori lower bound without pollution cannot be true in general: the so-called \emph{oscillation terms} appear in the relationship between error and estimator \cite{NSV:09,NV}. In addition, the oscillation might be dominant in \RR{the} early stages of an AFEM, thus it cannot be ignored to obtain optimality without assuming that the initial mesh is sufficiently fine \cite{MR1770058,NSV:09,NV}. In the context of optimal control problems, reference \cite{HHIK} continues and extends the prior work developed in \cite{LiuYan}. The authors propose a slight modification of the residual-type a posteriori error estimator of \cite{LiuYan} and prove upper and lower error bounds which include oscillation terms \cite[Theorems 5.1 and 6.1]{HHIK}. 

An attempt to unify these ideas has been carried out recently in \cite{KRS}. The authors derive an important error equivalence that simplifies the a posteriori error analysis to, simply put, provide estimators for the \emph{state} and \emph{adjoint} equations which satisfy a set of suitable assumptions \cite[Theorem 3.2]{KRS}. Unfortunately, this analysis relies fundamentally on a particular structure for the problem and the relations among the natural spaces for the state, adjoint state and control. Many problems do not fit into this framework and thus one must either extend the theory or devise new estimators. The problem we consider in this work is an instance of this issue. For different approaches based on \emph{weighted residual} and \emph{goal-oriented} methods and advances in the semilinear and nonlinear case, the reader is referred to \cite{BBMRV,BV:09,HH:08,MRW:15,VW:08}.

In this work we will be interested in the design and analysis of a reliable and efficient a posteriori error estimator for the so-called \emph{pointwise tracking optimal control problem}. To describe this problem, for $n \in \{2,3\}$, we let $\Omega \subset \R^n$ be an open polytopal domain with Lipschitz boundary and let $\calZ \subset \Omega$ with $\# \calZ < \infty$. Given a set of desired states $\{ \ysf_z \}_{z \in \calZ}$, a regularization parameter $\lambda>0$ and the cost functional
\begin{equation}
\label{eq:defofJp}
  J(\ysf,\usf) = \frac12 \sum_{z \in \calZ} | \ysf(z) - \ysf_z |^2 + \frac\lambda2 \| \usf \|_{L^2(\Omega)}^2,
\end{equation}
the problem reads as follows: Find $\min J(\ysf, \usf)$ subject to the linear \emph{state equation}
\begin{equation}
\label{eq:defofPDEp}
  -\LAP \ysf = \fsf + \usf \text{ in } \Omega, \qquad \ysf = 0 \text{ on } \partial\Omega,
\end{equation}
and the \emph{control constraints}
\begin{equation}
\label{eq:cc2}
\asf \leq \usf(x) \leq \bsf \RR{\mbox{ for almost every }} x \in \Omega.
\end{equation}
The bounds $\asf$, $\bsf \in \mathbb{R}$ satisfy the property $\asf < \bsf$, and the forcing term $\fsf$ belongs to $L^\infty(\Omega)$. The cost functional involves point evaluations of the state, which leads to a subtle formulation of the adjoint problem:
\begin{equation}
\label{eq:adjp}
-\LAP \psf = \sum_{z \in \calZ} ( \ysf(z) - \ysf_z) \delta_z \text{ in } \Omega, \qquad \psf = 0 \text{ on } \partial\Omega.
\end{equation}

As already pointed out in \cite{AOS2}, the pointwise tracking optimal control problem \eqref{eq:defofJp}--\eqref{eq:adjp} is relevant in several applications where the state observations are carried out at specific locations. For instance, the calibration problem with American options \cite{MR2137495}, selective cooling of steel \cite{MR1844451}, and many others.   
  
Notice that since, for $n>1$, $\delta_z \notin H^{-1}(\Omega)$, the solution $\psf$ to \eqref{eq:adjp} does not belong to $H^1(\Omega)$. Therefore, the analysis of the finite element method applied to the pointwise tracking optimal control problem is not standard. An a priori error analysis has been recently provided in \RR{\cite{AOS2, BrettElliott,MR3449612}}. In \cite{AOS2}, the authors operate under the framework of Muckenhoupt weighted Sobolev spaces analyzed in \cite{NOS2} and thus circumvent the difficulties associated with the adjoint equation \eqref{eq:adjp}. Indeed, weighted Sobolev spaces allow \RR{one} to work under a Hilbert space-based framework in comparison to the non-Hilbertian setting of \cite{BrettElliott,MR3449612}. In \cite{AOS2}, the authors propose a fully discrete scheme on quasi-uniform meshes that discretizes the control using piecewise constant functions. The state and adjoint are discretized using piecewise linear functions. For $n=2$, the authors obtain a $\calO(h|\log h|)$ rate of convergence for the optimal control in the $L^2$-norm. However, for $n=3$, the derived a priori estimate reads $\calO(h^{1/2}|\log h|^2)$, which is suboptimal in terms of approximation. This estimate motivates the study of a posteriori error estimators and adaptivity. AFEMs are also motivated by the fact that the a priori theory developed in \cite{AOS2} requires that $\Omega$ is convex. If this condition is violated the optimal variables may have geometric singularities which should be efficiently resolved. \AJS{For a convex domain and under certain geometric assumptions on the mesh (see \eqref{eq:rhnorig}), an a posteriori error estimator is provided in \cite{MR3449612} and its reliability is proven. No efficiency estimates are provided.}

\AJS{We comment that a somewhat similar problem is studied in \cite{MR2193509}. The authors study, for $n=2$ and $\Omega$ a convex polygon, a parameter identification problem with point observations. The cost functional reads as ours with $\lambda = 0$ and the parameter to be recovered ranges over an open subset of a finite dimensional space. They obtain, by carefully studying the behavior of associated discrete Green's functions, an a priori error estimate of order $\calO(h^2 |\log h|^2)$ for the approximation of the unknown parameter.}

The derivation and analysis of an a posteriori error estimator for the pointwise tracking optimal control problem is quite challenging. This is due to the fact that this problem involves:
\begin{enumerate}[($i$)]
 \item pointwise evaluations of the optimal state $\bar{\ysf}$ in the cost functional \eqref{eq:defofJp}, 
 \item an elliptic equation with point sources as the adjoint equation \eqref{eq:adjp}, and
 \item an intrinsic nonlinearity introduced by the constraints \eqref{eq:cc2} on the optimal control $\bar{\usf}$.
\end{enumerate}
Therefore, an a posteriori error estimator must incorporate all these features in order to drive an efficient AFEM. Given a mesh $\T$ and corresponding approximations $\bar{\ysf}_{\T}$, $\bar{\psf}_{\T}$ and $\bar{\usf}_{\T}$, our proposed error indicator $\E_{\textrm{ocp}}(\bar{\ysf}_{\T},\bar{\psf}_{\T},\bar{\usf}_{\T}; \T)$ is based on the following three contributions:
\begin{equation*}
 \RR{\E_{\textrm{ocp}}^2}(\bar{\ysf}_{\T},\bar{\psf}_{\T},\bar{\usf}_{\T}; \T) = \RR{\E_{\ysf}^2}(\bar{\ysf}_{\T},\bar{\usf}_{\T}; \T) + \RR{\E_{\psf}^2}(\bar{\psf}_{\T},\bar{\ysf}_{\T}; \T) + \RR{\E_{\usf}^2}(\bar{\usf}_{\T},\bar{\psf}_{\T}; \T),
\end{equation*}
where $\E_{\ysf}(\bar{\ysf}_{\T},\bar{\usf}_{\T}; \T)$ corresponds to the max-norm a posteriori error estimator analyzed in \cite{deldia,rhn} and extended in \cite{MR2249676,demlow2014maximum}, $\E_{\psf}(\bar{\psf}_{\T},\bar{\ysf}_{\T}; \T)$ denotes the residual-type a posteriori error indicator on Muckenhoupt weighted Sobolev spaces proposed and studied in \cite{AGM}, and $\E_{\usf}(\bar{\usf}_{\T},\bar{\psf}_{\T}; \T)$ is defined as the $\ell^2$-sum of the local contributions $\E_{\usf}(\bar{\usf}_{\T},\bar{\psf}_{\T}; T) = \|\bar{\usf}_{\T} - \Pi (-\tfrac{1}{\lambda} \bar{\psf}_{\T})\|_{L^2(T)}$, with $T \in \T$ and $\Pi(v) = \min \{\bsf, \max\{ \asf,v\}\}$. The main contribution of this work is the analysis of the a posteriori error estimator $\E_{\textrm{ocp}}=\E_{\textrm{ocp}}(\bar{\ysf}_{\T},\bar{\psf}_{\T},\bar{\usf}_{\T}; \T)$. \AJS{Assuming only that $\Omega$ is a Lipschitz polytope and $\fsf \in L^\infty(\Omega)$} we prove its global reliability and \EO{global efficiency. We prove local efficiency of the terms $\E_{\ysf}$ and $\E_{\psf}$. However,} we do not obtain a local efficiency bound because the term $\E_{\usf}$ is not locally efficient. This is a recurring feature in a posteriori error estimation for optimal control problems with control constraints and we refer the reader to \cite{KRS} for a thorough discussion on this matter. \AJS{Notice that we do not require convexity of $\Omega$.} We remark that the analysis involves estimates in $L^{\infty}$-norms and weighted Sobolev spaces, combined with having to deal with the variational inequality that characterizes the optimal control. This subtle intertwining of ideas is one of the highlights of this contribution.

We remark that our approach is not restricted to \eqref{eq:defofJp}--\eqref{eq:adjp} and can be applied to a wider class of problems. For instance, the so-called optimal control problem with point sources in the state equation \cite{AOS2} and an optimal control problem with finitely many pointwise state constraints \cite{DLeykekhman_DMeidner_BVexler_2013a}. The study of these will be part of our future work.

We organize our exposition as follows. We set notation in section~\ref{sec:notation}, where we also recall basic facts about weights and weighted spaces. The a priori and a posteriori error analysis of elliptic problems with delta sources is reviewed in section~\ref{sec:elliptic_problems}. Section~\ref{sec:maximum} recalls the maximum norm error estimation of elliptic problems. The core of our work is section~\ref{sec:pointwise_tracking}, where we describe our problem and its a priori error analysis and, combining the results of previous sections, we devise an a posteriori error estimator and show, in \S\ref{subsub:reliable} and \S\ref{subsub:efficient}, its reliability and efficiency, respectively. We conclude, in section~\ref{sec:numex}, with a series of numerical examples that illustrate and go beyond our theory.

\section{Notation and preliminaries}
\label{sec:notation}

Let us set notation and describe the setting we shall operate with.
\subsection{Notation}
\label{sub:notation}
Throughout this work $n \in \{2,3\}$ and $\Omega\subset\mathbb{R}^n$ is an open and bounded polytopal domain with Lipschitz boundary $\partial\Omega$. If $\Xcal$ and $\Ycal$ are normed vector spaces, we write $\Xcal \hookrightarrow \Ycal$ to denote that $\Xcal$ is continuously embedded in $\Ycal$. We denote by $\Xcal'$ and $\|\cdot\|_{\Xcal}$ the dual and the norm of $\Xcal$, respectively. 

The set of locally integrable functions on $\Omega$ is denoted by $L^1_{\textrm{loc}}(\Omega)$. For $E \subset \Omega$ of finite Hausdorff $i$-dimension, $i \in \{1,2,3\}$, we denote its measure by $|E|$. The mean value of a function $f$ over a set $E$ is
\[
 \fint_E f  = \frac{1}{|E|}\int_{E} f .
\]

The relation $a \lesssim b$ indicates that $a \leq C b$, with a constant $C$ that depends neither on $a$, $b$ nor the discretization parameters. The value of $C$ might change at each occurrence.

\subsection{Weighted Sobolev spaces}
\label{sub:weighted_Sobolev}
A weight is an almost everywhere positive function $\omega \in L^1_{\textrm{loc}}(\R^n)$. In particular, we will be interested in the weights belonging to the so-called Muckenhoupt class $A_2$ \cite{Javier,FKS:82,Muckenhoupt,Turesson}.

\begin{definition}[Muckenhoupt class $A_2$]
 \label{def:Muckenhoupt}
Let $\omega$ be a weight. We say that $\omega \in A_2$ if there exists a positive constant $C_{\omega}$ such that
\begin{equation}
\label{A_pclass}
C_{\omega} = \sup_{B} \left( \fint_{B} \omega \right) \left( \fint_{B} \omega^{-1} \right)  < \infty,
\end{equation}
where the supremum is taken over all balls $B$ in $\R^n$. If $\omega$ belongs to the Muckenhoupt class $A_2$, we say that $\omega$ is an $A_2$-weight, and we call the constant $C_{\omega}$ in \eqref{A_pclass} the $A_2$-constant of $\omega$. 
\end{definition}

\AJS{For a measurable $E \subset \R^n$ and a weight $\omega$, we set $\omega(E) = \int_E \omega$. With this notation \eqref{A_pclass} can be rewritten as
\[
  \omega(B) \omega^{-1}(B) \lesssim |B|^2,
\]
for all balls $B \subset \R^n$.
}

Let us present an example of a weight that belongs to $A_2$, which will be essential in the analysis presented below. Let $x_0$ be an interior point of $\Omega$. Denote by $\dist(x)$ the Euclidean distance $\dist(x)=|x-x_0|$ to $x_0$ and define $\dist^\alpha(x) = \dist(x)^{\alpha}$. We have that $\dist^\alpha \in A_2$ if and only if $\alpha \in (-n,n)$. The main motivation to consider the weight $\dist^\alpha$ is that it plays a central role in the analysis of Poisson problems with Dirac measures such as the adjoint equation \eqref{eq:adjp}; see section \ref{sec:elliptic_problems} and \cite{AGM,DAngelo:SINUM,NOS2}. We refer the reader to \cite{Javier,NOS2,Turesson} for more examples of $A_2$-weights and their most important properties.

We now define the weighted Lebesgue space $L^2(\omega,\Omega)$.

\begin{definition}[weighted Lebesgue spaces]
\label{Lp_weighted} 
Let $\omega \in A_2$, and let $\Omega \subset \R^n$ be an open and bounded domain. We define the weighted Lebesgue space $L^2(\omega, \Omega)$ as the set of measurable functions $u$ on $\Omega$ \AJS{for which the norm}
\begin{equation*}
 \| u \|_{L^2(\omega, \Omega)} = \left( \int_{\Omega} |u|^2 \omega \right)^{\frac{1}{2}}
\end{equation*}
\AJS{is finite.}
\end{definition}

Since $L^2(\omega,\Omega) \subset L^1_{\textrm{loc}}(\Omega)$ \cite[Proposition 2.3]{NOS2}, it makes sense to talk about weak derivatives of functions in $L^2(\omega,\Omega)$. We define weighted Sobolev spaces as follows.

\begin{definition}[weighted Sobolev spaces]
\label{H_1_weighted} 
Let $\omega \in A_2$, and let $\Omega \subset \R^n$ be an open and bounded domain. We define the weighted Sobolev space $H^1(\omega, \Omega)$ as the set of \AJS{functions $u \in W^{1,1}(\Omega)$ for which the norm}
\begin{equation*}
 \|u \|_{H^1(\omega, \Omega)} = \left( \|u\|_{L^2(\omega, \Omega)}^2 + \|\nabla u\|_{L^2(\omega, \Omega)}^2 \right)^{\frac{1}{2}}
\end{equation*}
\AJS{is finite}. We also define $H_0^1(\omega, \Omega)$ as the closure of $C_0^{\infty}(\Omega)$ in $H^1(\omega, \Omega)$.
\end{definition}

If $\omega \in A_2$, then we have the following important consequence: the space $H^1(\omega,\Omega)$ is Hilbert and $H^1(\omega,\Omega) \cap C^{\infty}(\Omega)$ is dense in $H^1(\omega,\Omega)$ (cf.~\cite[Proposition 2.1.2, Corollary 2.1.6]{Turesson} and \cite[Theorem~1]{GU}).

The class $A_2$ has proven to be a fundamental tool in harmonic analysis. If $\omega \in A_2$, then any Calder\'on-Zygmund singular integral operator is bounded in the space $L^2(\omega,\Omega)$ \cite{Javier,Grafakos}. In spite of this, the use of these classes as a tool to derive and understand properties of discrete schemes is relatively new in numerical analysis; see \cite{AGM,NOS2}.

\AJS{
\subsection{The Poisson problem in Lipschitz polytopes}
\label{sub:reg_of_y}

Let us, for the sake of future reference, collect here some standard results concerning the regularity of the solution to the Poisson problem
\begin{equation}
\label{eq:Fish}
  - \Delta u = f \ \text{in } \Omega, \quad u=0 \ \text{on } \partial\Omega,
\end{equation}
where $\Omega$ is a bounded and Lipschitz, but not necessarily convex, polytope. We begin with a global higher integrability of $u$ and, as a consequence, its H\"older regularity; see \cite{Dauge:92, Grisvard, JK:95, JK:81, MR2641539, Savare:98}.

\begin{proposition}[higher integrability]
\label{prop:uisW1p}
Let $u \in H^1_0(\Omega)$ denote the unique solution of \eqref{eq:Fish} with $f \in L^2(\Omega)$. There is $q>n$ such that $u \in W^{1,q}(\Omega)$. Moreover,
\[
  \| u \|_{W^{1,q}(\Omega)} \lesssim \| f \|_{L^2(\Omega)},
\]
where the hidden constant is independent of $u$ and $f$. This, in particular, implies that for $\kappa = 1-n/q>0$ we have $u \in C^{0,\kappa}(\bar\Omega)$ with a similar estimate.
\end{proposition}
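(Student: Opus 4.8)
The plan is to invoke the $W^{1,p}$--regularity theory for the Dirichlet Laplacian on bounded Lipschitz domains due to Jerison and Kenig \cite{JK:81,JK:95} (see also \cite{Dauge:92,Savare:98}): there exists an exponent $\bar q = \bar q(\Omega) > n$ such that, for every $p \in [2,\bar q)$ and every $g \in W^{-1,p}(\Omega)$, the unique solution $w \in W^{1,p}_0(\Omega)$ of $-\Delta w = g$ in $\Omega$ satisfies
\[
  \| w \|_{W^{1,p}(\Omega)} \lesssim \| g \|_{W^{-1,p}(\Omega)},
\]
with a hidden constant depending only on $\Omega$ and $p$. The strategy is then to choose an admissible $q \in (n,\bar q)$ for which $f \in L^2(\Omega)$ can be regarded as an element of $W^{-1,q}(\Omega)$, apply this result, identify the two solutions by uniqueness, and finally upgrade the resulting Sobolev regularity to H\"older regularity through a Morrey embedding.

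First I would record the elementary inclusion $L^2(\Omega) \hookrightarrow W^{-1,q}(\Omega)$, which by duality is equivalent to the Sobolev embedding $W^{1,q'}_0(\Omega) \hookrightarrow L^2(\Omega)$, $q'$ being the conjugate exponent of $q$; this holds as soon as $q' \ge \tfrac{2n}{n+2}$, i.e. for $q \le 2^* := \tfrac{2n}{n-2}$ when $n = 3$ and for every finite $q$ when $n = 2$. Since $2^* = 6 > 3$ and $\bar q > n$, the interval $(n,\min\{\bar q,6\})$ (resp. $(2,\bar q)$ if $n=2$) is nonempty; fix any $q$ in it, so that $\| f \|_{W^{-1,q}(\Omega)} \lesssim \| f \|_{L^2(\Omega)}$. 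Applying the Jerison--Kenig result with $p=q$ and $g=f$ produces $w \in W^{1,q}_0(\Omega) \subset H^1_0(\Omega)$ solving \eqref{eq:Fish}; by uniqueness of the weak solution in $H^1_0(\Omega)$ we have $u = w$, and therefore $u \in W^{1,q}(\Omega)$ with $\| u \|_{W^{1,q}(\Omega)} \lesssim \| f \|_{W^{-1,q}(\Omega)} \lesssim \| f \|_{L^2(\Omega)}$.

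Since $q > n$ and $\Omega$, being a bounded Lipschitz polytope, is a Sobolev extension domain, Morrey's inequality yields the continuous embedding $W^{1,q}(\Omega) \hookrightarrow C^{0,\kappa}(\bar\Omega)$ with $\kappa = 1 - n/q \in (0,1)$, whence $\| u \|_{C^{0,\kappa}(\bar\Omega)} \lesssim \| u \|_{W^{1,q}(\Omega)} \lesssim \| f \|_{L^2(\Omega)}$; chaining the estimates concludes the proof. The only non-elementary ingredient, and the main obstacle, is the Jerison--Kenig improvement of the integrability exponent beyond $W^{1,2}$: the classical Meyers/Gehring argument for divergence-form equations with merely bounded coefficients gives only $W^{1,2+\varepsilon}$ with $\varepsilon$ possibly small --- enough for $n=2$ but not, in general, for $n=3$ --- and it is the Lipschitz structure of $\Omega$ that furnishes an exponent strictly above $n$; one has to check, as done above, that this exponent is compatible with the constraint $q\le 2^*$ imposed by $f\in L^2(\Omega)$, which turns out to be automatic.
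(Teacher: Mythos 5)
Your argument is correct and follows exactly the route the paper intends: the paper offers no proof of this proposition, only the citations to Jerison--Kenig, Dauge, Savar\'e, et al., and your proof correctly assembles those results, including the necessary check that the Jerison--Kenig exponent range $(n,\bar q)$ is compatible with the constraint $q\le 2^*$ coming from $f\in L^2(\Omega)$ (automatic since $2^*=6>3$ for $n=3$ and vacuous for $n=2$), followed by the Morrey embedding on an extension domain. No gaps.
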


Next we comment that, whenever $f \in L^p(\Omega)$ with $p \in [2,\infty)$, we have  a local regularity result, whose proof can be found, for instance, in  \cite[Theorem 9.11]{GT} or \cite[Theorem 12.2.2]{MR3012036}.

\begin{proposition}[local regularity]
\label{prop:loclreg}
Let $u \in H^1_0(\Omega)$ denote the unique solution of \eqref{eq:Fish} with $f \in L^p(\Omega)$ and $p \in [2,\infty)$. If $D \Subset \Omega$ then $u \in W^{2,p}(D)$ and the following estimate holds
\[
  \| u \|_{W^{2,p}(D)} \lesssim \| u \|_{L^p(\Omega)} + \| f \|_{L^p(\Omega)},
\]
where the hidden constant depends on $\textup{dist}(D,\partial\Omega)$ but is independent of $u$ and $f$.
\end{proposition}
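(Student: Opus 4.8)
The statement to prove is Proposition~\ref{prop:loclreg}, the local $W^{2,p}$-regularity of solutions to the Poisson problem. This is a classical interior elliptic regularity result, so the plan is essentially to reduce it to the standard Calder\'on--Zygmund interior estimates for the Laplacian via a covering and localization argument.

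First I would observe that the Dirichlet boundary condition plays no role here: since $D \Subset \Omega$, we may pick an intermediate open set $D'$ with $D \Subset D' \Subset \Omega$ and work entirely on $D'$, so the problem becomes one of interior regularity for $-\Delta u = f$ without reference to $\partial\Omega$. The key tool is the interior $W^{2,p}$ a priori estimate for the Laplacian (e.g.\ \cite[Theorem 9.11]{GT}): if $-\Delta u = f$ on a ball $B_{2r}(x_0) \subset \Omega$ and $f \in L^p(B_{2r}(x_0))$, then $u \in W^{2,p}(B_r(x_0))$ with
\[
  \| u \|_{W^{2,p}(B_r(x_0))} \lesssim \| u \|_{L^p(B_{2r}(x_0))} + \| f \|_{L^p(B_{2r}(x_0))},
\]
where the hidden constant depends only on $n$, $p$, and $r$. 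Here I should note that $u \in L^p(\Omega)$ makes sense for every $p \in [2,\infty)$ by Proposition~\ref{prop:uisW1p}, which gives $u \in W^{1,q}(\Omega) \hookrightarrow C^{0,\kappa}(\bar\Omega) \hookrightarrow L^\infty(\Omega)$, hence $u \in L^p(\Omega)$ for all finite $p$; so the right-hand side above is finite.

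The second step is a standard compactness/covering argument. Set $\rho = \tfrac14 \operatorname{dist}(D,\partial\Omega) > 0$. For each $x_0 \in \bar D$ the ball $B_{2\rho}(x_0)$ is contained in $\Omega$, so the interior estimate applies on $B_\rho(x_0)$. Since $\bar D$ is compact, extract a finite subcover $\{B_\rho(x_i)\}_{i=1}^N$ of $\bar D$. Summing the local estimates over this finite collection and using that each $B_{2\rho}(x_i) \subset \Omega$ yields
\[
  \| u \|_{W^{2,p}(D)} \le \sum_{i=1}^N \| u \|_{W^{2,p}(B_\rho(x_i))} \lesssim \sum_{i=1}^N \Bigl( \| u \|_{L^p(B_{2\rho}(x_i))} + \| f \|_{L^p(B_{2\rho}(x_i))} \Bigr) \lesssim \| u \|_{L^p(\Omega)} + \| f \|_{L^p(\Omega)},
\]
where the constants depend on $N$, hence on $\rho = \tfrac14\operatorname{dist}(D,\partial\Omega)$ (and on $n$, $p$), but not on $u$ or $f$. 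This is exactly the claimed bound.

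I do not expect any serious obstacle here — the result is textbook interior regularity, and the only points requiring a word of care are (a) justifying that $u \in L^p(\Omega)$ for all finite $p$, which is immediate from Proposition~\ref{prop:uisW1p}, and (b) tracking that the dependence of the constant is only through $\operatorname{dist}(D,\partial\Omega)$ (via the number $N$ of balls in the cover and the radius $\rho$), which is transparent in the covering argument. In fact, since the result is entirely standard, in the paper it would suffice to simply cite \cite[Theorem 9.11]{GT} or \cite[Theorem 12.2.2]{MR3012036} and remark that the global $L^p$ bound on $u$ follows from Proposition~\ref{prop:uisW1p}; the covering argument above is the content hiding behind that citation.
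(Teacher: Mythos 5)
Your proposal is correct and matches the paper, which gives no proof of its own but simply cites \cite[Theorem 9.11]{GT} and \cite[Theorem 12.2.2]{MR3012036} — precisely the interior Calder\'on--Zygmund estimate you invoke and then unpack via the standard covering argument. The only (harmless) imprecision is the claim that the constant depends on $D$ \emph{only} through $\operatorname{dist}(D,\partial\Omega)$: the number $N$ of balls also reflects the size of $D$, but since $D\subset\Omega$ and $\Omega$ is bounded this is absorbed into the dependence on $\Omega$ that the paper tolerates throughout.
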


Notice that, since $\Omega$ is bounded, we can combine the estimates of Propositions~\ref{prop:uisW1p} and \ref{prop:loclreg} to obtain that, for every $D \Subset \Omega$,
\begin{equation}
\label{eq:W2p}
  \| u \|_{W^{2,p}(D)} \lesssim  \| f \|_{L^p(\Omega)},  
\end{equation}
where the hidden constant depends on $|\Omega|$ and $\textup{dist}(D,\partial\Omega)$ but is independent of $u$ and $f$.

Finally, we establish the weighted local integrability of $u$.

\begin{proposition}[weighted integrability]
\label{prop:uweight}
Let $u \in H^1_0(\Omega)$ denote the solution of \eqref{eq:Fish} with $f \in L^p(\Omega)$ and $p>n$. Let $y \in \Omega$, $r < \textup{dist}(y,\partial\Omega)$ and  $B$ denote the ball of radius $r$ and center $y$. If $\mu \in A_2$, then we have that $u \in H^1(\mu,B) $. Moreover, 
\[
  \| \nabla u \|_{L^2(\mu,B)} \lesssim \| f \|_{L^p(\Omega)},
\]
where the hidden constant depends on $\mu(B)$, $\textup{dist}(B,\partial\Omega)$, $r$ and $|\Omega|$, but is independent of $u$ and $f$.
\end{proposition}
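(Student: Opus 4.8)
The plan is to reduce the weighted estimate to a pointwise (i.e.\ $L^\infty$) bound for $\nabla u$ in a neighbourhood of $y$; once this is available the weight enters only through the finiteness of $\mu(B)$, which is guaranteed since $\mu \in A_2 \subset L^1_{\textrm{loc}}(\R^n)$. First, since $\Omega$ is bounded and $p > n \geq 2$, we have $f \in L^2(\Omega)$ with $\| f \|_{L^2(\Omega)} \lesssim \| f \|_{L^p(\Omega)}$, the constant depending on $|\Omega|$. Proposition~\ref{prop:uisW1p} then yields $u \in C^{0,\kappa}(\bar\Omega)$ and, in particular, $\| u \|_{L^\infty(\Omega)} \lesssim \| f \|_{L^p(\Omega)}$. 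This already controls the lower order term: recalling that $u \in H^1_0(\Omega) \subset W^{1,1}(\Omega)$, so that $u$ is an admissible element of Definition~\ref{H_1_weighted}, we have
\[
  \| u \|_{L^2(\mu,B)}^2 \leq \| u \|_{L^\infty(\Omega)}^2 \, \mu(B) \lesssim \mu(B) \, \| f \|_{L^p(\Omega)}^2 .
\]

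For the gradient I would introduce an intermediate ball. Set $r' := \tfrac12\big( r + \textup{dist}(y,\partial\Omega) \big)$ and $D := B_{r'}(y)$, so that $B \Subset D \Subset \Omega$ and $\textup{dist}(D,\partial\Omega) = \tfrac12 \textup{dist}(B,\partial\Omega)$. The combined local regularity estimate \eqref{eq:W2p} gives $u \in W^{2,p}(D)$ with $\| u \|_{W^{2,p}(D)} \lesssim \| f \|_{L^p(\Omega)}$, the hidden constant depending on $|\Omega|$ and $\textup{dist}(B,\partial\Omega)$. Since $p > n$, the Morrey embedding $W^{2,p}(D) \hookrightarrow W^{1,\infty}(D)$ (in fact into $C^{1,1-n/p}(\bar D)$) is valid on the ball $D$, whence
\[
  \| \nabla u \|_{L^\infty(D)} \lesssim \| u \|_{W^{2,p}(D)} \lesssim \| f \|_{L^p(\Omega)} ,
\]
where the embedding constant depends on the radius $r'$, hence on $r$ and $\textup{dist}(B,\partial\Omega)$.

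It then remains to assemble the pieces. Since $B \subset D$,
\[
  \| \nabla u \|_{L^2(\mu,B)}^2 = \int_B |\nabla u|^2 \mu \leq \| \nabla u \|_{L^\infty(D)}^2 \, \mu(B) \lesssim \mu(B) \, \| f \|_{L^p(\Omega)}^2 ,
\]
and adding the bound for $\| u \|_{L^2(\mu,B)}$ obtained above shows $u \in H^1(\mu,B)$ together with the asserted estimate, the hidden constant depending only on $\mu(B)$, $\textup{dist}(B,\partial\Omega)$, $r$ and $|\Omega|$. I do not expect a real obstacle here: the only points that require some care are the bookkeeping of the dependencies of the constants and the (routine) observation that $A_2$ weights are locally integrable, so that the claimed membership is meaningful. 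In fact, the argument only uses $\mu \in L^1_{\textrm{loc}}(\R^n)$ rather than the full strength of the $A_2$ condition; the latter will instead be essential for the adjoint variable, whose data is a sum of Dirac measures.
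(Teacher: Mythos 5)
Your argument is correct and follows essentially the same route as the paper's proof: the combined local estimate \eqref{eq:W2p} gives $u \in W^{2,p}$ near $y$, the Sobolev embedding for $p>n$ upgrades this to an $L^\infty$ bound on $\nabla u$, and the weight then enters only through $\mu(B)$. The intermediate ball $D$ is harmless but unnecessary (the paper applies \eqref{eq:W2p} directly on $B$, which is admissible since $r < \textup{dist}(y,\partial\Omega)$), and your explicit treatment of the zeroth-order term $\| u \|_{L^2(\mu,B)}$ via Proposition~\ref{prop:uisW1p} is a small, correct addition that the paper leaves implicit.
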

\begin{proof}
The proof follows from the local regularity of Proposition~\ref{prop:loclreg} and an embedding result. Indeed, since $\textup{dist}(B,\partial\Omega)>0$, from \eqref{eq:W2p} we have
\[
  \| u \|_{W^{2,p}(B)} \lesssim  \| f \|_{L^p(\Omega)}.
\]
Moreover, since $p>n$, we have that $W^{1,p}(B) \hookrightarrow L^\infty(B)$ and
\[
  \| \nabla u \|_{L^\infty(B)} \lesssim \| u \|_{W^{2,p}(B)} \lesssim  \| f \|_{L^p(\Omega)}.
\]
With this estimate the $L^2(\mu,B)$-norm of $\nabla u$ can be bounded as
\[
  \int_B \mu |\nabla u |^2 \leq \mu(B) \| \nabla u \|_{L^\infty(B)}^2 \lesssim \| f \|_{L^p(\Omega)}^2,
\]
with a hidden constant that depends only on $\mu(B)$, $\textup{dist}(B,\partial\Omega)$, $r$ and $|\Omega|$. This concludes the proof.
\end{proof}
}

\section{Elliptic problems with Dirac sources}
\label{sec:elliptic_problems}
Since the analysis of the pointwise tracking optimal control problem involves \eqref{eq:adjp}, in this section we review the arguments developed in \cite{AGM,NOS2} to study Poisson problems with Dirac measures on the right hand side. The analysis hinges on the Muckenhoupt weighted Sobolev spaces introduced in section \ref{sub:weighted_Sobolev}. We also comment on the finite element approximation of such problems and conclude by reviewing the a posteriori error analysis recently developed in \cite{AGM}.

Let $x_0$ be an interior point of $\Omega$. Consider the following elliptic boundary value problem:
\begin{equation}
\label{-lap=delta}
    - \Delta u = \delta_{x_0} \text{ in } \Omega, \qquad u = 0 \text{ on } \partial\Omega,
\end{equation}
where $\delta_{x_0}$ denotes the Dirac delta supported at $x_0 \in \Omega$. The asymptotic behavior of $u$ near $x_0$ is dictated by
\begin{equation}\label{asympt-x0}
\nabla u(x) \approx |x-x_0|^{1-n}.
\end{equation}
On the basis of \eqref{asympt-x0}, a simple computation shows that $| \nabla u | \in L^2(\dist^{\alpha},\Omega)$ provided $\alpha \in (n-2,\infty)$. This heuristic suggests that we \RR{seek solutions} to problem \eqref{-lap=delta} in weighted Sobolev spaces. 

Let us make these considerations rigorous. Since $\dist^{\alpha}, \dist^{-\alpha} \in A_2$ for $\alpha \in (-n,n)$, we invoke 
Definition \ref{H_1_weighted} and \cite[Proposition 2.1.2]{Turesson} to conclude that the spaces $H^1_0(\dist^{\alpha},\Omega)$ and $H^1_0(\dist^{-\alpha},\Omega)$ are Hilbert. We then define the bilinear form
\begin{equation}
\label{eq:defofforma}
  a(w,v) = \int_\Omega \nabla w\cdot \nabla v
\end{equation}
and consider the following weak formulation of problem \eqref{-lap=delta}:
\begin{equation}
\label{eq:weak_form}
u \in H^1_0(\dist^{\alpha},\Omega): \quad a(u,v) = \delta_{x_0}(v) \quad \forall v \in H^1_0(\dist^{-\alpha},\Omega).
\end{equation}
The bilinear form $a$ satisfies an inf-sup condition on $H^1_0(\dist^{\alpha},\Omega) \times H^1_0(\dist^{-\alpha},\Omega)$ \cite[Theorem 2.3]{AGM}. In addition, $a$ is bounded 
as a consequence of H{\"o}lder's inequality. On the other hand, if $\alpha \in (n-2,n)$ we have that $\delta_{x_0} \in H_0^1(\dist^{-\alpha},\Omega)'$ \cite[Lemma 7.1.3]{KMR}. All these elements allow us to conclude that problem \eqref{eq:weak_form} is well posed on the weighted Sobolev spaces $H_0^1(\dist^{\alpha},\Omega)$ and $H_0^1(\dist^{-\alpha},\Omega)$ provided 
\begin{equation}
\label{alpha}
\alpha \in \textbf{I} := \left(n-2,n \right). 
\end{equation}
We refer the reader to \cite{NOS} for an alternative formulation on the weighted Sobolev spaces $H_0^1(\varpi,\Omega)$ and $H_0^1(\varpi^{-1},\Omega)$, where the weight $\varpi$ is defined as follows: if $d = \diam(\Omega)$ is the diameter of $\Omega$ and $\distnos(x)=\dist(x)/(2d)$, then
\begin{equation}
\label{eq:defofvarpi}
  \varpi(x) = \begin{dcases}
                \frac{ \distnos(x)^{n-2}} { \log^2 \distnos(x) }, & 0 < \distnos(x) < \frac12, \\
                \frac{2^{2-n}}{\log^2 2}, & \distnos(x) \geq \frac12.
              \end{dcases}
\end{equation}
The well-posedness of \eqref{-lap=delta} follows from \cite[Lemma 7.7]{NOS2}.

\AJS{We conclude with an embedding result for $H^1_0(\dist^\alpha,\Omega)$ that will be useful later.

\begin{lemma}[$H^1_0(\dist^\alpha,\Omega) \hookrightarrow L^2(\Omega)$]
\label{lem:restalpha}
If $\alpha \in (n-2,\RR{2})$ then $H^1_0(\dist^\alpha,\Omega) \hookrightarrow L^2(\Omega)$ and we have the following weighted Poincar\'e inequality
\[
  \| v \|_{L^2(\Omega)} \lesssim \| \nabla v \|_{L^2(\dist^\alpha,\Omega)}, \quad \forall v \in H^1_0(\dist^\alpha,\Omega)\RR{,}
\]
where the hidden constant depends only on $\Omega$.
\end{lemma}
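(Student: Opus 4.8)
The plan is to establish the estimate first for $v \in C_0^\infty(\Omega)$ and then conclude by density, using that, by Definition~\ref{H_1_weighted}, $H^1_0(\dist^\alpha,\Omega)$ is the closure of $C_0^\infty(\Omega)$ in the $H^1(\dist^\alpha,\Omega)$--norm. The engine of the argument is a combination of H\"older's inequality---to trade the weighted gradient norm for an \emph{unweighted} $L^p$ gradient norm, at the expense of an integral of a negative power of the weight---followed by the Sobolev inequality. The crucial point is the choice of the Sobolev exponent: take $p$ whose Sobolev conjugate is exactly $2$, i.e.\ $p = 2n/(n+2)$, which satisfies $1 \le p < n$ for $n \in \{2,3\}$. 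With this choice the relevant negative power of the weight turns out to be $\dist^{-\alpha n/2}$, and this function is integrable over $\Omega$ \emph{precisely} when $\alpha n/2 < n$, that is, when $\alpha < 2$. Thus the hypothesis $\alpha<2$ is exactly what the argument needs; the lower bound $\alpha > n-2$ plays no role.

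Concretely, I would fix $p = 2n/(n+2)$ and first record that, since $x_0$ is an interior point of the bounded domain $\Omega$, $\dist$ is bounded below away from any neighborhood of $x_0$ while $\int_{B(x_0,r)}\dist^{-\alpha n/2} \lesssim \int_0^r s^{\,n-1-\alpha n/2}\, ds < \infty$ because $\alpha<2$; hence $\dist^{-\alpha n/2} \in L^1(\Omega)$. Then, for $v \in C_0^\infty(\Omega)$, writing $|\nabla v|^p = \big(|\nabla v|^2\dist^\alpha\big)^{p/2}\,\dist^{-\alpha p/2}$ and applying H\"older's inequality with exponents $2/p$ and $2/(2-p)$ (so that the weight exponent becomes $\alpha p/(2-p) = \alpha n/2$) gives
\[
  \|\nabla v\|_{L^p(\Omega)} \le \Big(\int_\Omega \dist^{-\alpha n/2}\Big)^{1/n}\, \|\nabla v\|_{L^2(\dist^\alpha,\Omega)}.
\]
Since $v \in C_0^\infty(\Omega) \subset W^{1,p}_0(\Omega)$ and $p < n$, the Gagliardo--Nirenberg--Sobolev inequality together with $np/(n-p)=2$ yields $\|v\|_{L^2(\Omega)} \lesssim \|\nabla v\|_{L^p(\Omega)}$, with a constant depending only on $n$. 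Chaining the two bounds proves the asserted inequality for every $v\in C_0^\infty(\Omega)$, with a constant depending only on $\Omega$ (through $n$, $\alpha$ and $\|\dist^{-\alpha n/2}\|_{L^1(\Omega)}$) and independent of $v$.

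For the density step: given $v \in H^1_0(\dist^\alpha,\Omega)$, choose $v_k \in C_0^\infty(\Omega)$ with $v_k \to v$ in $H^1(\dist^\alpha,\Omega)$; then $(\nabla v_k)_k$ is Cauchy in $L^2(\dist^\alpha,\Omega)$, so by the inequality just proved $(v_k)_k$ is Cauchy---hence convergent---in $L^2(\Omega)$. On any compact $K\Subset\Omega\setminus\{x_0\}$ the weight $\dist^\alpha$ is bounded above and below by positive constants, so $L^2(\dist^\alpha,K)=L^2(K)$ with equivalent norms; therefore the $L^2(\Omega)$--limit of $(v_k)_k$ agrees with $v$ a.e.\ on $\Omega\setminus\{x_0\}$, i.e.\ a.e.\ on $\Omega$. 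Passing to the limit in the estimate for $v_k$ then establishes both the embedding $H^1_0(\dist^\alpha,\Omega) \hookrightarrow L^2(\Omega)$ and the weighted Poincar\'e inequality. The only genuine obstacle in the whole argument is recognizing that the window of admissible Sobolev exponents is nonempty, which, as explained, reduces exactly to $\alpha<2$; every other step is routine.
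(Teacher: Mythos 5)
Your proof is correct, but it takes a genuinely different route from the paper. The paper's proof is a two-line verification of an abstract sufficient condition for weighted Poincar\'e inequalities, namely condition (6.2) of \cite{NOS2}: one checks that $\left(r/R\right)^{2+n}\dist^\alpha(B_R)/\dist^\alpha(B_r)\lesssim 1$ for concentric balls centered at $x_0$, which, since $\dist^\alpha(B_s)\approx s^{n+\alpha}$, reduces to $(r/R)^{2-\alpha}\lesssim 1$ and hence to $\alpha<2$. You instead give a self-contained elementary argument: H\"older with the critically chosen exponent $p=2n/(n+2)$ (so that $p^*=2$) converts the weighted gradient norm into $\|\nabla v\|_{L^p(\Omega)}$ at the cost of $\|\dist^{-\alpha n/2}\|_{L^1(\Omega)}^{1/n}$, and the Gagliardo--Nirenberg--Sobolev inequality finishes; your exponent bookkeeping ($\alpha p/(2-p)=\alpha n/2$, $(2-p)/(2p)=1/n$, and $p=1$ for $n=2$ versus $p=6/5$ for $n=3$) is accurate, the integrability of $\dist^{-\alpha n/2}$ is indeed equivalent to $\alpha<2$, and the density step correctly identifies the $L^2(\Omega)$--limit with $v$. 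What each approach buys: the paper's argument is shorter, leans on general two-weight machinery already in place in \cite{NOS2}, and transfers verbatim to the multi-point weight $\rho$ of Lemma~\ref{lem:restalphagen}; yours avoids any external citation, makes transparent \emph{why} the threshold is exactly $\alpha<2$ (it is precisely the local integrability of $\dist^{-\alpha n/2}$), and also extends to $\rho$ with no change. Your observation that the lower bound $\alpha>n-2$ is irrelevant to this lemma is consistent with the paper, where that restriction comes only from the well-posedness of the adjoint problem.
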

\begin{proof}
It suffices to verify condition (6.2) of \cite{NOS2} for $p=q=2$, $\omega = \dist^\alpha$ and $\rho = 1$, \ie if $B_s \subset \Omega$ denotes a ball of radius $s$
\[
  \left( \frac{r}R \right)^{2+n} \frac{ \dist^\alpha(B_R) }{\dist^\alpha(B_r)} \lesssim 1,
\]
whenever $r \leq R$. Similarly to \cite[Lemma 7.6]{NOS2}, it suffices to verify this condition for balls centered at $x_0$. In this case
\[
  \dist^\alpha(B_s) \approx \int_0^s r^{\alpha+n-1} \diff r \approx s^{n+\alpha},
\]
which shows that, whenever $\alpha <2$, the embedding and Poincar\'e inequality hold.
\end{proof}}

\subsection{Finite element approximation and a priori error bounds}
\label{sec:a_priori}
Let $\T = \{T\}$ be a conforming partition of $\bar\Omega$ into simplices $T$ with size $h_T = \diam(T)$, and set $h_{\T} = \max_{T \in \T} h_T$.  We denote by $\Tr$ the collection of conforming and shape regular meshes that are refinements of an initial mesh $\T_0$. By shape regular we mean that there exists a constant $\sigma > 1$ such that
$
 \max \left\{ \sigma_T : T \in \T \right\} \leq \sigma
$
for all $\T \in \Tr$ \cite{CiarletBook,Guermond-Ern}. Here $\sigma_T = h_T/\rho_T$ denotes the shape coefficient of $T$  where $\rho_T$ is the diameter of the largest ball that can be inscribed in $T$.

Let $\Sides$ denote the set of internal interelement boundaries $S$ (or sides) and by $h_S$ we indicate the diameter of $S$. We define the \emph{star} or \emph{patch} associated with an element $T \in \T$ as
\[
  \Ne_T := \bigcup_{T' \in \T : T \cap T' \neq \emptyset} T'.
\]
Given a mesh $\T \in \Tr$, we define the finite element space of continuous piecewise polynomials of degree one as
\begin{equation}
\V(\T) = \left\{v_{\T} \in C^0( \bar \Omega): {v_{\T}}_{|T} \in \mathbb{P}_1(T),~ \ \forall~ T \in \T, \ v_{\T|\partial\Omega} = 0 \right\}.
\label{eq:defFESpace}
\end{equation}
Then, the corresponding Galerkin approximation to problem \eqref{-lap=delta} is given by
\begin{equation}
\label{eq:Gal_sol}
u_{\T} \in \V(\T): \quad a(u_{\T},v_{\T}) = \delta_{x_0} (v_{\T}) \quad \forall v_{\T} \in \V(\T).
\end{equation}
The well-posedness of problem \eqref{eq:Gal_sol} follows from \cite[Theorem 3.1]{AGM} and \cite[Lemma 7.8]{NOS2}. The a priori error analysis is due to Scott \cite{Scott:73} and Casas \cite{Casas:85}. The author of \cite{Scott:73} assumes that $\Omega$ is a smooth domain and the mesh $\T \in \Tr$ is quasiuniform with mesh size $h_{\T}$ and derives the following a priori error estimate:
\begin{equation}
\label{eq:optimal_a_priori_error_bound}
\| u - u_{\T} \|_{L^2(\Omega)} \lesssim h_{\T}^{2-n/2}.
\end{equation}
Using a different technique, Casas \cite{Casas:85} obtained the same result for polygonal or polyhedral domains and general regular Borel measures on the right-hand side. An analysis based on Muckenhoupt weighted Sobolev spaces has been recently developed in \cite{NOS2}: If $\Omega$ is convex and the mesh $\T \in \Tr$ is quasiuniform with mesh size $h_{\T}$, then
\begin{equation}
\label{eq:a_priori_error_bound}
 \| u - u_{\T} \|_{L^2(\Omega)} \lesssim h_{\T}^{2-n/2}| \log h_{\T} | \| \nabla u \|_{L^2(\varpi,\Omega)}.
\end{equation}
where $\varpi$ is defined in \eqref{eq:defofvarpi}. The analysis of \cite{Casas:85,NOS2} uses that $\Omega$ is a convex polyhedral domain, thus closing the regularity gap of \cite{Scott:73}; see \cite[Remark 3.1]{Seidman:12}.

\subsection{A posteriori error estimates}
\label{sec:a_posteriori}

The a priori upper bounds \eqref{eq:optimal_a_priori_error_bound} and \eqref{eq:a_priori_error_bound} are not computable and essentially provide only asymptotic information.
In addition, the limited regularity of the solution $u$ of problem \eqref{-lap=delta} does not allow the method to exhibit optimal rates of convergence. These facts motivate the analysis of an a posteriori error estimator driving AFEMs to solve problem \eqref{-lap=delta}. These methods are essential for the efficient approximation of \eqref{-lap=delta} with relatively modest computational resources; especially in three spatial dimensions and in the scenario of a non-convex domain $\Omega$.

We now recall the residual-type a posteriori error estimator introduced and analyzed in \cite{AGM}. To do this, given $\T \in \Tr$ and $T \in \T$, we define
 \begin{equation}
 \label{def:DT}
  D_T = \max_{x \in T} \dist (x).
 \end{equation}
We define the local a posteriori error indicator by
\begin{equation}
\label{eq:estimator_local}
\E_{\alpha}^2(u_{\T};T) = 
  \begin{dcases}
    h_T D_T^{\alpha} \| \llbracket \nu \cdot \nabla u_{\T} \rrbracket \|^2_{L^2(\partial T \setminus \partial \Omega)} + h_T^{\alpha + 2 - n},
    & x_0 \in T, \\
    h_T D_T^{\alpha} \| \llbracket \nu \cdot \nabla u_{\T}  \rrbracket \|^2_{L^2(\partial T \setminus \partial \Omega),} & x_0 \notin T ,
  \end{dcases}
\end{equation}
and the global error estimator $\E_{\alpha}^2(u_{\T};\T)= \sum_{T \in \T} \E_{\alpha}^2(u_{\T};T)$. In \eqref{eq:estimator_local}, the jump or interelement residual $\llbracket \nu \cdot \nabla u_{\T} \rrbracket$ is defined by
\begin{equation}
\label{eq:jump}
 \llbracket \nu \cdot \nabla u_{\T} \rrbracket = \nu^+ \cdot \nabla {u_{\T}}_{|{T^+}} + \nu^- \cdot \nabla {u_{\T}}_{|T^-}
\end{equation}
on the internal side $S \in \Sides$ shared by the distinct elements $T^+$, $T^{-} \in \T$. Here $\nu^+, \nu^-$ are unit normals on $S$ pointing towards $T^+$, $T^{-}$, respectively.  We comment that, in \eqref{eq:estimator_local}, the factor $h_T^{\alpha + 2 - n}$ appears, on the basis of \cite[Theorem 4.7]{AGM}, as a consequence of a local estimation of the Dirac delta $\delta_{x_0}$  applied to stars containing individually the delta points. 

The following result states the reliability of the global a posteriori error estimator. For a proof see \cite[Theorem 5.1]{AGM}.
 
\begin{proposition}[global reliability]
\label{pro:global_alpha}
Let $u \in H_0^1(\dist^{\alpha},\Omega)$ and $u_{\T} \in \V(\T)$ be the solutions to problems \RR{\eqref{eq:weak_form}} and \eqref{eq:Gal_sol}, respectively. If $\alpha \in \textbf{\emph{I}}$, then
\[
 \| \nabla(u - u_{\T}) \|_{L^2(\dist^{\alpha},\Omega)} \lesssim \E_{\alpha}(u_{\T};\T),
\]
where the hidden constant depends on the diameter of $\Omega$, the shape regularity constant $\sigma$, the parameter $\alpha$ and the inf-sup constant of $a$. In addition, the hidden constant blows up when $\alpha$ approaches $\partial\mathbf{I}$.
\end{proposition}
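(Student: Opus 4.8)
The plan is to carry out the by-now classical residual argument for a posteriori error estimation, adapted to the weighted framework of \cite{AGM}. Since $u_{\T} \in \V(\T) \subset H_0^1(\dist^{\alpha},\Omega)$ (because $\alpha>0$ makes $\dist^{\alpha}$ bounded on $\Omega$), the error $u - u_{\T}$ belongs to $H_0^1(\dist^{\alpha},\Omega)$, and the inf-sup condition for $a$ on $H_0^1(\dist^{\alpha},\Omega) \times H_0^1(\dist^{-\alpha},\Omega)$ \cite[Theorem 2.3]{AGM} yields
\[
  \| \nabla(u - u_{\T}) \|_{L^2(\dist^{\alpha},\Omega)} \lesssim \sup_{0 \neq v \in H_0^1(\dist^{-\alpha},\Omega)} \frac{a(u - u_{\T}, v)}{\| \nabla v \|_{L^2(\dist^{-\alpha},\Omega)}},
\]
with a constant that degenerates as $\alpha \to \partial\mathbf{I}$. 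It thus suffices to bound $a(u - u_{\T}, v)$ for a fixed but arbitrary $v \in H_0^1(\dist^{-\alpha},\Omega)$. Using the weak formulations \eqref{eq:weak_form} and \eqref{eq:Gal_sol}, for any $v_{\T} \in \V(\T)$ we have the Galerkin orthogonality relation $a(u - u_{\T}, v) = \delta_{x_0}(v - v_{\T}) - a(u_{\T}, v - v_{\T})$.

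The key technical ingredient is a quasi-interpolation operator $I_{\T} \colon H_0^1(\dist^{-\alpha},\Omega) \to \V(\T)$ of Cl\'ement/Scott--Zhang type enjoying local stability and approximation estimates in the $\dist^{-\alpha}$-weighted norms, as well as the corresponding weighted trace estimates on interelement sides; such an operator and its properties are available from \cite{AGM,NOS2} since $\dist^{-\alpha} \in A_2$ for $\alpha \in \mathbf{I}$. Choosing $v_{\T} = I_{\T} v$, integrating by parts elementwise, and using that $u_{\T}$ is piecewise linear, so that $\Delta u_{\T} = 0$ on each $T \in \T$, the second term collapses to the jump residuals:
\[
  a(u_{\T}, v - I_{\T} v) = \sum_{S \in \Sides} \int_S \llbracket \nu \cdot \nabla u_{\T} \rrbracket \, (v - I_{\T} v).
\]
Applying the weighted Cauchy--Schwarz inequality on each side $S$ with the reciprocal pair of weights $\dist^{\pm\alpha}$, followed by the weighted trace and approximation estimates for $I_{\T}$ established in \cite{AGM} and the finite overlap of the patches $\{\Ne_T\}$, the jump contribution is controlled by $\bigl(\sum_{T \in \T} h_T D_T^{\alpha} \| \llbracket \nu \cdot \nabla u_{\T} \rrbracket \|_{L^2(\partial T \setminus \partial\Omega)}^2 \bigr)^{1/2} \| \nabla v \|_{L^2(\dist^{-\alpha},\Omega)}$, which is precisely the jump part of $\E_{\alpha}(u_{\T};\T)$.

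It remains to handle the Dirac term $\delta_{x_0}(v - I_{\T} v) = (v - I_{\T} v)(x_0)$. Let $T_0 \in \T$ be an element with $x_0 \in T_0$ and let $B \subset \Ne_{T_0}$ be a ball of radius comparable to $h_{T_0}$ centered at $x_0$. Since $\alpha \in \mathbf{I}$, one has the localized embedding $\delta_{x_0} \in H_0^1(\dist^{-\alpha},B)'$ \cite[Lemma 7.1.3]{KMR}; a scaling argument on $B$ combined with the local approximation property of $I_{\T}$ (subtracting a suitable local average of $v$ to reduce to a function that can be tested against the functional) gives $|(v - I_{\T} v)(x_0)| \lesssim h_{T_0}^{(\alpha+2-n)/2} \| \nabla v \|_{L^2(\dist^{-\alpha},\Ne_{T_0})}$, which is the origin of the term $h_T^{\alpha+2-n}$ in \eqref{eq:estimator_local}; compare \cite[Theorem 4.7]{AGM}. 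Collecting the two bounds, dividing by $\| \nabla v \|_{L^2(\dist^{-\alpha},\Omega)}$ and taking the supremum over $v$ yields the claim.

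The main obstacle is the careful bookkeeping of the two mutually reciprocal Muckenhoupt weights $\dist^{\pm\alpha}$ through the interpolation, trace and scaling estimates, in particular near $x_0$ where $\dist^{-\alpha}$ degenerates and the quantities $D_T$ must be used in place of $\dist$, and tracking the dependence of all hidden constants on $\alpha$, so that the asserted blow-up as $\alpha \to \partial\mathbf{I}$ — which stems both from the inf-sup constant and from the norm of $\delta_{x_0}$ in $H_0^1(\dist^{-\alpha},\Omega)'$ — is correctly accounted for.
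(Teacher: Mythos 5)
Your proposal is correct and follows essentially the same route as the proof the paper relies on (the citation to \cite[Theorem 5.1]{AGM}): inf-sup duality on the pair $H_0^1(\dist^{\alpha},\Omega)\times H_0^1(\dist^{-\alpha},\Omega)$, Galerkin orthogonality with a weighted Cl\'ement-type quasi-interpolant, elementwise integration by parts reducing the residual to the jump terms, and the local estimate of $\delta_{x_0}$ from \cite[Theorem 4.7]{AGM} producing the $h_T^{\alpha+2-n}$ contribution. The only caveat is that the weighted interpolation, trace and delta estimates are invoked rather than proved, but that is exactly how the paper itself treats them.
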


The local efficiency of the \RR{indicator} \eqref{eq:estimator_local} is as follows \cite[Theorem 5.3]{AGM}.

\begin{proposition}[local efficiency] 
Let $u \in H_0^1(\dist^{\alpha},\Omega)$ and $u_{\T} \in \V(\T)$ be the solutions to problems \RR{\eqref{eq:weak_form}} and \eqref{eq:Gal_sol}, respectively. If $\alpha \in \textbf{\emph{I}}$, then
\[
 \E_{\alpha}(u_{\T};T) \lesssim \| u - u_{\T} \|_{H^1(\dist^{\alpha},\Ne_T)}, 
\]
for all $T \in \T$, and where the hidden constant depends on the shape regularity constant $\sigma$ and the parameter $\alpha$. In addition, the hidden constant blows up if $\alpha$ approaches $n$.
\end{proposition}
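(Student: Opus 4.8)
The plan is to establish the local efficiency bound by the standard residual-estimator technique of testing the residual equation against carefully chosen bubble functions, adapted to the weighted setting. The estimator $\E_\alpha(u_\T;T)$ has two types of contributions: the jump terms $h_T D_T^\alpha \|\llbracket \nu\cdot\nabla u_\T\rrbracket\|_{L^2(\partial T\setminus\partial\Omega)}^2$, present on every element, and the data term $h_T^{\alpha+2-n}$, present only on the (single) element containing $x_0$. I would treat these separately. For the jump terms, fix an internal side $S$ shared by $T^+$ and $T^-$, let $\phi_S$ be the face bubble supported on $\omega_S = T^+\cup T^-$, and test the Galerkin orthogonality relation $a(u-u_\T, v)=0$ (valid for $v\in\V(\T)$, and more generally the residual identity for $v$ vanishing near $x_0$) against $v = \llbracket\nu\cdot\nabla u_\T\rrbracket \phi_S$, extended appropriately. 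Since $u_\T$ is piecewise linear, $\Delta u_\T = 0$ elementwise, so integration by parts leaves only the jump term, yielding, after the usual scaling/equivalence of norms on finite-dimensional spaces and an inverse inequality, a bound of $h_S\|\llbracket\nu\cdot\nabla u_\T\rrbracket\|_{L^2(S)}^2$ in terms of $\|\nabla(u-u_\T)\|_{L^2(\omega_S)}\|\nabla v\|_{L^2(\omega_S)}$; then one inserts the weight $D_T^\alpha$ using the fact that $\dist^\alpha$ is essentially constant (up to the shape-regularity and $A_2$ constants) on a patch not containing $x_0$, comparable to $D_T^\alpha$, which converts the unweighted local bound into the weighted one $h_T D_T^\alpha\|\llbracket\nu\cdot\nabla u_\T\rrbracket\|_{L^2(S)}^2 \lesssim \|\nabla(u-u_\T)\|_{L^2(\dist^\alpha,\omega_S)}^2$. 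Summing over the faces of $T$ gives the jump part of the claim with $\Ne_T$ on the right.

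For the data term, which only concerns the element $T$ with $x_0\in T$, I would argue that $h_T^{\alpha+2-n}$ is, up to constants, a lower bound for $\|u - u_\T\|_{H^1(\dist^\alpha,\Ne_T)}$. Here the key is the known local behavior of $u$ near $x_0$, namely $\nabla u(x)\approx|x-x_0|^{1-n}$ from \eqref{asympt-x0}: one computes $\|\nabla u\|_{L^2(\dist^\alpha,B_{h_T}(x_0))}^2 \approx \int_0^{h_T} r^{2(1-n)}r^\alpha r^{n-1}\diff r \approx h_T^{\alpha+2-n}$ when $\alpha \in \mathbf{I}$ (so the integral converges at $0$ and is dominated by its upper limit). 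Meanwhile $u_\T$ is piecewise linear, hence bounded with bounded gradient on $T$ (indeed the discrete problem's solution does not reproduce the singularity), so $\|\nabla u_\T\|_{L^2(\dist^\alpha,T)}$ carries a lower-order power of $h_T$; a triangle inequality then shows $\|\nabla(u-u_\T)\|_{L^2(\dist^\alpha,T)}\gtrsim h_T^{(\alpha+2-n)/2}$, i.e.\ the square is $\gtrsim h_T^{\alpha+2-n}$, which is exactly the data contribution. Actually, since this is the content of \cite[Theorem 5.3]{AGM}, I would cite the analogous computation there; the point is that the singular part of $u$ near $x_0$ cannot be resolved by a single linear element.

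Combining the two parts: for a generic $T$ (not containing $x_0$) only the jump contribution is present and is controlled by $\|\nabla(u-u_\T)\|_{L^2(\dist^\alpha,\Ne_T)} \le \|u-u_\T\|_{H^1(\dist^\alpha,\Ne_T)}$; for the element containing $x_0$ we add the data term, controlled as above. In both cases the hidden constant depends only on $\sigma$ (through the bubble-function norm equivalences, inverse inequalities, and the comparability of $\dist^\alpha$ across a patch) and on $\alpha$ (through the $A_2$-constant of $\dist^\alpha$ and the convergence of the radial integral), and it blows up as $\alpha\to n$ precisely because the $A_2$-constant of $\dist^\alpha$ degenerates there. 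The main obstacle I anticipate is the weighted bubble-function machinery: one must verify that multiplying by $\dist^\alpha$ interacts correctly with the standard inverse estimates and norm equivalences on a patch $\Ne_T$, which requires knowing that $\dist^\alpha$ oscillates by only a bounded factor on $\Ne_T$ when $x_0\notin\Ne_T$ (a consequence of shape regularity and $\alpha$ being fixed), and handling the delicate patch containing $x_0$ where this comparability fails but where instead the explicit radial integral takes over. Since all of this is carried out in \cite{AGM}, the cleanest route is to reduce to those arguments, but the sketch above indicates how it goes.
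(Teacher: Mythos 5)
The jump--term half of your argument is, in outline, the one the paper relies on: the paper does not prove this proposition itself but cites \cite[Theorem 5.3]{AGM}, and it reproduces the technique in detail later for the analogous estimator $\E_{\psf}$. There the essential extra ingredient, which you only gesture at, is the modified side bubble $\psi_S$ supported on sub-simplices $T_*\cup T_*'$ chosen \emph{away} from the singular point (the paper's Figure~\ref{Fig:bubble}), so that on the patch containing $x_0$ the test function both annihilates the Dirac term in the residual identity and lives where $\dist^\alpha$ is comparable to $D_T^\alpha$. That construction is the actual content of the "delicate patch" you defer.

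The genuine gap is in your treatment of the data term $h_T^{\alpha+2-n}$. You bound $\|\nabla(u-u_\T)\|_{L^2(\dist^\alpha,T)}$ from below by $\|\nabla u\|-\|\nabla u_\T\|$ and assert that $\|\nabla u_\T\|_{L^2(\dist^\alpha,T)}$ is of lower order. It is not: on the element containing $x_0$ the Galerkin solution generically satisfies $|\nabla u_\T|\approx h_T^{1-n}$ (it interpolates nodal values of size $h_T^{2-n}$ for $n=3$, resp.\ $|\log h_T|$ for $n=2$, at vertices a distance $h_T$ apart), so $\|\nabla u_\T\|^2_{L^2(\dist^\alpha,T)}\approx h_T^{2(1-n)}\,h_T^{n+\alpha}=h_T^{\alpha+2-n}$, which is exactly the same order as $\|\nabla u\|^2_{L^2(\dist^\alpha,B_{h_T}(x_0))}$. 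The triangle inequality therefore yields nothing; to salvage your route you would need a lower bound on the best approximation of the singular part of $u$ by piecewise linears over the whole patch, which you do not prove and which is delicate when $x_0$ is a mesh vertex. The argument actually used in \cite{AGM} --- and spelled out in the paper for the corresponding term of $\E_{\psf}$, see \eqref{eq:chi}--\eqref{eq:aux3} --- is dual rather than constructive: take $\chi$ smooth with $\chi(x_0)=1$, $\supp\chi\subset\Ne_T$ and $\|\nabla\chi\|_{L^\infty}\lesssim h_T^{-1}$, write $1=\delta_{x_0}(\chi)=a(\chi,u)=a(\chi,u-u_\T)+a(\chi,u_\T)$, bound the first term by $\|\nabla(u-u_\T)\|_{L^2(\dist^\alpha,\Ne_T)}\|\nabla\chi\|_{L^2(\dist^{-\alpha},\Ne_T)}\lesssim h_T^{(n-2-\alpha)/2}\|\nabla(u-u_\T)\|_{L^2(\dist^\alpha,\Ne_T)}$, and the second by jump terms that are already controlled. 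This also identifies the true source of the blow-up as $\alpha\to n$, namely $\int_{\Ne_T}\dist^{-\alpha}\approx h_T^{n-\alpha}/(n-\alpha)$; note that the radial integral in your computation degenerates at the opposite endpoint $\alpha\to n-2$, which does not match the statement.
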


We remark that given the specifics of our problem and discretization scheme---we are dealing with the Laplacian, the right hand side is only a Dirac mass and we are using lowest order finite elements---no oscillation terms appear in the local lower bound. In more general situations these must be taken into consideration; see \cite{AGM} for details.

We conclude this section by commenting on the alternative a posteriori error analysis developed in \cite{Rodolfo1,Rodolfo2}, which measures the error in the spaces $W^{l,p}(\Omega)$ with $l\in \{0,1\}$.

\section{Pointwise a posteriori error estimates for elliptic problems}
\label{sec:maximum}

In section~\ref{sec:pointwise_tracking} we propose and analyze an a posteriori error estimator for the pointwise tracking optimal control problem. The proposed indicator hinges on a suitable combination of the error estimator described in section \ref{sec:a_posteriori} for the adjoint equation \eqref{eq:adjp} and a pointwise a posteriori error estimator for the state equation \eqref{eq:defofPDEp}. In an effort to make this contribution self contained, in this section we briefly review results concerning the a posteriori error analysis for elliptic problems in the maximum norm.

Let $f \in L^{\infty}(\Omega)$ and $u$ be the weak solution to:
\begin{equation}
\label{-lap=f}
u \in H_0^1(\Omega): \quad a(u,v) = (f,v)_{L^2(\Omega)} \quad \forall v \in H_0^1(\Omega),
\end{equation}
where $a$ is defined in \eqref{eq:defofforma}. \AJS{Proposition~\ref{prop:uisW1p} yields} the H{\"o}lder continuity of the function $u$ solving \eqref{-lap=f}; see also \cite[Lemma~1]{demlow2014maximum}. In the setting of subsection \ref{sec:a_priori}, we define the Galerkin approximation to problem \eqref{-lap=f} as
\begin{equation}
\label{eq:Gal_sol_f}
u_{\T} \in \V(\T): \quad a(u_{\T},v_{\T}) = (f,v_{\T})_{L^2(\Omega)} \quad \forall v_{\T} \in \V(\T).
\end{equation}

We now present the pointwise a posteriori error estimator studied by Nochetto in \cite{rhn} for $n=2$. The analysis of this error indicator was subsequently extended to $n=3$ by Dari et al.~in \cite{deldia} and later improved in \cite{MR3022214,demlow2014maximum}. We introduce the local pointwise indicator
\begin{equation}
\label{eq:estimator_local_inf}
\E_\infty(u_{\T};T) = 
h_T^2 \| f \|_{L^\infty(T)}  + h_T \| \llbracket \nu\cdot\nabla u_{\T} \rrbracket \|_{L^\infty(\partial T \setminus \partial \Omega)}, 
\end{equation}
and the corresponding global pointwise estimator $\E_\infty(u_{\T};\T) = \max_{T \in\T} \E_\infty(u_{\T};T)$.

The reliability of the global indicator $\E_{\infty}$ is given below. To state it and for future reference, we define
\begin{equation}\label{eq:log}
\ell_\T = \left|\log\left( \max_{T \in \T} \frac{1}{h_{T}}\right)\right|.
\end{equation}
The earliest proof of reliability can be found in \cite[Lemma 4.1]{rhn} and \cite[Theorem 3.1]{deldia} for $n=2$ and $n=3$, respectively. These results were later improved in \cite{MR3022214} and \cite{demlow2014maximum} to the one given below.

\begin{proposition}[global reliability]
\label{pro:global_infty}
Let $u \in H_0^1(\Omega) \cap L^\infty(\Omega)$ and $u_{\T} \in \V(\T)$ be the solutions to problems \eqref{-lap=f} and \eqref{eq:Gal_sol_f}, respectively. Then
\[
 \| u - u_{\T} \|_{L^{\infty}(\Omega)} \lesssim  \ell_{\T}\E_{\infty}(u_{\T};\T),
\]
where the hidden constant depends on $\Omega$ but not on $u$ or the size of the elements in the mesh $\T$.
\end{proposition}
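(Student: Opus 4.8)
The plan is to bound $\|u-u_\T\|_{L^\infty(\Omega)}$ by duality against the (mollified) Green's function centered at a near-extremal point, the logarithmic factor $\ell_\T$ arising from the borderline non-integrability of that Green's function. Since $f\in L^\infty(\Omega)\subset L^p(\Omega)$ for some $p>n$, Proposition~\ref{prop:uisW1p} gives $u\in C^{0,\kappa}(\bar\Omega)$, hence $e:=u-u_\T\in C(\bar\Omega)$ and $\|e\|_{L^\infty(\Omega)}=|e(x_0)|$ for some $x_0\in\bar\Omega$; if $x_0\in\partial\Omega$ there is nothing to prove, so assume $x_0\in\Omega$ and, without loss of generality, $e(x_0)=\|e\|_{L^\infty(\Omega)}>0$. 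Let $T_{x_0}\in\T$ contain $x_0$, set $h:=h_{T_{x_0}}$, and fix a mollifier $\delta^h_{x_0}\ge 0$ with $\int_\Omega\delta^h_{x_0}=1$, $\operatorname{supp}\delta^h_{x_0}\subset\Ne_{T_{x_0}}$ and $\|\delta^h_{x_0}\|_{L^\infty(\Omega)}\lesssim h^{-n}$. On each $T$ the error solves $-\Delta e=f$, since $u_\T$ is piecewise linear and thus $\Delta u_\T|_T\equiv0$; an elementwise maximum–principle argument then bounds $\|e\|_{L^\infty(T)}$ by $\|e\|_{L^\infty(\partial T)}+C h_T^2\|f\|_{L^\infty(T)}$, and combining this with a standard localization (see \cite{rhn,deldia,demlow2014maximum}) one reduces to the mollified point value, namely
\[
  \|u-u_\T\|_{L^\infty(\Omega)}\ \lesssim\ \E_\infty(u_\T;\T)\ +\ \bigl|(\delta^h_{x_0},e)_{L^2(\Omega)}\bigr|.
\]

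\emph{Duality and elementwise integration by parts.} Since $\delta^h_{x_0}\in L^2(\Omega)$, let $g\in H^1_0(\Omega)$ solve $a(g,v)=(\delta^h_{x_0},v)_{L^2(\Omega)}$ for all $v\in H^1_0(\Omega)$ and let $g_\T\in\V(\T)$ be its Galerkin approximation. Testing with $v=e$, using the symmetry of $a$, the Galerkin orthogonality $a(e,v_\T)=0$ for all $v_\T\in\V(\T)$ (both \eqref{-lap=f} and \eqref{eq:Gal_sol_f} use the same $a$ and $\V(\T)$), and integrating by parts on each element — where again $\Delta u_\T\equiv0$ — yields
\[
  (\delta^h_{x_0},e)=a(g,e)=a(e,g-g_\T)=(f,g-g_\T)_{L^2(\Omega)}-\sum_{S\in\Sides}\int_S\llbracket\nu\cdot\nabla u_\T\rrbracket\,(g-g_\T).
\]

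\emph{Extracting the estimator.} Hölder's inequality on each element and each internal side, together with the definition \eqref{eq:estimator_local_inf} of $\E_\infty$, gives
\[
  \bigl|(f,g-g_\T)_{L^2(\Omega)}\bigr|\le\sum_{T\in\T}\|f\|_{L^\infty(T)}\|g-g_\T\|_{L^1(T)}\le\E_\infty(u_\T;\T)\sum_{T\in\T}h_T^{-2}\|g-g_\T\|_{L^1(T)},
\]
\[
  \Bigl|\sum_{S\in\Sides}\int_S\llbracket\nu\cdot\nabla u_\T\rrbracket(g-g_\T)\Bigr|\le\sum_{S\in\Sides}\|\llbracket\nu\cdot\nabla u_\T\rrbracket\|_{L^\infty(S)}\|g-g_\T\|_{L^1(S)}\le\E_\infty(u_\T;\T)\sum_{S\in\Sides}h_S^{-1}\|g-g_\T\|_{L^1(S)}.
\]
Hence everything reduces to the mesh–independent a priori estimate, uniform over the family of mollified Green's functions,
\[
  \sum_{T\in\T}h_T^{-2}\|g-g_\T\|_{L^1(T)}+\sum_{S\in\Sides}h_S^{-1}\|g-g_\T\|_{L^1(S)}\ \lesssim\ \ell_\T.
\]

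\emph{Main obstacle.} The genuine difficulty — and the source of the logarithm — is this last bound. Since the exact Green's function behaves like $\log|x-x_0|$ ($n=2$) or $|x-x_0|^{-1}$ ($n=3$), the unweighted energy approach does not close; one must instead use the regularized distance $\sigma(x)=(|x-x_0|^2+h^2)^{1/2}$ and establish weighted a priori estimates of Schatz–Wahlbin–Nochetto type, roughly of the form $\|\sigma^{\beta}\nabla(g-g_\T)\|_{L^2(\Omega)}\lesssim\ell_\T^{1/2}$ and $\|\sigma^{\beta-1}(g-g_\T)\|_{L^2(\Omega)}\lesssim h\,\ell_\T^{1/2}$ for an admissible exponent $\beta$. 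These follow from a dyadic decomposition of $\Omega$ into the annuli $\{2^jh\le|x-x_0|\le 2^{j+1}h\}$, on which $g$ is smooth away from $x_0$ so local (Caccioppoli and duality) elliptic estimates apply, combined with the global energy bound $\|\nabla(g-g_\T)\|_{L^2(\Omega)}\lesssim h^{1-n/2}$, valid on any bounded Lipschitz polytope, and a superapproximation/kickback argument summing the annular contributions. Converting the weighted $L^2$ bounds into the $L^1$ sums above by Cauchy–Schwarz and using $\int_\Omega\sigma^{-n}\approx\int_h^{\diam(\Omega)}r^{-1}\,\diff r\approx\ell_\T$ produces exactly one logarithmic factor in the sharp version of \cite{MR3022214,demlow2014maximum} (the earlier arguments of \cite{rhn,deldia} gave $\ell_\T^{2}$). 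The subtle point is that on a \emph{non-convex} polytope $g\notin W^{2,1}(\Omega)$ near reentrant corners, so the naive ``bound $\|g\|_{W^{2,1}(\Omega)}\lesssim\ell_\T$ then interpolate'' route is unavailable; the dyadic, weighted bookkeeping is precisely what lets the corner singularities be absorbed as log–free lower order terms, and it is also there that the restriction $n\le3$ is used, to keep $h^{1-n/2}$ and the accompanying Sobolev embeddings under control. Collecting the estimates yields $\|u-u_\T\|_{L^\infty(\Omega)}\lesssim\ell_\T\,\E_\infty(u_\T;\T)$.
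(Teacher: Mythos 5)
The paper does not actually prove Proposition~\ref{pro:global_infty}: it is recalled from the literature, with the proof attributed to \cite{rhn} and \cite{deldia} for $n=2,3$ and, in the sharp form with a single factor of $\ell_\T$, to \cite{MR3022214,demlow2014maximum}. Your outline follows precisely the strategy of those references --- duality against a regularized Green's function centered at a near-extremal point, Galerkin orthogonality plus elementwise integration by parts to expose the two residual contributions appearing in \eqref{eq:estimator_local_inf}, and Schatz--Wahlbin/Nochetto-type weighted, dyadic a priori estimates for $g-g_\T$ as the source of the single logarithm --- so it is consistent with the (cited) proof the paper relies on. The one caveat is that the two genuinely hard steps are described rather than carried out: the localization reducing $\|u-u_\T\|_{L^\infty(\Omega)}$ to $|(\delta^h_{x_0},e)|$ plus estimator terms (your elementwise maximum-principle remark does not by itself control $\|e\|_{L^\infty(\partial T)}$, and in \cite{MR2249676} this step requires a barrier argument precisely to avoid mesh restrictions of the form \eqref{eq:rhnorig}), and the bound $\sum_{T}h_T^{-2}\|g-g_\T\|_{L^1(T)}+\sum_{S}h_S^{-1}\|g-g_\T\|_{L^1(S)}\lesssim\ell_\T$; as a self-contained argument these would need to be filled in, but as a reconstruction of the cited proof your sketch identifies all the right ingredients and correctly locates where the logarithm and the restriction $n\le 3$ enter.
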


Denote by $\mathcal{P}_{\T}$ the $L^2$-projection operator onto functions that are piecewise constant over $\T$.
The local efficiency of the \RR{indicator} \eqref{eq:estimator_local_inf} is as follows. For a proof see \cite[Theorem 3.2]{deldia} and \cite[Lemma 4.2]{rhn} for $n=2$ and $n=3$, respectively; see also \cite[Section 3.4]{demlow2014maximum}.

\begin{proposition}[local efficiency] 
\label{pro:local_infty}
Let $u \in H_0^1(\Omega)\cap L^\infty(\Omega)$ and $u_{\T} \in \V(\T)$ be the solutions to problems \eqref{-lap=f} and \eqref{eq:Gal_sol_f}, respectively. Then
\[
 \E_{\infty}(u_{\T};T) \lesssim 
 \| u - u_{\T} \|_{L^{\infty}(\Ne_T)} + \max_{\RR{T'} \in \Ne_{T}} h_\RR{T'}^2 \| f - \mathcal{P}_{\T}f\|_{L^{\infty}(\RR{T'})} ,
\]
for all $T \in \T$, where the hidden constant is independent of \RR{$u$ and the size of the elements in the mesh $\T$.}
\end{proposition}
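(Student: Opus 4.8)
The plan is to bound each of the two contributions to $\E_\infty(u_\T;T)$ — the interior residual term $h_T^2\|f\|_{L^\infty(T)}$ and the jump term $h_T\|\llbracket\nu\cdot\nabla u_\T\rrbracket\|_{L^\infty(\partial T\setminus\partial\Omega)}$ — separately by the right-hand side, using the classical bubble-function technique of Verf\"urth adapted to the $L^\infty$ setting. First I would introduce, for each element $T$, the standard interior bubble $b_T$ supported on $T$ (a product of barycentric coordinates, scaled so $\|b_T\|_{L^\infty(T)}=1$) and, for each internal side $S\subset\partial T$, the face bubble $b_S$ supported on the patch $\Ne_S$ of the two elements sharing $S$. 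The key scaling facts are the norm equivalences $\|b_T v_\T\|_{L^2(T)}\approx\|v_\T\|_{L^2(T)}$ and inverse-type bounds $\|\nabla(b_T v_\T)\|_{L^2(T)}\lesssim h_T^{-1}\|v_\T\|_{L^2(T)}$ for $v_\T$ polynomial, together with their $L^\infty$ analogues and the fact that on a shape-regular simplex $\|v_\T\|_{L^\infty(T)}\approx h_T^{-n/2}\|v_\T\|_{L^2(T)}$ for polynomials of fixed degree.

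For the jump term, since $\llbracket\nu\cdot\nabla u_\T\rrbracket$ is a polynomial on $S$ that attains its $L^\infty(S)$-norm at some point $x_S\in\bar S$, I would test the Galerkin orthogonality / residual equation with $w = b_S\,\varphi$, where $\varphi$ is a suitable polynomial extension from $S$ chosen so that integrating against the jump reproduces a quantity comparable to $h_S^{n-1}\|\llbracket\nu\cdot\nabla u_\T\rrbracket\|_{L^\infty(S)}^2$. Expanding $a(u-u_\T,w)=(f,w)_{L^2}-a(u_\T,w)$ and integrating by parts elementwise on $\Ne_S$ produces exactly the jump term plus an interior contribution involving $f$ (note $\Delta u_\T=0$ elementwise since $u_\T$ is piecewise linear and $\Delta$ of the state is $f$, not zero on $\Omega$; here it is the continuous residual $-\Delta u = f$ that enters). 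Estimating the interior term by $h_S^2\|f\|_{L^\infty(\Ne_S)}$ — or rather, after subtracting the mesh-constant projection, by $\max h_{T'}^2\|f-\mathcal P_\T f\|_{L^\infty(T')}$ using that $b_S\varphi$ has zero-mean-type cancellation against constants on each $T'$ — and the left-hand side by $\|u-u_\T\|_{L^\infty(\Ne_S)}\cdot\|\nabla(b_S\varphi)\|_{L^1(\Ne_S)}$ (the $L^\infty$–$L^1$ duality pairing), and then dividing through, yields $h_S\|\llbracket\nu\cdot\nabla u_\T\rrbracket\|_{L^\infty(S)}\lesssim\|u-u_\T\|_{L^\infty(\Ne_S)}+\max_{T'\in\Ne_S}h_{T'}^2\|f-\mathcal P_\T f\|_{L^\infty(T')}$, after carefully tracking the powers of $h$ from the scaling relations above. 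The interior residual term $h_T^2\|f\|_{L^\infty(T)}$ is handled analogously but more simply: test with $w=b_T(f-\mathcal P_\T f)$ extended appropriately, noting that since $-\Delta u_\T=0$ on $T$, the element residual equals $f$, so $h_T^2\|\mathcal P_\T f\|_{L^\infty(T)}$ is controlled by $\|u-u_\T\|_{L^\infty(T)}$ plus the oscillation term, and $h_T^2\|f-\mathcal P_\T f\|_{L^\infty(T)}$ is itself the oscillation term — then use the triangle inequality $\|f\|_{L^\infty(T)}\le\|\mathcal P_\T f\|_{L^\infty(T)}+\|f-\mathcal P_\T f\|_{L^\infty(T)}$.

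The main obstacle is bookkeeping the $L^\infty$-versus-$L^2$ scaling correctly: the standard Verf\"urth argument is most naturally written in $L^2$, and here one must instead pair the $L^\infty$-norm of the error against the $L^1$-norm of gradients of bubble-weighted polynomials, which changes the power of $h_T$ in every estimate. One has to verify that these powers conspire to leave precisely a factor $h_T$ (resp.\ $h_T^2$) multiplying the jump (resp.\ interior) residual — the dimension-dependent exponents $n/2$ from $\|\cdot\|_{L^\infty}\approx h^{-n/2}\|\cdot\|_{L^2}$ must cancel against the $|T|\approx h_T^n$ and $|S|\approx h_T^{n-1}$ volume factors. Since this is a known result with a clean statement, I would largely cite \cite{rhn,deldia,demlow2014maximum} for the details and only indicate the structure of the argument, emphasizing that no convexity of $\Omega$ is needed because the argument is entirely local on interior patches $\Ne_T$.
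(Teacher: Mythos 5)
Your plan follows the same bubble-function route as the references the paper cites for this result (\cite{rhn,deldia,demlow2014maximum}) and as the paper's own, fully written-out proof of the analogous local efficiency bound \eqref{eq:efficiencyy} for $\E_{\ysf}$. There is, however, one step that fails as described. You propose to bound $a(u-u_{\T},w)=\int\nabla(u-u_{\T})\cdot\nabla w$ by $\|u-u_{\T}\|_{L^{\infty}(\Ne_S)}\,\|\nabla(b_S\varphi)\|_{L^{1}(\Ne_S)}$ and call this an $L^\infty$--$L^1$ duality pairing. It is not: the gradient is still on $u-u_{\T}$, so H\"older only gives $\|\nabla(u-u_{\T})\|_{L^{\infty}}\|\nabla w\|_{L^{1}}$, which is useless here. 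To trade $\nabla(u-u_{\T})$ for $u-u_{\T}$ you must integrate by parts a \emph{second} time, elementwise, after which the error is paired against $\sum_{T'}\|\Delta w\|_{L^{1}(T')}+\sum_{S'}\|\llbracket\nu\cdot\nabla w\rrbracket\|_{L^{1}(S')}$; these are exactly the terms $\int_{T}(\bar{\ysf}-\bar{\ysf}_{\T})\Delta\varphi_S$ and $\int_{S'}\llbracket\nu\cdot\nabla\varphi_S\rrbracket(\bar{\ysf}-\bar{\ysf}_{\T})$ appearing in the paper's identity \eqref{eq:res4}. The distinction matters quantitatively: your quantity scales like $h_S^{\,n-1}$ times the $L^\infty$-norm of the jump, whereas the correct one scales like $h_S^{\,n-2}$ (two derivatives of the bubble, each costing a factor $h_S^{-1}$), and that extra $h_S^{-1}$ is precisely what leaves the factor $h_T$ in front of the jump residual in the final estimate. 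With your scaling you would conclude $\|\llbracket\nu\cdot\nabla u_{\T}\rrbracket\|_{L^{\infty}(S)}\lesssim\|u-u_{\T}\|_{L^{\infty}(\Ne_S)}$ with no factor of $h_S$, which is stronger than the statement and false. The same correction applies to your closing remark about pairing the error with ``the $L^1$-norm of gradients'': it must be second derivatives plus normal-derivative jumps of the test function. A minor additional point: for the interior term the natural test function is $\varphi_T$ itself, since $\mathcal{P}_{\T}f$ is constant on $T$; testing with $b_T(f-\mathcal{P}_{\T}f)$ does not reproduce $\|\mathcal{P}_{\T}f\|_{L^{\infty}(T)}$. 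With these repairs the bookkeeping you outline does close.
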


The proof of Propositions~\ref{pro:global_infty} and \ref{pro:local_infty} rely on asymptotic estimates for a regularized Green's function. Moreover, the original results required a fineness assumption on the initial mesh together with the existence of a constant $\gamma \geq 1$, such that
\begin{equation}
\label{eq:rhnorig}
  h_{\T}^{\gamma} \lesssim \min_{T \in \T} h_T, \quad \forall \T \in \mathbb{T}.
\end{equation}
These assumptions were later removed in \cite{MR2249676}, using a barrier function argument. The power of the factor $\ell_\T$ in these works was $2-\tfrac23(n-2)$, but it was later shown in \cite{MR3022214,demlow2014maximum} to be equal to one and that this cannot be improved.

\section{The pointwise tracking optimal control problem}
\label{sec:pointwise_tracking}

We follow \cite{AOS2} and invoke the results on weighted Sobolev spaces described in section \ref{sub:weighted_Sobolev} to precisely describe the pointwise tracking optimal control problem introduced in section \ref{sec:introduccion}. We start by defining the set of \emph{observable points} $\mathcal{Z} \subset \Omega$ with $\# \mathcal{Z} = l < \infty$. Since $\#\calZ < \infty$, we know that 
\[
  \AJS{d_{\calZ}=\min\left\{ \textup{dist}(\calZ,\partial\Omega), \min\left\{ \left|z-z'\right|: z,z' \in \calZ, \ z\neq z' \right\} \right\}> 0.}
\]
We then define the weight $\rho$ as follows: if $\# \calZ = 1$, then
\begin{equation}
\label{eq:defofoweight1}
  \rho(x) = \distz^{\alpha}(x),
\end{equation}
otherwise
\begin{equation}
\label{eq:defofoweight2}
  \rho(x) = \begin{dcases}
                \distz^{\alpha}(x), & \exists z \in \calZ: \distz(x) < \frac{d_{\calZ}}2, \\
                1, & \distz(x) \geq \frac{d_{\calZ}}2, \ \forall z \in \calZ.
              \end{dcases}
\end{equation}
Here $\distz(x) = |x-z|$ and $\alpha \in \mathbf{I}  = (n-2,n)$. \AJS{Owing to \cite{MR3215609}}, the weight $\rho$ belongs to the Muckenhoupt class $A_2$ introduced in Definition \ref{def:Muckenhoupt}. \EO{We now state an extension of Lemma \ref{lem:restalpha}, although we omit its proof for brevity.
\begin{lemma}[$H^1_0(\rho,\Omega) \hookrightarrow L^2(\Omega)$]
\label{lem:restalphagen}
If $\alpha \in (n-2,2)$ then $H^1_0(\rho,\Omega) \hookrightarrow L^2(\Omega)$ and we have the following weighted Poincar\'e inequality
\[
  \| v \|_{L^2(\Omega)} \lesssim \| \nabla v \|_{L^2(\rho,\Omega)}, \quad \forall v \in H^1_0(\rho,\Omega),
\]
where the hidden constant depends only on $\Omega$ \AJS{and $d_\calZ$}.
\end{lemma}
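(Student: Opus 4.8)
The plan is to reduce Lemma~\ref{lem:restalphagen} to the already-proven Lemma~\ref{lem:restalpha} by a localization argument, since the only difference between the weights $\dist^\alpha$ and $\rho$ is that $\rho$ equals $1$ away from the observable points and behaves like $\dist_z^\alpha$ near each $z \in \calZ$. First I would fix a smooth partition of unity $\{\varphi_z\}_{z \in \calZ} \cup \{\varphi_0\}$ subordinate to the cover of $\bar\Omega$ consisting of the balls $B_z = B(z, d_\calZ/2)$ for $z \in \calZ$ together with the open set $\Omega_0 = \{x \in \Omega : \dist_z(x) > d_\calZ/4 \ \forall z \in \calZ\}$; the separation property built into $d_\calZ$ guarantees these balls are pairwise disjoint and that the cover is finite, so the partition of unity can be chosen with $\|\nabla \varphi_z\|_{L^\infty}$ controlled by a constant depending only on $d_\calZ$. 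For $v \in H^1_0(\rho,\Omega)$ I write $v = \varphi_0 v + \sum_{z \in \calZ} \varphi_z v$ and estimate $\|v\|_{L^2(\Omega)}$ by the triangle inequality.

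The contribution $\varphi_0 v$ is supported in $\Omega_0$ where $\rho \equiv 1$, so $\varphi_0 v \in H^1_0(\Omega)$ and the ordinary Poincar\'e inequality gives $\|\varphi_0 v\|_{L^2(\Omega)} \lesssim \|\nabla(\varphi_0 v)\|_{L^2(\Omega)} \lesssim \|v\|_{L^2(\Omega_0)} + \|\nabla v\|_{L^2(\Omega_0)}$, and since $\rho = 1$ on the support, this is $\lesssim \|v\|_{L^2(\Omega_0)} + \|\nabla v\|_{L^2(\rho,\Omega)}$; the lower-order term $\|v\|_{L^2(\Omega_0)}$ will be absorbed at the end. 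For each $z \in \calZ$, the function $\varphi_z v$ is supported in $B_z$ where $\rho = \dist_z^\alpha$, and $\dist_z^\alpha \in A_2$ on all of $\R^n$, so by Lemma~\ref{lem:restalpha} (applied with the distance taken to the point $z$, which is exactly the setting of that lemma) we get $\|\varphi_z v\|_{L^2(\Omega)} \lesssim \|\nabla(\varphi_z v)\|_{L^2(\dist_z^\alpha,\Omega)} \lesssim \|\nabla\varphi_z\|_{L^\infty}\|v\|_{L^2(\dist_z^\alpha,B_z)} + \|\nabla v\|_{L^2(\dist_z^\alpha,B_z)} \lesssim \|v\|_{L^2(\rho,B_z)} + \|\nabla v\|_{L^2(\rho,\Omega)}$. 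Here I use that on $B_z$ the weight $\dist_z^\alpha$ is bounded above by $(d_\calZ/2)^\alpha \cdot (\text{const})$ when $\alpha \geq 0$, or one uses $\dist_z^\alpha = \rho$ directly; in either case the first term is controlled by $\|v\|_{L^2(\rho,\Omega)}$ up to a constant depending on $d_\calZ$ and $\alpha$.

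Summing the finitely many pieces yields $\|v\|_{L^2(\Omega)} \lesssim \|v\|_{L^2(\rho,\Omega)} + \|\nabla v\|_{L^2(\rho,\Omega)}$, a weighted $H^1$-bound rather than the desired $H^1_0$-Poincar\'e bound. To remove the $\|v\|_{L^2(\rho,\Omega)}$ term I would use the standard Ne\v{c}as/compactness argument: the embedding $H^1_0(\rho,\Omega) \hookrightarrow\hookrightarrow L^2(\rho,\Omega)$ is compact (this follows from the corresponding compact embedding into $L^2(\Omega)$ together with boundedness of $\rho$, or from \cite{NOS2}), so if the Poincar\'e inequality failed there would be a sequence $v_k$ with $\|v_k\|_{L^2(\Omega)} = 1$ and $\|\nabla v_k\|_{L^2(\rho,\Omega)} \to 0$; passing to a subsequence converging in $L^2(\rho,\Omega)$ and weakly in $H^1_0(\rho,\Omega)$, the limit has zero weighted gradient hence is constant, and being in $H^1_0(\rho,\Omega)$ it is zero, contradicting $\|v_k\|_{L^2(\Omega)} = 1$ once we check the $L^2(\Omega)$-norm passes to the limit along the subsequence (which follows from the localization bound above applied to $v_j - v_k$). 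The main obstacle is precisely this last step: making the compactness argument clean requires knowing the compact embedding into $L^2(\Omega)$ (or $L^2(\rho,\Omega)$), so if one wants to avoid invoking \cite{NOS2} for that, an alternative is to derive the Poincar\'e constant directly by a more careful scaling on each patch $B_z$, using that $v$ vanishes on $\partial\Omega$ to get a genuine (unweighted) Poincar\'e inequality on $\Omega_0$ and a weighted one on each $B_z$ with an explicit constant, and then chaining them via the overlap regions where $\rho$ is comparable to $1$ from both sides. Since the paper states the proof is omitted for brevity, either route is acceptable; I would present the localization reduction and cite Lemma~\ref{lem:restalpha} together with the compactness remark.
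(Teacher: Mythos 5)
Your localization strategy is a legitimate alternative to what the paper has in mind, but as written it does not close, and you have correctly located the problem yourself: after summing the pieces you only reach $\| v \|_{L^2(\Omega)} \lesssim \| v \|_{L^2(\rho,\Omega)} + \| \nabla v \|_{L^2(\rho,\Omega)}$, and since $\alpha>n-2\geq 0$ the weight $\rho$ is bounded above, so $\| v \|_{L^2(\rho,\Omega)}\lesssim \| v\|_{L^2(\Omega)}$ and the inequality is vacuous unless the lower-order term is removed. The compactness argument you propose for removing it is circular in its present form: the compact embedding $H^1_0(\rho,\Omega)\hookrightarrow\hookrightarrow L^2(\rho,\Omega)$ (or into $L^2(\Omega)$) already contains the continuous embedding that the lemma asserts, so it cannot be ``deduced from the corresponding compact embedding into $L^2(\Omega)$'' without an independent proof or a precise external citation, which you flag but do not supply. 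The repair stays entirely inside your own framework. The lower-order contributions come only from the commutator terms $v\nabla\varphi_0$ and $v\nabla\varphi_z$, and with your choice of cover these gradients are supported in the annuli $\{d_\calZ/4\le \distz(x)\le d_\calZ/2\}$, where $\rho=\distz^{\alpha}$ is bounded above and below by constants depending on $d_\calZ$ and $\alpha$. If you estimate $\|v\nabla\varphi_z\|$ over $\supp\nabla\varphi_z$ instead of over all of $B_z$, what survives is only $\|v\|_{L^2(\Omega_1)}$ with $\Omega_1=\Omega\setminus\bigcup_{z\in\calZ}\overline{B(z,d_\calZ/8)}$. On $\Omega_1$ the weight is bounded below, $v\in H^1(\Omega_1)$ with $v=0$ on the portion $\partial\Omega$ of $\partial\Omega_1$, and the standard unweighted Friedrichs inequality gives $\|v\|_{L^2(\Omega_1)}\lesssim\|\nabla v\|_{L^2(\Omega_1)}\lesssim\|\nabla v\|_{L^2(\rho,\Omega)}$. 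No weighted compactness is then needed.

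For comparison: the lemma is stated in the paper as an extension of Lemma~\ref{lem:restalpha}, whose proof verifies the two-weight condition (6.2) of \cite{NOS2} for the pair $(\dist^\alpha,1)$; the intended extension simply repeats that verification for $\rho$, checking the ratio $(r/R)^{2+n}\rho(B_R)/\rho(B_r)$ for balls centered at the points of $\calZ$, where $\rho(B_s)\approx s^{n+\alpha}$ for $s\le d_\calZ/2$ and $\rho(B_s)\approx s^n$ otherwise, and the exponent count $(r/R)^{2-\alpha}\le 1$ is exactly what forces $\alpha<2$. That route yields the Poincar\'e inequality in one stroke, with no lower-order term to absorb; your decomposition is a workable alternative once repaired as above, at the cost of the extra Friedrichs step.
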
}

For $\asf, \bsf \in \R$ with $\asf < \bsf$ we define the set of admissible controls
\begin{equation}
\label{eq:Uz}
\mathcal{U}_{\textrm{ad}} = \left\{ \usf \in L^2(\Omega): \asf \leq \usf(x) \le \bsf \RR{\mbox{ for almost every }} x \in \Omega \right\},
\end{equation}
which is a convex, closed and nonempty subset of $L^2(\Omega)$. We recall that the functional $J$ is defined in \eqref{eq:defofJp}. We then define the \emph{pointwise tracking optimal control problem} as follows: Find $\min J (\ysf,\usf)$ subject to the following weak formulation of problem \eqref{eq:defofPDEp}
\begin{equation}
\label{eq:statep}
\ysf\in H^1_0(\Omega): \quad   a(\ysf,\vsf) = (\fsf+\usf, \vsf )_{L^2(\Omega)} \quad \forall \vsf \in H^1_0(\Omega),
\end{equation}
and the control constraints $\usf \in \mathcal{U}_{\textrm{ad}}$. 
\EO{To analyze this optimal control problem, we introduce the so-called control-to-state map $\mathbf{S}:L^2(\Omega) \rightarrow H_0^1(\Omega)$ which, given a control $\usf$, associates to it the unique state $\ysf$ that solves problem \eqref{eq:statep}. With this operator at hand, we define the reduced cost functional
\begin{equation}
\label{eq:F_red}
j(\usf)=J(\mathbf{S}\usf,\usf) = \frac12 \sum_{z \in \calZ} | \mathbf{S}\usf(z) - \ysf_z |^2 + \frac\lambda2 \| \usf \|_{L^2(\Omega)}^2.
\end{equation}
We notice that, since $\fsf \in L^\infty(\Omega)$, $\usf \in\mathcal{U}_{\textrm{ad}} \subset L^{\infty}(\Omega)$ and $\partial \Omega$ is Lipschitz, the results of Proposition \ref{prop:uisW1p} imply that
$\ysf = \mathbf{S}\usf$ is H{\"o}lder continuous and then that the point evaluations of $ \ysf = \mathbf{S}\usf$ in \eqref{eq:F_red} are well defined. In view of the fact that $j$ is weakly lower semicontinuous and strictly convex ($\lambda>0$), we conclude the existence and uniqueness of an optimal control $\bar \usf$ and an optimal state $\bar \ysf$ that verify \eqref{eq:statep} \cite[Theorem 2.14]{Tbook}. In addition, we have that $\bar \usf$ satisfies the first order optimality condition $j'(\bar\usf)( \usf - \bar{\usf} )\geq 0$ for all $\usf \in \mathcal{U}_{\textrm{ad}}$ \cite[Lemma 2.21]{Tbook}. To explore this variational inequality, and to obtain optimality conditions, we begin by further characterizing the range of $\mathbf S$.}

\AJS{
\begin{lemma}[range of $\mathbf S$]
\label{lem:rangeS}
Let $\mathbf S$ denote the control-to-state map, \ie the solution operator to \eqref{eq:statep}. If $u \in \mathcal{U}_{\textrm{ad}}$, then 
$\mathbf{S}u \in W^{1,q}_0(\Omega) \cap H^1_0(\rho^{-1},\Omega)$, where $q>n$ is given by Proposition~\ref{prop:uisW1p}.
\end{lemma}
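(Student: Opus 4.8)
The statement asserts that $\mathbf{S}u$ enjoys two regularity properties simultaneously: global $W^{1,q}$-regularity with $q>n$, and membership in the weighted space $H^1_0(\rho^{-1},\Omega)$. The key observation is that $u \in \mathcal{U}_{\textrm{ad}}$ means $u \in L^\infty(\Omega)$ (since $\asf \le u \le \bsf$), and $\fsf \in L^\infty(\Omega)$ by hypothesis, so the right hand side $\fsf + u$ of \eqref{eq:statep} lies in $L^\infty(\Omega) \subset L^p(\Omega)$ for every $p \in [2,\infty)$.

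First I would invoke Proposition~\ref{prop:uisW1p}: since $\fsf + u \in L^2(\Omega)$, there is $q>n$ with $\mathbf{S}u \in W^{1,q}(\Omega)$, and the zero boundary trace is inherited from $H^1_0(\Omega)$, so $\mathbf{S}u \in W^{1,q}_0(\Omega)$. This disposes of the first membership. For the second, I would argue that $\rho^{-1} \in A_2$: indeed, $\rho \in A_2$ as already noted after \eqref{eq:defofoweight2}, and $A_2$ is closed under taking reciprocals (equivalently, $\rho^{-1} = \distz^{-\alpha}$ near each $z \in \calZ$ with $-\alpha \in (-n,n)$, and equals $1$ away from $\calZ$, so it is an $A_2$-weight by the same argument from \cite{MR3215609} used for $\rho$). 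Then I would localize: around each $z \in \calZ$, take the ball $B_z$ of radius $d_\calZ/2 < \textup{dist}(z,\partial\Omega)$; Proposition~\ref{prop:uweight} applied with $f = \fsf + u \in L^p(\Omega)$ for some fixed $p>n$ and $\mu = \rho^{-1}$ yields $\mathbf{S}u \in H^1(\rho^{-1},B_z)$ with $\|\nabla \mathbf{S}u\|_{L^2(\rho^{-1},B_z)} \lesssim \|\fsf+u\|_{L^p(\Omega)}$. On the complement $\Omega \setminus \bigcup_{z} B_z$, the weight $\rho^{-1}$ is bounded above and below by positive constants (it equals $1$ there by \eqref{eq:defofoweight2}, or is comparable to a constant when $\#\calZ=1$ is handled by noting $\rho^{-1}$ is bounded away from the single singularity), so on that region $H^1(\rho^{-1},\cdot)$ coincides with ordinary $H^1$, and $\mathbf{S}u \in H^1(\Omega) \hookrightarrow H^1(\Omega\setminus\bigcup_z B_z)$ trivially. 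Patching these finitely many estimates gives $\mathbf{S}u \in H^1(\rho^{-1},\Omega)$; the homogeneous boundary condition together with density of $C_0^\infty(\Omega)$ in $H^1(\rho^{-1},\Omega)$ (valid since $\rho^{-1}\in A_2$) places $\mathbf{S}u$ in $H^1_0(\rho^{-1},\Omega)$.

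The only mildly delicate point is the last claim that $W^{1,q}_0 \cap H^1 \cap \{u=0 \text{ on } \partial\Omega\}$ membership upgrades to $H^1_0(\rho^{-1},\Omega)$ rather than merely $H^1(\rho^{-1},\Omega)$; I would handle this by approximating $\mathbf{S}u$ with $C^\infty(\Omega)$ functions in the $H^1(\rho^{-1},\Omega)$ norm (using the density result cited after Definition~\ref{H_1_weighted}), truncating near $\partial\Omega$ using the already-established fact that $\rho^{-1}$ is a regular weight in a neighborhood of $\partial\Omega$, which reduces the question to the unweighted one where $\mathbf{S}u \in H^1_0(\Omega)$ by construction. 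I expect the main obstacle to be nothing deep but rather the careful bookkeeping of the two-region decomposition and making the weighted-versus-unweighted equivalence away from $\calZ$ precise; a single sentence noting that $\rho^{-1} \asymp 1$ on $\Omega \setminus \bigcup_z B_z$ should suffice.
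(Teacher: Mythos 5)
Your proof is correct and follows essentially the same route as the paper: the $W^{1,q}_0$ membership via Proposition~\ref{prop:uisW1p}, then the decomposition of $\Omega$ into balls of radius $d_\calZ/2$ about each $z\in\calZ$ (where Proposition~\ref{prop:uweight} with $\mu=\rho^{-1}$ gives the weighted gradient bound) plus the complement (where $\rho^{-1}\lesssim 1$ reduces matters to the standard $H^1$ estimate), concluding by finiteness of $\#\calZ$. Your additional remarks on $\rho^{-1}\in A_2$ and on upgrading $H^1(\rho^{-1},\Omega)$ to $H^1_0(\rho^{-1},\Omega)$ are details the paper leaves implicit, and they are handled correctly.
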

\begin{proof}
To shorten notation set $y = \mathbf{S}u$. Proposition~\ref{prop:uisW1p} immediately yields that $y \in W^{1,q}_0(\Omega)$ for some $q>n$. Let us now show that $y \in H^1_0(\rho^{-1},\Omega)$. To do so, for each $z \in \calZ$, let $B(z)$ denote the ball with center $z$ and radius $d_\calZ/2$. Set $D = \Omega \setminus \cup_{z \in \calZ} B(z)$ and compute
\[
  \int_\Omega \rho^{-1} | \nabla y|^2 = \sum_{z \in \calZ} \int_{B(z)} \rho^{-1} | \nabla y|^2 + \int_D \rho^{-1} | \nabla y|^2.
\]
By definition, there is $a>0$ such that $\rho(x) \geq a$ for every $x \in D$, thus
\[
  \int_D \rho^{-1} | \nabla y|^2 \lesssim \int_D |\nabla y|^2 \leq \| \fsf + u \|_{L^2(\Omega)}^2.
\]
To bound the integral near the support of the Dirac measures we note that $B(z) \Subset \Omega$, $\fsf + \mathcal{U}_{\textrm{ad}} \subset L^\infty(\Omega)$ and invoke Proposition~\ref{prop:uweight} to obtain
\[
  \int_{B(z)} \rho^{-1} | \nabla y|^2 \lesssim \| \fsf + u \|_{L^\infty(\Omega)}^2,
\]
where the hidden constant depends on $d_\calZ$. The fact that $\# \calZ$ is finite allows us to conclude.
\end{proof}

With this characterization at hand we can proceed to obtain optimality conditions. To do so}
we define the optimal adjoint variable $\bar\psf$ as the unique solution of
\begin{equation}
\label{eq:adjp2}
\bar\psf \in H^1_0(\rho,\Omega): \quad  a(\wsf, \bar\psf) = 
\sum_{z \in \calZ}  \left( \bar\ysf(z) - \ysf_z \right) \delta_z(\wsf) \quad \forall \wsf \in H^1_0(\rho^{-1},\Omega),
\end{equation}
with $\rho$ defined by \eqref{eq:defofoweight1}--\eqref{eq:defofoweight2}, the bilinear form $a$ as in \eqref{eq:defofforma} and $\bar{\ysf} = \mathbf{S} \bar \usf$. We notice that since $\delta_z \in H^1_0(\rho^{-1},\Omega)'$ \cite[Lemma 7.1.3]{KMR}, we invoke \cite[Theorem 2.3]{AGM} and conclude that the adjoint problem \eqref{eq:adjp2} is well posed.

\AJS{We are now in a position to show optimality conditions for our problem.

\begin{theorem}[optimality conditions]
\label{thm:optim_conds}
The pair $(\bar{\ysf},\bar{\usf}) \in H_0^1(\Omega) \times L^2(\Omega)$ is optimal for the pointwise tracking optimal control problem if and only if $\bar\usf \in \mathcal{U}_{\textrm{ad}}$, $\bar{\ysf} = \mathbf{S} \bar \usf$
and the optimal control $\bar{\usf}$ satisfies
\begin{equation}
\label{eq:VIp}
  ( \bar{\psf} +\lambda \bar{\usf}, \usf - \bar{\usf} )_{L^2(\Omega)}  \geq 0 \quad \forall \usf \in \mathcal{U}_{\textrm{ad}},
\end{equation}
where the optimal adjoint state $\bar \psf \in H^1_0(\rho,\Omega)$ solves \eqref{eq:adjp2}\RR{.}
\end{theorem}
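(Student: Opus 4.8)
The plan is to derive the optimality conditions from the abstract first-order condition $j'(\bar\usf)(\usf - \bar\usf) \ge 0$ for all $\usf \in \mathcal{U}_{\textrm{ad}}$, which holds by convexity of $\mathcal{U}_{\textrm{ad}}$ and strict convexity and differentiability of $j$ (already recalled from \cite{Tbook} in the text preceding the statement). The entire content of the theorem is therefore the identification $j'(\bar\usf)\vsf = (\bar\psf + \lambda\bar\usf, \vsf)_{L^2(\Omega)}$ for all $\vsf \in L^2(\Omega)$, after which \eqref{eq:VIp} is immediate, and the converse direction follows because the variational inequality for a convex differentiable functional on a convex set is both necessary and sufficient for a minimum.

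First I would compute $j'(\bar\usf)\vsf$ directly from \eqref{eq:F_red}. Since $\mathbf S$ is linear and continuous, the chain rule gives
\[
  j'(\bar\usf)\vsf = \sum_{z \in \calZ} \left( \bar\ysf(z) - \ysf_z \right) (\mathbf S\vsf)(z) + \lambda (\bar\usf, \vsf)_{L^2(\Omega)},
\]
where the point evaluations $(\mathbf S\vsf)(z)$ make sense because, by Lemma~\ref{lem:rangeS}, $\mathbf S\vsf \in W^{1,q}_0(\Omega) \hookrightarrow C(\bar\Omega)$ for $q>n$. The crux is to rewrite the first sum using the adjoint equation \eqref{eq:adjp2}. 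Writing $\wsf = \mathbf S\vsf$, we have $\wsf \in H^1_0(\rho^{-1},\Omega)$ by Lemma~\ref{lem:rangeS}, so $\wsf$ is an admissible test function in \eqref{eq:adjp2}, and hence
\[
  \sum_{z \in \calZ} \left( \bar\ysf(z) - \ysf_z \right) \delta_z(\wsf) = a(\wsf, \bar\psf).
\]
On the other hand, $\wsf = \mathbf S\vsf$ solves \eqref{eq:statep} with control $\vsf$ and zero forcing term, so $a(\wsf, \vsf') = (\vsf, \vsf')_{L^2(\Omega)}$ for all $\vsf' \in H^1_0(\Omega)$. The difficulty here is that $\bar\psf$ lives in $H^1_0(\rho,\Omega)$, not in $H^1_0(\Omega)$, so one cannot simply substitute $\vsf' = \bar\psf$ into the state equation. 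The clean way around this is to note that $a$ is symmetric and $\wsf \in H^1_0(\rho^{-1},\Omega)$ is a legitimate test function in \eqref{eq:adjp2}, while $\bar\psf \in H^1_0(\rho,\Omega)$ is a legitimate test function in the weak form of the state equation for $\wsf$ — but the latter is formulated only over $H^1_0(\Omega)$. To bridge this, I would invoke Lemma~\ref{lem:restalphagen} (valid since we may take $\alpha \in (n-2,2)$), which gives $H^1_0(\rho,\Omega) \hookrightarrow L^2(\Omega)$; combined with a density argument, or by directly checking that the state equation tested with functions in $H^1_0(\rho,\Omega)$ still makes sense ($\nabla \wsf \in L^2(\rho^{-1},\Omega)$ pairs with $\nabla\bar\psf \in L^2(\rho,\Omega)$ via H\"older, and $\vsf \in L^2(\Omega)$ pairs with $\bar\psf \in L^2(\Omega)$), one obtains $a(\wsf,\bar\psf) = (\vsf, \bar\psf)_{L^2(\Omega)}$. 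Chaining these identities yields $\sum_{z} (\bar\ysf(z) - \ysf_z)(\mathbf S\vsf)(z) = (\bar\psf, \vsf)_{L^2(\Omega)}$, hence $j'(\bar\usf)\vsf = (\bar\psf + \lambda\bar\usf, \vsf)_{L^2(\Omega)}$.

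With this representation of $j'$, the forward direction is finished: optimality of $(\bar\ysf,\bar\usf)$ implies $\bar\usf \in \mathcal{U}_{\textrm{ad}}$, $\bar\ysf = \mathbf S\bar\usf$, and $j'(\bar\usf)(\usf - \bar\usf) = (\bar\psf + \lambda\bar\usf, \usf - \bar\usf)_{L^2(\Omega)} \ge 0$ for all $\usf \in \mathcal{U}_{\textrm{ad}}$, which is \eqref{eq:VIp}. For the converse, suppose $\bar\usf \in \mathcal{U}_{\textrm{ad}}$, $\bar\ysf = \mathbf S\bar\usf$ and \eqref{eq:VIp} holds; then $j'(\bar\usf)(\usf-\bar\usf) \ge 0$ for all $\usf \in \mathcal{U}_{\textrm{ad}}$, and since $j$ is convex we have $j(\usf) \ge j(\bar\usf) + j'(\bar\usf)(\usf - \bar\usf) \ge j(\bar\usf)$ for every admissible $\usf$, so $\bar\usf$ is the (unique) minimizer and $(\bar\ysf,\bar\usf)$ is optimal. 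The main obstacle, as indicated above, is the careful justification that $\bar\psf \in H^1_0(\rho,\Omega)$ may be paired against $\wsf = \mathbf S\vsf$ through the state equation despite the mismatch of function spaces; everything else is a routine application of the chain rule and convex analysis.
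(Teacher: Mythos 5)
Your proposal is correct and follows essentially the same route as the paper: both start from the abstract first-order condition $j'(\bar\usf)(\usf-\bar\usf)\geq 0$, use Lemma~\ref{lem:rangeS} to insert $\mathbf{S}(\usf-\bar\usf)$ as a test function in the adjoint equation \eqref{eq:adjp2}, and justify the pairing $a(\mathbf{S}(\usf-\bar\usf),\bar\psf)=(\usf-\bar\usf,\bar\psf)_{L^2(\Omega)}$ by approximating $\bar\psf$ with smooth functions --- which is precisely the paper's density argument for circumventing $\bar\psf\notin H^1_0(\Omega)$. The only cosmetic difference is that you announce the representation $j'(\bar\usf)\vsf=(\bar\psf+\lambda\bar\usf,\vsf)_{L^2(\Omega)}$ for all $\vsf\in L^2(\Omega)$, while the cited lemmas only justify it for $\vsf\in L^\infty(\Omega)$; since $\usf-\bar\usf\in L^\infty(\Omega)$ for every admissible $\usf$, this is all that is needed and your argument goes through.
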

\begin{proof}
The first order optimality condition, which characterizes $\bar \usf$ reads, for every $\usf \in \mathcal{U}_{\textrm{ad}}$
\[
  0 \leq j'(\bar \usf)(\usf - \bar \usf) = \sum_{z \in \calZ} \left( \mathbf{S}\bar\usf(z) - \ysf_z \right) \mathbf{S}(\usf - \bar \usf)(z) 
  + \lambda \left( \bar\usf, \usf - \bar \usf \right)_{L^2(\Omega)}.
\]
We now focus on the first term of this inequality. Set $\ysf = \mathbf{S}\usf$ and $\bar\ysf = \mathbf{S}\bar\usf$ and notice that, by Lemma~\ref{lem:rangeS}, we can set $\wsf = \ysf - \bar\ysf \in H^1_0(\rho^{-1},\Omega) \cap C(\bar\Omega)$ in \eqref{eq:adjp2} to obtain
\begin{equation}
\label{eq:dualwithyybar}
  a(\ysf - \bar\ysf, \bar\psf) = \sum_{z \in \calZ} \left( \bar \ysf(z) - \ysf_z \right) \left( \ysf(z) - \bar\ysf(z) \right).
\end{equation}
We would like to set $\vsf = \bar\psf$ in the equation that $\ysf-\bar\ysf$ solves (see \eqref{eq:statep}). If that were possible, we would obtain
\begin{equation}
\label{eq:cheating}
  a(\ysf-\bar\ysf, \bar\psf) = (\usf - \bar\usf, \bar\psf)_{L^2(\Omega)}.
\end{equation}
A combination of \eqref{eq:dualwithyybar}, \eqref{eq:cheating} and the variational inequality would then allow us to conclude. However, $\bar\psf \in H^1_0(\rho,\Omega)\setminus H^1_0(\Omega)$ so that \eqref{eq:cheating} must be justified by different means.

Let $\{p_n\}_{n \in \mathbb N} \subset C_0^\infty(\Omega)$ be such that $p_n \to \bar\psf$ in $H^1_0(\rho,\Omega)$. Setting $\vsf = p_n$ in \eqref{eq:statep} yields
\[
  a(\ysf-\bar\ysf, p_n) = (\usf - \bar\usf, p_n)_{L^2(\Omega)}.
\]
Since $H^1_0(\rho,\Omega) \hookrightarrow L^1(\Omega)$ and $\usf - \bar \usf \in L^\infty(\Omega)$ the right hand side of this expression converges to $(\usf - \bar\usf, \bar \psf)_{L^2(\Omega)}$. The continuity of $a$ in $H^1_0(\rho^{-1}, \Omega) \times H^1_0(\rho, \Omega)$, together with $\ysf-\bar\ysf \in H^1_0(\rho^{-1}, \Omega)$ then yield \eqref{eq:cheating}.
\end{proof}}

We recall the so-called projection formula: $\bar{\usf}$ solves \eqref{eq:VIp} if and only if \cite[section 3.6.3]{Tbook}, \cite[section 2.1]{MR0271512}
\begin{equation}
\label{eq:Pi}
  \bar{\usf} = \Pi\left(-\frac{1}{\lambda}\bar{\psf}\right),
\end{equation}
where the projection operator $\Pi: \AJS{L^1(\Omega)} \rightarrow \mathcal{U}_{\textrm{ad}}$ is defined by 
\begin{equation} 
\label{def:Pi}
  \Pi(v) = \min\{\bsf, \max\{\asf, v\}\},
\end{equation}
and gives the best approximation of $v$ in $\mathcal{U}_{\textrm{ad}}$.

We now recall the finite element approximation of the pointwise tracking optimal control problem proposed and analyzed in \cite{AOS2}. The approximation of the optimal control $\bar{\usf}$ is done by piecewise constant functions: $\bar\usf_{\T} \in \U_{\textrm{ad}}(\T)$, where
\begin{equation}
\label{eq:Upc}
\U_{\textrm{ad}}(\T) = \U(\T) \cap \mathcal{U}_{\textrm{ad}}, ~~ \U(\T) = \left\{ v_\T \in L^\infty(\Omega): v_{\T|T} \in \mathbb{P}_0(T),~\forall~T\in\T \right\},
\end{equation}
with $\mathcal{U}_{\textrm{ad}}$ defined in \eqref{eq:Uz}. The optimal state and adjoint state are discretized using the finite element space $\V(\T)$ defined in \eqref{eq:defFESpace}. In this setting, the discrete counterpart of \eqref{eq:defofJp}--\eqref{eq:adjp} reads: Find
$
\min J (\ysf_{\T},\usf_{\T})
$
subject to the discrete state equation
\begin{equation}
\label{eq:abstateh}
\ysf_{\T} \in \V(\T): \quad a(\ysf_{\T}, \vsf_{\T} ) = (\fsf + \usf_{\T}, \vsf_{\T})_{L^2(\Omega)} \quad \forall \vsf_{\T} \in \V(\T),
\end{equation}
and the discrete control constraints
$
\usf_{\T} \in \U_{\textrm{ad}}(\T).
$
The pair $(\bar{\ysf}_{\T},\bar{\usf}_{\T})$ is optimal for the previous discrete optimal control problem if and only $\bar{\ysf}_{\T}$ solves \eqref{eq:abstateh} and
\begin{equation}
\label{eq:VIpd}
  ( \bar{\psf}_{\T} +\lambda \bar{\usf}_{\T}, \usf_{\T} - \bar{\usf}_{\T} )_{L^2(\Omega)}  \geq 0 \quad \forall \usf_{\T} \in \U_{\textrm{ad}}(\T),
\end{equation}
where $\bar{{\psf}}_{\T}$ solves the discrete counterpart of \eqref{eq:adjp2}, that is
\begin{equation}
\label{eq:adjointh}
  \psf_\T \in \V(\T): \quad  a(\wsf_\T,\psf_\T) = \sum_{z \in \calZ} ( \ysf_\T(z) - \ysf_z) \delta_z( \wsf_\T) \quad \forall \wsf_\T \in \V(\T).
\end{equation}

Exploiting the function-space setting based on the Muckenhoupt weight $\varpi$, defined in \eqref{eq:defofvarpi}, the following a priori error estimate was derived in \cite{AOS2}: If $\Omega$ is convex and the mesh $\T$ is quasiuniform with mesh size $h_{\T}$, then
\begin{equation}
\label{eq:tracking_estimate}
  \| \bar \usf - \bar{\usf}_\T \|_{L^2(\Omega)} \lesssim  h_{\T}^{2-n/2} |\log h_{\T}|^{n-1},
\end{equation}
where the hidden constant is independent of $\T$ and $\bar{\usf}$. 

\subsection{A posteriori error analysis}
\label{sub:tracking_aposteriori}

The a priori error estimate \eqref{eq:tracking_estimate} is suboptimal in terms of approximation. The reduced regularity of the optimal adjoint state $\bar{\psf}$ does not allow the method to exhibit its optimal rate of convergence. In addition, the a priori error estimate \eqref{eq:tracking_estimate} requires the mesh $\T$ to be quasiuniform and the domain $\Omega$ to be convex. However, as it was commented in section \ref{sec:pointwise_tracking}, the well-posedness of the pointwise tracking optimal control problem relies only on the Lipschitz property of $\partial \Omega$.  The need for convexity and the lack of regularity properties of $\bar{\psf}$ when $n=3$ motivate the study of AFEM to solve the optimal control problem \eqref{eq:defofJp}--\eqref{eq:adjp}. We now proceed to propose and analyze the key ingredient of any AFEM: an a posteriori error estimator.

The derivation and analysis of an a posteriori error estimator for problem \eqref{eq:defofJp}--\eqref{eq:adjp} is \AJS{far from} \RR{trivial}. The pointwise tracking optimal control problem involves: pointwise evaluations of the optimal state $\bar{\ysf}$, an elliptic equation with point sources \eqref{eq:adjp2} and an intrinsic nonlinearity introduced by the constraints on the optimal control, \ie $\bar{\usf} \in \mathcal{U}_{\textrm{ad}}$. We incorporate these main features in an a posteriori error estimator, which is defined as the sum of three contributions:
\begin{equation}
 \label{eq:defofEocp}
 \RR{\E_{\textrm{ocp}}^2}(\bar{\ysf}_{\T},\bar{\psf}_{\T},\bar{\usf}_{\T}; \T) = \RR{\E_{\ysf}^2}(\bar{\ysf}_{\T},\bar{\usf}_{\T}; \T) + \RR{\E_{\psf}^2}(\bar{\psf}_{\T},\bar{\ysf}_{\T}; \T) + \RR{\E_{\usf}^2}(\bar{\usf}_{\T},\bar{\psf}_{\T}; \T),
\end{equation}
where $\T \in \Tr$ and $\bar \ysf_{\T}$, $\bar \usf_{\T}$ and $\bar \psf_{\T}$ denote the optimal variables \AJS{that solve \eqref{eq:abstateh}--\eqref{eq:adjointh}}.

We now define and describe each contribution in \eqref{eq:defofEocp} separately. First, on the basis of section \ref{sec:maximum}, we define the local pointwise indicator associated with the state equation \eqref{eq:defofPDEp} as
\begin{equation}
 \label{eq:defofEy}
 \E_{\ysf}(\bar{\ysf}_{\T},\bar{\usf}_{\T}; T)= h_T^2 \| \fsf + \bar{\usf}_{\T} \|_{L^{\infty}(T)} + h_T \| \llbracket \nu \cdot \nabla \bar{\ysf}_{\T} \rrbracket\|_{L^{\infty}(\partial T \setminus \partial \Omega)},
\end{equation}
where $\llbracket \nu \cdot \nabla \bar{\ysf}_{\T} \rrbracket$ denotes the jumps of the normal derivative of $\bar{\ysf}_{\T}$ across interelement sides as defined in \eqref{eq:jump}. The global pointwise estimator is then defined by
\begin{equation}
\label{eq:defofEyg}
\E_{\ysf}(\bar{\ysf}_{\T},\bar{\usf}_{\T}; \T) = \max_{T \in \T } \E_{\ysf}(\bar{\ysf}_{\T},\bar{\usf}_{\T}; T).
\end{equation}

To introduce an a posteriori error indicator associated with the adjoint equation \eqref{eq:adjp2} we assume that: 
\begin{equation}\label{eq:patch}
\forall T \in \T, \ \# (\Ne_T \cap \calZ) \leq 1,
\end{equation}
that is, for every element $T \in \T$ its patch $\Ne_T$ contains at most one observable point.
This is not a restrictive assumption, as it can always be satisfied by starting with a suitably refined mesh.
Define
\begin{equation}
\label{eq:DTapost}
  D_T = \min_{z \in \calZ} \left\{  \max_{x \in T} |x-z| \right\}.
\end{equation}
On the basis of the results presented in section \ref{sec:a_posteriori}, we define the local error indicator as
\begin{multline}
 \label{eq:defofEp}
 \E_{\psf}(\bar{\psf}_{\T},\bar{\ysf}_{\T}; T)= 
    \bigg( h_T D_T^{\alpha} \| \llbracket \nu \cdot \nabla \bar{\psf}_{\T} \rrbracket\|^2_{L^2(\partial T\setminus \partial \Omega)} 
    \\
    + \sum_{z \in \calZ \cap T} h_T^{\alpha + 2 - n} |\bar{\ysf}_{\T}(z) - \ysf_{z}|^2 \bigg)^{1/2}.
\end{multline}
The global error estimator is thus defined by
\begin{equation}
\label{eq:defofEpg}
  \E_{\psf}(\bar{\psf}_{\T},\bar{\ysf}_{\T}; \T) = \left(\sum_{T \in \T}  \E^2_{\psf}(\bar{\psf}_{\T},\bar{\ysf}_{\T}; T) \right)^{1/2}. 
\end{equation}
Finally, we define a global error estimator associated with the optimal control, as follows:
\begin{equation}
\label{eq:defofEug}
  \E_{\usf}(\bar{\usf}_{\T},\bar{\psf}_{\T}; \T) = \left( \sum_{T \in \T}  \E^2_{\usf}(\bar{\usf}_{\T},\bar{\psf}_{\T}; T) \right)^{1/2},
\end{equation}
with the local error indicators
\begin{equation}\label{eq:defofEu}
\E_{\usf}(\bar{\usf}_{\T},\bar{\psf}_{\T}; T) = \|\bar{\usf}_{\T} - \Pi (-\tfrac{1}{\lambda} \bar{\psf}_{\T})\|_{L^2(T)}.
\end{equation}
In \eqref{eq:defofEu}, $\Pi$ denotes the nonlinear projection operator defined by formula \eqref{def:Pi}.

\subsubsection{Error estimator: reliability}
\label{subsub:reliable}
We now proceed to derive the global reliability of the error indicator $\E_{\textrm{ocp}}$ defined by \eqref{eq:defofEocp}.

\begin{theorem}[global reliability property of $\E_{\textrm{ocp}}$]
Let $(\bar{\usf},\bar{\ysf},\bar{\psf}) \in L^2(\Omega) \times H_0^1(\Omega) \times H_0^1(\rho,\Omega)$ be the solution to the optimality system \AJS{\eqref{eq:statep}, \eqref{eq:adjp2} and \eqref{eq:VIp}} and $(\bar{\usf}_{\T},\bar{\ysf}_{\T},\bar{\psf}_{\T}) \in  \U_{\textrm{ad}}(\T) \times \V(\T) \times \V(\T)$ its numerical approximation \AJS{given by \eqref{eq:abstateh}--\eqref{eq:adjointh}. If $\alpha \in (n-2,2)$, then}
\RR{\begin{equation}
\label{eq:reliability}
 \begin{aligned}
\| \bar{\usf} - \bar{\usf}_{\T}\|_{L^2(\Omega)}^2 + \| \bar{\ysf} - \bar{\ysf}_{\T} \|_{L^{\infty}(\Omega)}^2 
+ \| \nabla( \bar{\psf} - \bar{\psf}_{\T}) \|_{L^2(\rho,\Omega)}^2 \lesssim \ell_\T^2 \E_{\ysf}^2(\bar{\ysf}_{\T},\bar{\usf}_{\T}; \T) 
\\
+ \E_{\psf}^2(\bar{\psf}_{\T},\bar{\ysf}_{\T}; \T) + \E_{\usf}^2(\bar{\usf}_{\T},\bar{\psf}_{\T}; \T) \lesssim 
(1+ \ell_\T)^2
\E^2_{\textrm{\emph{ocp}}} (\bar{\ysf}_{\T},\bar{\psf}_{\T},\bar{\usf}_{\T}; \T),
\end{aligned}
\end{equation}}
where $\ell_{\T}$ is defined in \eqref{eq:log} and the hidden constant is independent of $\bar{\ysf}$, $\bar{\usf}$, $\bar{\psf}$, the size of elements in the mesh $\T$ and $\#\T$. \AJS{The constant, however, blows up as $\lambda \downarrow 0$}.
\label{TH:global_reliability}
\end{theorem}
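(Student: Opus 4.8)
The plan is to introduce two auxiliary variables that decouple the three error contributions. Let $\tilde{\ysf} := \mathbf{S}\bar{\usf}_{\T}$ be the \emph{continuous} state associated with the \emph{discrete} control, and let $\tilde{\psf} \in H^1_0(\rho,\Omega)$ solve $a(\wsf,\tilde{\psf}) = \sum_{z \in \calZ}(\bar{\ysf}_{\T}(z) - \ysf_z)\delta_z(\wsf)$ for all $\wsf \in H^1_0(\rho^{-1},\Omega)$, i.e.\ the \emph{continuous} adjoint associated with the \emph{discrete} state $\bar{\ysf}_{\T}$. The gain is that $\bar{\ysf}_{\T}$ is exactly the Galerkin approximation of $\tilde{\ysf}$ in the sense of \eqref{eq:Gal_sol_f} with datum $\fsf + \bar{\usf}_{\T} \in L^\infty(\Omega)$, and $\bar{\psf}_{\T}$ is exactly the Galerkin approximation of $\tilde{\psf}$ in the sense of \eqref{eq:Gal_sol}. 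Consequently, Proposition~\ref{pro:global_infty} gives $\|\tilde{\ysf} - \bar{\ysf}_{\T}\|_{L^\infty(\Omega)} \lesssim \ell_{\T}\E_{\ysf}(\bar{\ysf}_{\T},\bar{\usf}_{\T};\T)$, and the counterpart of Proposition~\ref{pro:global_alpha} for the weight $\rho$ and finitely many Dirac masses---whose reliability follows from the arguments of \cite{AGM} once \eqref{eq:patch} is in force, so that the localization is carried out around one observable point at a time---gives $\|\nabla(\tilde{\psf} - \bar{\psf}_{\T})\|_{L^2(\rho,\Omega)} \lesssim \E_{\psf}(\bar{\psf}_{\T},\bar{\ysf}_{\T};\T)$. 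In addition, linearity of $\mathbf{S}$ and Proposition~\ref{prop:uisW1p} yield $\|\bar{\ysf} - \tilde{\ysf}\|_{L^\infty(\Omega)} = \|\mathbf{S}(\bar{\usf} - \bar{\usf}_{\T})\|_{L^\infty(\Omega)} \lesssim \|\bar{\usf} - \bar{\usf}_{\T}\|_{L^2(\Omega)}$, while the inf-sup stability of \eqref{eq:adjp2} (see \cite[Theorem 2.3]{AGM}) together with $\delta_z \in H^1_0(\rho^{-1},\Omega)'$ and $\#\calZ < \infty$ yield $\|\nabla(\bar{\psf} - \tilde{\psf})\|_{L^2(\rho,\Omega)} \lesssim \|\bar{\ysf} - \bar{\ysf}_{\T}\|_{L^\infty(\Omega)}$.

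The core of the argument is the control estimate. Set $\tilde{\usf} := \Pi(-\tfrac{1}{\lambda}\bar{\psf}_{\T}) \in \mathcal{U}_{\textrm{ad}}$, so that $\E_{\usf}(\bar{\usf}_{\T},\bar{\psf}_{\T};\T) = \|\bar{\usf}_{\T} - \tilde{\usf}\|_{L^2(\Omega)}$ by \eqref{eq:defofEug} and \eqref{eq:defofEu}, and, by the projection formula \eqref{eq:Pi}, $(\bar{\psf}_{\T} + \lambda\tilde{\usf}, \usf - \tilde{\usf})_{L^2(\Omega)} \geq 0$ for all $\usf \in \mathcal{U}_{\textrm{ad}}$. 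Testing \eqref{eq:VIp} with $\usf = \tilde{\usf}$ and this last inequality with $\usf = \bar{\usf}$, and adding, produces
\[
  \lambda \|\bar{\usf} - \tilde{\usf}\|_{L^2(\Omega)}^2 \leq (\bar{\psf} - \bar{\psf}_{\T}, \tilde{\usf} - \bar{\usf})_{L^2(\Omega)} = (\bar{\psf} - \tilde{\psf}, \tilde{\usf} - \bar{\usf})_{L^2(\Omega)} + (\tilde{\psf} - \bar{\psf}_{\T}, \tilde{\usf} - \bar{\usf})_{L^2(\Omega)}.
\]
The last term is bounded by $\E_{\psf}(\bar{\psf}_{\T},\bar{\ysf}_{\T};\T)\,\|\tilde{\usf} - \bar{\usf}\|_{L^2(\Omega)}$ via the Cauchy--Schwarz inequality, the weighted Poincar\'e inequality of Lemma~\ref{lem:restalphagen} (this is where the hypothesis $\alpha < 2$ enters) and the bound on $\tilde{\psf} - \bar{\psf}_{\T}$ from the first paragraph. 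For the first term, we test the equation solved by $\bar{\psf} - \tilde{\psf}$ with $\wsf := \mathbf{S}(\tilde{\usf} - \bar{\usf}) \in W^{1,q}_0(\Omega) \cap H^1_0(\rho^{-1},\Omega)$---admissible by Lemma~\ref{lem:rangeS}---and argue exactly as in the proof of Theorem~\ref{thm:optim_conds}, replacing the inadmissible test function by a density argument, to obtain $(\bar{\psf} - \tilde{\psf}, \tilde{\usf} - \bar{\usf})_{L^2(\Omega)} = \sum_{z \in \calZ}(\bar{\ysf}(z) - \bar{\ysf}_{\T}(z))\,\wsf(z)$.

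The decisive point is to rewrite this sum so that the dependence quadratic in $\bar{\usf} - \bar{\usf}_{\T}$ carries a favorable sign. Setting $A_z := [\mathbf{S}(\bar{\usf} - \bar{\usf}_{\T})](z)$, $B_z := [\tilde{\ysf} - \bar{\ysf}_{\T}](z)$ and $C_z := [\mathbf{S}(\tilde{\usf} - \bar{\usf}_{\T})](z)$, one has $\bar{\ysf}(z) - \bar{\ysf}_{\T}(z) = A_z + B_z$ and $\wsf(z) = C_z - A_z$, whence
\[
  \sum_{z \in \calZ}(\bar{\ysf}(z) - \bar{\ysf}_{\T}(z))\,\wsf(z) = \sum_{z \in \calZ}\left( A_z C_z + B_z C_z - A_z B_z \right) - \sum_{z \in \calZ} A_z^2 \leq \sum_{z \in \calZ}\left( A_z C_z + B_z C_z - A_z B_z \right).
\]
By Proposition~\ref{prop:uisW1p} and the auxiliary estimates, $|A_z| \lesssim \|\bar{\usf} - \bar{\usf}_{\T}\|_{L^2(\Omega)}$, $|B_z| \lesssim \ell_{\T}\E_{\ysf}(\bar{\ysf}_{\T},\bar{\usf}_{\T};\T)$ and $|C_z| \lesssim \|\bar{\usf}_{\T} - \tilde{\usf}\|_{L^2(\Omega)} = \E_{\usf}(\bar{\usf}_{\T},\bar{\psf}_{\T};\T)$; since $\#\calZ < \infty$, this sum is controlled by a constant times $\|\bar{\usf} - \bar{\usf}_{\T}\|_{L^2(\Omega)}\E_{\usf} + \ell_{\T}\E_{\ysf}\E_{\usf} + \ell_{\T}\E_{\ysf}\|\bar{\usf} - \bar{\usf}_{\T}\|_{L^2(\Omega)}$. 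Using the triangle inequality $\|\bar{\usf} - \bar{\usf}_{\T}\|_{L^2(\Omega)} \leq \|\bar{\usf} - \tilde{\usf}\|_{L^2(\Omega)} + \E_{\usf}$ and then Young's inequality to absorb the resulting multiple of $\|\bar{\usf} - \tilde{\usf}\|_{L^2(\Omega)}^2$ into the left-hand side, we conclude $\|\bar{\usf} - \tilde{\usf}\|_{L^2(\Omega)} \lesssim \E_{\usf} + \ell_{\T}\E_{\ysf} + \E_{\psf}$, and hence $\|\bar{\usf} - \bar{\usf}_{\T}\|_{L^2(\Omega)} \lesssim \E_{\usf} + \ell_{\T}\E_{\ysf} + \E_{\psf}$, with a hidden constant that deteriorates as $\lambda \downarrow 0$.

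Once the control error is controlled, the remaining two contributions follow by propagation: $\|\bar{\ysf} - \bar{\ysf}_{\T}\|_{L^\infty(\Omega)} \leq \|\bar{\ysf} - \tilde{\ysf}\|_{L^\infty(\Omega)} + \|\tilde{\ysf} - \bar{\ysf}_{\T}\|_{L^\infty(\Omega)} \lesssim \|\bar{\usf} - \bar{\usf}_{\T}\|_{L^2(\Omega)} + \ell_{\T}\E_{\ysf}$, and then $\|\nabla(\bar{\psf} - \bar{\psf}_{\T})\|_{L^2(\rho,\Omega)} \leq \|\nabla(\bar{\psf} - \tilde{\psf})\|_{L^2(\rho,\Omega)} + \|\nabla(\tilde{\psf} - \bar{\psf}_{\T})\|_{L^2(\rho,\Omega)} \lesssim \|\bar{\ysf} - \bar{\ysf}_{\T}\|_{L^\infty(\Omega)} + \E_{\psf}$. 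Squaring these three bounds, adding them, and using $\ell_{\T}^2\E_{\ysf}^2 + \E_{\psf}^2 + \E_{\usf}^2 \leq (1 + \ell_{\T})^2\E_{\textrm{ocp}}^2$ together with definition \eqref{eq:defofEocp} yields \eqref{eq:reliability}. I expect the main obstacle to be exactly the circular dependence control $\to$ adjoint $\to$ state $\to$ control: a naive chain of Cauchy--Schwarz estimates leaves a factor $\lambda^{-1}\|\bar{\usf} - \bar{\usf}_{\T}\|_{L^2(\Omega)}$ on the right-hand side which cannot be absorbed for general $\lambda > 0$, and it is the sign trick on $\sum_z A_z^2$ followed by Young's inequality that breaks it.
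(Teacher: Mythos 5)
Your proposal is correct and follows essentially the same route as the paper: the auxiliary control $\tilde{\usf}=\Pi(-\tfrac1\lambda\bar{\psf}_{\T})$, the combination of the two variational inequalities, the auxiliary adjoint driven by $\bar{\ysf}_{\T}(z)-\ysf_z$ (your $\tilde{\psf}$ is the paper's $\qsf$), the nonpositive quadratic term that breaks the circular dependence, and the final Young-inequality absorption all coincide with the paper's Steps 1--7. The only difference is cosmetic: where you expand the duality identity into point values $A_z,B_z,C_z$ and discard $-\sum_z A_z^2$, the paper introduces a second auxiliary adjoint $\rsf$ (associated with $\mathbf{S}\tilde{\usf}$) and obtains the same sign from the term $\textrm{I}_1=-\sum_z|\bar{\ysf}(z)-\mathbf{S}\tilde{\usf}(z)|^2$.
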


\begin{proof}
We proceed in seven steps.

\noindent \framebox{Step 1.} We define $\tilde{\usf} = \Pi (-\tfrac{1}{\lambda}\bar{\psf}_{\T})$ which, \AJS{owing to \cite[Lemma 2.26]{Tbook},} can be equivalently characterized by
 \begin{equation}
  \label{tildeu}
  (\lambda \tilde{\usf} + \bar{\psf}_{\T}, \usf - \tilde{\usf})_{L^2(\Omega)} \geq 0 \quad \forall \usf \in \mathcal{U}_{\textrm{ad}}.
 \end{equation}
With this definition at hand, a simple application of the triangle inequality yields
\begin{equation}
 \label{u-uh-utilde}
 \|  \bar{\usf} -\bar{\usf}_{\T} \|_{L^2(\Omega)} \leq \|  \bar{\usf} -\tilde{\usf} \|_{L^2(\Omega)} + \|  \tilde{\usf} -\bar{\usf}_{\T} \|_{L^2(\Omega)}.
\end{equation}
Notice that, by the definition of $\tilde{\usf}$, the second term on the right hand side of \eqref{u-uh-utilde} is the global error estimator associated with the optimal control $\E_{\usf}(\bar{\usf}_{\T},\bar{\psf}_{\T}; \T)$ given in \eqref{eq:defofEug}. 

\noindent \framebox{Step 2.} Let us now focus on the first term on the right hand side of \eqref{u-uh-utilde}. Set $\usf = \tilde{\usf}$ in \eqref{eq:VIp} and $\usf = \bar{\usf}$ in \eqref{tildeu} and add the obtained inequalities to arrive at
\begin{equation}
\label{eq:usf-tildeusf}
 \lambda \| \bar{\usf} - \tilde{\usf} \|^2_{L^2(\Omega)} \leq (\bar{\psf} - \bar{\psf}_{\T}, \tilde{\usf} - \bar{\usf})_{L^2(\Omega)}.
\end{equation}
To control the right hand side of \eqref{eq:usf-tildeusf} we define an auxiliary adjoint state via the solution to the following problem:
\begin{equation}
\label{eq:adjq}
\qsf \in H_0^1(\rho,\Omega): \quad a(\wsf,\qsf) = \sum_{z \in \calZ} ( \bar{\ysf}_{\T}(z) - \ysf_z ) \delta_z(\wsf) \quad \forall \wsf \in H^1_0(\rho^{-1},\Omega).
\end{equation}
We then write $\bar{\psf} - \bar{\psf}_{\T} = (\bar{\psf}  - \qsf) + (\qsf - \bar{\psf}_{\T})$ in \eqref{eq:usf-tildeusf} to obtain
\begin{equation}
\label{I+II}
 \lambda \| \bar{\usf} - \tilde{\usf} \|^2_{L^2(\Omega)} \leq (\bar{\psf}  - \qsf, \tilde{\usf} - \bar{\usf})_{L^2(\Omega)} + (\qsf - \bar{\psf}_{\T}, \tilde{\usf} - \bar{\usf})_{L^2(\Omega)} = \textrm{I} + \textrm{II}.
\end{equation}
To control the term $\textrm{II}$, we exploit the fact that $\qsf$ solves \eqref{eq:adjq} and $\bar{\psf}_{\T}$ corresponds to its Galerkin approximation. Now, an adaption of the arguments developed in \cite{AGM} provide the estimate
\begin{equation}
\label{eq:qest}
  \| \nabla (\qsf  - \bar{\psf}_{\T}) \|_{L^2(\rho,\Omega)} \lesssim \E_{\psf}(\bar{\psf}_{\T},\bar{\ysf}_{\T}; \T),
\end{equation}
where $\E_{\psf}$ denotes the a posteriori error estimator for problem \eqref{eq:adjq} defined in \eqref{eq:defofEp}--\eqref{eq:defofEpg}. For brevity we skip details and only remark that \eqref{eq:qest} is valid because of assumption \eqref{eq:patch}. Consequently, \RR{Lemma~\ref{lem:restalphagen}}
allows us to arrive at
\begin{equation*}
 |\textrm{II}| \lesssim \| \nabla (\qsf  - \bar{\psf}_{\T}) \|_{L^2(\rho,\Omega)}\| \tilde{\usf} - \bar{\usf} \|_{L^2(\Omega)} ,
\end{equation*}
which together with a Young's inequality and \eqref{eq:qest}, yield
\begin{equation}
\label{II}
 |\textrm{II}| - \frac{\lambda}{4}\| \tilde{\usf} - \bar{\usf} \|^2_{L^2(\Omega)} \lesssim \E^2_{\psf}(\bar{\psf}_{\T},\bar{\ysf}_{\T}; \T ) .
\end{equation}

\noindent \framebox{Step 3.} We now estimate the term $\textrm{I}$. To do this, we introduce another auxiliary adjoint state $\rsf$ via the solution to the following problem:
\begin{equation}
\label{eq:adjr}
\rsf \in H^1_0(\rho,\Omega): \quad a(\wsf,\rsf) = \sum_{z \in \calZ} ( \tilde{\ysf}(z) - \ysf_z) \delta_z(\wsf) \quad \forall \wsf \in H^1_0(\rho^{-1},\Omega),
\end{equation}
where $\tilde{\ysf} \in H_0^1(\Omega)$ solves $a(\tilde{\ysf},\vsf) = (\fsf + \tilde{\usf},\vsf)$ for all $\vsf \in H_0^1(\Omega)$. We recall that $\tilde{\usf} = \Pi (-\tfrac{1}{\lambda}\bar{\psf}_{\T})$. With the definition of the state $\rsf$ at hand, we write $\bar{\psf}  - \qsf = (\bar{\psf} - \rsf) + (\rsf - \qsf)$ and then, the term $\textrm{I}$ in \eqref{I+II} becomes 
\[
 (\bar{\psf}  - \qsf, \tilde{\usf} - \bar{\usf})_{L^2(\Omega)} =  (\bar{\psf} - \rsf, \tilde{\usf} - \bar{\usf})_{L^2(\Omega)} + (\rsf - \qsf, \tilde{\usf} - \bar{\usf})_{L^2(\Omega)} = \textrm{I}_1 + \textrm{I}_2.
\]
We now proceed to control the term $\textrm{I}_1$. \AJS{Since $\bar{\usf} - \tilde{\usf} \in L^\infty(\Omega)$, and $\bar{\ysf} - \tilde{\ysf}$ solves $a(\bar{\ysf} - \tilde{\ysf},\vsf) = (\bar{\usf} - \tilde{\usf},\vsf)$ for all $\vsf \in H_0^1(\Omega)$, Lemma~\ref{lem:rangeS} allows us to conclude that $\bar{\ysf} - \tilde{\ysf} \in H_0^1(\rho^{-1},\Omega)$ and then, by setting $\wsf = \bar{\ysf} - \tilde{\ysf}$ in \eqref{eq:adjp2} and \eqref{eq:adjr}, we derive 
\[
  a(\bar{\ysf} - \tilde{\ysf},\bar{\psf} - \rsf) =  \sum_{z \in \calZ} |\bar{\ysf}(z) - \tilde{\ysf}(z)|^2.
\]
On the other hand, a similar approximation argument to that used in the proof of Theorem~\ref{thm:optim_conds} shows that
\[
  a(\bar\ysf - \tilde\ysf, \bar\psf - \rsf) = (\bar{\usf} - \tilde{\usf}, \bar{\psf} - \rsf )_{L^2(\Omega)}.
\]
In conclusion,
\[
  \textrm{I}_1 =  - \sum_{z \in \calZ} |\bar{\ysf}(z) - \tilde{\ysf}(z)|^2 \leq 0.
\]}

\noindent \framebox{Step 4.} We now estimate the term $\textrm{I}_2$. To accomplish this we first notice that the function $\rsf - \qsf \in H^1_0(\rho,\Omega)$ solves
\begin{equation*}
\quad a(\wsf,\rsf - \qsf) = \sum_{z \in \calZ} ( \tilde{\ysf}(z)- \bar{\ysf}_{\T}(z)) \delta_z(\wsf) \quad \forall \wsf \in H^1_0(\rho^{-1},\Omega).
\end{equation*}
Consequently, \AJS{the embedding of} \RR{Lemma~\ref{lem:restalphagen}} in conjunction with the stability of the problem above yield
\begin{equation}
\label{r-q}
 \| \rsf - \qsf \|_{L^2(\Omega)} \lesssim  \| \nabla (\rsf - \qsf) \|_{L^2(\rho,\Omega)} \lesssim \| \tilde{\ysf} - \bar{\ysf}_{\T}  \|_{L^{\infty}(\Omega)}.
\end{equation}
To control the right hand side of the previous expression, we use the triangle inequality to obtain $\| \tilde{\ysf} -\bar{\ysf}_{\T}  \|_{L^{\infty}(\Omega)} \lesssim \| \tilde{\ysf} - \ysf^*  \|_{L^{\infty}(\Omega)} + \| \ysf^* -\bar{\ysf}_{\T} \|_{L^{\infty}(\Omega)}$ where $\ysf^* \in H_0^1(\Omega)$ solves $a(\ysf^*,\vsf) = (\fsf+\bar{\usf}_{\T},\vsf)$ for all $\vsf \in H_0^1(\Omega)$.
\AJS{The results of Proposition~\ref{prop:uisW1p} imply that, for some $q>n$,
\begin{equation}
\label{eq:W1pestimate}
  \| \tilde{\ysf} - \ysf^*  \|_{L^{\infty}(\Omega)} \lesssim \| \tilde{\ysf} - \ysf^* \|_{W^{1,q}(\Omega)} \lesssim \| \tilde{\usf} - \bar{\usf}_{\T} \|_{L^2(\Omega)} = \E_{\usf}(\bar{\usf}_{\T},\bar{\psf}_{\T}; \T).
\end{equation}}
Since $\bar{\ysf}_{\T}$ is the Galerkin approximation of $\ysf^*$, the second term is estimated by invoking the global reliability of the error estimator $\E_{\ysf}$ defined in \eqref{eq:defofEy}--\eqref{eq:defofEyg}: $\| \ysf^* -\bar{\ysf}_{\T} \|_{L^{\infty}(\Omega)} \lesssim \ell_\T \E_{\ysf}(\bar{\ysf}_{\T},\bar{\usf}_{\T};\T)$. Applying these estimates to \eqref{r-q} we obtain
\[
  |\textrm{I}_2| - \frac{\lambda}{4}\| \tilde{\usf} - \bar{\usf} \|^2_{L^2(\Omega)} \lesssim 
  \ell_\T^2 \E^2_{\ysf}(\bar{\ysf}_{\T},\bar{\usf}_{\T};\T) + \E^2_{\usf}(\bar{\usf}_{\T},\bar{\psf}_{\T}; \T).
\]
The fact that $\textrm{I}_1 \leq 0$, implies that a similar estimate is valid for the term $\textrm{I} = \textrm{I}_1 + \textrm{I}_{2}$. This, together with \eqref{u-uh-utilde}, \eqref{I+II} and \eqref{II}, implies that
\RR{\begin{equation}
\label{u-uT}
\| \bar{\usf} - \bar{\usf}_{\T} \|_{L^2(\Omega)}^2 \lesssim  \ell_\T^2 \E_{\ysf}^2(\bar{\ysf}_{\T},\bar{\usf}_{\T};\T) + \E_{\psf}^2(\bar{\psf}_{\T},\bar{\ysf}_{\T};\T) + \E_{\usf}^2(\bar{\usf}_{\T},\bar{\psf}_{\T}; \T).
\end{equation}}

\noindent \framebox{Step 5.} The goal of this step is to control the error $\bar{\ysf} - \bar{\ysf}_{\T}$ in the $L^{\infty}$--norm in terms of $\E_{\textrm{ocp}}$. To do this, we write $\bar{\ysf} - \bar{\ysf}_{\T} = (\bar{\ysf} - \ysf^*) + (\ysf^*- \bar{\ysf}_{\T})$ and estimate each term separately. To control the first term we invoke a similar argument to the one that gives \eqref{eq:W1pestimate}: 
\[
 \| \bar{\ysf} - \ysf^* \|_{L^{\infty}(\Omega)} \lesssim \| \bar{\ysf} - \ysf^* \|_{W^{1,\AJS{q}}(\Omega)} \lesssim \| \bar{\usf} - \bar{\usf}_{\T} \|_{L^2(\Omega)}. 
\]
On the other hand, the estimate 
\[
 \| \ysf^*- \bar{\ysf}_{\T} \|_{L^{\infty}(\Omega)} \lesssim \ell_\T \E_{\ysf}(\bar{\ysf}_{\T},\bar{\usf}_{\T};\T) 
\]
follows from the fact that $\bar{\ysf}_{\T}$ is the Galerkin approximation of $\ysf^*$ and the global reliability of $\E_{\ysf}$ (Proposition \ref{pro:global_infty}). Collecting the derived estimates and invoking \eqref{u-uT} we obtain 
\RR{\begin{equation}
\label{y-yT}
 \| \bar{\ysf}- \bar{\ysf}_{\T} \|_{L^{\infty}(\Omega)}^2 \lesssim \ell_{\T}^2 \E_{\ysf}^2(\bar{\ysf}_{\T},\bar{\usf}_{\T};\T) + \E_{\psf}^2(\bar{\psf}_{\T},\bar{\ysf}_{\T};\T) + \E_{\usf}^2(\bar{\usf}_{\T},\bar{\psf}_{\T}; \T).
\end{equation}}

\noindent \framebox{Step 6.}  In this step we control the error $\bar{\psf} - \bar{\psf}_{\T}$ in the $H^1(\rho,\Omega)$-seminorm. A simple application of the triangle inequality yields
\[
\| \nabla(\bar{\psf} - \bar{\psf}_{\T}) \|_{L^2(\rho,\Omega)} \lesssim \| \nabla(\bar{\psf} - \qsf) \|_{L^2(\rho,\Omega)} + \| \nabla(\qsf - \bar{\psf}_{\T}) \|_{L^2(\rho,\Omega)}.
\]
To control the first term, we invoke a stability result and conclude that 
\[
\| \nabla(\bar{\psf} - \qsf) \|_{L^2(\rho,\Omega)}\lesssim\| \bar{\ysf} - \bar{\ysf}_{\T}  \|_{L^{\infty}(\Omega)}.
\]
Consequently, \eqref{eq:qest} and \eqref{y-yT} yield that
\RR{\begin{equation}
\label{p-pT}
\| \nabla(\bar{\psf} - \bar{\psf}_{\T}) \|_{L^2(\rho,\Omega)}^2 \lesssim \ell_\T^2 \E_{\ysf}^2(\bar{\ysf}_{\T},\bar{\usf}_{\T};\T) + \E_{\psf}^2(\bar{\psf}_{\T},\bar{\ysf}_{\T};\T) + \E_{\usf}^2(\bar{\usf}_{\T},\bar{\psf}_{\T}; \T). 
\end{equation}}
Notice that here, once again, we need to rely on assumption \eqref{eq:patch} for estimate \eqref{eq:qest} to be valid.

\noindent \framebox{Step 7.} Finally, collecting \eqref{u-uT}, \eqref{y-yT} and \eqref{p-pT} we obtain \eqref{eq:reliability}, which concludes the proof.
\end{proof}

\begin{remark}[\AJS{range of $\alpha$}]\rm
\AJS{Notice that although problem \eqref{-lap=delta} is well-posed for $\alpha \in \mathbf{I}=(n-2,n)$ and Proposition~\ref{pro:global_alpha} asserts the reliability of $\E_\alpha$ for the same range, in three dimensions we must further restrict the range of $\alpha$ to guarantee the embedding of} \RR{Lemma~\ref{lem:restalphagen}.}
\end{remark}

\subsubsection{Error estimator: efficiency}
\label{subsub:efficient}
We now analyze the efficiency properties of the a posteriori error estimator $\E_{\textrm{ocp}}$ defined in \eqref{eq:defofEocp} by examining each of its contributions separately. 
Before proceeding with such analysis, we introduce the following notation: for an edge, triangle or tetrahedron $G$, let $\mathcal{V}(G)$ be the set of vertices of $G$. With this notation at hand, we 
introduce two smooth functions $\varphi_T$ and $\varphi_S$ as follows. 
Given $T \in \T$, we consider a non-negative and smooth function $\varphi_{T}$ with the following properties:
\begin{equation}
 \label{eq:propertiesofeta}
 |T| \lesssim \int_{T} \varphi_{T}, \quad \supp \varphi_{T} = T, \quad \|\nabla^{k} \varphi_{T} \|_{L^{\infty}(T)} \lesssim h_{T}^{-k}, \ k=0,1,2.
\end{equation}
For example, following \cite{Verfurth2,Verfurth}, we may take $\varphi_T$ given by
\[
 \varphi_{T|T} = (n+1)^{n+1} \prod_{\vero\in\mathcal{V}(T)} \xi_{\vero}, 
\]
where $\xi_{\vero}$ are the barycentric coordinates of $T$. We now define the function $\varphi_{S}$ for $S \in \Sides$. To do this, let $\Ne_{S}$ be the patch composed of the two elements of $\T$ sharing $S$. We then define $\varphi_S$ as a smooth cut-off function with the properties
\begin{equation}
 \label{eq:propertiesofetaS}
 |S| \lesssim \int_{S} \varphi_{S}, \quad \supp \varphi_{S} = \Ne_S,  \quad 
 \|\nabla^{k} \varphi_{S} \|_{L^{\infty}(T)} \lesssim h_{S}^{-k}, \ k=0,1,2.
\end{equation}
For example, we may take $\varphi_S$ given by
\[
 \varphi_{S|T} = n^{n} \prod_{\vero\in\mathcal{V}(S)} \xi_{\vero}, 
\]
where $T \subset \Ne_S$. This choice is due to Verf\"{u}rth \cite{Verfurth2,Verfurth}.

Recall that $\mathcal{P}_{\T}$ denotes the $L^2$-projection operator onto piecewise constant, over $\T$, functions. With these elements at hand we derive the local efficiency of $\E_{\ysf}$.

\begin{lemma}[local efficiency of $\E_{\ysf}$] 
\RR{Let} $(\bar{\usf},\bar{\ysf},\bar{\psf}) \in L^2(\Omega) \times H_0^1(\Omega) \times H_0^1(\rho,\Omega)$ \RR{be} the solution to the optimality system \eqref{eq:statep}--\eqref{eq:adjp2} and $(\bar{\usf}_{\T},\bar{\ysf}_{\T},\bar{\psf}_{\T}) \in  \U_{\textrm{ad}}(\T) \times \V(\T) \times \V(\T)$ \RR{be} its numerical approximation defined \AJS{by \eqref{eq:abstateh}--\eqref{eq:adjointh}}\RR{. If $\alpha \in \mathbf{I}$}, then 
\begin{equation}
\label{eq:efficiencyy}
 \begin{aligned}
\E_{\ysf} (\bar{\ysf}_{\T},\bar{\usf}_{\T}; T) & \lesssim \|  \bar{\ysf} - \bar{\ysf}_{\T} \|_{L^{\infty}(\Ne_T)} + h_{T}^{2-n/2} \| \bar{\usf} - \bar{\usf}_{\T}\|_{L^2(\Ne_T)}\\
& + h_{T}^{2} \| \fsf - \mathcal{P}_{\T}\fsf\|_{L^{\infty}(\Ne_T)},
\end{aligned}
\end{equation}
where the hidden constant is independent of the optimal variables, their approximations, the size of the elements in the mesh $\T$ and $\#\T$. 
\end{lemma}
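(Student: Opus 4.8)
The plan is to reduce the local efficiency of $\E_{\ysf}$ to the already-established local efficiency of the max-norm estimator for a standard Poisson problem, namely Proposition~\ref{pro:local_infty}. Recall that $\bar{\ysf}_{\T}$ is the Galerkin approximation of the auxiliary state $\ysf^* \in H_0^1(\Omega)$ solving $a(\ysf^*,\vsf) = (\fsf + \bar{\usf}_{\T},\vsf)_{L^2(\Omega)}$ for all $\vsf \in H_0^1(\Omega)$. Crucially, the right-hand side datum of this problem is $\fsf + \bar{\usf}_{\T}$, and since $\bar{\usf}_{\T} \in \U(\T)$ is piecewise constant we have $\mathcal{P}_{\T}(\fsf + \bar{\usf}_{\T}) = \mathcal{P}_{\T}\fsf + \bar{\usf}_{\T}$, whence $(\fsf + \bar{\usf}_{\T}) - \mathcal{P}_{\T}(\fsf + \bar{\usf}_{\T}) = \fsf - \mathcal{P}_{\T}\fsf$. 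Also note that $\E_{\ysf}(\bar{\ysf}_{\T},\bar{\usf}_{\T};T)$ is, by \eqref{eq:defofEy}, exactly the local max-norm indicator $\E_\infty$ of \eqref{eq:estimator_local_inf} for this auxiliary problem with datum $f = \fsf + \bar{\usf}_{\T}$.

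First I would apply Proposition~\ref{pro:local_infty} to the pair $(\ysf^*,\bar{\ysf}_{\T})$ with $f = \fsf + \bar{\usf}_{\T}$, obtaining
\[
 \E_{\ysf}(\bar{\ysf}_{\T},\bar{\usf}_{\T};T) \lesssim \| \ysf^* - \bar{\ysf}_{\T} \|_{L^\infty(\Ne_T)} + \max_{T' \in \Ne_T} h_{T'}^2 \| \fsf - \mathcal{P}_{\T}\fsf \|_{L^\infty(T')}.
\]
Next I would insert $\bar{\ysf}$ via the triangle inequality, $\| \ysf^* - \bar{\ysf}_{\T} \|_{L^\infty(\Ne_T)} \leq \| \ysf^* - \bar{\ysf} \|_{L^\infty(\Ne_T)} + \| \bar{\ysf} - \bar{\ysf}_{\T} \|_{L^\infty(\Ne_T)}$, and control $\| \ysf^* - \bar{\ysf} \|_{L^\infty(\Ne_T)}$ by the difference of the data. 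Since $\ysf^* - \bar{\ysf}$ solves $a(\ysf^* - \bar{\ysf},\vsf) = (\bar{\usf}_{\T} - \bar{\usf},\vsf)_{L^2(\Omega)}$ with $\bar{\usf}_{\T} - \bar{\usf} \in L^\infty(\Omega)$, Proposition~\ref{prop:uisW1p} gives, for the $q>n$ therein, $\| \ysf^* - \bar{\ysf} \|_{L^\infty(\Omega)} \lesssim \| \ysf^* - \bar{\ysf} \|_{W^{1,q}(\Omega)} \lesssim \| \bar{\usf}_{\T} - \bar{\usf} \|_{L^2(\Omega)}$. To obtain the localized factor $h_T^{2-n/2}$ appearing in \eqref{eq:efficiencyy} one needs a local rather than global estimate: here I would instead use an $L^\infty$--$L^2$ duality / local regularity bound on $\Ne_T$ for the Laplacian, giving $\| \ysf^* - \bar{\ysf} \|_{L^\infty(\Ne_T)} \lesssim h_T^{2-n/2} \| \bar{\usf} - \bar{\usf}_{\T} \|_{L^2(\Ne_T)}$ up to lower-order contributions, following the same scaling argument that produces the $h_T^{2-n/2}$ weight in \eqref{eq:optimal_a_priori_error_bound}. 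Finally, bounding $\max_{T'\in\Ne_T} h_{T'}^2 \|\cdot\|_{L^\infty(T')} \lesssim h_T^2 \| \fsf - \mathcal{P}_{\T}\fsf \|_{L^\infty(\Ne_T)}$ by shape regularity collects all three terms of \eqref{eq:efficiencyy}.

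The main obstacle I anticipate is the passage from the \emph{global} bound $\| \ysf^* - \bar{\ysf} \|_{L^\infty(\Omega)} \lesssim \| \bar{\usf} - \bar{\usf}_{\T}\|_{L^2(\Omega)}$ to the \emph{localized} one with the favorable power $h_T^{2-n/2}$ and the patch norm $\| \bar{\usf} - \bar{\usf}_{\T}\|_{L^2(\Ne_T)}$ on the right. This requires exploiting that the datum $\bar{\usf}_{\T} - \bar{\usf}$ enters only through a smoothing solution operator and estimating point values of $\ysf^* - \bar{\ysf}$ by a duality argument against a Dirac mass (whose fundamental solution is in $L^2$ locally, in dimensions $n \in \{2,3\}$, with the scaling giving the $h_T^{2-n/2}$ weight) — essentially an inverse-type estimate combined with the interior regularity of Proposition~\ref{prop:loclreg}. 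One must be slightly careful that the duality argument does not reintroduce a logarithmic factor; in the efficiency direction, however, one only needs an upper bound on $\| \ysf^* - \bar{\ysf} \|_{L^\infty(\Ne_T)}$ by a weighted $L^2$-norm of the data, which is clean and log-free. With that localized estimate in hand, the remaining steps are routine applications of the triangle inequality, shape regularity, and Proposition~\ref{pro:local_infty}.
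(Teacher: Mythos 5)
Your overall strategy---reduce to the max-norm local efficiency of Proposition~\ref{pro:local_infty} for the auxiliary problem with datum $\fsf+\bar{\usf}_{\T}$ and then swap $\ysf^*$ for $\bar{\ysf}$ by a triangle inequality---founders on exactly the step you flag as the main obstacle, and that obstacle is not surmountable as described. The function $\wsf=\ysf^*-\bar{\ysf}$ solves $-\Delta \wsf=\bar{\usf}_{\T}-\bar{\usf}$ in all of $\Omega$ with $\wsf=0$ on $\partial\Omega$; the solution operator is nonlocal, so $\wsf_{|\Ne_T}$ is driven by the data on the whole of $\Omega$, not just on $\Ne_T$. No duality against a Dirac mass or interior regularity argument can produce the bound $\|\ysf^*-\bar{\ysf}\|_{L^\infty(\Ne_T)}\lesssim h_T^{2-n/2}\|\bar{\usf}-\bar{\usf}_{\T}\|_{L^2(\Ne_T)}$: the Green's function representation gives only $\|\wsf\|_{L^\infty(\Omega)}\lesssim \|\bar{\usf}-\bar{\usf}_{\T}\|_{L^2(\Omega)}$ with an $O(1)$ constant and a \emph{global} norm on the right, and the analogy with \eqref{eq:optimal_a_priori_error_bound} is misleading because that rate comes from approximation theory for a singular Galerkin error, not from any local smoothing. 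With only the global bound available, your argument yields at best $\E_{\ysf}(\bar{\ysf}_{\T},\bar{\usf}_{\T};T)\lesssim \|\bar{\ysf}-\bar{\ysf}_{\T}\|_{L^\infty(\Ne_T)}+\|\bar{\usf}-\bar{\usf}_{\T}\|_{L^2(\Omega)}+h_T^2\|\fsf-\mathcal{P}_{\T}\fsf\|_{L^\infty(\Ne_T)}$, which is strictly weaker than \eqref{eq:efficiencyy}.

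The paper sidesteps this entirely by never introducing $\ysf^*$ in this proof. It runs the bubble-function residual argument directly for the exact optimal state $\bar{\ysf}$: testing the residual identity with $\varphi_T$ and $\varphi_S$ produces identities \eqref{eq:res3}--\eqref{eq:res4}, in which the control error enters only through the local integrals $\int_T(\bar{\usf}-\bar{\usf}_{\T})\varphi_T$ and $\int_{\Ne_S}(\bar{\usf}-\bar{\usf}_{\T})\varphi_S$. A Cauchy--Schwarz bound there gives $|T|^{1/2}\|\bar{\usf}-\bar{\usf}_{\T}\|_{L^2(\Ne_T)}$, and after normalizing by $|T|\approx h_T^n$ and multiplying by $h_T^2$ the factor $h_T^{2-n/2}$ appears purely algebraically. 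In other words, the key idea you are missing is to keep the control error inside a local residual integral, where the bubble function localizes it for free, rather than pushing it through the nonlocal solution operator. Your observations that $\mathcal{P}_{\T}(\fsf+\bar{\usf}_{\T})=\mathcal{P}_{\T}\fsf+\bar{\usf}_{\T}$ and that $\E_{\ysf}$ coincides with $\E_\infty$ for the auxiliary datum are correct but do not rescue the localization.
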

\begin{proof}
Let us consider a function $\vsf \in H^1_0(\Omega)$ which is such that $\vsf_{|T} \in C^2(T)$ for all $T \in \T$. We then invoke the fact that $\bar{\ysf} \in H^1_0(\Omega)$ solves \eqref{eq:statep} and integration by parts to obtain
\begin{equation*}
\label{eq:res1}
\int_{\Omega} \nabla (\bar{\ysf} - \bar{\ysf}_{\T}) \cdot \nabla \vsf = \int_{\Omega} \left(\fsf+\bar{\usf}\right) \vsf - \int_{\Omega} \nabla \bar{\ysf}_{\T} \cdot \nabla  \vsf = \sum_{T \in \T} \int_{T} \left(\fsf+\bar{\usf}\right) \vsf + \sum_{S \in \Sides } \int_{S } \llbracket \nu\cdot\nabla \bar{\ysf}_{\T} \rrbracket  \vsf.
\end{equation*}
On the other hand, since on each element $T$ we have that $\vsf \in C^2(T)$, integrating by parts again allows us to derive
\begin{equation*}
\label{eq:res2}
\int_{\Omega} \nabla (\bar{\ysf} - \bar{\ysf}_{\T}) \cdot \nabla \vsf = - \sum_{T \in \T} \int_{T} (\bar{\ysf} - \bar{\ysf}_{\T}) \Delta \vsf  + \sum_{S \in \Sides } \int_{S} \llbracket \nu\cdot\nabla \vsf \rrbracket (\bar{\ysf} - \bar{\ysf}_{\T}).
\end{equation*}
Consequently, the right hand sides of the previous two expressions coincide and, in particular, by setting $\vsf = \varphi_{T}$ 
defined through properties  \eqref{eq:propertiesofeta}, we deduce that
\begin{equation}
\label{eq:res3}
  \begin{aligned}
    \int_{T} \left(\mathcal{P}_{\T}\fsf+\bar{\usf}_{\T}\right) \varphi_{T}  & = \int_{T} \left(\mathcal{P}_{\T}\fsf- \fsf \right) \varphi_{T} + \int_{T} \left(\bar{\usf}_{\T}-\bar{\usf}\right) \varphi_{T}
    \\
    &- \int_{T} (\bar{\ysf} - \bar{\ysf}_{\T}) \Delta \varphi_{T}  + \sum_{S \in \Sides_T } \int_{S} \llbracket \nu\cdot\nabla \varphi_T \rrbracket (\bar{\ysf} - \bar{\ysf}_{\T}),
  \end{aligned}
\end{equation}
where $\Sides_T$ is the subset of $\Sides$ which contains the sides in $\Sides$ which are sides of $T$. Similarly, if we consider $\vsf = \varphi_S$, defined through properties \eqref{eq:propertiesofetaS}, we obtain that
\begin{multline}
\label{eq:res4}
    \int_{S} \llbracket \nu\cdot\nabla \bar{\ysf}_{\T} \rrbracket \varphi_S  = -\int_{\Ne_S} \left(\fsf+\bar{\usf}_{\T}\right)  \varphi_S - \int_{\Ne_S} (\bar{\usf} - \bar{\usf}_{\T})  \varphi_S
    \\
    - \sum_{T\in\T:\,T\subset\Ne_S}\int_{T} (\bar{\ysf} - \bar{\ysf}_{\T}) \Delta \varphi_{S} + \sum_{S'\in\Sides:\,S'\subset\Ne_S}\int_{S'} \llbracket \nu\cdot\nabla \varphi_S \rrbracket (\bar{\ysf} - \bar{\ysf}_{\T}).
\end{multline}
With these ingredients we proceed to divide the proof in two steps. 

\noindent \framebox{Step 1.} We estimate the term $h_T^2 \| \fsf+\bar\usf_{\T}\|_{L^{\infty}(T)}$ in \eqref{eq:defofEy}. A simple application of the triangle inequality allows us to derive
\begin{equation}
\label{eq:Pfaux}
h_T^2 \| \fsf+\bar\usf_{\T}\|_{L^{\infty}(T)} \lesssim h_T^2 \| \mathcal{P}_{\T}\fsf+\bar\usf_{\T}\|_{L^{\infty}(T)}+ h_T^2\| \fsf-\mathcal{P}_{\T}\fsf\|_{L^{\infty}(T)}.
\end{equation}
To control the first term on the right hand side of the previous expression we invoke identity \eqref{eq:res3}. This, in view of \eqref{eq:propertiesofeta} and the fact that $(\mathcal{P}_\T \fsf + \bar\usf_{\T})_{|T} \in \mathbb{P}_0\AJS{(T)}$, allows us to obtain
\begin{align*}
 \left| \mathcal{P}_{\T}\fsf+\bar\usf_{\T} \right| ~ | T | & \lesssim \left| \int_{T} (\bar{\ysf} - \bar{\ysf}_{\T}) \Delta \varphi_{T} \right|  + \sum_{S' \in \Sides_T} \left| \int_{S'} \llbracket \nu\cdot \nabla \varphi_T \rrbracket (\bar{\ysf} - \bar{\ysf}_{\T}) \right|
\\
& + \left|  \int_{T} (\fsf-\mathcal{P}_{\T}\fsf) \varphi_{T} \right|+\left|  \int_{T} (\bar{\usf}-\bar{\usf}_{\T}) \varphi_{T} \right|
\\
& \lesssim \left( h_{T}^{-2} |T| + h_{T}^{-1}\sum_{S'\in\Sides_T}|S'| \right) \| \bar{\ysf} - \bar{\ysf}_{\T}\|_{L^{\infty}(T)}
\\
& + |T|\| \fsf-\mathcal{P}_{\T}\fsf\|_{L^{\infty}(T)} + |T|^{\tfrac{1}{2}}\| \bar{\usf} - \bar{\usf}_{\T } \|_{L^2(T)}.
\end{align*}
We now recall that, for any $T \in \T$, we have that $|T| \approx h_{T}^n$ and $|T|/|S| \approx h_{T}$. Consequently, in view of \eqref{eq:Pfaux} and the previous estimate, we derive
\[
h_T^2 \| \fsf+\bar\usf_{\T}\|_{L^{\infty}(T)} \lesssim \| \bar{\ysf} - \bar{\ysf}_{\T}\|_{L^{\infty}(T)} + h_T^{2-\tfrac{n}{2}}\| \bar{\usf} - \bar{\usf}_{\T } \|_{L^2(T)} + h_T^2 \| \fsf-\mathcal{P}_{\T}\fsf\|_{L^{\infty}(T)}.
\]

\noindent \framebox{Step 2.} We estimate the term $h_T \| \llbracket \nu\cdot\nabla \bar{\ysf}_{\T} \rrbracket \|_{L^{\infty}(\partial T)}$ in \eqref{eq:defofEy}. We first invoke \eqref{eq:propertiesofetaS} and derive
$
|S| \| \llbracket \nu\cdot\nabla \bar{\ysf}_{\T} \rrbracket \|_{L^{\infty}(S)} \lesssim  \left| \int_{S} \llbracket \nu\cdot\nabla \bar{\ysf}_{\T} \rrbracket \varphi_S \right|.
$
In light of this estimate, we use identity \eqref{eq:res4} and control each term with the help of \eqref{eq:propertiesofetaS}. This yields
\begin{align*}
\left| \int_{S} \llbracket \nu\cdot\nabla \bar{\ysf}_{\T} \rrbracket  \varphi_S \right|  & \lesssim |T| \| \fsf+\bar{\usf}_{\T}\|_{L^{\infty}(\Ne_{S})} + |T|^{\tfrac{1}{2}} \|\bar{\usf} - \bar{\usf}_{\T} \|_{L^2(\Ne_S)}
\\
& + \left( h_{T}^{-2} |T| + h_{T}^{-1}\sum_{S'\in\Sides:\,S'\subset\Ne_{S}}|S'| \right) \| \bar{\ysf} - \bar{\ysf}_{\T}\|_{L^{\infty}(\Ne_{S})}.
\end{align*}
The collection of the estimates derived in Steps 1 and 2 concludes the proof.
\end{proof}

\AJS{
\begin{remark}[piecewise linear control] \rm
Notice that the only place where we use that $\Uad(\T)$ consists of piecewise constant functions is in Step 1 of the previous result. If we were to use, say, piecewise linear functions and suitably redefine $\calP_\T$ all that is necessary for our considerations to hold is to obtain a suitable bound on $\| \calP_\T \fsf + \bar\usf_\T\|_{L^\infty(\Omega)}$.
\end{remark}
}

We now proceed to analyze the local efficiency properties of the contribution $\E_{\psf}$ defined by \eqref{eq:defofEp}--\eqref{eq:defofEpg}. An important ingredient in the analysis that follows is the so-called \emph{residual} $\Res_{\psf} = \Res_{\psf}(\bar{\psf}_{\T}) \in H_0^1(\rho^{-1},\Omega)'$ defined by
\[
\langle \Res_{\psf} (\bar{\psf}_{\T}),\wsf \rangle := a(\wsf,\bar{\psf} - \bar{\psf}_{\T}) = \sum_{z \in \calZ} ( \bar{\ysf}(z) - \ysf_z) \delta_z(\wsf) - a(\wsf,\bar{\psf}_{\T}) \quad \forall \wsf \in H^1_0(\rho^{-1},\Omega),
\]
where $a$ is defined in \eqref{eq:defofforma}, $\bar \ysf$ solves \eqref{eq:statep} and $\bar\psf_{\T}$ solves \eqref{eq:adjointh}. We now explore an abstract estimate for the residual. Let $\Orara$ be a subdomain of $\Omega$ and $\wsf \in H_0^1(\rho^{-1},\Orara)$. An argument based on \cite[Remark 6.1]{MR2238754} and \cite[Section 5.2]{AGM} provides the estimate
\[
 | \langle \Res_{\psf} (\bar{\psf}_{\T}),\wsf \rangle | = |a(\wsf,\bar \psf - \bar{\psf}_{\T})| \leq \| \nabla ( \bar \psf - \bar{\psf}_{\T}) \|_{L^2(\rho,\Orara)}
  \| \nabla \wsf \|_{L^2(\rho^{-1},\Orara)}.
\]
Consequently,
\begin{equation}
\| \Res_{\psf} \|_{H_0^1(\rho^{-1},\Orara)'} \leq \| \nabla ( \bar \psf - \bar{\psf}_{\T}) \|_{L^2(\rho,\Orara)}.
\label{eq:estimate_Resp}
\end{equation}

We now utilize the residual estimation techniques developed in \cite{AGM}. We start by introducing a smooth function $\psi_S$ whose construction we owe to \cite{AGM}. In this setting, $\psi_S$ plays an analogous role to the one of the smooth function $\varphi_S$ defined by \eqref{eq:propertiesofetaS}. Given $S \in \Sides$, we recall that $\Ne_S = T \cup T'$ denotes the patch composed of the two elements of $\T$ sharing $S$. The construction of $\psi_S$ is as follows: we divide each edge of $T$ and $T'$ into four equal segments. The vertices of $S$ and the segments having a vertex in $S$ determine $n$ patches of adjacent simplices, which we denote by $\{\calP_i\}_{i=1}^n$. We now choose $\calP_0 \in \{\calP_i\}_{i=1}^n$ as the patch that is further away from $\calZ$. Notice that, first of all, $\calP_0 = T_{*} \cup T'_{*}$ with $T_{*} \subset T$ and $T'_{*} \subset T'$, respectively (see Figure \ref{Fig:bubble}). Moreover, owing to assumption \eqref{eq:patch}, we have that
\RR{\[
  D_T \lesssim \min_{z \in \calZ} \left\{ \min_{x \in T_*} |x-z| \right\}
  \mbox{ and }
    D_T \lesssim \min_{z \in \calZ} \left\{ \min_{x \in T'_*} |x-z| \right\}.
\]}
Scaling and translation arguments applied to a standard bubble function yield the existence of a smooth function $\psi_S$ with the properties
\begin{equation}
 \label{eq:propertiesofetaSdelta}
 \begin{dcases}
    \psi_S(z) = 0, \quad \forall z \in \calZ, & |S| \lesssim \int_{S} \psi_{S},  \\ 
    \supp  \psi_{S} \subset T_* \cup T'_* \subset \Ne_S,  &  
    \|\nabla^{k} \psi_{S} \|_{L^{\infty}(\Ne_S)} \lesssim h_{S}^{-k}, \ k=0,1,2,
 \end{dcases}
\end{equation}
see \cite{AGM} for details.
After these preparations we are ready to derive the local efficiency of $\E_{\psf}$.
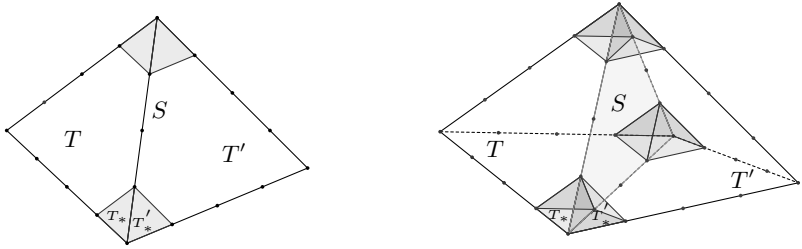
\begin{figure}[!h]
\begin{center}
\definecolor{tttttt}{rgb}{0.2,0.2,0.2}
\definecolor{uququq}{rgb}{0.25,0.25,0.25}
\begin{tikzpicture}[line cap=round,line join=round,>=triangle 45,x=.2cm,y=.25cm]
\clip(0.37,1.68) rectangle (25.13,14.14);
\fill[color=tttttt,fill=tttttt,fill opacity=0.1] (8,3.5) -- (10,2) -- (10.5,5) -- cycle;
\fill[color=tttttt,fill=tttttt,fill opacity=0.1] (10,2) -- (13,3) -- (10.5,5) -- cycle;
\fill[color=tttttt,fill=tttttt,fill opacity=0.1] (9.5,12.5) -- (11.5,11) -- (12,14) -- cycle;
\fill[color=tttttt,fill=tttttt,fill opacity=0.1] (11.5,11) -- (14.5,12) -- (12,14) -- cycle;
\draw (2,8)-- (12,14);
\draw (12,14)-- (10,2);
\draw (10,2)-- (2,8);
\draw (10,2)-- (22,6);
\draw (22,6)-- (12,14);
\draw [color=tttttt] (8,3.5)-- (10,2);
\draw [color=tttttt] (10,2)-- (10.5,5);
\draw [color=tttttt] (10.5,5)-- (8,3.5);
\draw [color=tttttt] (10,2)-- (13,3);
\draw [color=tttttt] (13,3)-- (10.5,5);
\draw [color=tttttt] (10.5,5)-- (10,2);
\draw [color=tttttt] (9.5,12.5)-- (11.5,11);
\draw [color=tttttt] (11.5,11)-- (12,14);
\draw [color=tttttt] (12,14)-- (9.5,12.5);
\draw [color=tttttt] (11.5,11)-- (14.5,12);
\draw [color=tttttt] (14.5,12)-- (12,14);
\draw [color=tttttt] (12,14)-- (11.5,11);
\draw (5.2,8.5) node[anchor=north west] {\small $T$};
\draw (15.7,7.8) node[anchor=north west] {\small $T'$};
\draw (11,10) node[anchor=north west] {\small $S$};
\draw (8.0,4.2) node[anchor=north west] {\tiny $T_{*}$};
\draw (9.7,4.3) node[anchor=north west] {\tiny $T_{*}^{'}$};
\begin{scriptsize}
\fill [color=black] (2,8) circle (0.7pt);
\fill [color=black] (12,14) circle (0.7pt);
\fill [color=black] (10,2) circle (0.7pt);
\fill [color=black] (7,11) circle (0.7pt);
\fill [color=uququq] (4.5,9.5) circle (0.7pt);
\fill [color=black] (9.5,12.5) circle (0.7pt);
\fill [color=black] (6,5) circle (0.7pt);
\fill [color=black] (4,6.5) circle (0.7pt);
\fill [color=black] (8,3.5) circle (0.7pt);
\fill [color=black] (11,8) circle (0.7pt);
\fill [color=black] (10.5,5) circle (0.7pt);
\fill [color=black] (11.5,11) circle (0.7pt);
\fill [color=black] (22,6) circle (0.7pt);
\fill [color=black] (16,4) circle (0.7pt);
\fill [color=black] (13,3) circle (0.7pt);
\fill [color=black] (19,5) circle (0.7pt);
\fill [color=black] (17,10) circle (0.7pt);
\fill [color=black] (19.5,8) circle (0.7pt);
\fill [color=black] (14.5,12) circle (0.7pt);
\end{scriptsize}
\end{tikzpicture}
\definecolor{zzzzzz}{rgb}{0.6,0.6,0.6}
\definecolor{uququq}{rgb}{0.25,0.25,0.25}
\definecolor{tttttt}{rgb}{0.2,0.2,0.2}
\begin{tikzpicture}[line cap=round,line join=round,>=triangle 45,x=0.34cm,y=0.34cm]
\clip(0.04,0.41) rectangle (19.56,10.5);
\fill[color=tttttt,fill=tttttt,fill opacity=0.1] (11.12,4.83) -- (10.59,6.12) -- (8.84,4.87) -- cycle;
\fill[color=tttttt,fill=tttttt,fill opacity=0.1] (10.59,6.12) -- (11.12,4.83) -- (12.34,4.37) -- cycle;
\fill[color=tttttt,fill=tttttt,fill opacity=0.1] (8.84,4.87) -- (10.09,3.87) -- (10.59,6.12) -- cycle;
\fill[color=tttttt,fill=tttttt,fill opacity=0.1] (10.09,3.87) -- (12.34,4.37) -- (10.59,6.12) -- cycle;
\fill[color=tttttt,fill=tttttt,fill opacity=0.1] (7.25,8.75) -- (9.53,8.71) -- (9,10) -- cycle;
\fill[color=tttttt,fill=tttttt,fill opacity=0.1] (7.25,8.75) -- (8.5,7.75) -- (9,10) -- cycle;
\fill[color=tttttt,fill=tttttt,fill opacity=0.1] (8.5,7.75) -- (9.53,8.71) -- (9,10) -- cycle;
\fill[color=tttttt,fill=tttttt,fill opacity=0.1] (8.5,7.75) -- (10.75,8.25) -- (9,10) -- cycle;
\fill[color=tttttt,fill=tttttt,fill opacity=0.1] (9.53,8.71) -- (10.75,8.25) -- (9,10) -- cycle;
\fill[color=tttttt,fill=tttttt,fill opacity=0.1] (5.75,2) -- (8.03,1.96) -- (7.5,3.25) -- cycle;
\fill[color=tttttt,fill=tttttt,fill opacity=0.1] (5.75,2) -- (7,1) -- (7.5,3.25) -- cycle;
\fill[color=tttttt,fill=tttttt,fill opacity=0.1] (7,1) -- (9.25,1.5) -- (7.5,3.25) -- cycle;
\fill[color=tttttt,fill=tttttt,fill opacity=0.1] (8.03,1.96) -- (9.25,1.5) -- (7,1) -- cycle;
\fill[color=tttttt,fill=tttttt,fill opacity=0.1] (7,1) -- (8.03,1.96) -- (7.5,3.25) -- cycle;
\fill[color=zzzzzz,fill=zzzzzz,fill opacity=0.1] (7,1) -- (11.12,4.83) -- (9,10) -- cycle;
\draw (2,5)-- (7,1);
\draw [dash pattern=on 1pt off 1pt] (7,1)-- (11.12,4.83);
\draw [dash pattern=on 1pt off 1pt] (11.12,4.83)-- (9,10);
\draw (9,10)-- (7,1);
\draw (9,10)-- (2,5);
\draw [dash pattern=on 1pt off 1pt] (2,5)-- (11.12,4.83);
\draw [dash pattern=on 1pt off 1pt] (11.12,4.83)-- (16,3);
\draw (16,3)-- (7,1);
\draw (9,10)-- (16,3);
\draw [color=tttttt] (11.12,4.83)-- (10.59,6.12);
\draw [color=tttttt] (10.59,6.12)-- (8.84,4.87);
\draw [color=tttttt] (8.84,4.87)-- (11.12,4.83);
\draw [color=tttttt] (10.59,6.12)-- (11.12,4.83);
\draw [color=tttttt] (11.12,4.83)-- (12.34,4.37);
\draw [color=tttttt] (12.34,4.37)-- (10.59,6.12);
\draw [color=tttttt] (8.84,4.87)-- (10.09,3.87);
\draw [color=tttttt] (10.09,3.87)-- (10.59,6.12);
\draw [color=tttttt] (10.59,6.12)-- (8.84,4.87);
\draw [color=tttttt] (10.09,3.87)-- (12.34,4.37);
\draw [color=tttttt] (12.34,4.37)-- (10.59,6.12);
\draw [color=tttttt] (10.59,6.12)-- (10.09,3.87);
\draw [color=tttttt] (7.25,8.75)-- (9.53,8.71);
\draw [color=tttttt] (9.53,8.71)-- (9,10);
\draw [color=tttttt] (9,10)-- (7.25,8.75);
\draw [color=tttttt] (7.25,8.75)-- (8.5,7.75);
\draw [color=tttttt] (8.5,7.75)-- (9,10);
\draw [color=tttttt] (9,10)-- (7.25,8.75);
\draw [color=tttttt] (8.5,7.75)-- (9.53,8.71);
\draw [color=tttttt] (9.53,8.71)-- (9,10);
\draw [color=tttttt] (9,10)-- (8.5,7.75);
\draw [color=tttttt] (8.5,7.75)-- (10.75,8.25);
\draw [color=tttttt] (10.75,8.25)-- (9,10);
\draw [color=tttttt] (9,10)-- (8.5,7.75);
\draw [color=tttttt] (9.53,8.71)-- (10.75,8.25);
\draw [color=tttttt] (10.75,8.25)-- (9,10);
\draw [color=tttttt] (9,10)-- (9.53,8.71);
\draw (3.45,5) node[anchor=north west] {\small $T$};
\draw (13.0,3.9) node[anchor=north west] {\small $T'$};
\draw (8.3,6.82) node[anchor=north west] {\small $S$};
\draw (5.85,2.3) node[anchor=north west] {\tiny $T_{*}$};
\draw (7.5,2.6) node[anchor=north west] {\tiny $T_{*}^{'}$};
\draw [color=tttttt] (5.75,2)-- (8.03,1.96);
\draw [color=tttttt] (8.03,1.96)-- (7.5,3.25);
\draw [color=tttttt] (7.5,3.25)-- (5.75,2);
\draw [color=tttttt] (5.75,2)-- (7,1);
\draw [color=tttttt] (7,1)-- (7.5,3.25);
\draw [color=tttttt] (7.5,3.25)-- (5.75,2);
\draw [color=tttttt] (7,1)-- (9.25,1.5);
\draw [color=tttttt] (9.25,1.5)-- (7.5,3.25);
\draw [color=tttttt] (7.5,3.25)-- (7,1);
\draw [color=tttttt] (8.03,1.96)-- (9.25,1.5);
\draw [color=tttttt] (9.25,1.5)-- (7,1);
\draw [color=tttttt] (7,1)-- (8.03,1.96);
\draw [color=tttttt] (7,1)-- (8.03,1.96);
\draw [color=tttttt] (8.03,1.96)-- (7.5,3.25);
\draw [color=tttttt] (7.5,3.25)-- (7,1);
\draw [color=zzzzzz] (7,1)-- (11.12,4.83);
\draw [color=zzzzzz] (11.12,4.83)-- (9,10);
\draw [color=zzzzzz] (9,10)-- (7,1);
\begin{scriptsize}
\fill [color=tttttt] (2,5) circle (0.7pt);
\fill [color=tttttt] (7,1) circle (0.7pt);
\fill [color=tttttt] (11.12,4.83) circle (0.7pt);
\fill [color=tttttt] (9,10) circle (0.7pt);
\fill [color=tttttt] (16,3) circle (0.7pt);
\fill [color=tttttt] (4.5,3) circle (0.7pt);
\fill [color=uququq] (5.75,2) circle (0.7pt);
\fill [color=tttttt] (3.25,4) circle (0.7pt);
\fill [color=uququq] (6.56,4.91) circle (0.7pt);
\fill [color=tttttt] (4.28,4.96) circle (0.7pt);
\fill [color=uququq] (8.84,4.87) circle (0.7pt);
\fill [color=uququq] (10.06,7.41) circle (0.7pt);
\fill [color=uququq] (9.53,8.71) circle (0.7pt);
\fill [color=uququq] (10.59,6.12) circle (0.7pt);
\fill [color=uququq] (12.5,6.5) circle (0.7pt);
\fill [color=uququq] (14.25,4.75) circle (0.7pt);
\fill [color=uququq] (10.75,8.25) circle (0.7pt);
\fill [color=uququq] (11.5,2) circle (0.7pt);
\fill [color=uququq] (13.75,2.5) circle (0.7pt);
\fill [color=tttttt] (9.25,1.5) circle (0.7pt);
\fill [color=uququq] (9.06,2.91) circle (0.7pt);
\fill [color=uququq] (8.03,1.96) circle (0.7pt);
\fill [color=uququq] (10.09,3.87) circle (0.7pt);
\fill [color=uququq] (8,5.5) circle (0.7pt);
\fill [color=uququq] (8.5,7.75) circle (0.7pt);
\fill [color=uququq] (5.5,7.5) circle (0.7pt);
\fill [color=uququq] (3.75,6.25) circle (0.7pt);
\fill [color=uququq] (7.25,8.75) circle (0.7pt);
\fill [color=uququq] (13.56,3.91) circle (0.7pt);
\fill [color=uququq] (14.78,3.46) circle (0.7pt);
\fill [color=uququq] (12.34,4.37) circle (0.7pt);
\fill [color=uququq] (7.5,3.25) circle (0.7pt);
\end{scriptsize}
\end{tikzpicture}
\end{center}
\caption{\AAF{Simplices $T$ and $T'$, that share the side $S$, with their sub-simplices $T_{*}$ and $T_{*}^{'}$, respectively, obtained after dividing their edges into four equal segments, in two dimensions (left) and three dimensions (right).}}
\label{Fig:bubble}
\end{figure}
\begin{lemma}[local efficiency of $\E_{\psf}$] 
Let $(\bar{\usf},\bar{\ysf},\bar{\psf}) \in L^2(\Omega) \times H_0^1(\Omega) \times H_0^1(\rho,\Omega)$ be the solution to the optimality system \eqref{eq:statep}--\eqref{eq:adjp2} and $(\bar{\usf}_{\T},\bar{\ysf}_{\T},\bar{\psf}_{\T}) \in  \U_{\textrm{ad}}(\T) \times \V(\T) \times \V(\T)$ be its numerical approximation defined \AJS{by \eqref{eq:abstateh}--\eqref{eq:adjointh}. If $\alpha \in \mathbf{I}$}, then
\begin{equation}
\E_{\psf} (\bar{\psf}_{\T},\bar{\ysf}_{\T}; T) \lesssim \| \nabla( \bar{\psf} - \bar{\psf}_{\T}) \|_{L^{2}(\rho,\Ne_T)} 
+ \#\left(T \cap \calZ \right) h_T^{ \frac{\alpha}{2} + 1 - \frac{n}{2} } \|  \bar{\ysf} - \bar{\ysf}_{\T} \|_{L^{\infty}(T)},
\label{eq:efficiencyp}
\end{equation}
where the hidden constant is independent of the optimal variables, their approximations, the size of the elements in the mesh $\T$ and $\#\T$. 
\end{lemma}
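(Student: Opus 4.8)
The plan is to follow the residual-based efficiency argument of \cite[Section 5.2]{AGM}, adapting it to account for the fact that the discrete adjoint $\bar{\psf}_{\T}$ is built from the values $\bar{\ysf}_{\T}(z)$ rather than $\bar{\ysf}(z)$. We shall use the residual $\Res_{\psf}=\Res_{\psf}(\bar{\psf}_{\T})\in H^1_0(\rho^{-1},\Omega)'$, which by definition satisfies $\langle\Res_{\psf},\wsf\rangle = a(\wsf,\bar{\psf}-\bar{\psf}_{\T}) = \sum_{z\in\calZ}(\bar{\ysf}(z)-\ysf_z)\delta_z(\wsf) - a(\wsf,\bar{\psf}_{\T})$, together with the abstract bound \eqref{eq:estimate_Resp}: for any subdomain $\Orara\subset\Omega$ and any $\wsf\in H^1_0(\rho^{-1},\Orara)$ one has $|\langle\Res_{\psf},\wsf\rangle|\le\|\nabla(\bar{\psf}-\bar{\psf}_{\T})\|_{L^2(\rho,\Orara)}\,\|\nabla\wsf\|_{L^2(\rho^{-1},\Orara)}$. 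Recalling the definition \eqref{eq:defofEp}, we bound $\E_{\psf}(\bar{\psf}_{\T},\bar{\ysf}_{\T};T)$ by $\big(h_T D_T^{\alpha}\|\llbracket\nu\cdot\nabla\bar{\psf}_{\T}\rrbracket\|^2_{L^2(\partial T\setminus\partial\Omega)}\big)^{1/2}$ plus $\big(\sum_{z\in\calZ\cap T}h_T^{\alpha+2-n}|\bar{\ysf}_{\T}(z)-\ysf_z|^2\big)^{1/2}$ and estimate the two contributions separately; by assumption \eqref{eq:patch} the second sum contains at most one term.

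\emph{Jump term.} For every internal side $S$ of $T$ the quantity $c_S:=\llbracket\nu\cdot\nabla\bar{\psf}_{\T}\rrbracket|_S$ is constant, because $\bar{\psf}_{\T}$ is affine on each element, so we test $\Res_{\psf}$ with $\wsf_S=c_S\psi_S$, where $\psi_S$ is the bubble of \eqref{eq:propertiesofetaSdelta}. Since $\psi_S(z)=0$ for every $z\in\calZ$ the Dirac contribution to $\langle\Res_{\psf},\wsf_S\rangle$ vanishes, and an integration by parts over the two elements sharing $S$ (again using elementwise affinity of $\bar{\psf}_{\T}$) reduces $a(\wsf_S,\bar{\psf}_{\T})$ to $\pm c_S^2\int_S\psi_S$, which by \eqref{eq:propertiesofetaSdelta} is bounded below by a multiple of $c_S^2|S|$. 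On the other hand, the sub-patch $\calP_0=T_*\cup T'_*$ that supports $\psi_S$ satisfies $\textup{dist}(T_*\cup T'_*,\calZ)\gtrsim D_T$, hence $\rho\gtrsim D_T^{\alpha}$ on $\supp\psi_S$; combining this with the scaling bounds in \eqref{eq:propertiesofetaSdelta} gives $\|\nabla\wsf_S\|_{L^2(\rho^{-1},\Ne_S)}\lesssim |c_S|\,D_T^{-\alpha/2}h_S^{(n-2)/2}$. Feeding both estimates into \eqref{eq:estimate_Resp} with $\Orara=\Ne_S$, and recalling $|S|\approx h_S^{n-1}\approx h_T^{n-1}$, yields $h_T D_T^{\alpha}\|\llbracket\nu\cdot\nabla\bar{\psf}_{\T}\rrbracket\|^2_{L^2(S)}\lesssim\|\nabla(\bar{\psf}-\bar{\psf}_{\T})\|^2_{L^2(\rho,\Ne_S)}$. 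Summing over the internal sides of $T$ and using the finite overlap of the patches $\{\Ne_S\}$ controls the jump term by $\|\nabla(\bar{\psf}-\bar{\psf}_{\T})\|^2_{L^2(\rho,\Ne_T)}$.

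\emph{Dirac term.} Let $z$ be the unique observable point in $\calZ\cap T$, if any, and set $g_z=\bar{\ysf}(z)-\ysf_z$. The triangle inequality gives $|\bar{\ysf}_{\T}(z)-\ysf_z|\le|g_z|+\|\bar{\ysf}-\bar{\ysf}_{\T}\|_{L^{\infty}(T)}$, and multiplying by $h_T^{\frac{\alpha}{2}+1-\frac{n}{2}}$ the second summand produces exactly the term $\#(T\cap\calZ)\,h_T^{\frac{\alpha}{2}+1-\frac{n}{2}}\|\bar{\ysf}-\bar{\ysf}_{\T}\|_{L^{\infty}(T)}$ of \eqref{eq:efficiencyp}. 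It thus remains to show $h_T^{\alpha+2-n}|g_z|^2\lesssim\|\nabla(\bar{\psf}-\bar{\psf}_{\T})\|^2_{L^2(\rho,\Ne_T)}$. Here $g_z$ is precisely the magnitude of the Dirac mass carried by the continuous adjoint \eqref{eq:adjp2}; writing $\bar{\psf}$ as $g_z$ times the fundamental solution centered at $z$ plus a part that is harmonic (hence smooth) near $z$, and using that $\nabla\bar{\psf}_{\T}$ is constant on $T$, one sees that no affine function can cancel the singular gradient in the weighted $L^2(\rho,T)$ norm: a scaling computation gives $\inf_{c}\|\nabla\bar{\psf}-c\|_{L^2(\rho,T)}\gtrsim|g_z|h_T^{(\alpha+2-n)/2}$ (the infimum being bounded below by a positive constant depending only on $\sigma$), while the smooth remainder contributes a term of higher order in $h_T$. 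This is, after rescaling, the local lower bound of \cite[Theorem 5.3]{AGM} for point sources. Collecting the two contributions yields \eqref{eq:efficiencyp}.

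The main obstacle is the last estimate of the Dirac term. Extracting the magnitude $|g_z|$ requires either this asymptotic comparison—whose control of the smooth remainder forces, as in \cite{AGM}, a mild fineness of the mesh near $\calZ$ and a hidden constant depending on $d_\calZ$—or, alternatively, a test function that simultaneously equals one at $z$, is supported inside $\Ne_T$, lies in $H^1_0(\rho^{-1},\Ne_T)$ with the sharp weighted-gradient scaling $\|\nabla(\cdot)\|_{L^2(\rho^{-1},\Ne_T)}\lesssim h_T^{(n-2-\alpha)/2}$ (where the singular weight $\rho^{-1}$, behaving like $\distz^{-\alpha}$ near $z$, and the restriction $\alpha<n$ enter delicately), and whose energy pairing with $\bar{\psf}_{\T}$ can be reabsorbed into the jump contributions already estimated. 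The bookkeeping is particularly subtle when $z$ lies on a mesh face or coincides with a mesh vertex, which is exactly where assumption \eqref{eq:patch} is invoked to keep the relevant sub-patches away from $\calZ$.
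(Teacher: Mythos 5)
Your treatment of the jump term is correct and coincides with the paper's argument: testing the residual with $\llbracket \nu\cdot\nabla \bar{\psf}_{\T}\rrbracket\psi_S$, using that $\psi_S$ vanishes at every $z\in\calZ$ and that $\rho\gtrsim D_T^{\alpha}$ on $\supp\psi_S$, gives exactly \eqref{eq:Ep_first}. The triangle-inequality splitting of the Dirac term is also the paper's first step. The gap is in your main argument for the remaining bound $h_T^{\frac{\alpha}{2}+1-\frac{n}{2}}|g_z|\lesssim \|\nabla(\bar{\psf}-\bar{\psf}_{\T})\|_{L^2(\rho,\Ne_T)}+\cdots$ with $g_z=\bar{\ysf}(z)-\ysf_z$. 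The inequality $\inf_{c}\|\nabla\bar{\psf}-c\|_{L^2(\rho,T)}\gtrsim |g_z|\,h_T^{(\alpha+2-n)/2}$ cannot be obtained with constants independent of the optimal variables: because $\alpha+2-n>0$, the weighted mass of the gradient of the fundamental solution is carried by the scale $r\approx h_T$ rather than by the singularity, so separating it from the harmonic remainder $w$ costs a term of size $\|\nabla w\|_{L^\infty(T)}h_T^{(\alpha+n)/2}$, and absorbing that term requires $\|\nabla w\|_{L^\infty(T)}h_T^{\,n-1}\lesssim |g_z|$ --- a condition that fails when $|g_z|$ is small and that in any case involves the unknown solution (through $w$, which collects the boundary correction and the other Dirac contributions). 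You flag this yourself by invoking a mesh-fineness hypothesis and a solution-dependent constant, but the lemma carries no such hypotheses and explicitly asserts that the hidden constant is independent of the optimal variables, so this route does not prove the statement. (The lower bound on the infimum over constants, uniform in the position of $z$ inside a shape-regular $T$, is also asserted rather than proved.)

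The ``alternative'' you sketch at the end is precisely the paper's proof, and it closes the gap cleanly: one constructs a cut-off $\chi$ with $\chi(z)=1$, $\supp\chi\subset\Ne_T$, $\|\chi\|_{L^2(S)}\lesssim h_T^{(n-1)/2}$ and $\|\nabla\chi\|_{L^2(\rho^{-1},\Ne_T)}\lesssim h_T^{(n-2-\alpha)/2}$, uses the weak formulation \eqref{eq:adjp2} to write the exact identity $\bar{\ysf}(z)-\ysf_z=a(\chi,\bar{\psf})$ (assumption \eqref{eq:patch} guaranteeing that only one point of $\calZ$ meets $\Ne_T$), and splits $a(\chi,\bar{\psf})=a(\chi,\bar{\psf}-\bar{\psf}_{\T})+a(\chi,\bar{\psf}_{\T})$; the first term is bounded by the weighted duality pairing, and the second integrates by parts into jump terms that are reabsorbed via \eqref{eq:Ep_first}. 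This avoids any reference to the fundamental solution and produces constants depending only on $\sigma$ and $\alpha$. You identified the right tool but did not carry it out; as written, the proof of the Dirac-term bound is incomplete.
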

\begin{proof}
Let $T \in \T$ and $S \in \Sides$ be a side of $T$. We will estimate each one of the terms that comprise definition \eqref{eq:defofEp} separately.

We begin with the term $h_T D_T^{\alpha} \| \llbracket \nu\cdot\nabla \bar{\psf}_{\T} \rrbracket\|^2_{L^2(\partial T\setminus \partial \Omega)}$. If $\psi_S$ denotes the smooth function defined by \eqref{eq:propertiesofetaSdelta}, then 
\[
 \| \llbracket \nu\cdot\nabla \bar{\psf}_{\T} \rrbracket \|^2_{L^2(S)} \lesssim \int_{S} \llbracket \nu\cdot\nabla \bar{\psf}_{\T} \rrbracket^2 \psi_S = \int_{S} \llbracket \nu\cdot\nabla \bar{\psf}_{\T} \rrbracket \phi_S,
\]
where $\phi_S = \llbracket \nu\cdot\nabla \bar{\psf}_{\T} \rrbracket \psi_S$. Since $\phi_S(z) = 0$ for all $z \in \calZ$ and $\supp \phi_{S} \subset T_* \cup T'_* \subset \Ne_S$, we invoke the definition of $\Res_{\psf}$ and estimate the right hand side of the previous identity as follows:
\begin{align*}
\int_{S} \llbracket \nu\cdot \nabla \bar{\psf}_{\T} \rrbracket \phi_S = \langle \Res_{\psf}, \phi_S \rangle & \leq \| \Res_{\psf} \|_{H_0^1(\rho^{-1},\Ne_S)'} \| \nabla \phi_S \|_{L^2(\rho^{-1},\Ne_S)} 
 \\
 &\lesssim 
 h_T^{-1/2} D_T^{-\frac{\alpha}{2}} \| \nabla ( \bar \psf - \bar{\psf}_{\T}) \|_{L^2(\rho,\Ne_S)} \| \llbracket \nu\cdot\nabla \bar{\psf}_{\T} \rrbracket \|_{L^2(S)},
\end{align*}
where we have used \eqref{eq:estimate_Resp} and \cite[equation (5.9)]{AGM}. The previous computations yield the estimate
\begin{equation}
\label{eq:Ep_first}
h_T D_T^{\alpha} \| \llbracket \nu \cdot \nabla \bar{\psf}_{\T} \rrbracket\|^2_{L^2(S)} \lesssim \| \nabla ( \bar \psf - \bar{\psf}_{\T}) \|^2_{L^2(\rho,\Ne_S)}.
\end{equation}

From \eqref{eq:patch} it follows that $T \cap \calZ$ is either empty or consists of exactly one point. If $T \cap \calZ = \emptyset$, then estimate \eqref{eq:Ep_first} immediately yields \eqref{eq:efficiencyp}. If $T \cap \calZ = \{ z \}$, then the local indicator $\E^2_{\psf}$, defined in \eqref{eq:defofEp}, contains the term
\[
h_T^{\alpha + 2 - n} |\bar{\ysf}_{\T}(z) - \ysf_z|^2.
\]
To control this term, we follow the arguments developed in the proof of \cite[Theorem 5.3]{AGM}, which yield the existence of a smooth function $\chi$ such that 
\begin{equation}
\label{eq:chi}
\chi(z) = 1, \quad \| \chi \|_{L^{\infty}(\Omega)} = 1, \quad \| \nabla \chi \|_{L^{\infty}(\Omega)} = h_T^{-1}, \quad \supp \chi \subset \Ne_T.
\end{equation}
In addition, $\| \chi \|_{L^2(S)} \lesssim h_{T}^{\frac{n-1}{2}}$ and $\| \nabla \chi \|_{L^2(\rho^{-1},\Ne_{T})} \lesssim h_T^{\frac{n-2}{2} - \frac{\alpha}{2}}$. With these elements we proceed to control $h_T^{\alpha + 2 - n}|\bar{\ysf}_{\T}(z) - \ysf_z|^2$ as follows. We start with a simple application of the triangle inequality: 
\begin{equation}
\label{eq:aux1}
h_T^{\frac{\alpha}{2} + 1 - \frac{n}{2}}| \bar{\ysf}_{\T}(z) - \ysf_z | \leq 
h_T^{\frac{\alpha}{2} + 1 - \frac{n}{2}} | \bar{\ysf}_{\T}(z) - \bar{\ysf}(z) | + 
h_T^{\frac{\alpha}{2} + 1 - \frac{n}{2}}| \bar{\ysf}(z) - \ysf_z|. 
\end{equation}
The first term is bounded by $h_T^{\frac{\alpha}{2} + 1 - \frac{n}{2}} \|  \bar{\ysf} - \bar{\ysf}_{\T} \|_{L^{\infty}(T)}$. To control the second term we 
employ that $\chi(z) = 1$, $\supp \chi \subset \Ne_T$ and integration by parts. Since $\bar{\psf}$ solves \eqref{eq:adjp2}, we obtain
\begin{align*}
 | \bar{\ysf}(z) - \ysf_z | & = |\left(\bar{\ysf}(z) - \ysf_z\right)\chi(z)| = |a(\chi,\bar{\psf})| \leq  |a(\chi,\bar{\psf} - \bar{\psf}_{\T})| + |a(\chi,\bar{\psf}_{\T})|
 \\
 & \leq \|\nabla(\bar{\psf} - \bar{\psf}_{\T}) \|_{L^2(\rho,\Ne_{T})} \| \nabla \chi \|_{L^2(\rho^{-1},\Ne_{T})} 
\\ 
& + \sum_{S\in\Sides:\,S \subset \Ne_T\setminus\partial\Ne_T} \| \llbracket \nu\cdot \nabla \bar{\psf}_{\T} \rrbracket \|_{L^2(S)} \| \chi \|_{L^2(S)}.
\end{align*}
The previous estimate, combined with the properties satisfied by $\chi$, allows us to derive
\begin{equation}
\label{eq:aux3}
  \begin{aligned}
    | \bar{\ysf}(z) - \ysf_z |  &\lesssim h_T^{\frac{n-2}{2} - \frac{\alpha}{2}} \|\nabla(\bar{\psf} - \bar{\psf}_{\T}) \|_{L^2(\rho,\Ne_{T})}  
    + \sum_{S\in\Sides:\,S \subset \Ne_T\setminus\partial\Ne_T} h_{T}^{\frac{n-1}{2}} \| \llbracket \nu\cdot\nabla \bar{\psf}_{\T} \rrbracket\|_{L^2(S)} \\
    & \lesssim h_T^{\frac{n-2}{2} - \frac{\alpha}{2}} \Bigg{(}
      \|\nabla(\bar{\psf} - \bar{\psf}_{\T}) \|_{L^2(\rho,\Ne_{T})} \\
    &+ \sum_{T'\in\T:\,T'\subset\Ne_T}\sum_{\RR{S\in\Sides_{T'}:\,S \not\subset\partial\Ne_T}} h_{T'}^{1/2} D_{T'}^{\alpha/2} \| \llbracket \nu\cdot\nabla \bar{\psf}_{\T} \rrbracket\|_{L^2(S)}
    \Bigg{)}.
  \end{aligned}
\end{equation}
In light of \eqref{eq:aux3}, equation \eqref{eq:aux1} yields
\begin{align*}
 h_T^{\alpha + 2 - n}|\bar{\ysf}_{\T}(z) - \ysf_z|^2  & \lesssim h_T^{ \alpha + 2 - n } \|  \bar{\ysf} - \bar{\ysf}_{\T} \|_{L^{\infty}(T)}^2 + \|\nabla(\bar{\psf} - \bar{\psf}_{\T}) \|_{L^2(\rho,\Ne_{T})}^2
 \\
& + \sum_{T'\in\T:\,T'\subset\Ne_T}\sum_{\RR{S\in\Sides_{T'}:\,S \not\subset\partial\Ne_T}} h_{T'} D_{T'}^{\alpha} \| \llbracket \nu \cdot \nabla \bar{\psf}_{\T} \rrbracket\|^2_{L^2(S)},
\end{align*}
which, via an application of \eqref{eq:Ep_first}, yields \eqref{eq:efficiencyp} and concludes the proof.
\end{proof}

\begin{remark}[range of $\alpha$] \rm
\label{rk:alpha}
Notice that, since $\alpha \in \textbf{I}$, we have that
\[
  \alpha + 2 -n >0,
\]
so that \eqref{eq:efficiencyp} is indeed an efficiency bound. 
\end{remark}

We now conclude with the global efficiency of the error estimator $\E_{\textrm{ocp}}$ defined in \eqref{eq:defofEocp}. To derive such a result, we define
\[
 \textrm{osc}_{\T}(\fsf,\T) = \max_{T \in \T} h_{T}^2\| \fsf - \mathcal{P}_{\T} \fsf \|_{L^{\infty}(T)}.
\]

\begin{theorem}[global efficiency of $\E_{\textrm{ocp}}$] 
\RR{Let} $(\bar{\usf},\bar{\ysf},\bar{\psf}) \in L^2(\Omega) \times H_0^1(\Omega) \times H_0^1(\rho,\Omega)$ \RR{be} the solution to the optimality system \eqref{eq:statep}--\eqref{eq:adjp2} and $(\bar{\usf}_{\T},\bar{\ysf}_{\T},\bar{\psf}_{\T}) \in  \U_{\textrm{ad}}(\T) \times \V(\T) \times \V(\T)$ \RR{be} its numerical approximation defined \AJS{by \eqref{eq:abstateh}--\eqref{eq:adjointh}. If $\alpha \in (n-2,2)$}, then 
\begin{equation}
\label{eq:efficiencyglobal}
\begin{aligned}
\E_{\textrm{ocp}} (\bar{\ysf}_{\T},\bar{\psf}_{\T},\bar{\usf}_{\T}; \T) & \lesssim \|  \bar{\ysf} - \bar{\ysf}_{\T} \|_{L^{\infty}(\Omega)} + \| \nabla( \bar{\psf} - \bar{\psf}_{\T}) \|_{L^{2}(\rho,\Omega)} 
\\
& + \| \bar{\usf} - \bar{\usf}_{\T}\|_{L^2(\Omega)} + \mathrm{osc}_{\T}(\fsf,\T),
\end{aligned}
\end{equation}
where the hidden constant is independent of the size of the elements in the mesh $\T$ and $\#\T$ but depends on linearly on $\sqrt{\#\calZ}$ and $\diam(\Omega)^{\frac\alpha2+1-\frac{n}2}$.
\end{theorem}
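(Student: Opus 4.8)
The plan is to bound each of the three contributions to $\E_{\textrm{ocp}}$ separately and then combine them, exploiting that $\E_{\textrm{ocp}}^2 = \E_{\ysf}^2 + \E_{\psf}^2 + \E_{\usf}^2$ by \eqref{eq:defofEocp}. The terms $\E_{\ysf}$ and $\E_{\psf}$ are dealt with by aggregating the local efficiency estimates \eqref{eq:efficiencyy} and \eqref{eq:efficiencyp} already proved, while the control term $\E_{\usf}$ needs a separate argument based on the projection formula \eqref{eq:Pi} and a stability/Poincar\'e bound.

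First I would treat $\E_{\ysf}$. Since $\E_{\ysf}(\bar{\ysf}_{\T},\bar{\usf}_{\T};\T) = \max_{T\in\T}\E_{\ysf}(\bar{\ysf}_{\T},\bar{\usf}_{\T};T)$, the local efficiency bound \eqref{eq:efficiencyy} applies termwise; because $2-n/2>0$ we have $h_T^{2-n/2}\le\diam(\Omega)^{2-n/2}$, shape regularity gives $h_T\approx h_{T'}$ for $T'\subset\Ne_T$, and each patch overlaps a bounded number of others, so taking the maximum over $T$ yields
\[
  \E_{\ysf}(\bar{\ysf}_{\T},\bar{\usf}_{\T};\T) \lesssim \|\bar{\ysf}-\bar{\ysf}_{\T}\|_{L^\infty(\Omega)} + \|\bar{\usf}-\bar{\usf}_{\T}\|_{L^2(\Omega)} + \textrm{osc}_{\T}(\fsf,\T).
\]
Next, for $\E_{\psf}$ I would square \eqref{eq:efficiencyp}, use $\#(T\cap\calZ)\in\{0,1\}$ from \eqref{eq:patch}, sum over $T\in\T$, and invoke the finite overlap of $\{\Ne_T\}$ to obtain
\[
  \E_{\psf}^2(\bar{\psf}_{\T},\bar{\ysf}_{\T};\T) \lesssim \|\nabla(\bar{\psf}-\bar{\psf}_{\T})\|_{L^2(\rho,\Omega)}^2 + \Big(\sum_{T\in\T}\#(T\cap\calZ)\,h_T^{\alpha+2-n}\Big)\|\bar{\ysf}-\bar{\ysf}_{\T}\|_{L^\infty(\Omega)}^2.
\]
By Remark \ref{rk:alpha}, $\alpha+2-n>0$, so $h_T^{\alpha+2-n}\le\diam(\Omega)^{\alpha+2-n}$, and $\sum_{T}\#(T\cap\calZ)=\sum_{z\in\calZ}\#\{T\in\T:z\in\bar T\}\lesssim\#\calZ$ by shape regularity; taking square roots gives the dependence on $\sqrt{\#\calZ}$ and $\diam(\Omega)^{\frac{\alpha}2+1-\frac{n}2}$ stated in the theorem.

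Finally, and this is the only step that is not a mere aggregation of earlier results, I would bound $\E_{\usf}(\bar{\usf}_{\T},\bar{\psf}_{\T};\T)=\|\bar{\usf}_{\T}-\Pi(-\tfrac1\lambda\bar{\psf}_{\T})\|_{L^2(\Omega)}$ by inserting $\pm\bar{\usf}$ and using the projection formula $\bar{\usf}=\Pi(-\tfrac1\lambda\bar{\psf})$ together with the fact that $\Pi$, being a pointwise metric projection onto $[\asf,\bsf]$, is nonexpansive on $L^2(\Omega)$:
\[
  \E_{\usf}(\bar{\usf}_{\T},\bar{\psf}_{\T};\T) \le \|\bar{\usf}-\bar{\usf}_{\T}\|_{L^2(\Omega)} + \tfrac1\lambda\|\bar{\psf}-\bar{\psf}_{\T}\|_{L^2(\Omega)}.
\]
Since $\alpha\in(n-2,2)$, the weighted Poincar\'e inequality of Lemma \ref{lem:restalphagen} gives $\|\bar{\psf}-\bar{\psf}_{\T}\|_{L^2(\Omega)}\lesssim\|\nabla(\bar{\psf}-\bar{\psf}_{\T})\|_{L^2(\rho,\Omega)}$, hence $\E_{\usf}\lesssim\|\bar{\usf}-\bar{\usf}_{\T}\|_{L^2(\Omega)}+\|\nabla(\bar{\psf}-\bar{\psf}_{\T})\|_{L^2(\rho,\Omega)}$ with a constant depending on $\lambda$. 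Substituting the three bounds into $\E_{\textrm{ocp}}^2=\E_{\ysf}^2+\E_{\psf}^2+\E_{\usf}^2$ yields \eqref{eq:efficiencyglobal}. There is no real obstacle here once the local efficiency lemmas are in hand; the only point requiring care is the constant bookkeeping, namely ensuring that the exponents $2-n/2$ and $\alpha+2-n$ are nonnegative so that $h_T$ can be absorbed into $\diam(\Omega)$ — this is precisely where the hypothesis $\alpha\in(n-2,2)$ and Remark \ref{rk:alpha} are used — and correctly counting the number of elements that contain a given observation point $z\in\calZ$.
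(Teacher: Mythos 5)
Your proposal is correct and follows essentially the same route as the paper's proof: aggregate the local efficiency bounds for $\E_{\ysf}$ and $\E_{\psf}$ (absorbing $h_T^{2-n/2}$ and $h_T^{\alpha+2-n}$ into powers of $\diam(\Omega)$ and counting elements meeting $\calZ$ to get the $\sqrt{\#\calZ}$ dependence), and handle $\E_{\usf}$ by inserting $\bar{\usf}=\Pi(-\tfrac1\lambda\bar{\psf})$, using the Lipschitz continuity of $\Pi$ and the weighted Poincar\'e inequality of Lemma~\ref{lem:restalphagen}. The only cosmetic difference is that you track the $1/\lambda$ factor explicitly, which the paper suppresses.
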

\begin{proof}
We start invoking the definition of the global pointwise indicator $\E_{\ysf}$ given by \eqref{eq:defofEyg} and the local efficiency estimate \eqref{eq:efficiencyy} to arrive at
\[
 \E_{\ysf}(\bar{\ysf}_{\T},\bar{\usf}_{\T}; \T) \lesssim \| \bar{\ysf} -\bar{\ysf}_{\T} \|_{L^{\infty}(\Omega)} +  \textrm{diam}(\Omega)^{2-n/2} \|\bar{\usf} -\bar{\usf}_{\T}  \|_{L^2(\Omega)} +  \textrm{osc}_{\T}(\fsf,\T).
\]
Now, in view of \eqref{eq:defofEpg}, the local efficiency estimate \eqref{eq:efficiencyp} provides the bound
\[
 \E_{\psf}(\bar{\psf}_{\T},\bar{\ysf}_{\T}; \T) \lesssim \| \nabla( \bar{\psf} -\bar{\psf}_{\T}) \|_{L^{2}(\rho,\Omega)} + 
 \left( \sum_{T \in \T: T\cap\calZ \neq \emptyset } h_T^{ \alpha+ 2 - n } \right)^{\frac{1}{2}}\| \bar{\ysf} -\bar{\ysf}_{\T} \|_{L^{\infty}(\Omega)}. 
\]
Remark~\ref{rk:alpha} and the fact that $\# \calZ < \infty$ imply the bound
\[
  \left( \sum_{T \in \T: T\cap\calZ \neq \emptyset}  h_T^{ \alpha+ 2 - n } \right)^{\frac12} \leq \sqrt{\# \calZ} \diam(\Omega)^{\frac\alpha2+1-\frac{n}2},
\]
which, firstly, is independent of  $\#\T$ and, secondly, is where the linear dependence on $\sqrt{\# \calZ}$ and $\diam(\Omega)^{\frac\alpha2+1-\frac{n}2}$ comes from. Finally, a trivial application of the triangle inequality yields
\[
 \E_{\usf}(\bar{\usf}_{\T},\bar{\psf}_{\T}; \T) \leq \| \bar \usf_{\T} - \Pi(-\tfrac{1}{\lambda}\bar \psf) \|_{L^2(\Omega)} + \| \Pi(-\tfrac{1}{\lambda}\bar \psf) - \Pi(-\tfrac{1}{\lambda}\bar \psf_{\T}) \|_{L^2(\Omega)},
\]
where $\Pi$ is defined in \eqref{def:Pi}. This, in conjunction with the Lipschitz continuity of $\Pi$ and \RR{Lemma~\ref{lem:restalphagen}}, implies 
\[
 \E_{\usf}(\bar{\usf}_{\T},\bar{\psf}_{\T}; \T) \leq \| \bar \usf_{\T} - \bar{\usf} \|_{L^2(\Omega)} + \| \nabla( \bar \psf - \bar \psf_{\T} ) \|_{L^2(\rho,\Omega)}.
\]
Gathering all the obtained estimates concludes the proof.
\end{proof}

\section{Numerical examples}
\label{sec:numex}
In this section we conduct a series of numerical examples that illustrate the performance of the error estimator, even \AAF{when} we violate \AAF{the assumption of} homogeneous Dirichlet boundary condition\RR{s}. These have been carried out with the help of a code that we implemented using \texttt{C++}. All matrices have been assembled exactly. The right hand sides and approximation errors are computed by a quadrature formula which is exact for polynomials of degree 19 for two dimensional domains and degree 14 for three dimensional domains. All linear systems were solved using the multifrontal massively parallel sparse direct solver (MUMPS) \cite{MUMPS1,NUMPS2}. 

For a given partition $\T$ we seek $(\bar{\ysf}_\T,\bar{\psf}_\T,\bar{\usf}_\T)\in \V(\T)\times \V(\T)\times \Uad(\T)$ that solves \eqref{eq:abstateh}--\eqref{eq:adjointh}. We solve the ensuing nonlinear system of equations using a Newton-type primal-dual active set strategy \cite[\S 2.12.4]{Tbook}. Once a discrete solution is obtained, we use the error indicator \RR{
\begin{equation}\label{eq:totalInd}
\E_{\mathsf{ocp};T}^{2}:=
\E_{\ysf}^{2}(\bar{\ysf}_{\T},\bar{\usf}_{\T};T)+
\E_{\psf}^{2}(\bar{\psf}_{\T},\bar{\ysf}_{\T};T)+
\E_{\usf}^{2}(\bar{\usf}_{\T},\bar{\psf}_{\T};T),
\end{equation}}
which is defined in terms of \eqref{eq:defofEy}, \eqref{eq:defofEp} and \eqref{eq:defofEu}, to drive the adaptive procedure described in \textbf{Algorithm 1}. For the numerical results, we define the total number of degrees of freedom $\Ndof = 2 \dim(\V(\T)) + \dim(\U(\T))$, and to assess the accuracy of the approximation, the error is measured in the norm\RR{
\[
  \|(e_{\ysf},e_{\psf},e_{\usf})\|_{\Omega}^{2} = \|e_\ysf\|_{L^{\infty}(\Omega)}^{2}+
  \|\nabla e_\psf\|_{L^{2}(\rho,\Omega)}^{2}+
  \|e_\usf\|_{L^{2}(\Omega)}^{2}
\]
where $e_\ysf=\bar{\ysf}-\bar{\ysf}_{\T}$, $e_\psf=\bar{\psf}-\bar{\psf}_{\T}$ and $e_\usf=\bar{\usf}-\bar{\usf}_{\T}$.}
\AAF{For all our numerical examples we fix $\lambda=1$, and consider problems with homogeneous boundary conditions whose exact solutions are not known, and problems where we violate such a requirement by constructing exact solutions by taking the optimal adjoint to be}
\begin{equation}
\label{exact_adjoint}
\bar{\psf}(x) = \begin{dcases}
                  -\frac{1}{2\pi}\sum_{z\in\calZ}\log|x-z|, & \textrm{if}~\Omega\subset\mathbb{R}^{2},\\
                  \frac{1}{4\pi}\sum_{z\in\calZ}\frac{1}{|x-z|}, & \textrm{if}~\Omega\subset\mathbb{R}^{3},
                \end{dcases}
\end{equation}
\AAF{and fixing an optimal state, from which the right hand side $\fsf$ can be computed accordingly.}

The initial meshes for our numerical examples are shown in Figure~\ref{Fig0}.

\begin{table}[!h]
\begin{flushleft}
\scalebox{.9}
{
\begin{tabular}{l l} 
\multicolumn{2}{l}{\textbf{Algorithm 1:  Adaptive Primal-Dual Active Set Algorithm.}} 
\vspace{0.15cm}\\
\toprule
\multicolumn{2}{l}{\textbf{Input:} Initial mesh $\T_{0}$, set of observation points $\calZ$,  set of desired point states $\{\ysf_{z}\}_{z\in\calZ}$,}\\
\multicolumn{2}{l}{ box-constrain\RR{t} constants $\asf<\bsf$, control cost $\lambda$ and external force $\fsf$.}
\vspace{0.1cm} \\
\textbf{Set:}  & $i=0$.
\vspace{0.1cm}\\
\multicolumn{2}{l}{\textbf{Active set strategy:}}
\vspace{0.1cm}\\
\textbf{1:}    &  Choose initial guesses for the control $\usf_\T^0\in \U(\T)$ and $\mu_\T^0 \in \U(\T)$.\\
               &  Compute $[\bar{\ysf}_{\T},\bar{\psf}_{\T},\bar{\usf}_{\T}]=\textrm{\textbf{Active-Set}}[\T_{i},\usf_\T^0,\mu_\T^{0},\lambda,\asf,\bsf,\fsf,\ysf_{z},\calZ]$. \\
               &  $\textrm{\textbf{Active-Set}}$ implements the active set strategy of \cite[\S 2.12.4]{Tbook}. \\
\multicolumn{2}{l}{\textbf{Adaptive loop:}}
\vspace{0.1cm}\\
\textbf{2:}    &  For each $T \in \T$ compute the local error indicator $\E_{\mathsf{ocp};T}$ given in \eqref{eq:totalInd}.
 \\
\textbf{3:}    & Mark an element $T$ for refinement using either a:\\
& - Maximum strategy: $\displaystyle\E_{\mathsf{ocp};T}^{2}> 0.5\max_{T'\in\T}\E_{\mathsf{ocp};T'}^{2}$.\\
& - Bulk criterion: see \cite{MR1393904}.\\
& - Average strategy: $\displaystyle\E_{\mathsf{ocp};T}^{2}\geq \frac{1}{\#\T}\sum_{T'\in\T}\E_{\mathsf{ocp};T'}^{2}$.\\
\textbf{4:}    & From step \textbf{3}, construct a new mesh, using a longest edge bisection algorithm. \\& Set $i\leftarrow i+1$, and go to step \textbf{1}.\\
\bottomrule
\end{tabular}}
\vspace{-0.3cm}
\end{flushleft}
\end{table}

\begin{figure}[!h]
\begin{center}
\scalebox{0.18}{\includegraphics{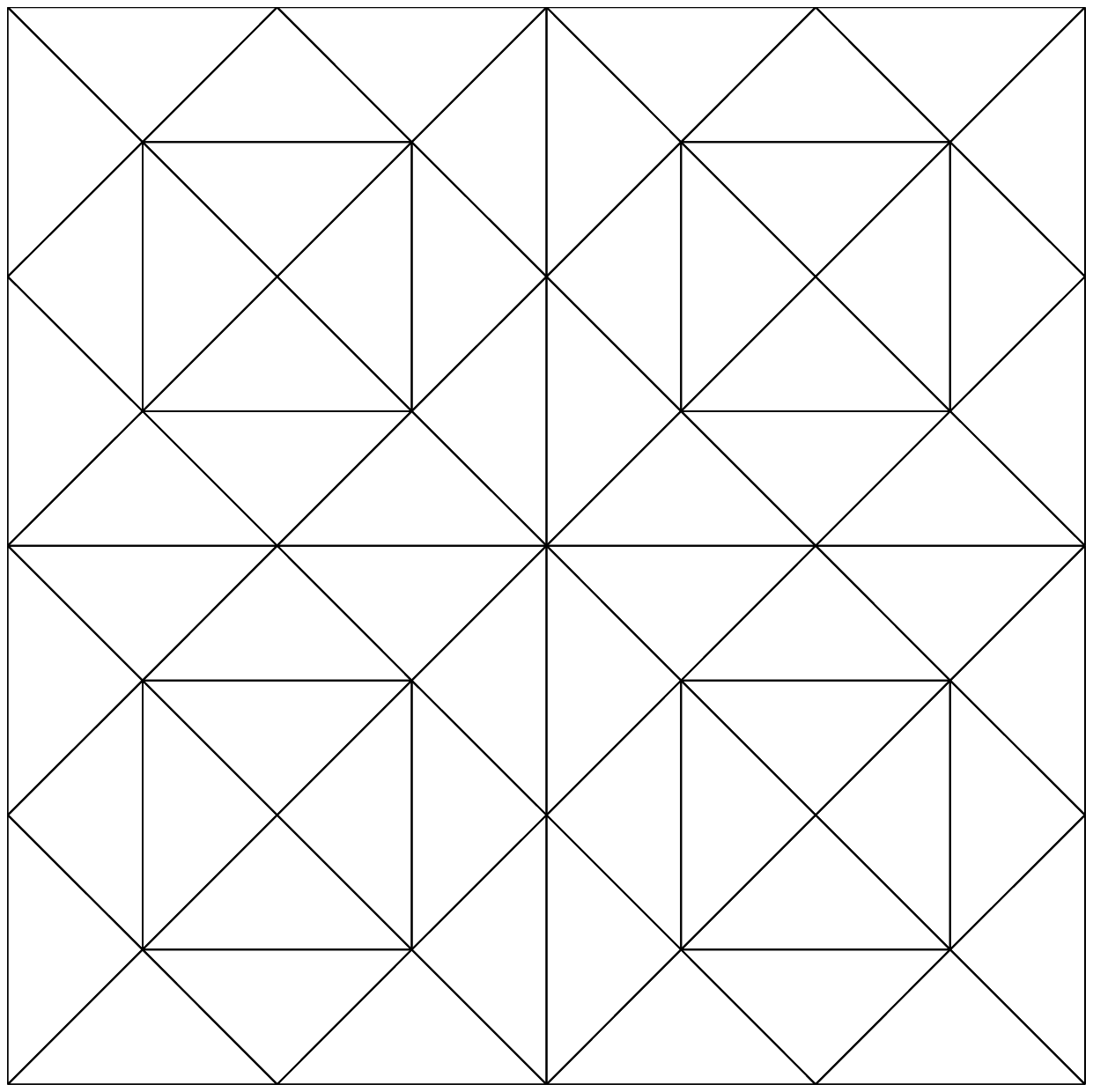}}
\scalebox{0.18}{\includegraphics{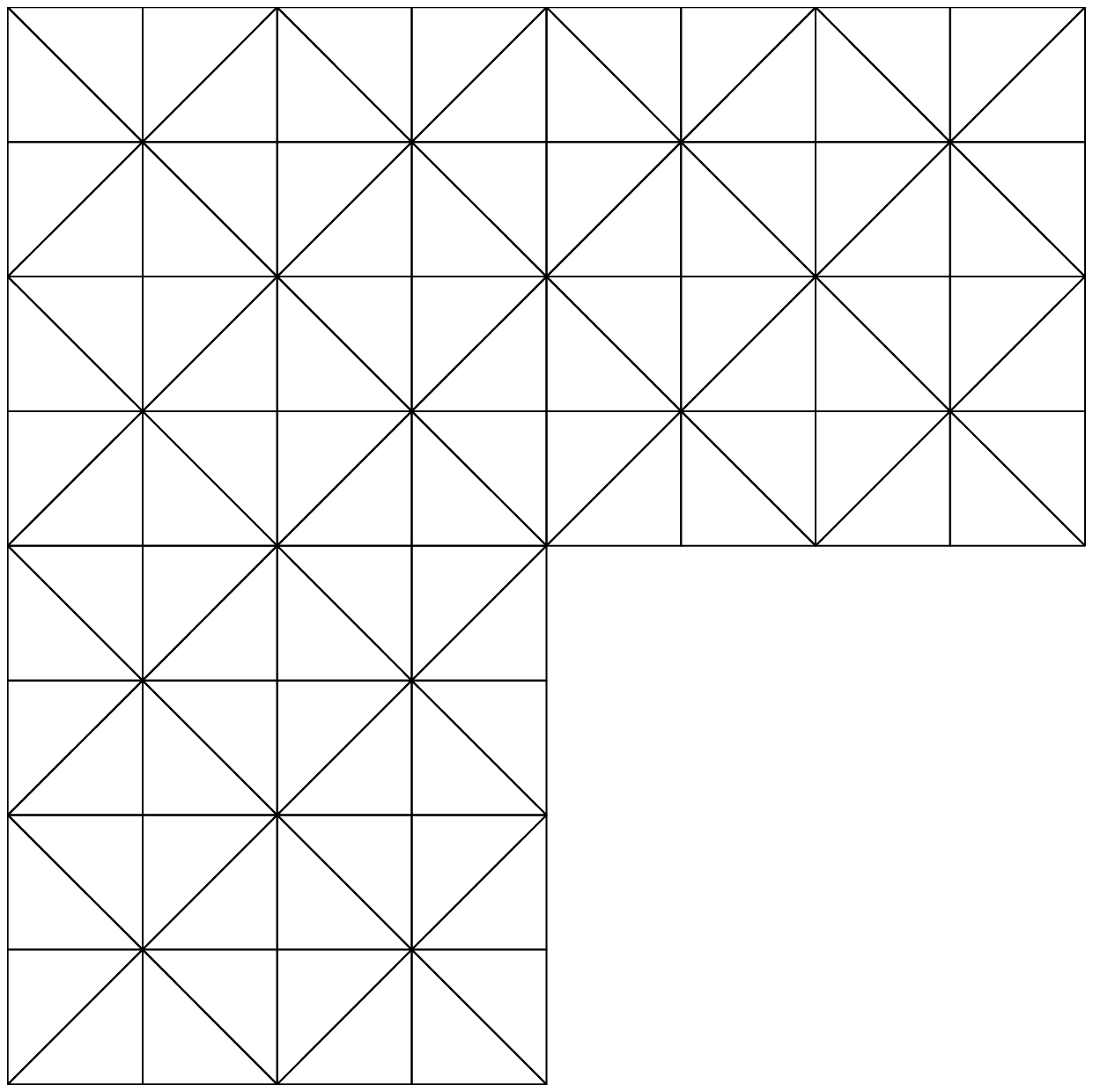}}
\scalebox{0.18}{\includegraphics{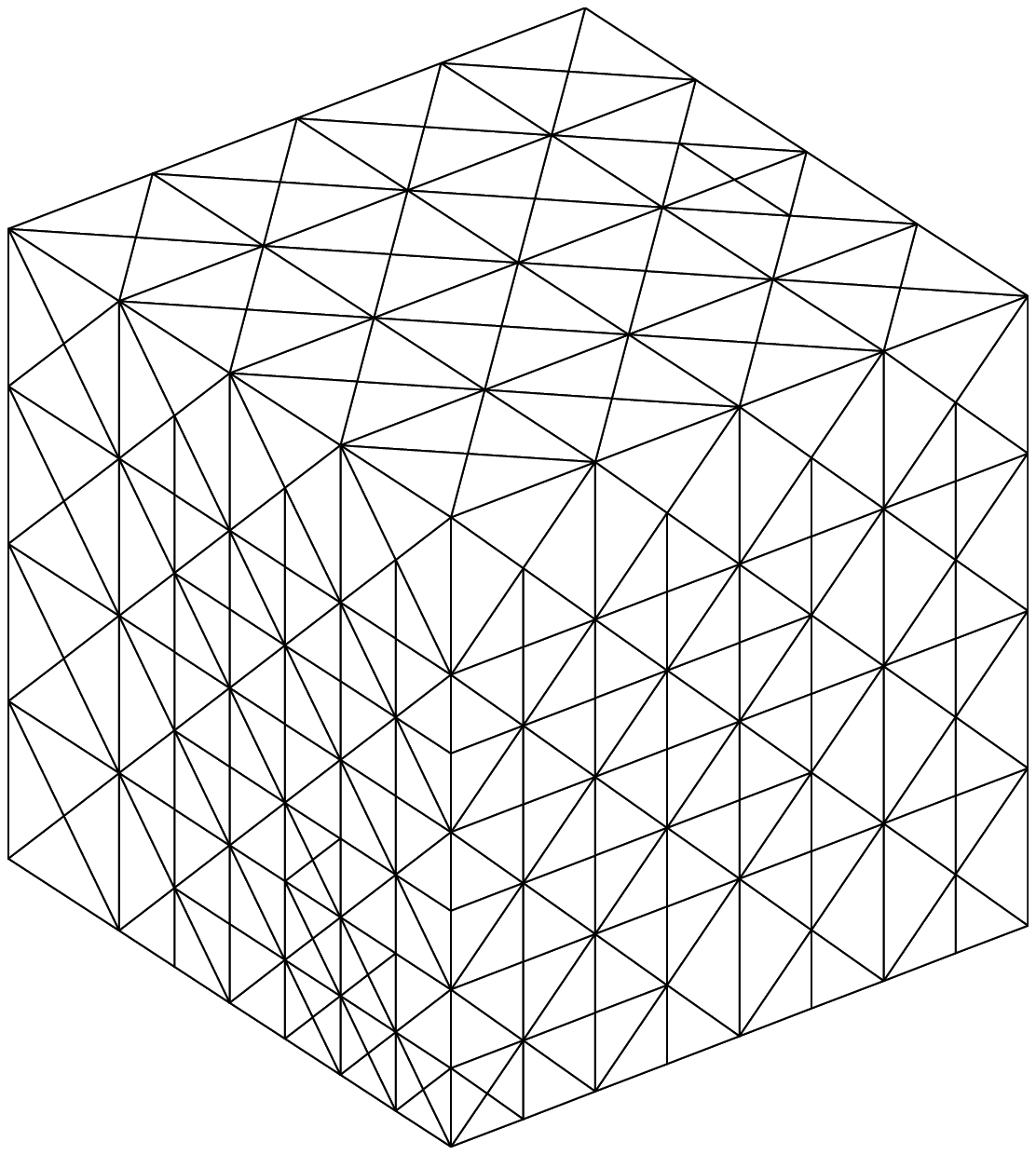}}
\end{center}
\caption{Initial meshes for Examples 1--3 (left), \RR{Example 4 (center) and Examples 5--7 (right)}.}
\label{Fig0}
\end{figure}
\subsection{Two dimensional examples}
\label{2d_examples}
\AAF{First, we consider a series of two dimensional examples for convex and nonconvex domains, with homogeneous and inhomogeneous Dirichlet boundary conditions and different numbers of source points. }
~\\~\\
\noindent \textbf{Example 1:} We consider a problem with homogeneous Dirichlet boundary conditions, letting $\Omega=(0,1)^{2}$, and setting $\asf = -0.5$, $\bsf =0.5$, and
\begin{gather*}
\fsf(x_{1},x_{2})=\sin(2\pi x_1)\cos(2\pi x_2)x_{1}^{3},\quad \calZ=\{(0.75,0.75),(0.25,0.25)\},  \\ 
 \ysf_{(0.75,0.75)}=1, ~ \ysf_{(0.25,0.25)}=-1.
\end{gather*}
\noindent \textbf{Example 2:} We let $\Omega=(0,1)^{2}$, and set the exact optimal adjoint state as in \eqref{exact_adjoint}, $\asf=-0.4$, $\bsf=-0.2$ and
\begin{eqnarray*}
\bar{\ysf}(x_{1},x_{2}) = 32x_1x_2(1-x_1)(1-x_2), ~ \calZ=\{(0.5,0.5)\}, ~ \ysf_{(0.5,0.5)}=1.
\end{eqnarray*}
\noindent \textbf{Example 3:} We let $\Omega=(0,1)^{2}$, and set the exact optimal adjoint state as in \eqref{exact_adjoint}, $\asf=-1.2$, $\bsf=-0.7$ and 
\begin{gather*}
\bar{\ysf}(x_{1},x_{2})=2.75-2x_1-2x_2+4x_1x_2, \\
\calZ=\{(0.75,0.75),(0.75,0.25),(0.25,0.75),(0.25,0.25)\}, \\
\ysf_{(0.75,0.75)}=1,\qquad \ysf_{(0.25,0.25)}=1,\qquad \ysf_{(0.75,0.25)}=0.5 \qquad \ysf_{(0.25,0.75)}=0.5.
\end{gather*}
\noindent \textbf{Example \AAF{4}:} We let $\Omega=(-1,1)^2\setminus [0,1)\times(-1,0]$ an L-shaped domain, and set the exact optimal adjoint state as in \eqref{exact_adjoint}, $\asf=-0.4$, $\bsf=-0.2$, and 
\begin{gather*}
\bar{\ysf}(x_{1},x_{2}):=r^{2/3}\sin(2\theta/3),\quad\textrm{with}~\theta\in[0,3\pi/2],\\
\calZ=\{(0.5,0.5)\},\quad \ysf_{(0.5,0.5)}=2^{1/3}\sin(\pi/6)-1.
\end{gather*}
\AAF{In Figure \ref{Fig:Ex1} we present convergence rates for the total error estimator and its individual contributions, with uniform and adaptive refinement, for Example 1. Figure \ref{Fig:Ex2}, presents convergence rates for the total error, error estimator and effectivity indices, considering $\alpha\in\{0.5,1,1.5\}$ and different marking strategies, for Example 2. We mention that we conducted such experiments for all the other problems, and came to the same conclusion. Figure \ref{Fig:Ex3-4} presents the finite element solutions for the optimal state $\bar{\ysf}_{\T}$, adjoint state $\bar{\psf}_{\T}$, and optimal control $\bar{\usf}_{\T}$, on adaptively refined meshes, and also convergence rates for the total error $\|(e_{\ysf},e_{\psf},e_{\usf})\|_{\Omega}$, error estimator $\E_{\mathsf{ocp}}$, their individual contributions and effectivity indices, for Examples 3 and 4.}
\psfrag{eta-Omega,for alpha=0.5}{\large $\E_{\mathsf{ocp}}$, for $\alpha=0.25$}
\psfrag{Ndofs}{\large Ndof}
\psfrag{adaptive-refinement}{$\E_{\mathsf{ocp}}$-Adaptive}
\psfrag{uniform-refinement}{$\E_{\mathsf{ocp}}$-Uniform}
\psfrag{ordenh1}{Ndof$^{-1/2}$}
\psfrag{ordenh2}{Ndof$^{-3/8}$}
\psfrag{ordenh-3/8}{Ndof$^{-3/8}$}
\psfrag{ordenh-1/2}{Ndof$^{-1/2}$}
\psfrag{ordenh-1}{Ndof$^{-1}$}
\psfrag{}{}
\psfrag{adaptive-refinement-log}{$\E_{\mathsf{ocp},\mathsf{log}}$}
\psfrag{adaptive-refinement-}{$\E_{\mathsf{ocp}}$}
\psfrag{yh}{\huge $\bar{\mathsf{y}}_{h}$}
\psfrag{estimador-refinamien-U}{$\E_{\mathsf{ocp}}$-Adaptive}
\psfrag{estimador-refinamien-A}{$\E_{\mathsf{ocp}}$-Uniform}
\begin{figure}[!h]
\psfrag{estimador-y-adapti}{$\E_{\mathsf{y}}$-Adaptive}
\psfrag{estimador-p-adapti}{$\E_{\mathsf{p}}$-Adaptive}
\psfrag{estimador-u-adapti}{$\E_{\mathsf{u}}$-Adaptive}
\psfrag{estimador-y-unifor}{$\E_{\mathsf{y}}$-Uniform}
\psfrag{estimador-p-unifor}{$\E_{\mathsf{p}}$-Uniform}
\psfrag{estimador-u-unifor}{$\E_{\mathsf{u}}$-Uniform}
\psfrag{FIGURAAAAAAAAAAAAAAAA4AAAAAAAAAA}{$\E_{\mathsf{ocp}}$ Adaptive vs Uniform for $\alpha=1.5$}
\includegraphics[width=4cm,height=4cm,scale=0.55]{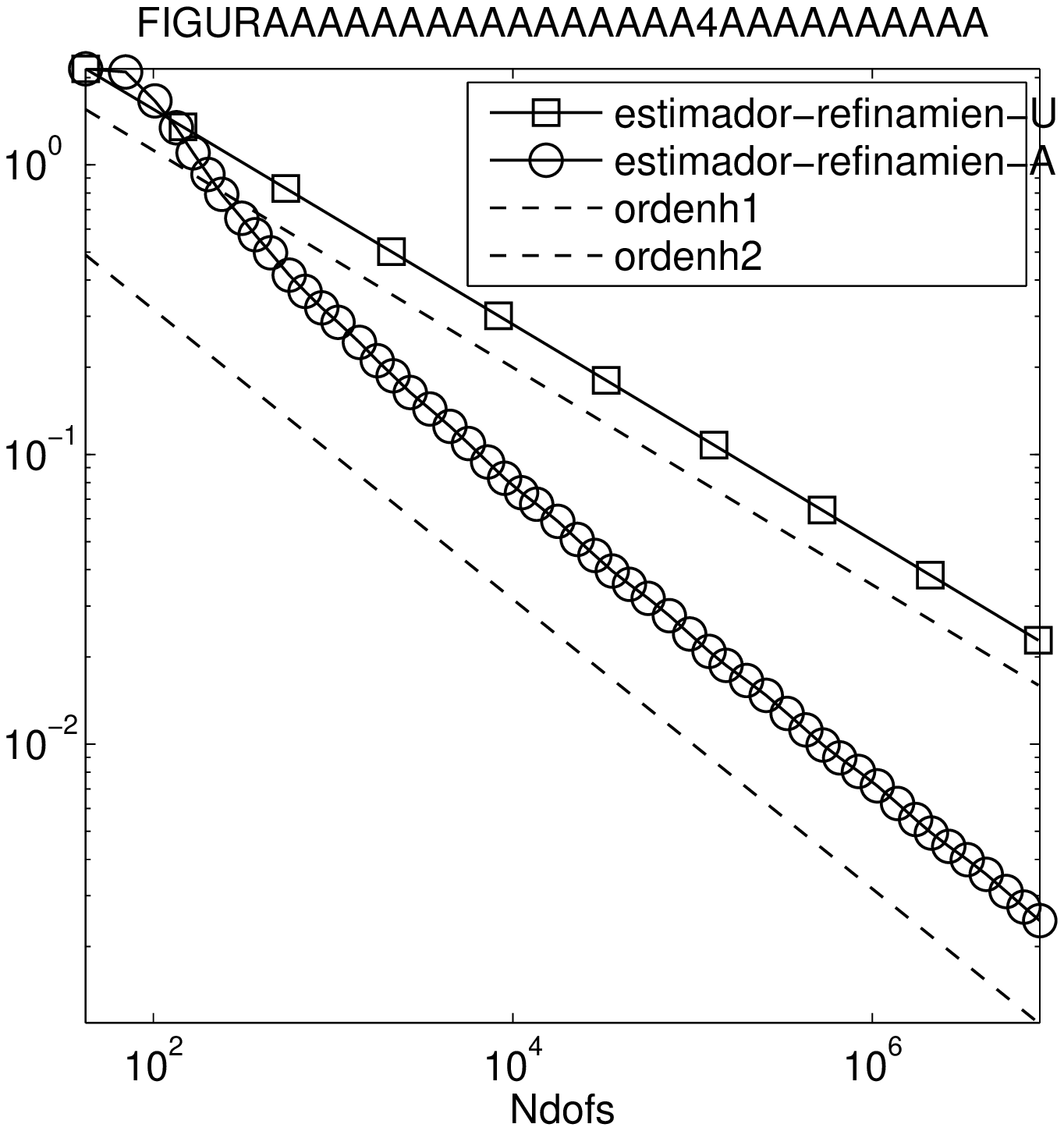}
\psfrag{FIGURAAAAAAAAAAAAAAAA4}{Estimator contributions for $\alpha=1.5$}
\includegraphics[width=4cm,height=4cm,scale=0.55]{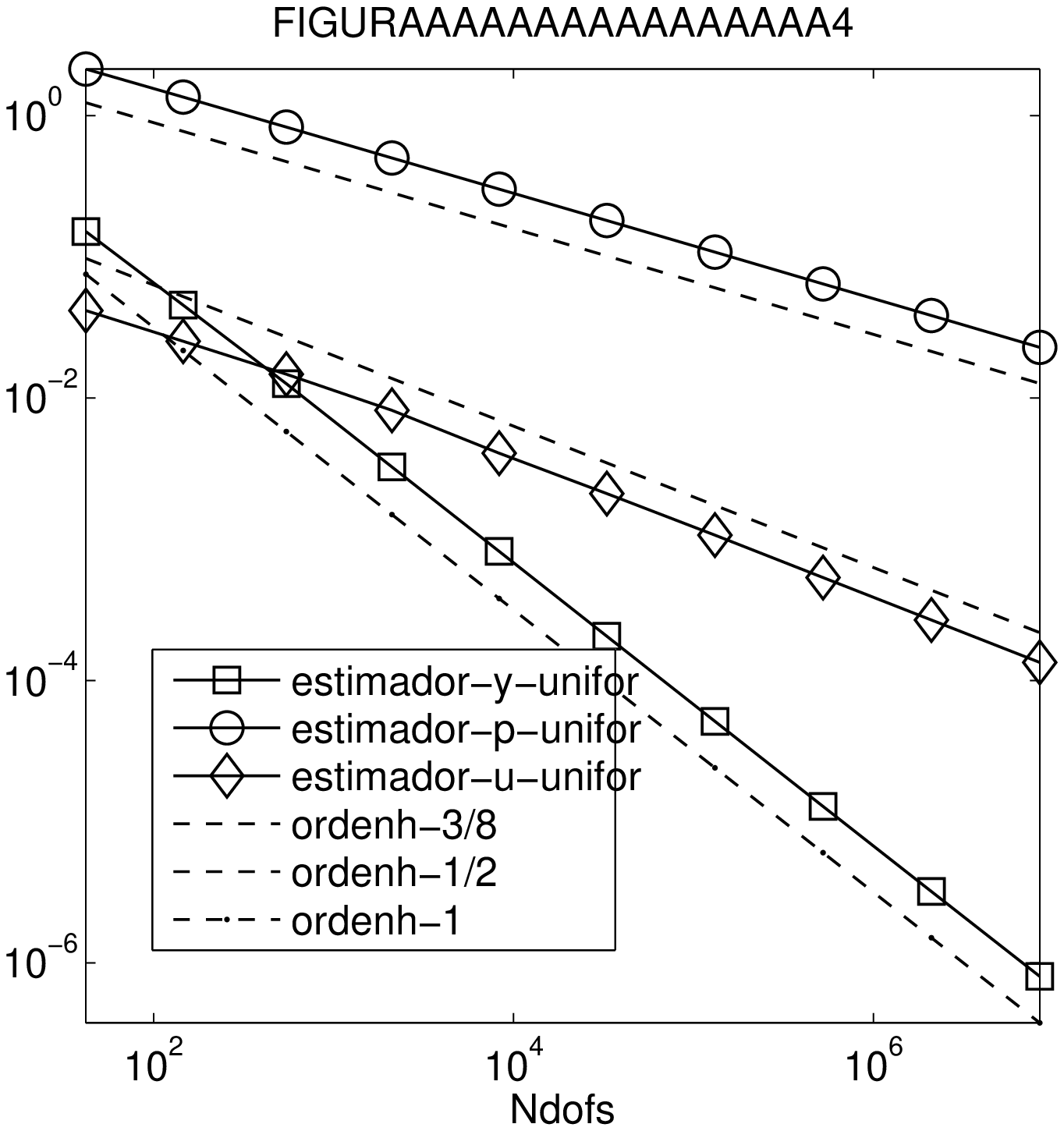}
\psfrag{FIGURAAAAAAAAAAAAAAAA4}{Estimator contributions for $\alpha=1.5$}
\includegraphics[width=4cm,height=4cm,scale=0.55]{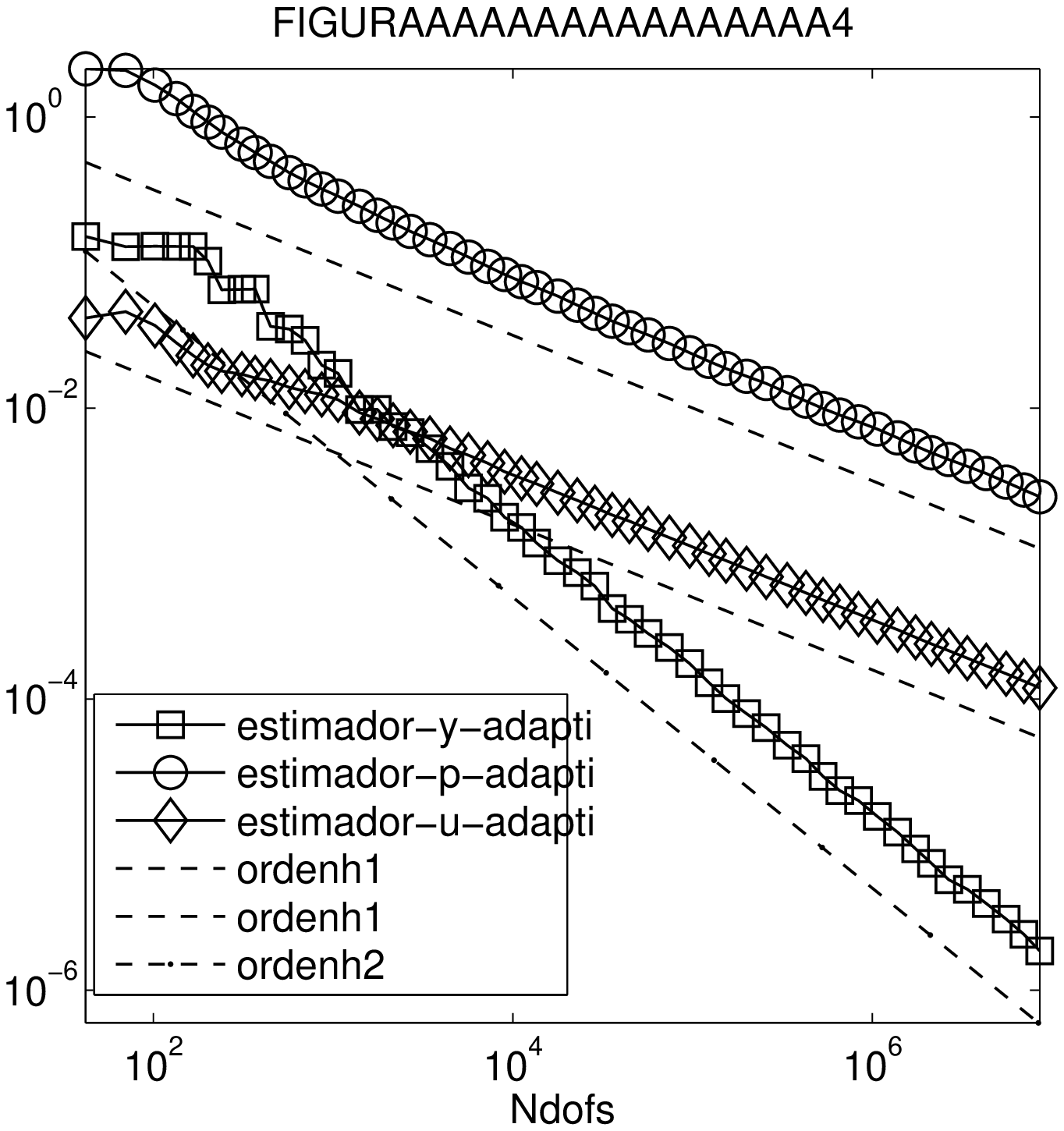}
\caption{\AAF{Example 1: For $\alpha = 1.5$ and based on a maximum refinement strategy we show convergence rates for the total error estimator with adaptive and uniform refinement (left), all the different contributions with uniform refinement (center), and adaptive refinement (right).}}
\label{Fig:Ex1}
\end{figure}
\psfrag{error para alpha=0.5----------------}{\Huge $\|(e_{\ysf},e_{\psf},e_{\usf})\|_{\Omega}$ for $\alpha=0.5$}
\psfrag{error para alpha=1----------------}{\Huge $\|(e_{\ysf},e_{\psf},e_{\usf})\|_{\Omega}$ for $\alpha=1$}
\psfrag{error para alpha=1.5----------------}{\Huge $\|(e_{\ysf},e_{\psf},e_{\usf})\|_{\Omega}$ for $\alpha=1.5$}

\psfrag{estimador para alpha=0.5}{\Huge $\E_{\mathsf{ocp}}$ for $\alpha=0.5$}
\psfrag{estimador para alpha=1}{\Huge $\E_{\mathsf{ocp}}$ for $\alpha=1$}
\psfrag{estimador para alpha=1.5}{\Huge $\E_{\mathsf{ocp}}$ for $\alpha=1.5$}

\psfrag{eficiencia para alpha=0.5---------------}{\Huge $\E_{\mathsf{ocp}}/\|(e_{\ysf},e_{\psf},e_{\usf})\|_{\Omega}$ for $\alpha=0.5$}
\psfrag{eficiencia para alpha=1---------------}{\Huge $\E_{\mathsf{ocp}}/\|(e_{\ysf},e_{\psf},e_{\usf})\|_{\Omega}$ for $\alpha=1$}
\psfrag{error para alpha=1.5}{\Huge $\|(e_{\ysf},e_{\psf},e_{\usf})\|_{\Omega}$ for $\alpha=1.5$}

\psfrag{eficiencia para alpha=1.5---------------}{\Huge $\E_{\mathsf{ocp}}/\|(e_{\ysf},e_{\psf},e_{\usf})\|_{\Omega}$ for $\alpha=1.5$}
\psfrag{Maximum----}{\huge Maximum}
\psfrag{Bulk----}{\huge Bulk}
\psfrag{Average----}{\huge Average}
\psfrag{ordenh1}{\Large $\textrm{Ndof}^{-1/2}$}
\psfrag{Ndofs}{\huge Ndof}
\begin{figure}[!h]
\begin{center}
\includegraphics[width=4cm,height=3.8cm,scale=1]{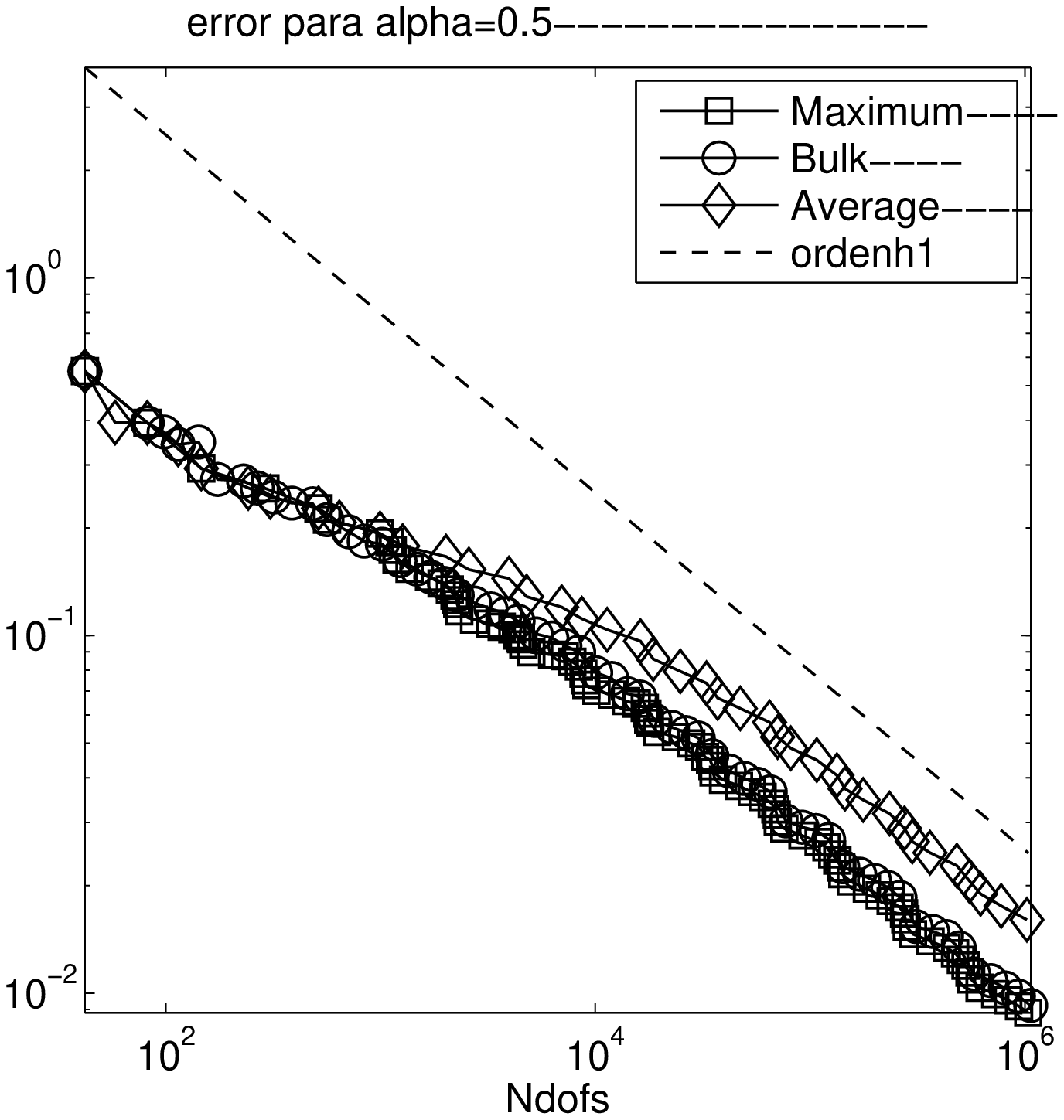}
\includegraphics[width=4cm,height=3.8cm,scale=1]{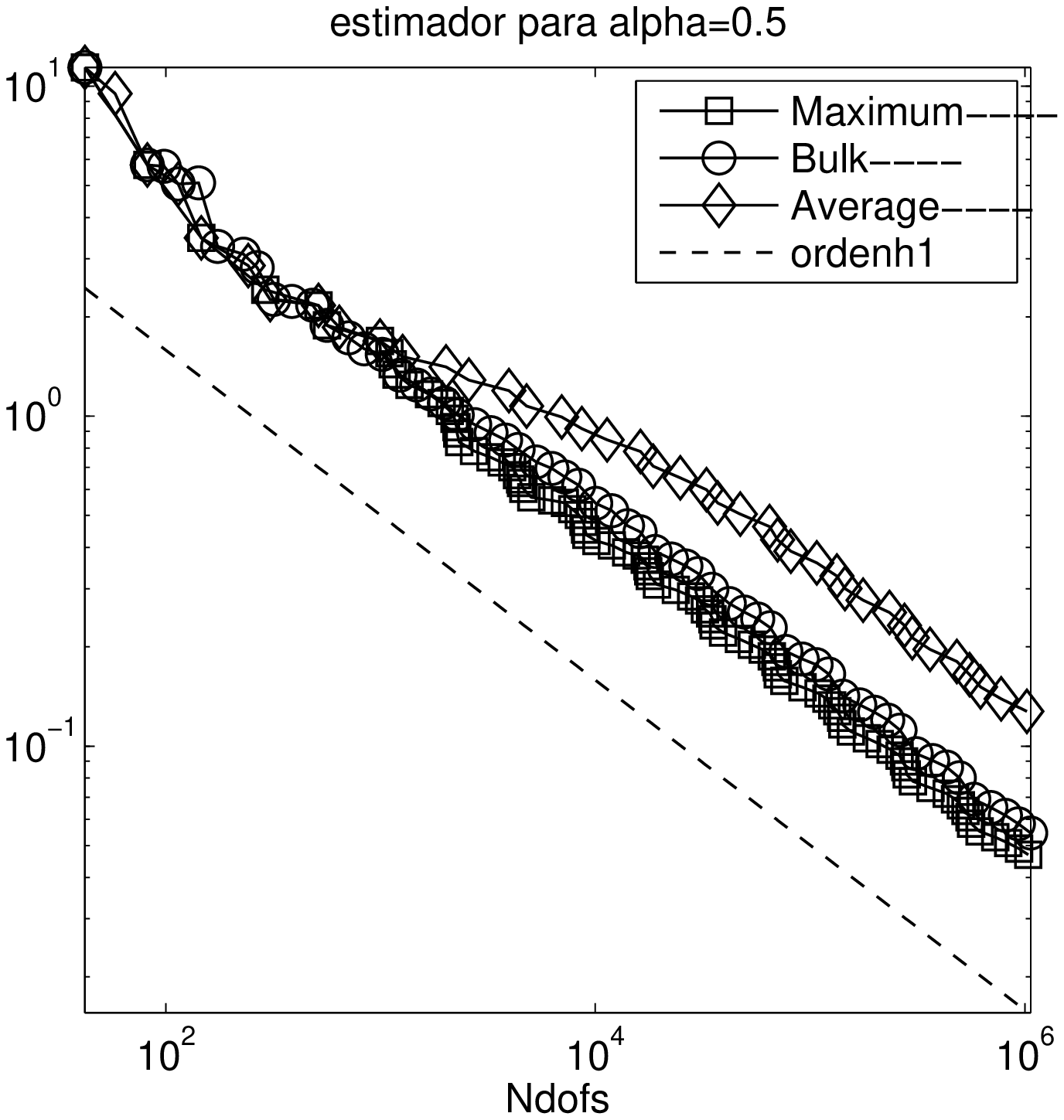}
\includegraphics[width=4cm,height=3.8cm,scale=1]{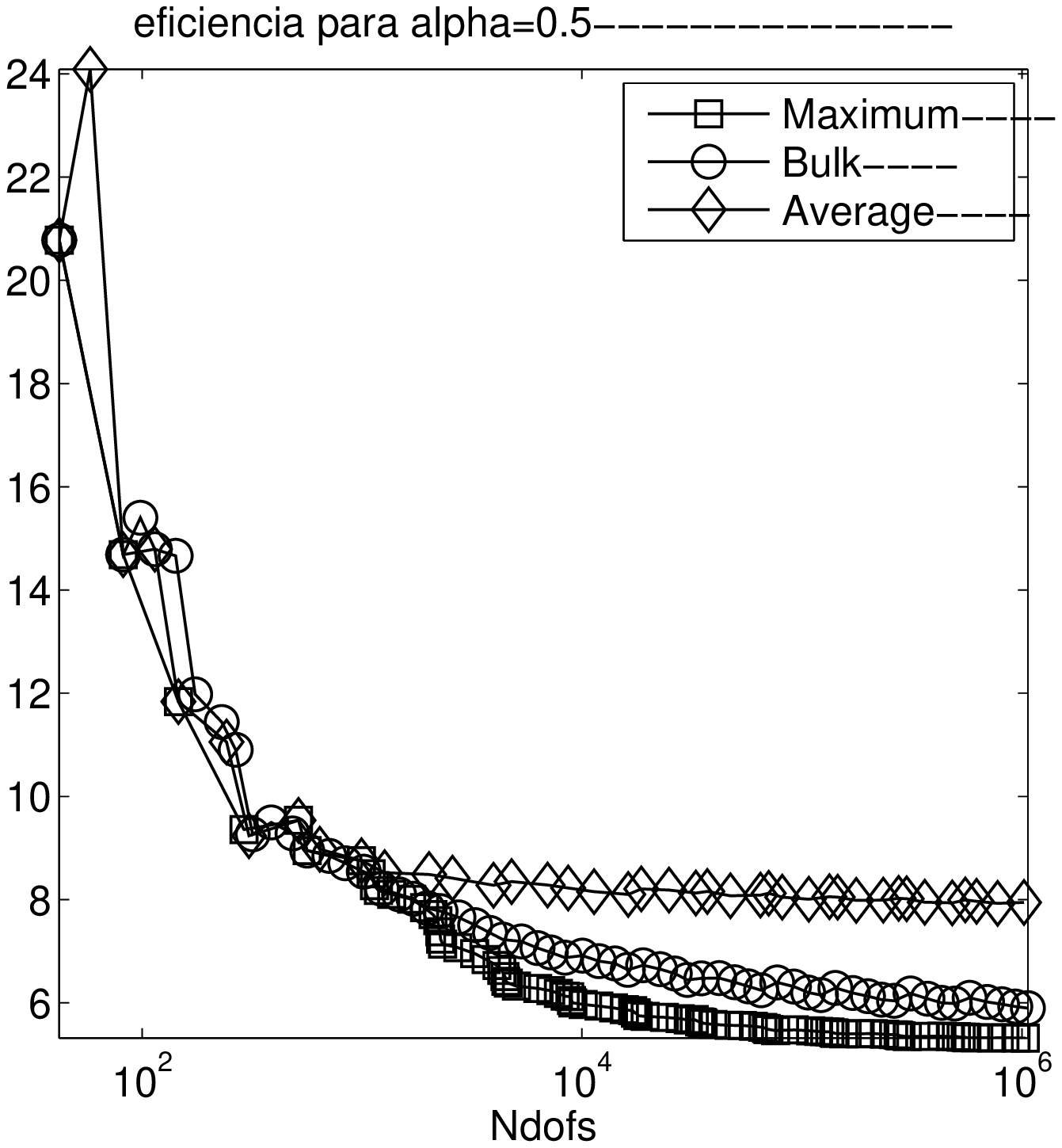}\\
\includegraphics[width=4cm,height=3.8cm,scale=1]{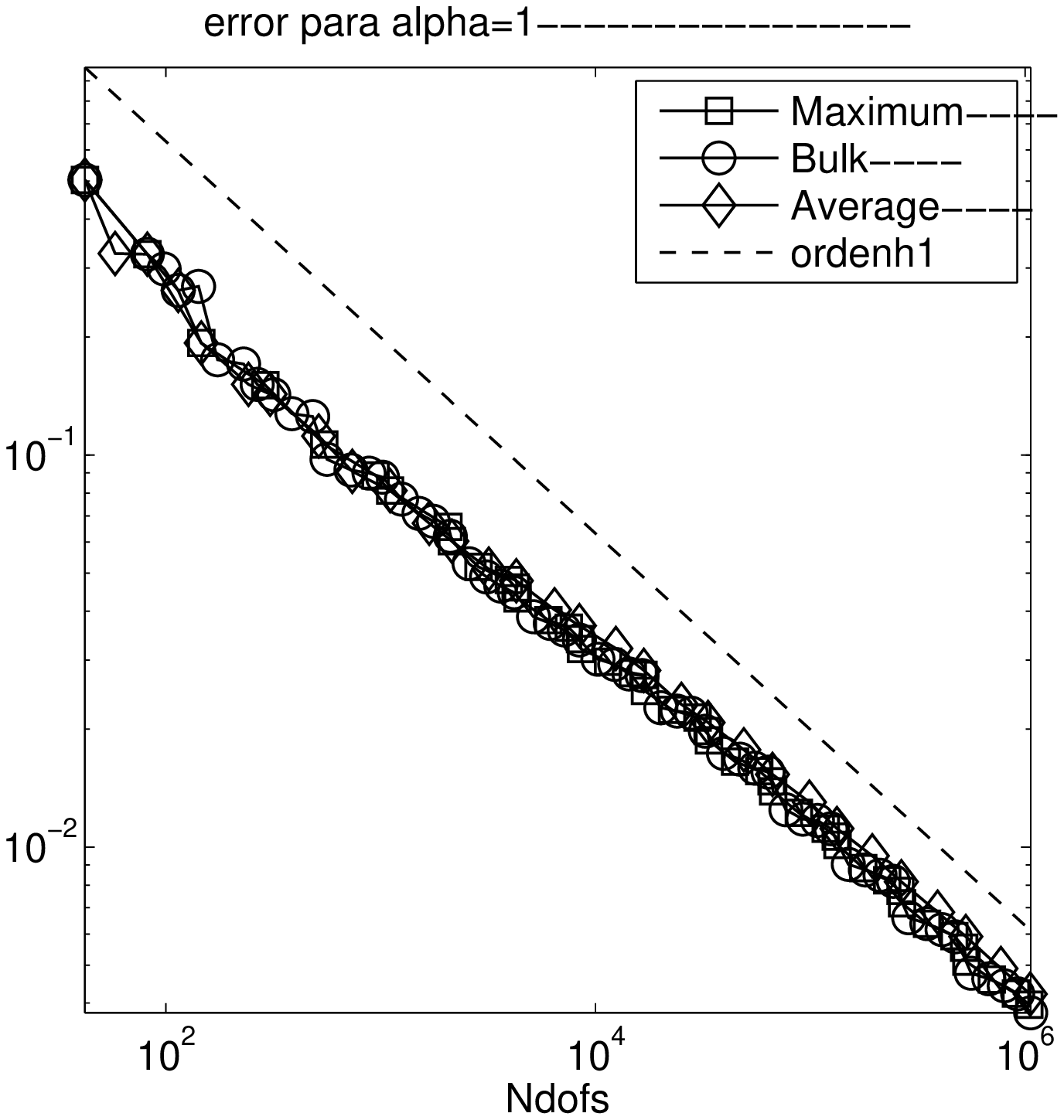}
\includegraphics[width=4cm,height=3.8cm,scale=1]{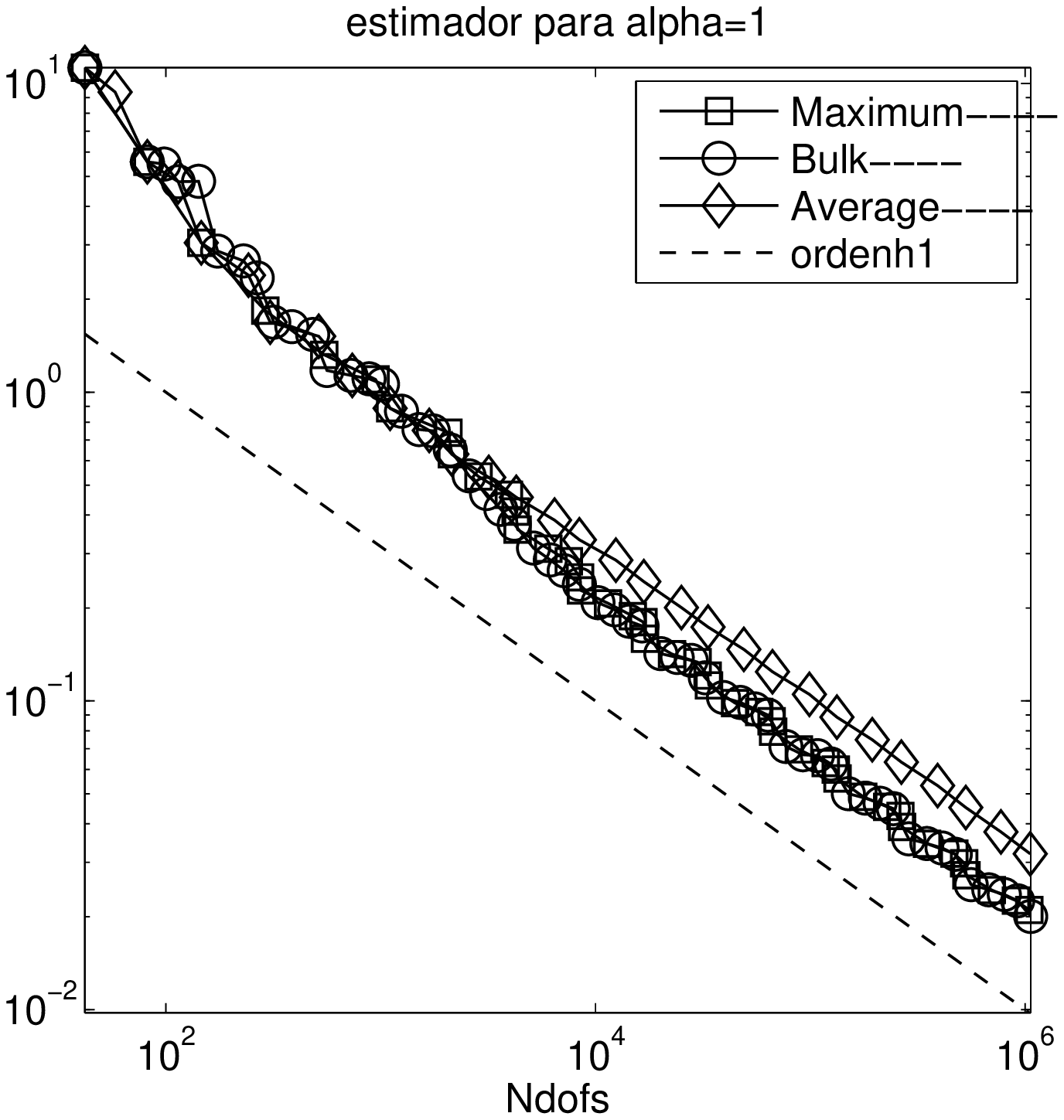}
\includegraphics[width=4cm,height=3.8cm,scale=1]{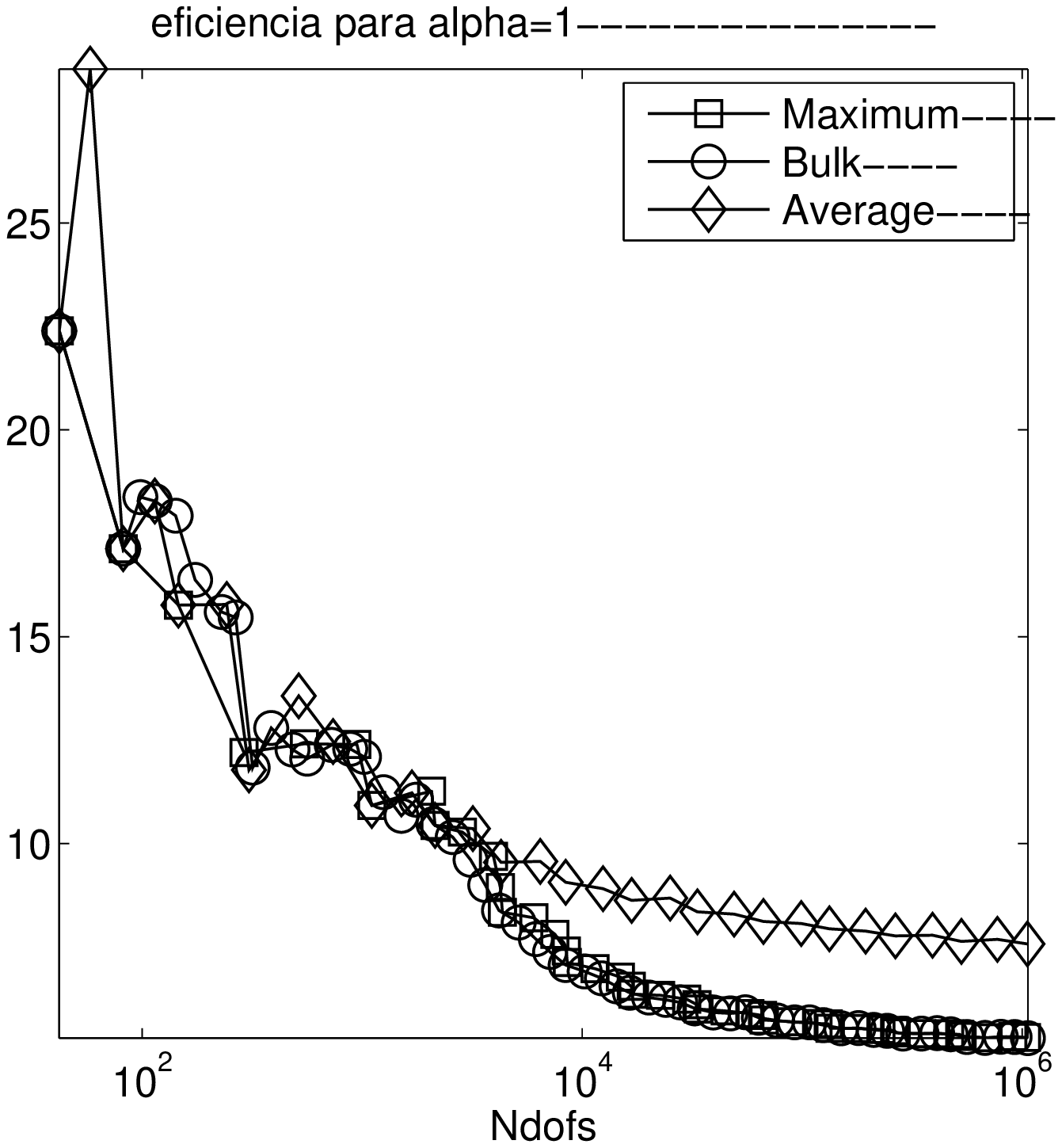}\\
\includegraphics[width=4cm,height=3.8cm,scale=1]{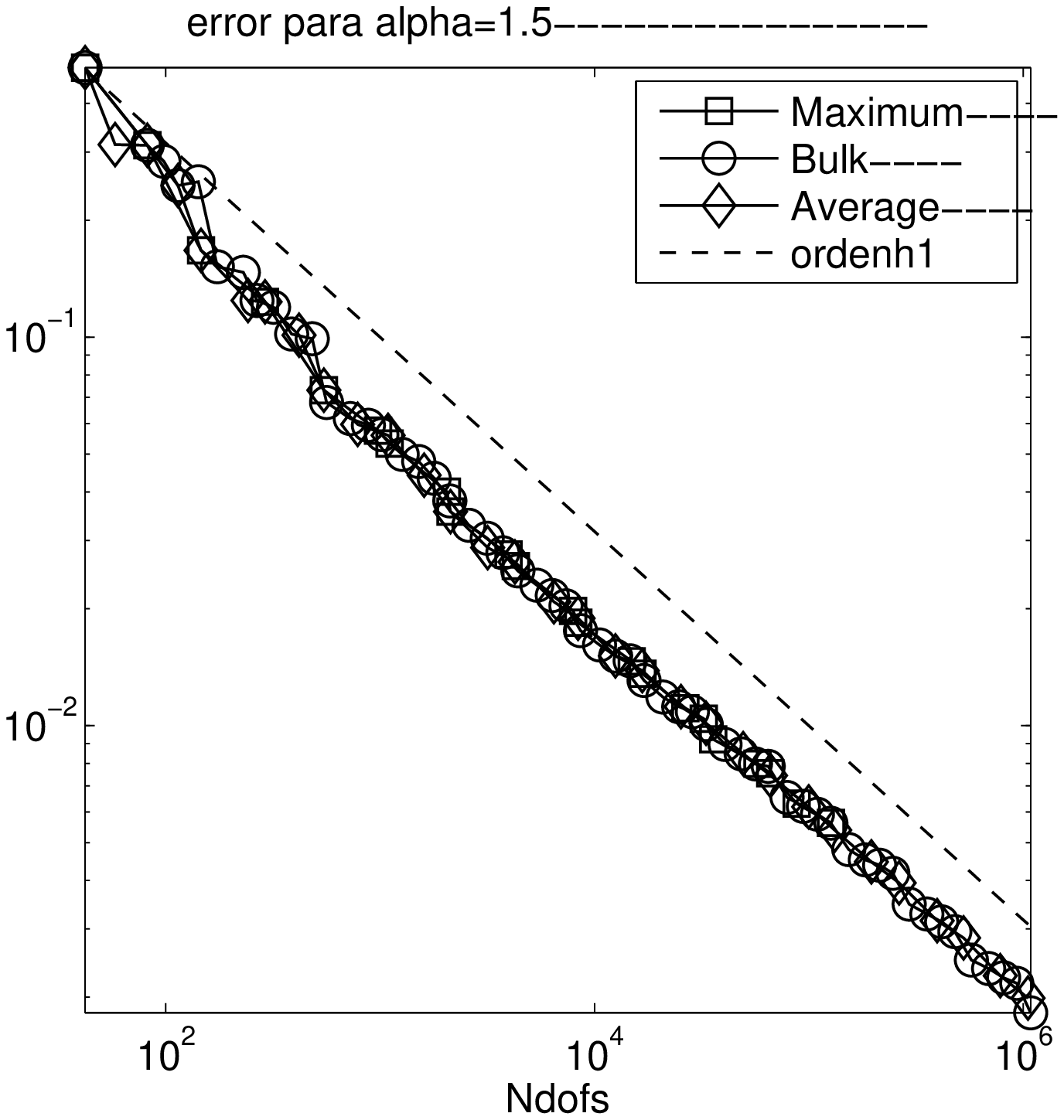}
\includegraphics[width=4cm,height=3.8cm,scale=1]{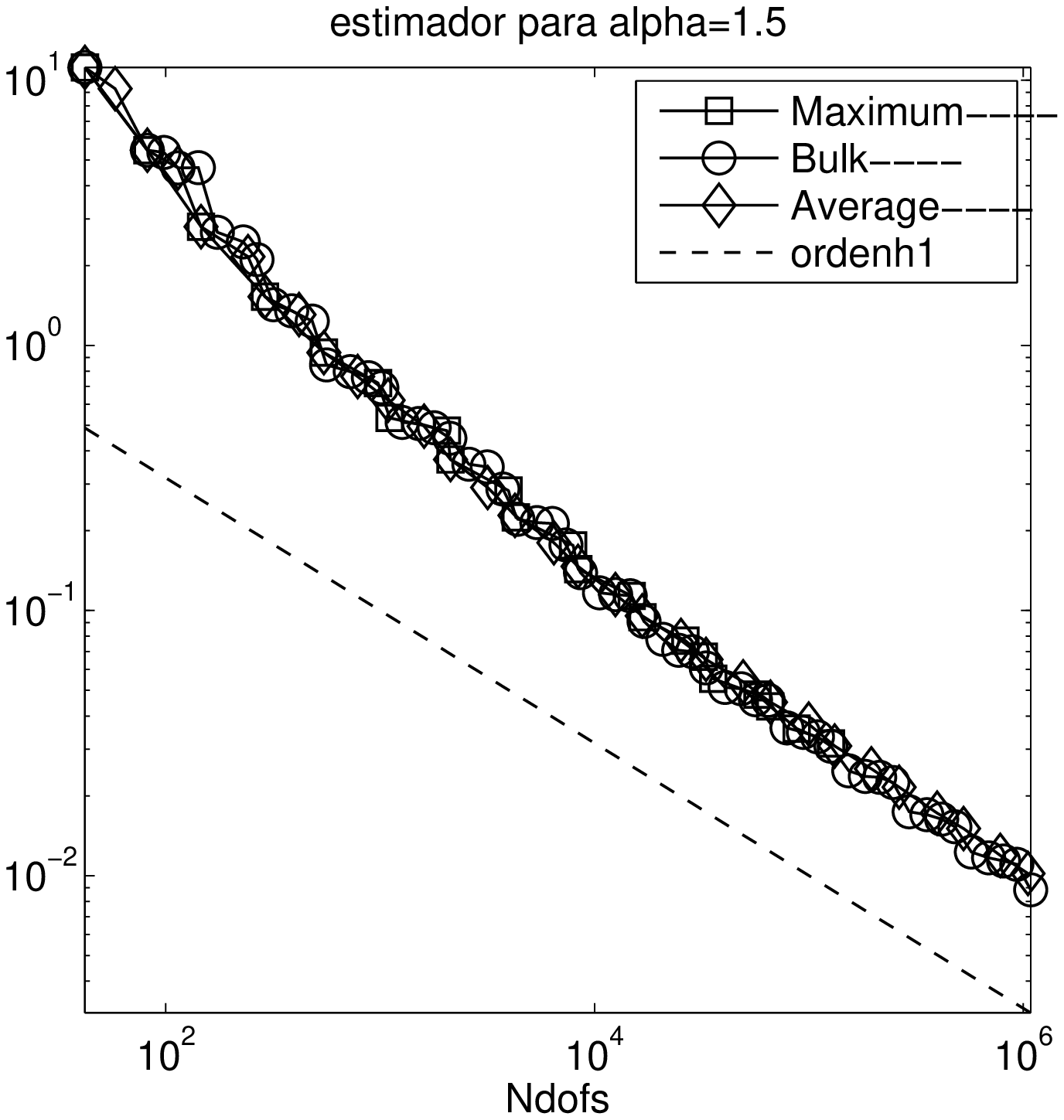}
\includegraphics[width=4cm,height=3.8cm,scale=1]{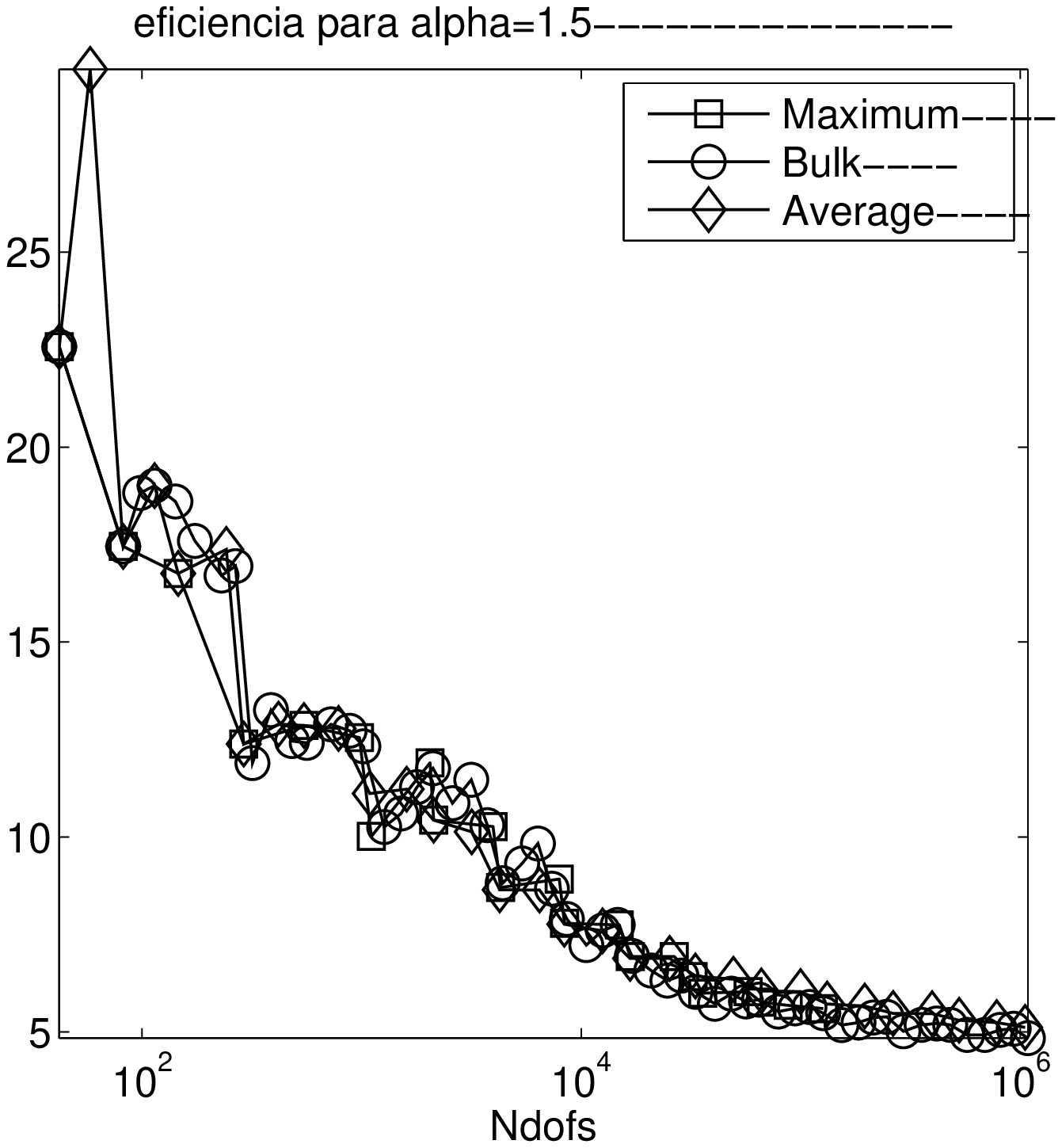}
\end{center}
\caption{\AAF{Example 2: Convergence rates for the total error (left), error estimator (center), and effectivity indices (right), with $\alpha\in\{0.5,1,1.5\}$ and different marking strategies.}}
\label{Fig:Ex2}
\end{figure}
\begin{figure}[!h]
\psfrag{FIGURAAAAAAAAAAAAAAAA5}{\huge $\E_{\ysf}$, $\E_{\psf}$ and $\E_{\usf}$ for $\alpha=1.5$}
\psfrag{error para y}{\huge $e_{\ysf}$}
\psfrag{error para p}{\huge $e_{\psf}$}
\psfrag{error para u}{\huge $e_{\usf}$}
\psfrag{estimador para alpha=1.5}{\huge $\E_{\mathsf{ocp}}$}
\psfrag{estimador para y}{\huge $\E_{\ysf}$}
\psfrag{estimador para p}{\huge $\E_{\psf}$}
\psfrag{estimador para u}{\huge $\E_{\usf}$}
\psfrag{maximum---}{\huge Maximum}
\psfrag{bulk---}{\huge Bulk}
\psfrag{average---}{\huge Average}
\psfrag{ordenh1}{\Large $\textrm{Ndof}^{-1/2}$}
\psfrag{Ndofs}{\huge Ndof}
\begin{subfigure}[b]{0.24\textwidth}
\centering $\bar{\mathsf{y}}_{h}$\\
\includegraphics[width=2.8cm,height=2.8cm,scale=1.5]{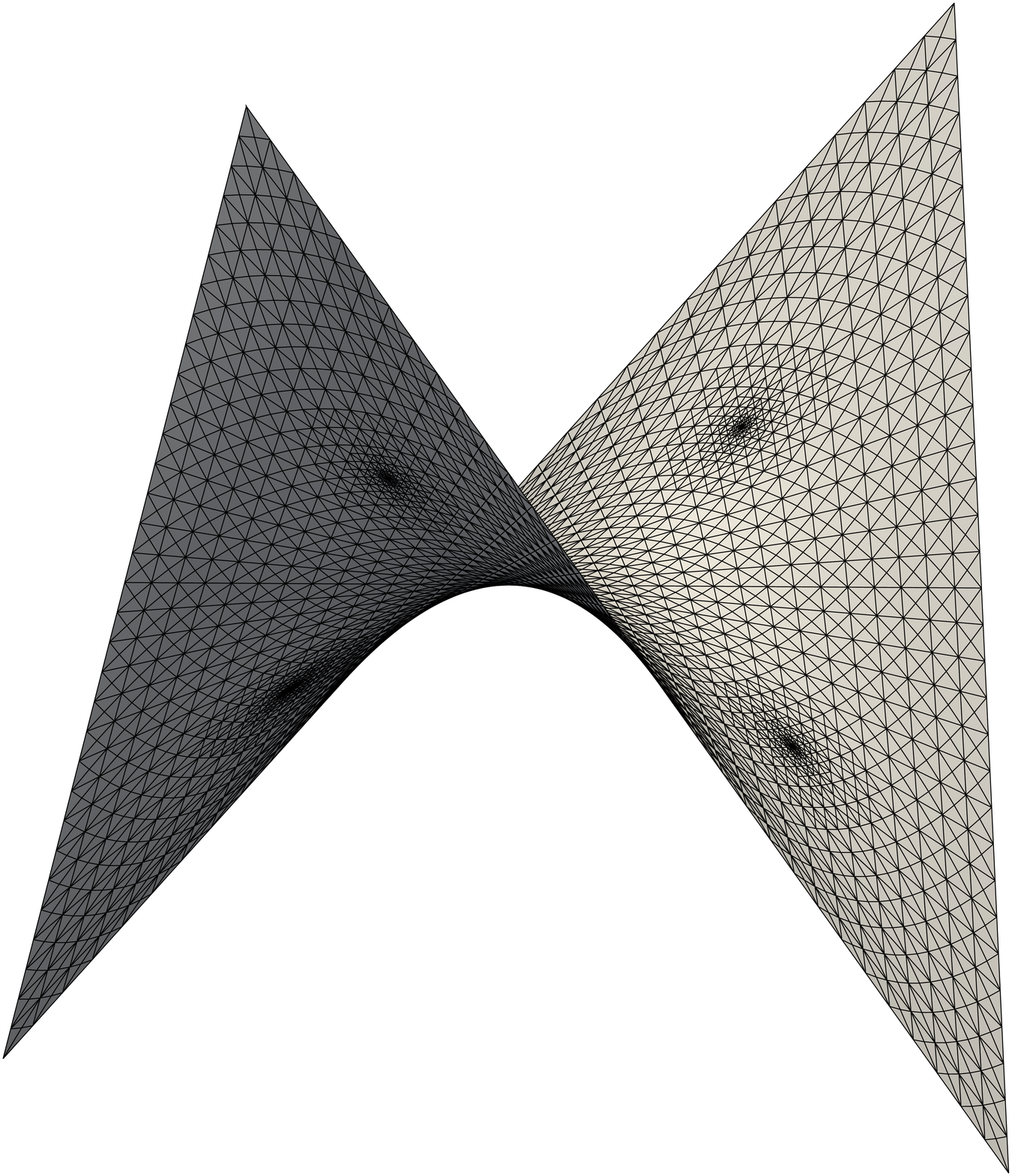}\\
\end{subfigure}
\begin{subfigure}[b]{0.26\textwidth}
\centering $\bar{\mathsf{p}}_{h}$\\
\includegraphics[width=2.8cm,height=2.8cm,scale=0.5]{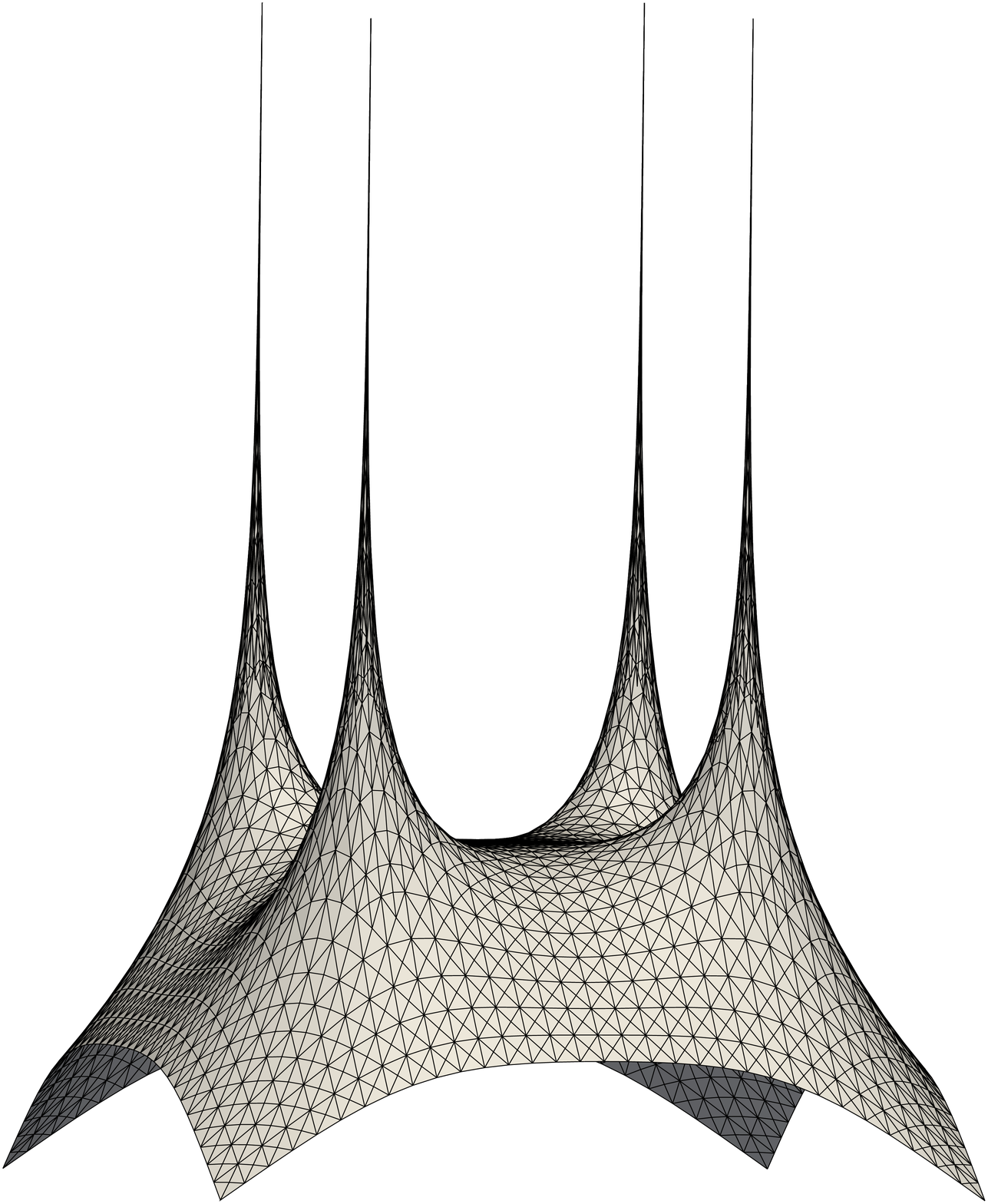}
\end{subfigure}
\begin{subfigure}[b]{0.26\textwidth}
\centering $\bar{\mathsf{u}}_{h}$\\
\includegraphics[width=2.6cm,height=2.8cm,scale=0.5]{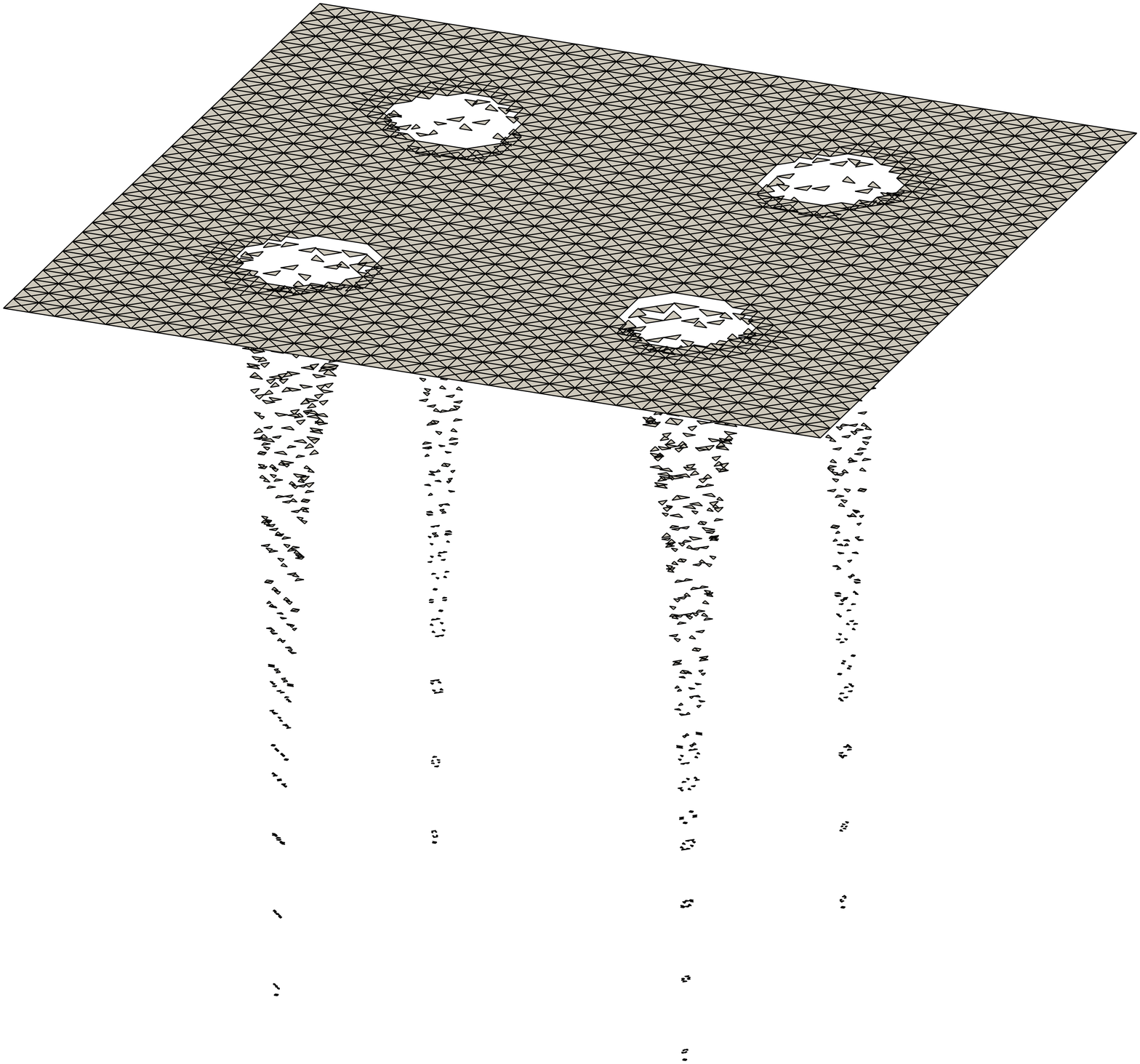}
\end{subfigure}
\begin{subfigure}[b]{0.2\textwidth}
\includegraphics[width=2.5cm,height=2.8cm,scale=0.5]{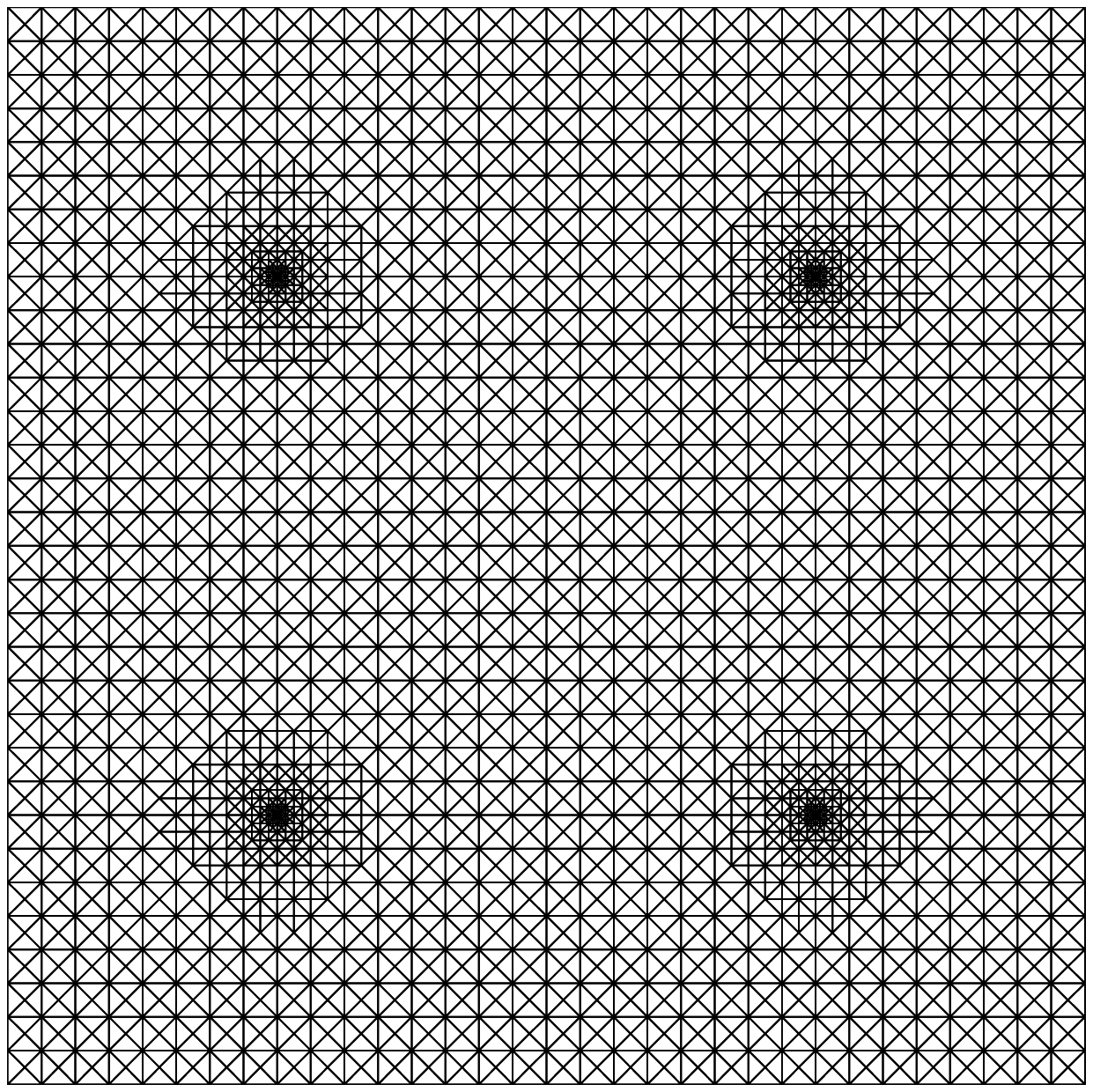}
\end{subfigure}
\\
\psfrag{FIGURAAAAAAAAAAAAAAAAAAAAAA4}{\huge $\|(e_{\ysf},e_{\psf},e_{\usf})\|_{\Omega}$ and $\E_{\mathsf{ocp}}$ for $\alpha=1.5$}
\psfrag{error para alpha=1.5}{\huge $\|(e_{\ysf},e_{\psf},e_{\usf})\|_{\Omega}$}
\psfrag{FIGURAAAAAAAAAAAAAAAAAAAAAA4-S}{\huge Error contributions for $\alpha=1.5$}
\psfrag{FIGURAAAAAAAAAAAAAAAAAAAAAA4-SS}{\huge Estimator contributions for $\alpha=1.5$}
\psfrag{eficiencias para alpha=1.5-----}{\huge Effectivity index for $\alpha=1.5$}
\psfrag{error y--------}{\huge $\|e_\ysf\|_{L^{\infty}(\Omega)}$}
\psfrag{error p--------}{\huge $\|\nabla e_\psf\|_{L^{2}(\rho,\Omega)}$}
\psfrag{error u--------}{\huge $\|e_\usf\|_{L^{2}(\Omega)}$}
\psfrag{estimador y}{\huge $\E_{\mathsf{y}}$}
\psfrag{estimador p}{\huge $\E_{\mathsf{p}}$}
\psfrag{estimador u}{\huge $\E_{\mathsf{u}}$}
\psfrag{estimador / error -----------}{\huge $\E_{\mathsf{ocp}}/\|(e_{\ysf},e_{\psf},e_{\usf})\|_{\Omega}$}
\psfrag{eficiencias y}{\huge $\E_{\mathsf{y}}/e_{\ysf}$}
\psfrag{eficiencias p}{\huge $\E_{\mathsf{p}}/e_{\psf}$}
\psfrag{eficiencias u}{\huge $\E_{\mathsf{u}}/e_{\usf}$}
\begin{subfigure}[b]{1\textwidth}
\includegraphics[width=3.1cm,height=3.4cm,scale=1]{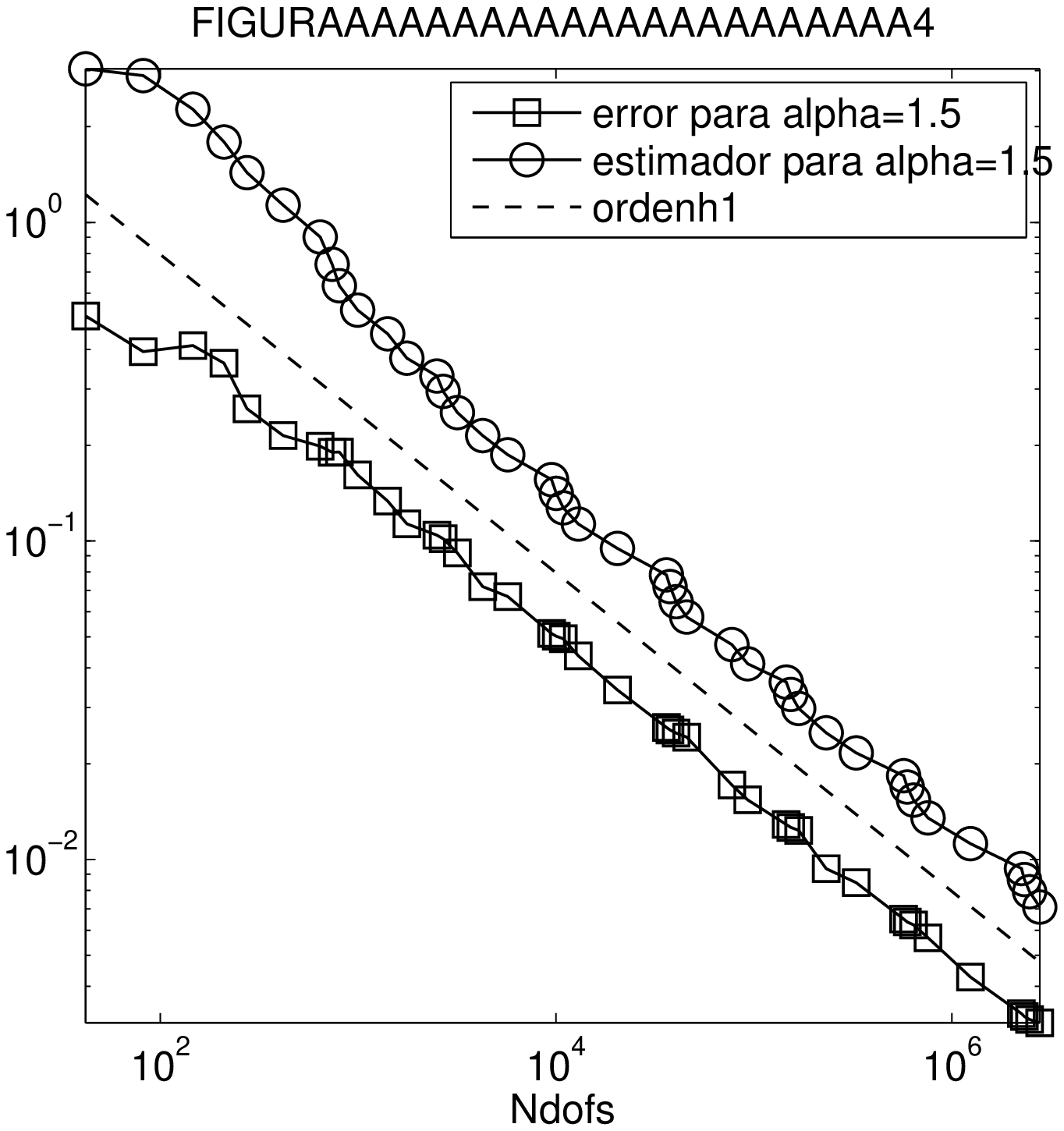}
\includegraphics[width=3.1cm,height=3.4cm,scale=1]{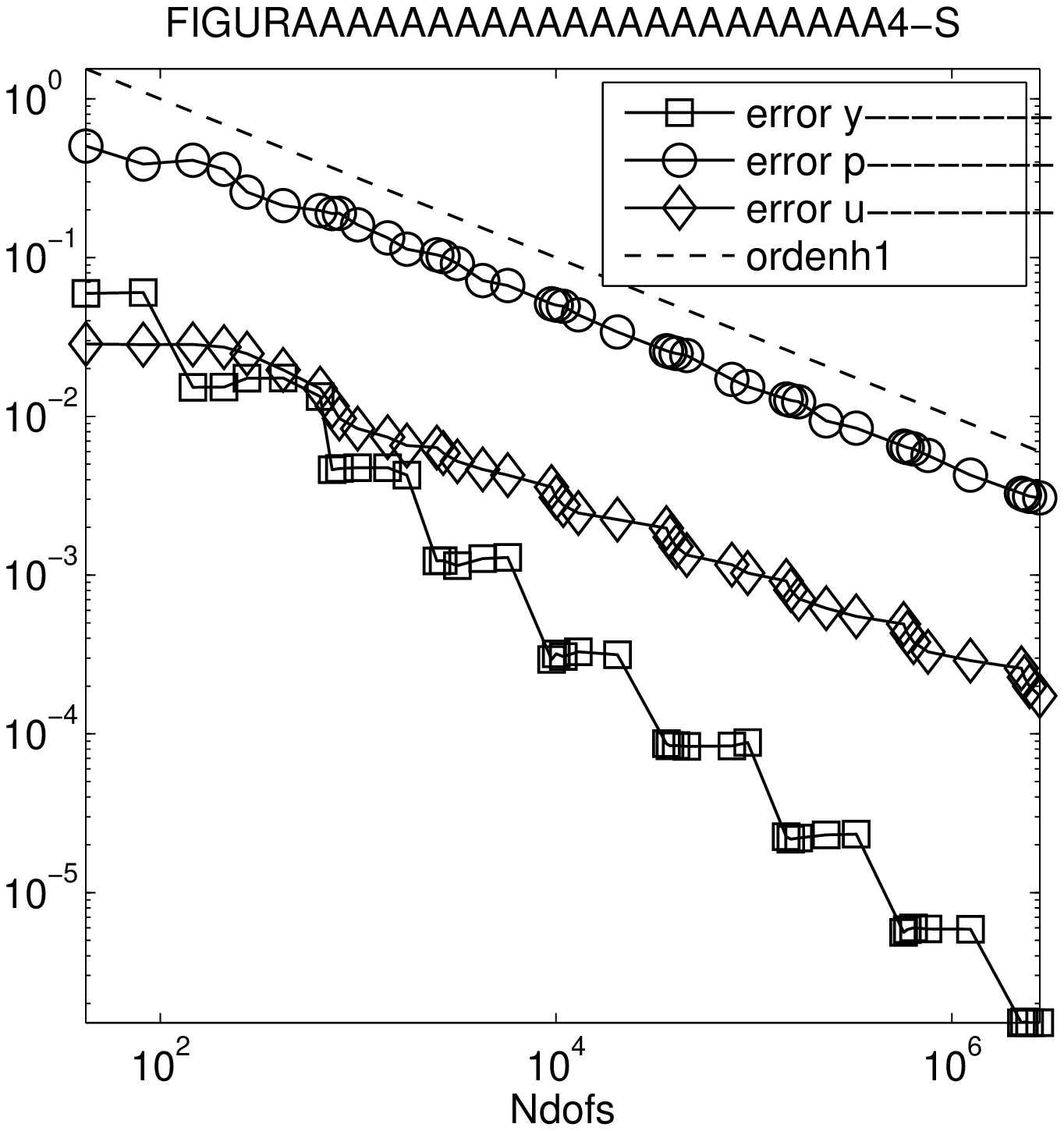}
\includegraphics[width=3.1cm,height=3.4cm,scale=1]{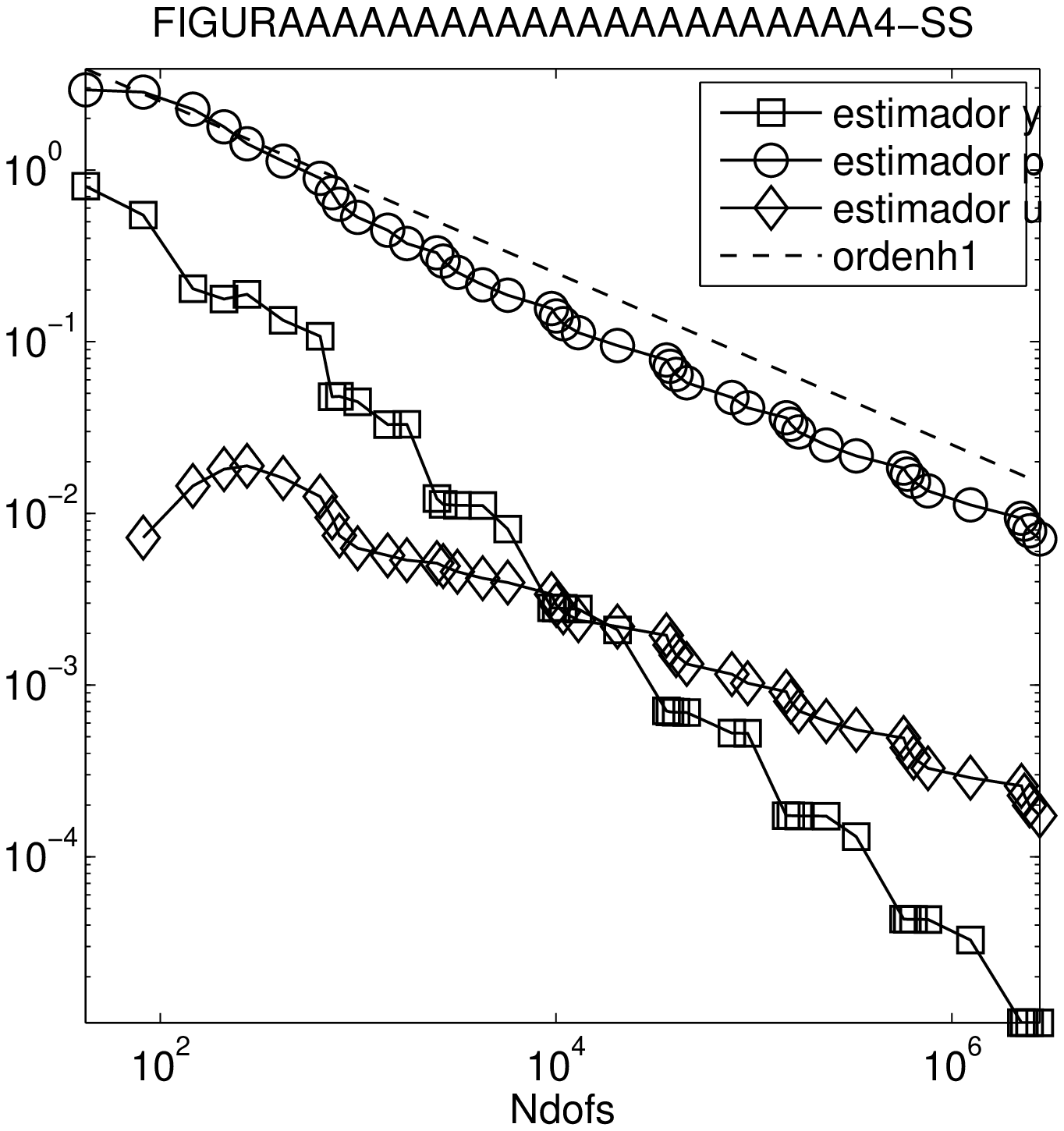}
\includegraphics[width=3.1cm,height=3.4cm,scale=1]{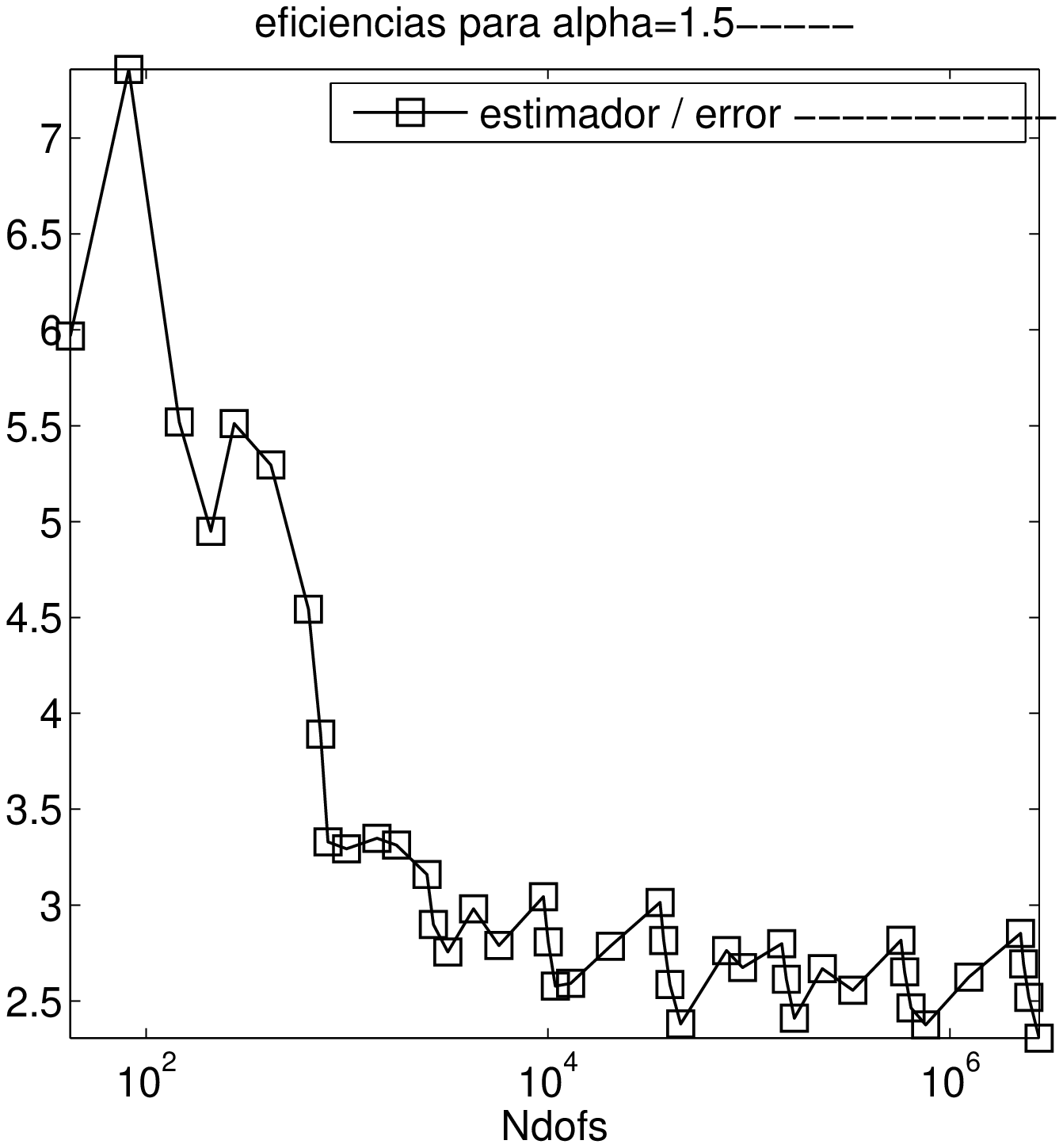}
\caption{Numerical results for Example 3.}
\end{subfigure}
\psfrag{FIGURAAAAAAAAAAAAAAAA5}{\huge $\E_{\ysf}$, $\E_{\psf}$ and $\E_{\usf}$ for $\alpha=1.5$}
\psfrag{error para y}{\huge $e_{\ysf}$}
\psfrag{error para p}{\huge $e_{\psf}$}
\psfrag{error para u}{\huge $e_{\usf}$}
\psfrag{estimador para alpha=1.5}{\huge $\E_{\mathsf{ocp}}$}
\psfrag{estimador para y}{\huge $\E_{\ysf}$}
\psfrag{estimador para p}{\huge $\E_{\psf}$}
\psfrag{estimador para u}{\huge $\E_{\usf}$}
\psfrag{maximum---}{\huge Maximum}
\psfrag{bulk---}{\huge Bulk}
\psfrag{average---}{\huge Average}
\psfrag{ordenh1}{\Large $\textrm{Ndof}^{-1/2}$}
\psfrag{Ndofs}{\huge Ndof}
\begin{subfigure}[b]{0.24\textwidth}
\centering $\bar{\mathsf{y}}_{h}$\\
\includegraphics[width=2.8cm,height=2.8cm,scale=1.5]{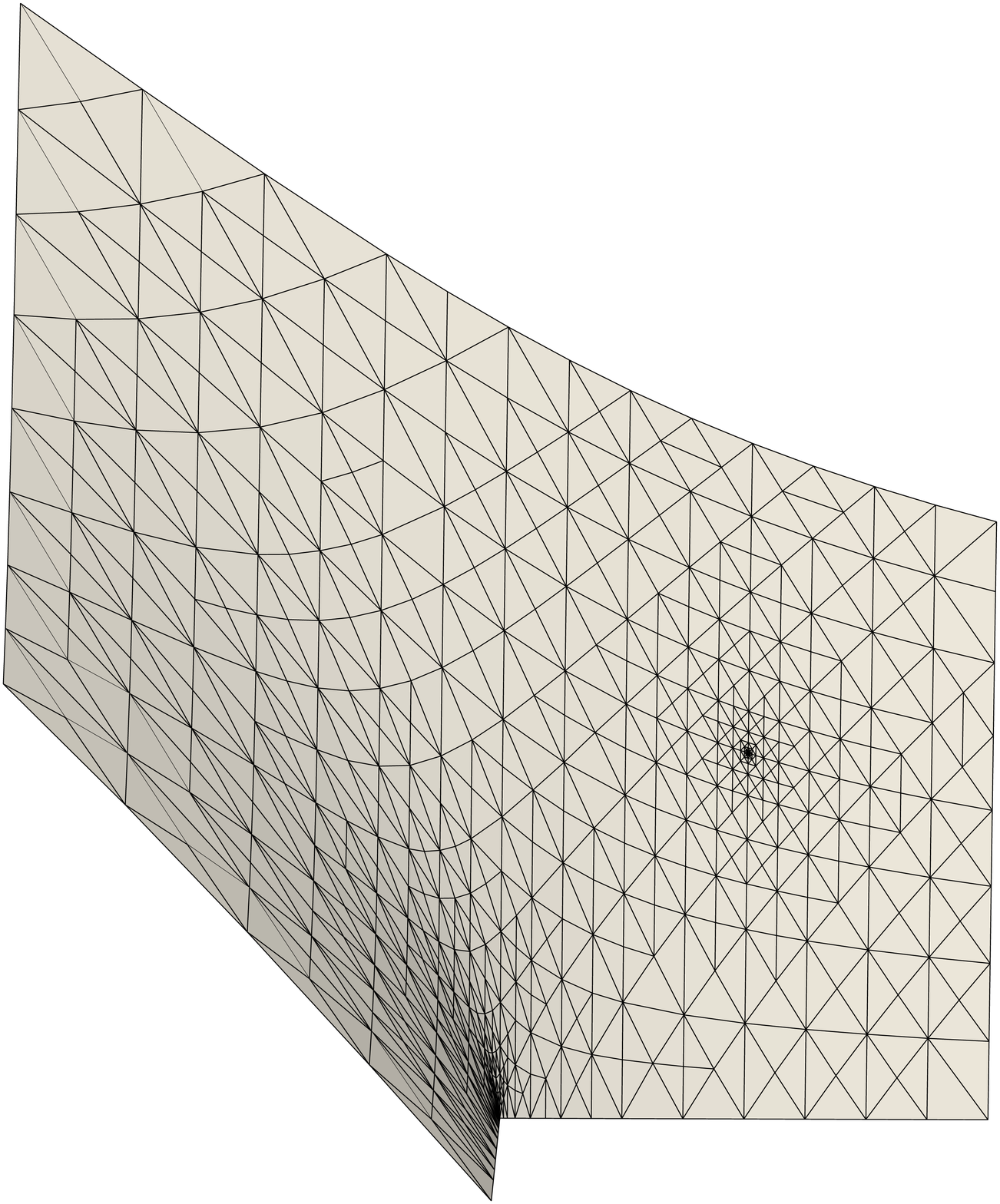}
\end{subfigure}
\begin{subfigure}[b]{0.26\textwidth}
\centering $\bar{\mathsf{p}}_{h}$\\
\includegraphics[width=2.8cm,height=2.8cm,scale=0.5]{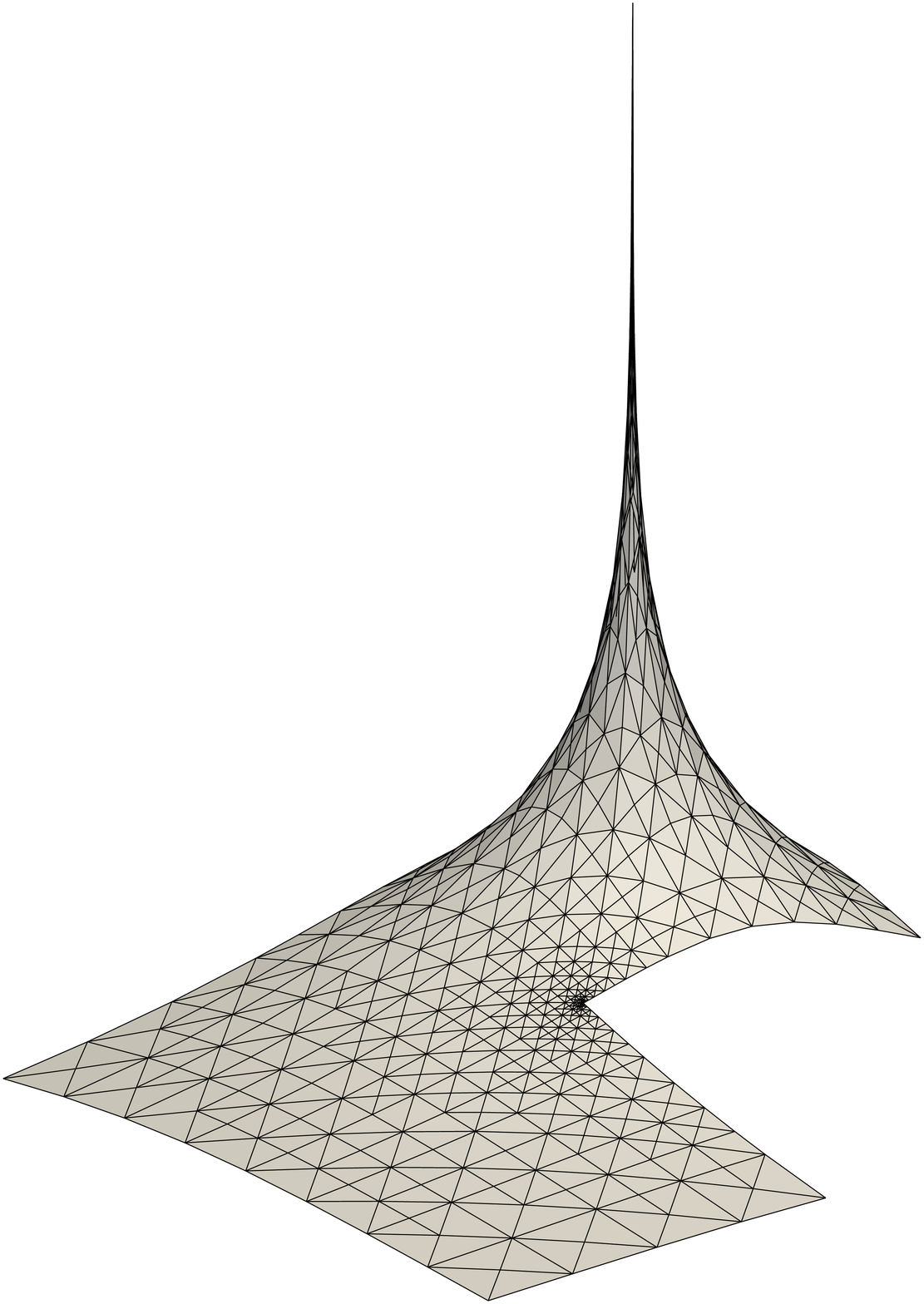}
\end{subfigure}
\begin{subfigure}[b]{0.26\textwidth}
\centering $\bar{\mathsf{u}}_{h}$\\
\includegraphics[width=2.6cm,height=2.8cm,scale=0.5]{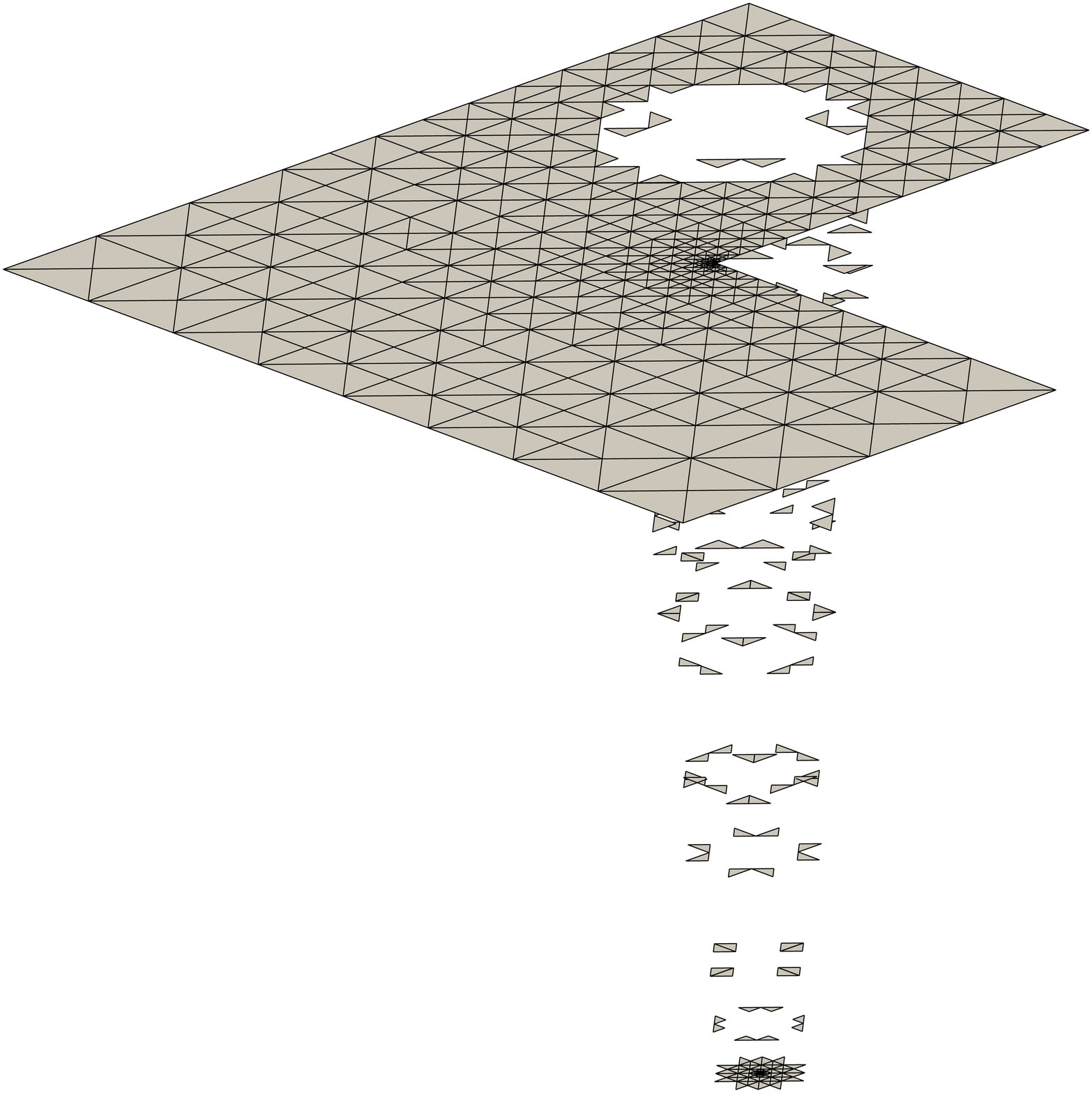}
\end{subfigure}
\begin{subfigure}[b]{0.2\textwidth}
\includegraphics[width=2.5cm,height=2.8cm,scale=0.5]{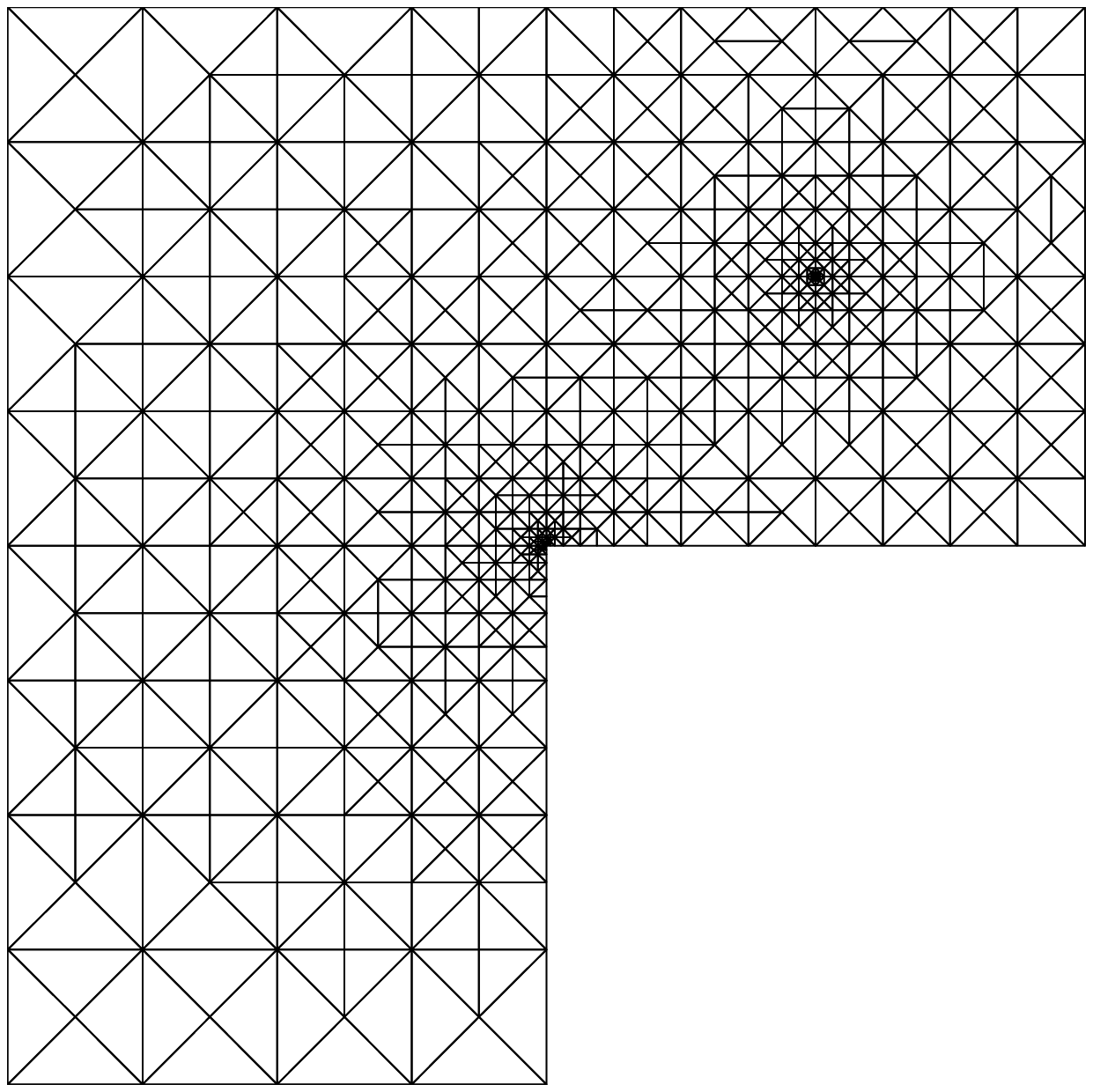}
\end{subfigure}
\\
\psfrag{FIGURAAAAAAAAAAAAAAAAAAAAAA4}{\huge $\|(e_{\ysf},e_{\psf},e_{\usf})\|_{\Omega}$ and $\E_{\mathsf{ocp}}$ for $\alpha=1.5$}
\psfrag{error para alpha=1.5}{\huge $\|(e_{\ysf},e_{\psf},e_{\usf})\|_{\Omega}$}
\psfrag{FIGURAAAAAAAAAAAAAAAAAAAAAA4-S}{\huge Error contributions for $\alpha=1.5$}
\psfrag{FIGURAAAAAAAAAAAAAAAAAAAAAA4-SS}{\huge Estimator contributions for $\alpha=1.5$}
\psfrag{eficiencias para alpha=1.5-----}{\huge Effectivity index for $\alpha=1.5$}
\psfrag{error y--------}{\huge $\|e_\ysf\|_{L^{\infty}(\Omega)}$}
\psfrag{error p--------}{\huge $\|\nabla e_\psf\|_{L^{2}(\rho,\Omega)}$}
\psfrag{error u--------}{\huge $\|e_\usf\|_{L^{2}(\Omega)}$}
\psfrag{estimador y}{\huge $\E_{\mathsf{y}}$}
\psfrag{estimador p}{\huge $\E_{\mathsf{p}}$}
\psfrag{estimador u}{\huge $\E_{\mathsf{u}}$}
\psfrag{estimador / error -----------}{\huge $\E_{\mathsf{ocp}}/\|(e_{\ysf},e_{\psf},e_{\usf})\|_{\Omega}$}
\psfrag{eficiencias y}{\huge $\E_{\mathsf{y}}/e_{\ysf}$}
\psfrag{eficiencias p}{\huge $\E_{\mathsf{p}}/e_{\psf}$}
\psfrag{eficiencias u}{\huge $\E_{\mathsf{u}}/e_{\usf}$}
\begin{subfigure}[b]{1\textwidth}
\includegraphics[width=3.1cm,height=3.4cm,scale=1]{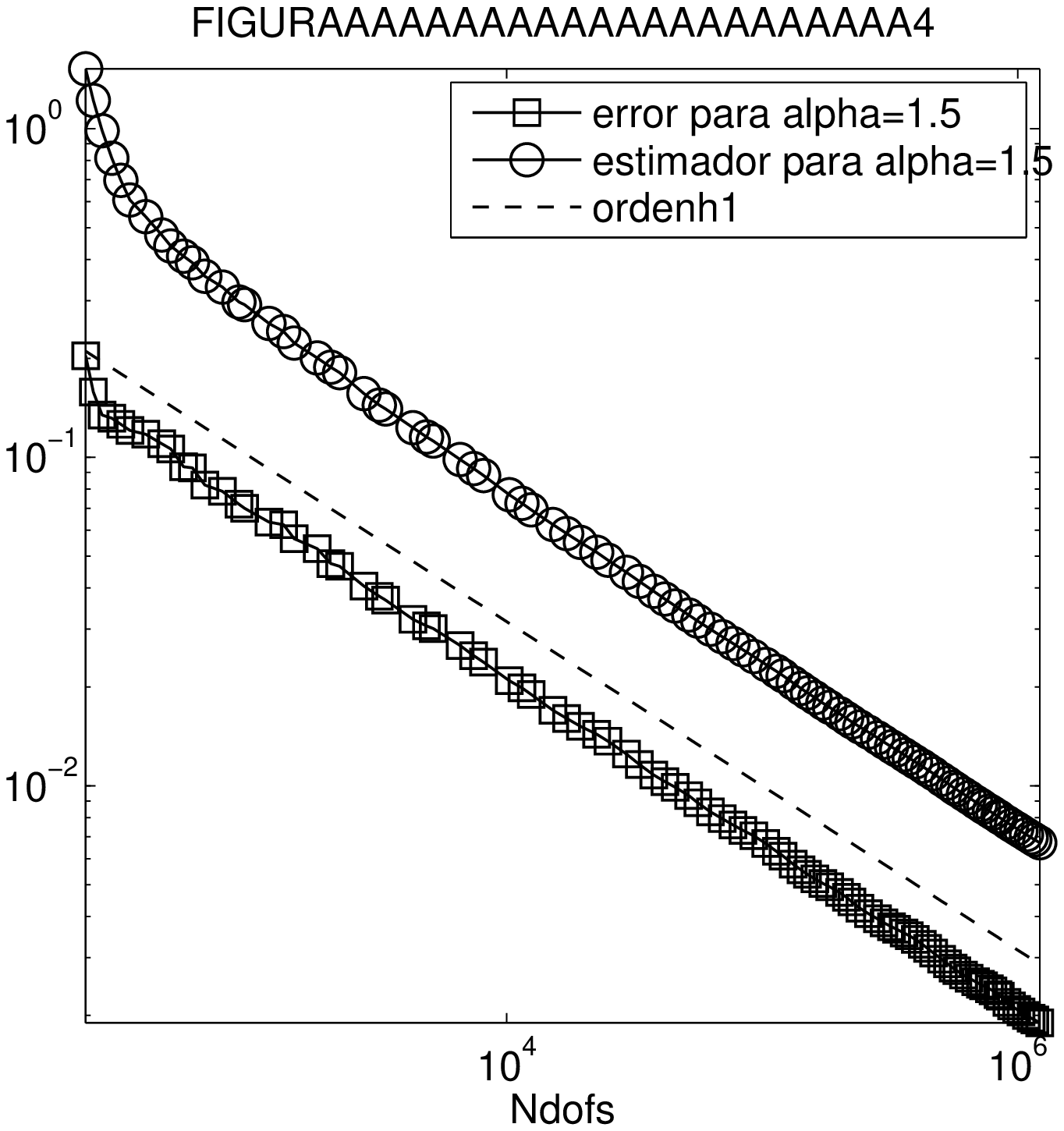}
\includegraphics[width=3.1cm,height=3.4cm,scale=1]{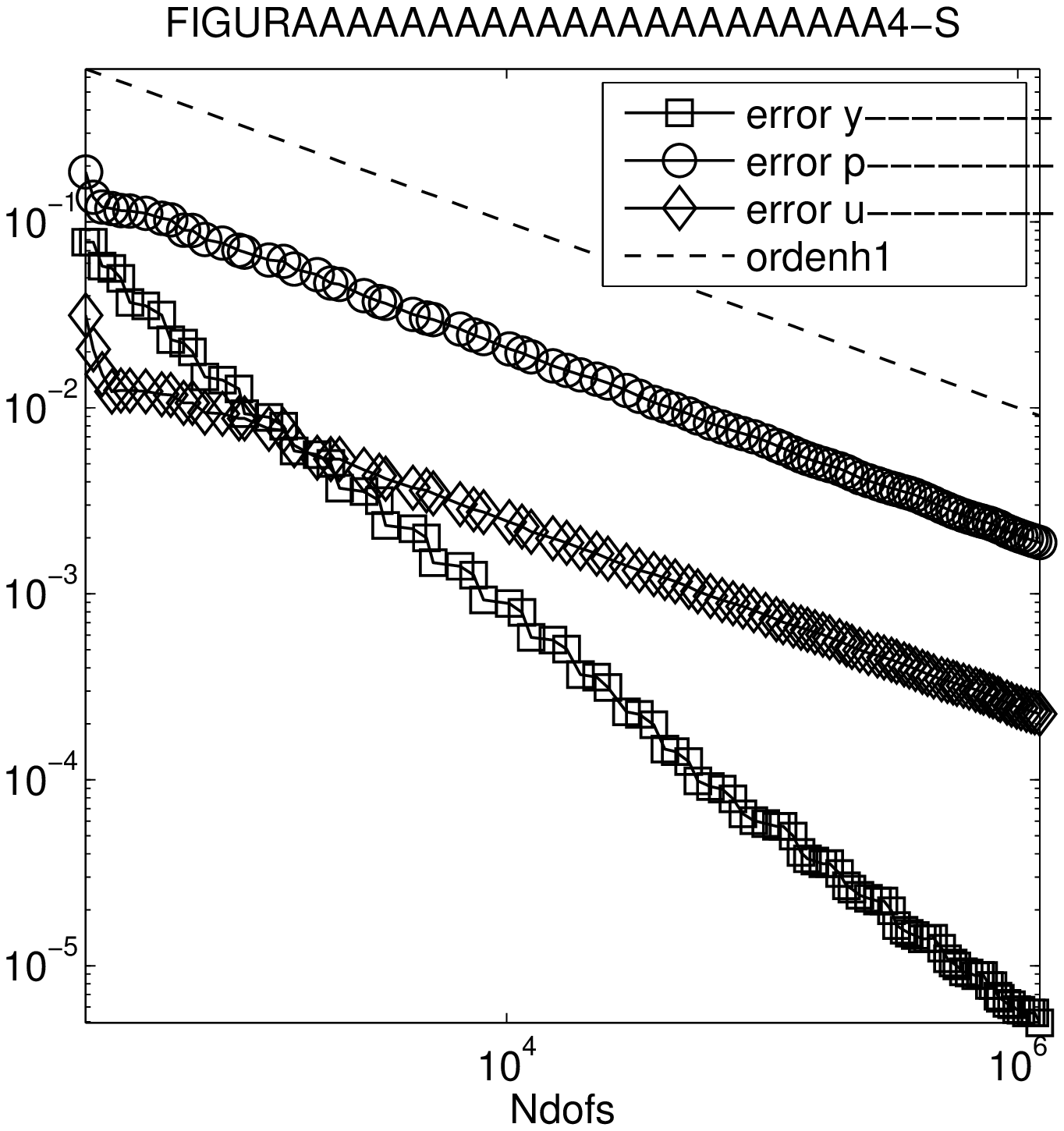}
\includegraphics[width=3.1cm,height=3.4cm,scale=1]{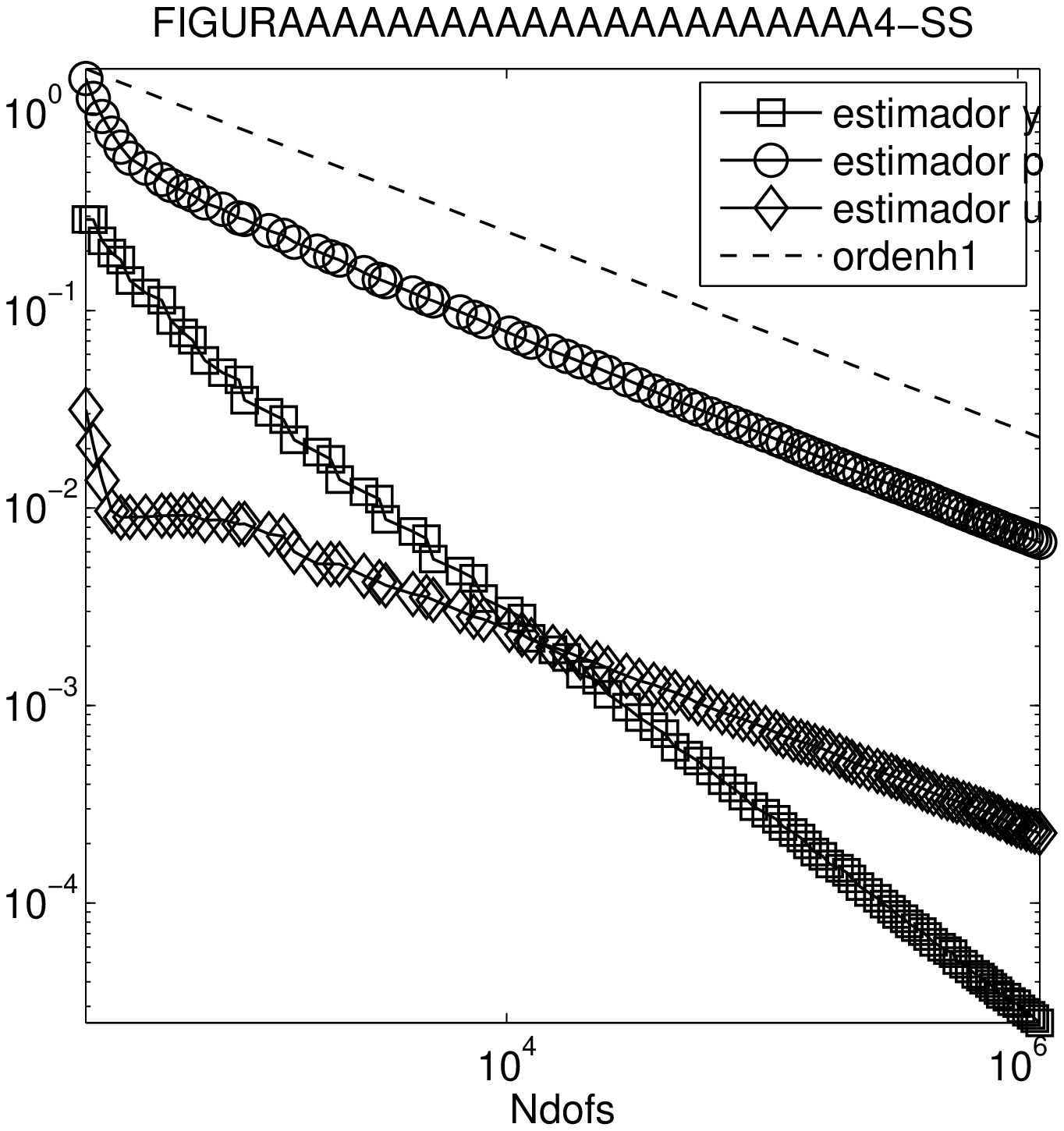}
\includegraphics[width=3.1cm,height=3.4cm,scale=1]{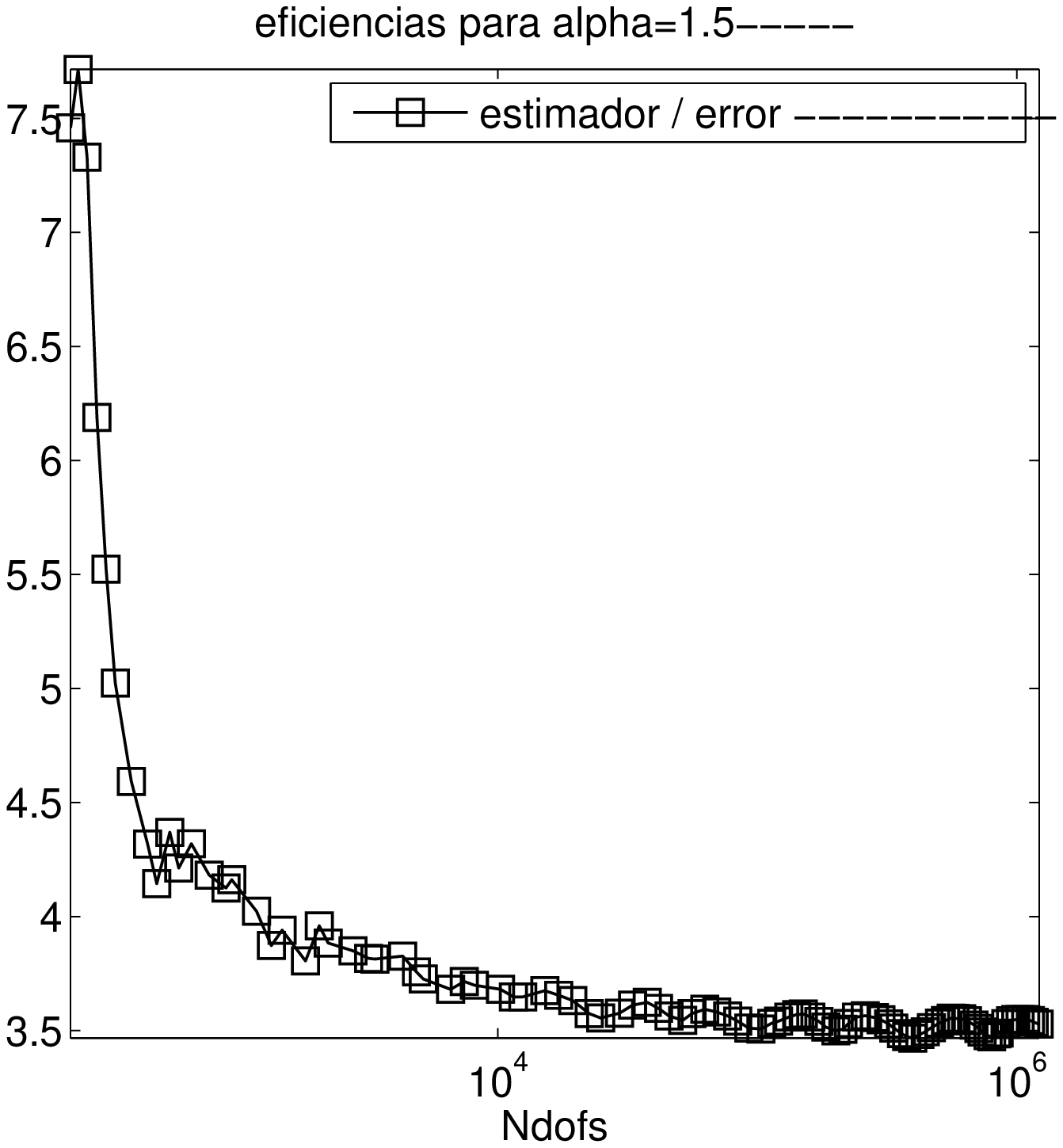}
\caption{Numerical results for Example 4.}
\end{subfigure}
\caption{\AAF{Examples 3 and 4: For $\alpha = 1.5$ and based on a maximum refinement strategy, we show the finite element solutions for the optimal state $\bar{\ysf}_{\T}$, adjoint state $\bar{\psf}_{\T}$, and optimal control $\bar{\usf}_{\T}$, on the mesh obtained after 20 steps of the adaptive loop. Also, convergence rates for the total error $\|(e_{\ysf},e_{\psf},e_{\usf})\|_{\Omega}$, error estimator $\E_{\mathsf{ocp}}$, their individual contributions and effectivity indices.}}
\label{Fig:Ex3-4}
\end{figure}
\subsection{Three dimensional examples}
\label{3d_examples}

\AAF{We show three dimensional examples, for the domain $\Omega=(0,1)^{3}$, and as before, with homogeneous and inhomogeneous Dirichlet boundary conditions and different number of source points.}

\noindent \textbf{Example \RR{5}:} We set $\asf=1$, $\bsf=10$,  $\fsf \equiv0$, and
\begin{gather*}
  \calZ=\{(0.25,0.25,0.25),(0.75,0.75,0.75)\},
\end{gather*}
and for all the desired tracking points $z\in\calZ$ we set $\ysf_{z}=1$. \RR{The exact solution is not known. The results are shown in Figure \ref{Fig:Ex5}.}
~\\
\noindent \textbf{Example \RR{6}:} We set $\asf=-1.5$ \RR{and} $\bsf=0.2$\RR{. The} optimal state is 
\[
  \bar{\ysf}(x_{1},x_{2},x_{3})=128x_1 x_2 x_3(1-x_1)(1-x_2)(1-x_3),
\]
while the optimal adjoint is given by \eqref{exact_adjoint}. The set of observation points is $\calZ=\{(0.5,0.5,0.5)\}$ and $\ysf_{(0.5,0.5,0.5)}=1$. \RR{The results are shown in Figure \ref{Fig:Ex6}.}
~\\
\noindent \textbf{Example \RR{7}:} \RR{We} set $\asf=-15$ and $\bsf=-5$. The optimal state is
\[
  \bar{\ysf}(x_{1},x_{2},x_{3})=\frac{8192}{27}x_1 x_2 x_3(1-x_1)(1-x_2)(1-x_3),
\]
whereas the optimal adjoint state is defined in \eqref{exact_adjoint}. The set of observation points is 
\begin{align*}
  \calZ = &\left\{(0.25,0.25,0.25),(0.25,0.25,0.75),(0.25,0.75,0.25),(0.25,0.75,0.75), \right. \\
  &\left. (0.75,0.25,0.25),(0.75,0.25,0.75),(0.75,0.75,0.25),(0.75,0.75,0.25) \right\}
\end{align*}
and we set $\ysf_z=1$ for all $z \in \calZ$. \RR{The results are shown in Figure \ref{Fig:Ex7}.}

\begin{figure}[!h]
\begin{center}
\psfrag{total estimator versus, for alpha=1.99}{\huge $\E_{\mathsf{ocp}}$ Adaptive vs Uniform for $\alpha=1.99$}
\psfrag{Ndof}{\huge Ndof}
\psfrag{total estimator (uniform)}{\LARGE $\E_{\mathsf{ocp}}$-Uniform}
\psfrag{total estimator (adaptive)}{\LARGE $\E_{\mathsf{ocp}}$-Adaptive}
\psfrag{1d3}{\LARGE Ndof$^{-1/3}$}
\psfrag{1d8}{\LARGE Ndof$^{-1/8}$}
\scalebox{0.23}{\includegraphics{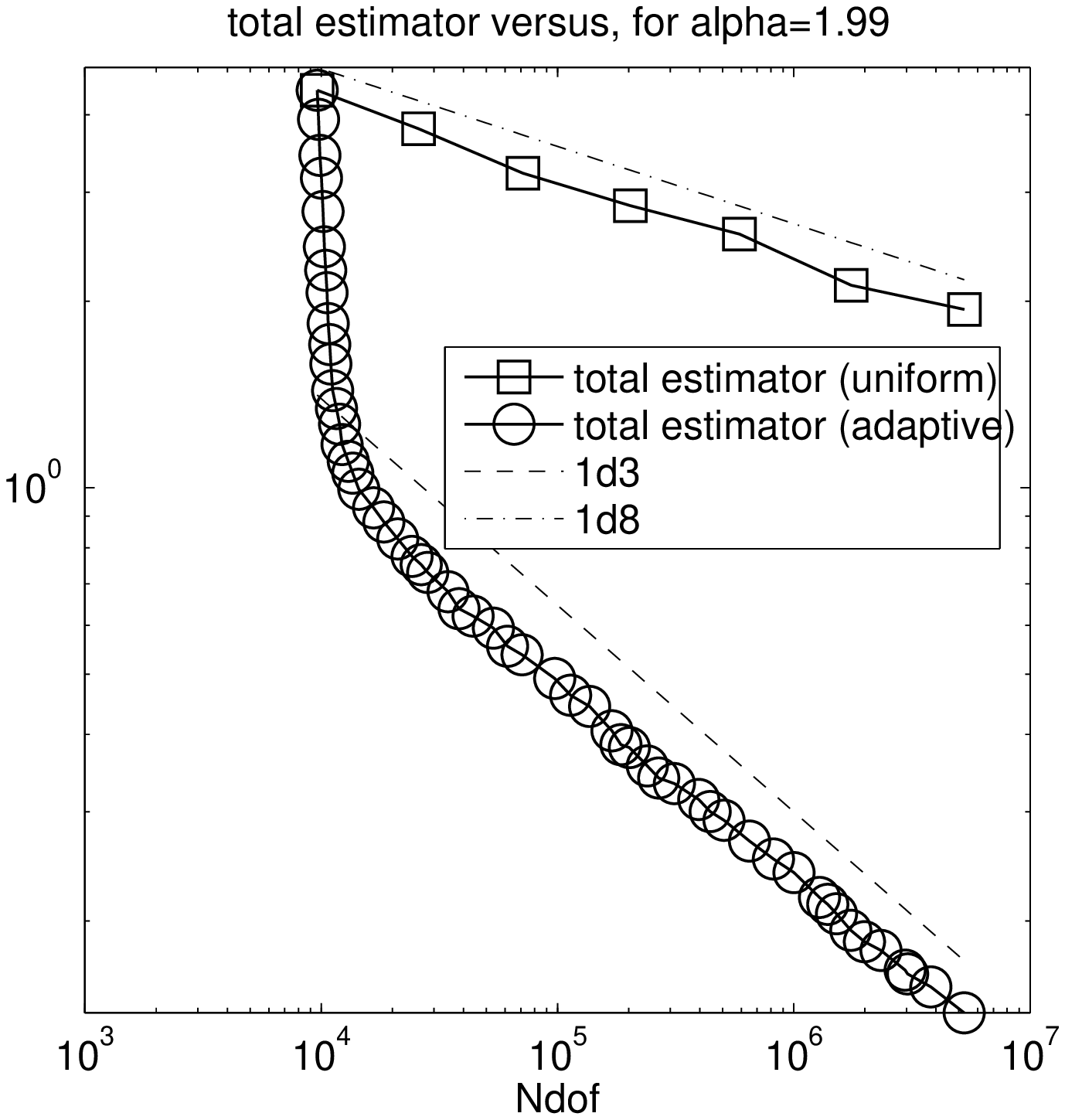}}
\psfrag{estimator contributions, for alpha=1.99}{\huge Estimator contributions, for $\alpha=1.99$}
\psfrag{Ndof}{\huge Ndof}
\psfrag{state estimator}{\LARGE $\E_{\mathsf{y}}$-Uniform}
\psfrag{adjoint estimator}{\LARGE $\E_{\mathsf{p}}$-Uniform}
\psfrag{control estimator}{\LARGE $\E_{\mathsf{u}}$-Uniform}
\psfrag{1d3}{\LARGE Ndof$^{-1/3}$}
\psfrag{5d12}{\LARGE Ndof$^{-5/12}$}
\scalebox{0.23}{\includegraphics{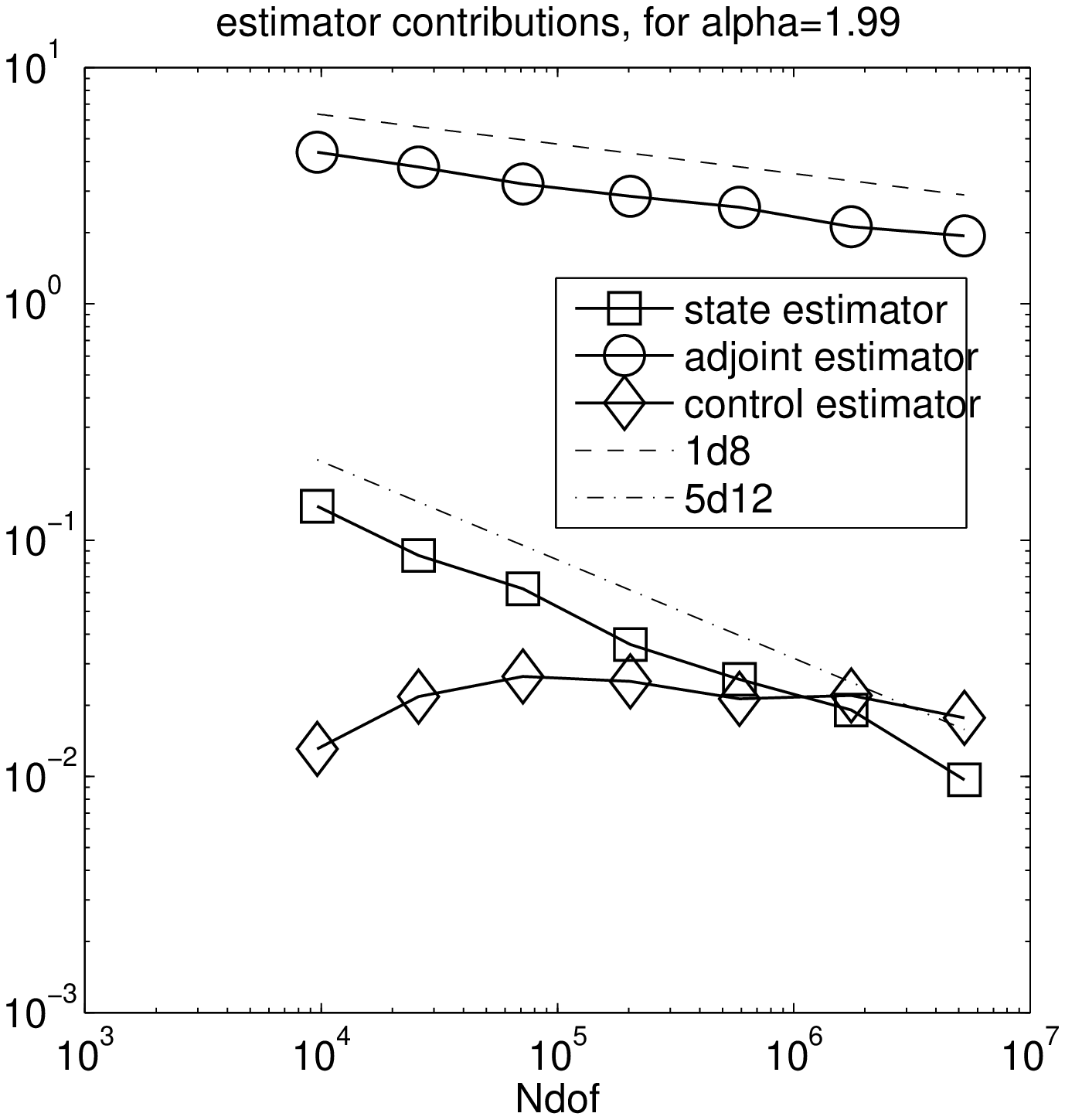}}
\psfrag{estimator contributions, for alpha=1.99}{\huge Estimator contributions, for $\alpha=1.99$}
\psfrag{Ndof}{\huge Ndof}
\psfrag{state estimator}{\LARGE $\E_{\mathsf{y}}$-Adaptive}
\psfrag{adjoint estimator}{\LARGE $\E_{\mathsf{p}}$-Adaptive}
\psfrag{control estimator}{\LARGE $\E_{\mathsf{u}}$-Adaptive}
\psfrag{1d3}{\LARGE Ndof$^{-1/3}$}
\psfrag{2d3}{\LARGE Ndof$^{-2/3}$}
\scalebox{0.23}{\includegraphics{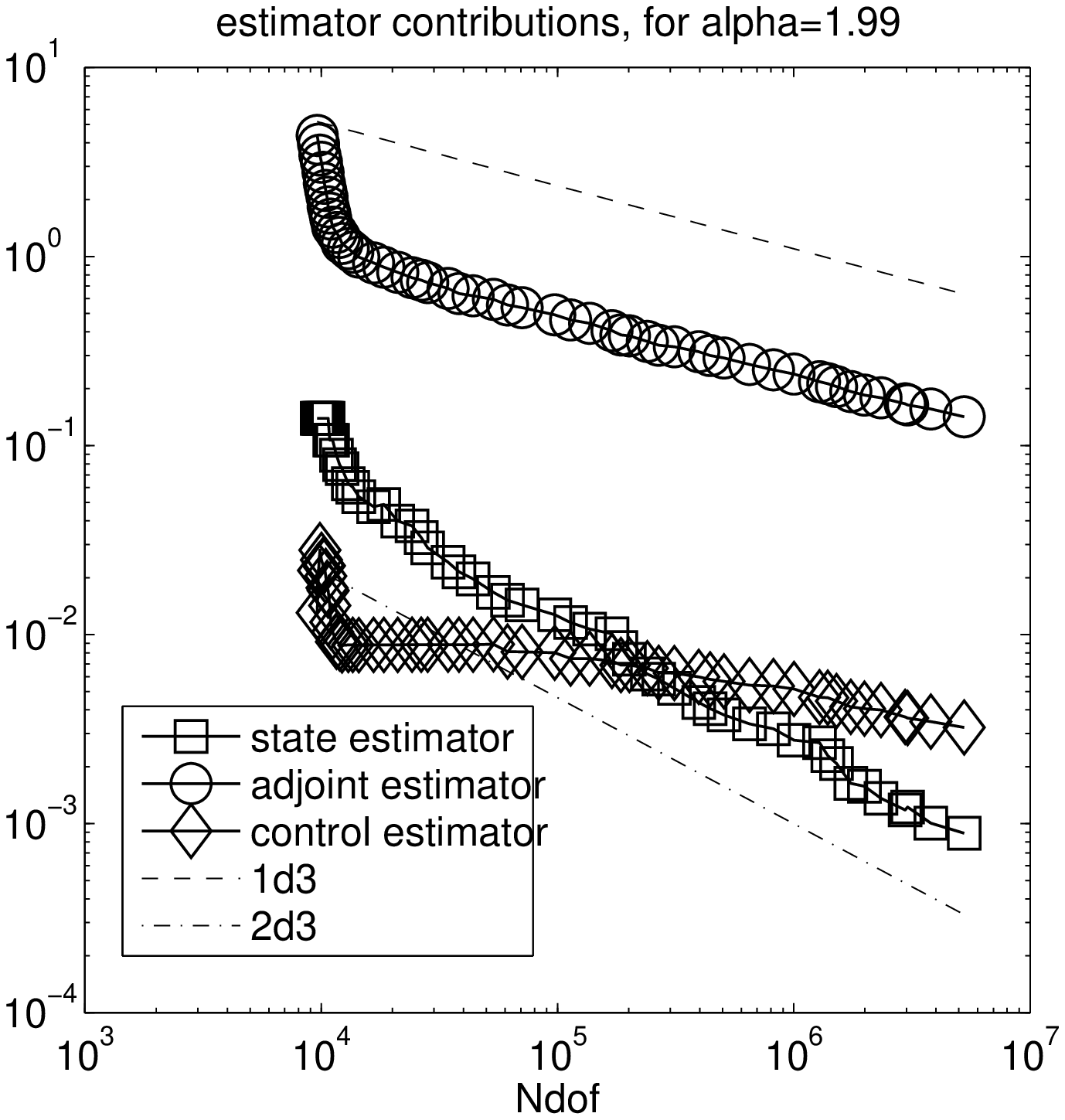}}
\scalebox{0.08}{\includegraphics{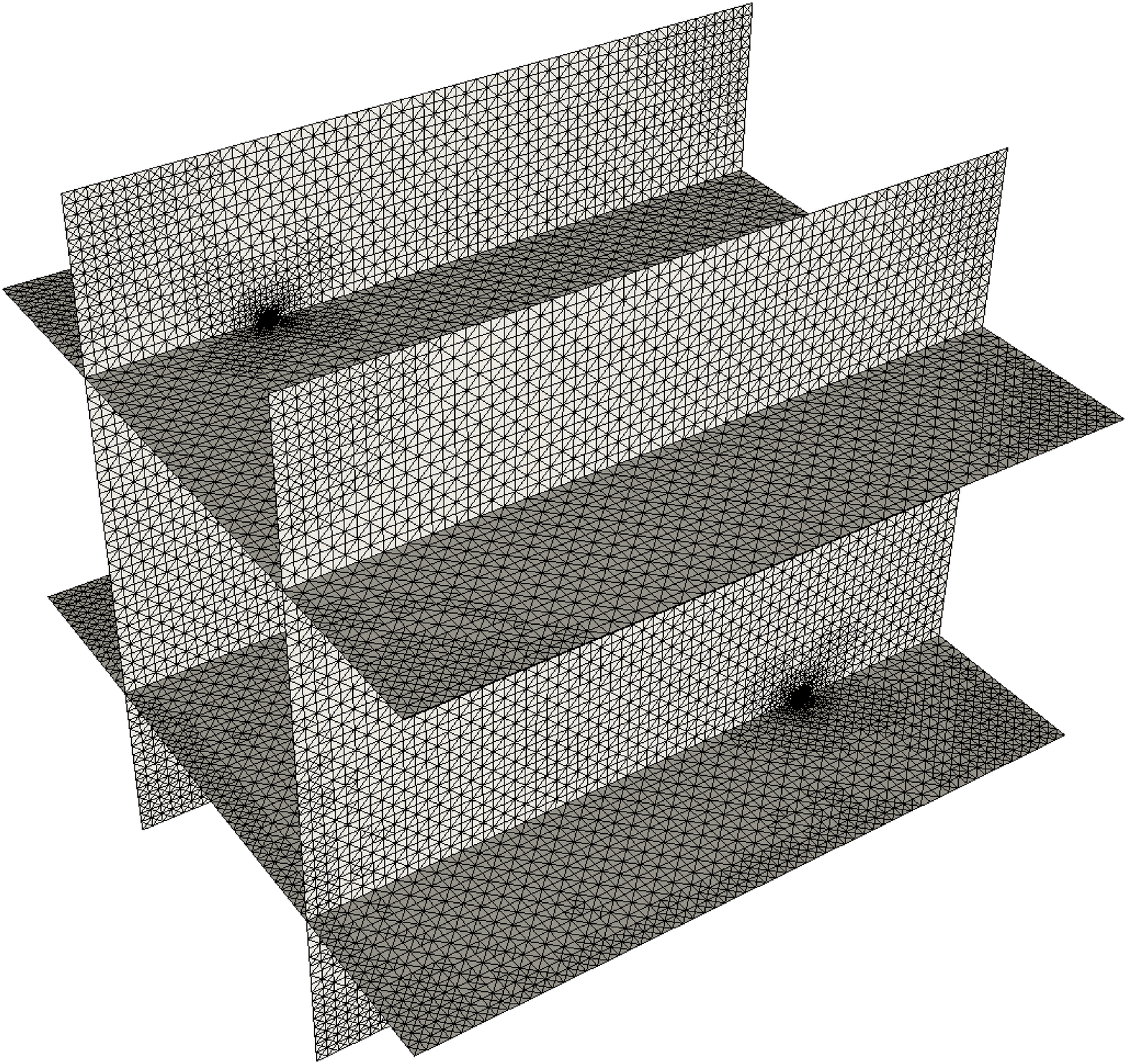}}
\end{center}
\caption{\RR{Example 5: For $\alpha = 1.99$ and both uniform refinement and adaptive refinement using a maximum strategy, we show the convergence rates for the error estimator $\E_{\mathsf{ocp}}$, its individual contributions and slices of the final adaptively refined mesh.}}
\label{Fig:Ex5}
\end{figure}
\begin{figure}[!h]
\begin{center}
\psfrag{total estimator and error, for alpha=1.99}{\huge $\|(e_{\ysf},e_{\psf},e_{\usf})\|_{\Omega}$ and $\E_{\mathsf{ocp}}$, for $\alpha=1.99$}
\psfrag{Ndof}{\huge Ndof}
\psfrag{total error ypu}{\LARGE $\|(e_{\ysf},e_{\psf},e_{\usf})\|_{\Omega}$}
\psfrag{total estimator ypu}{\LARGE $\E_{\mathsf{ocp}}$}
\psfrag{O(1)}{\LARGE Ndof$^{-1/3}$}
\scalebox{0.23}{\includegraphics{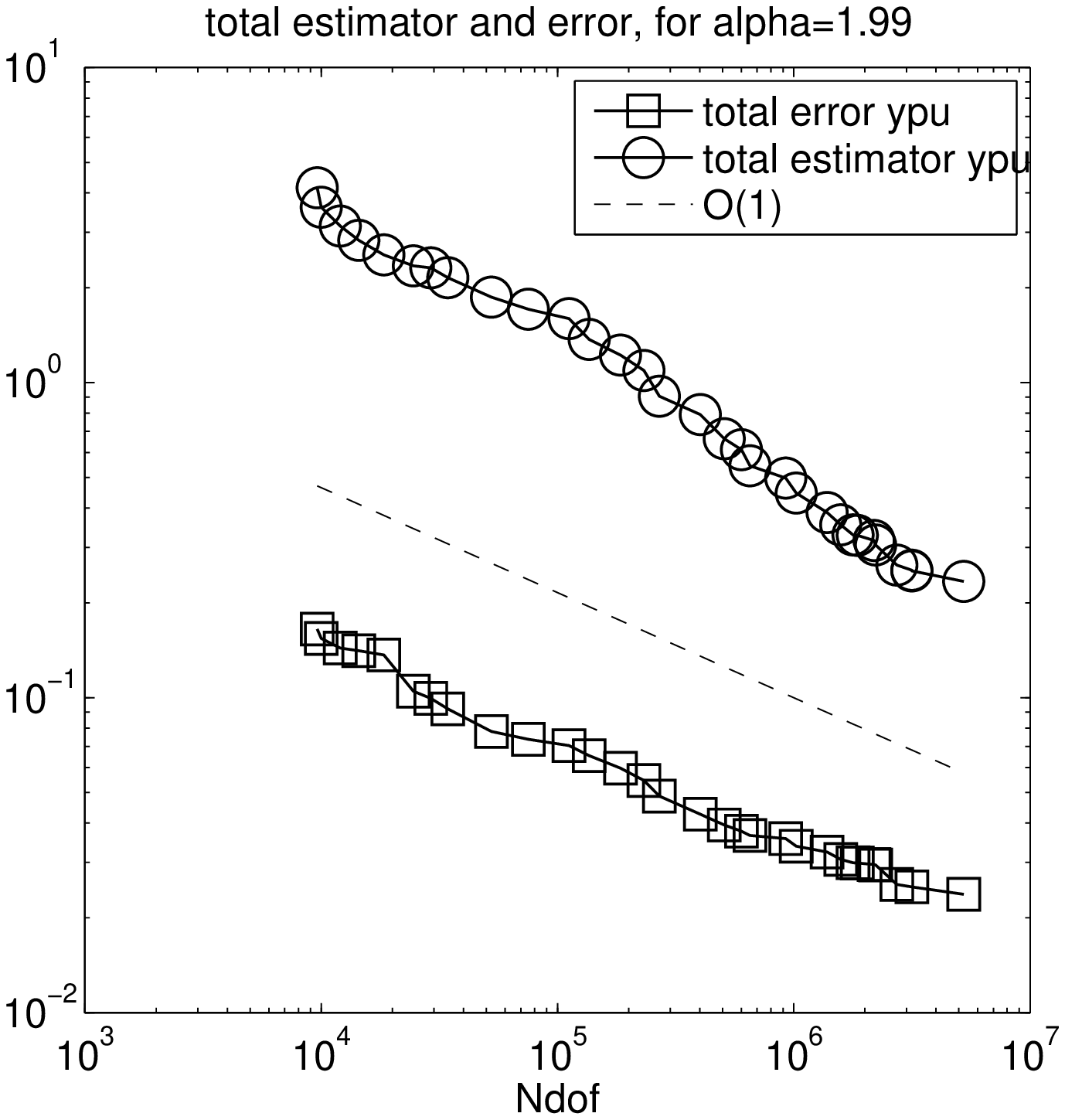}}
\psfrag{error contributions, for alpha=1.99}{\huge Error contributions, for $\alpha=1.99$}
\psfrag{Ndof}{\huge Ndof}
\psfrag{norm of state error}{\LARGE $\|e_\ysf\|_{L^{\infty}(\Omega)}$}
\psfrag{norm of adjoint error}{\LARGE $\|\nabla e_\psf\|_{L^{2}(\rho,\Omega)}$}
\psfrag{norm of control error}{\LARGE $\|e_\usf\|_{L^{2}(\Omega)}$}
\psfrag{O(1)}{\LARGE Ndof$^{-1/3}$}
\scalebox{0.23}{\includegraphics{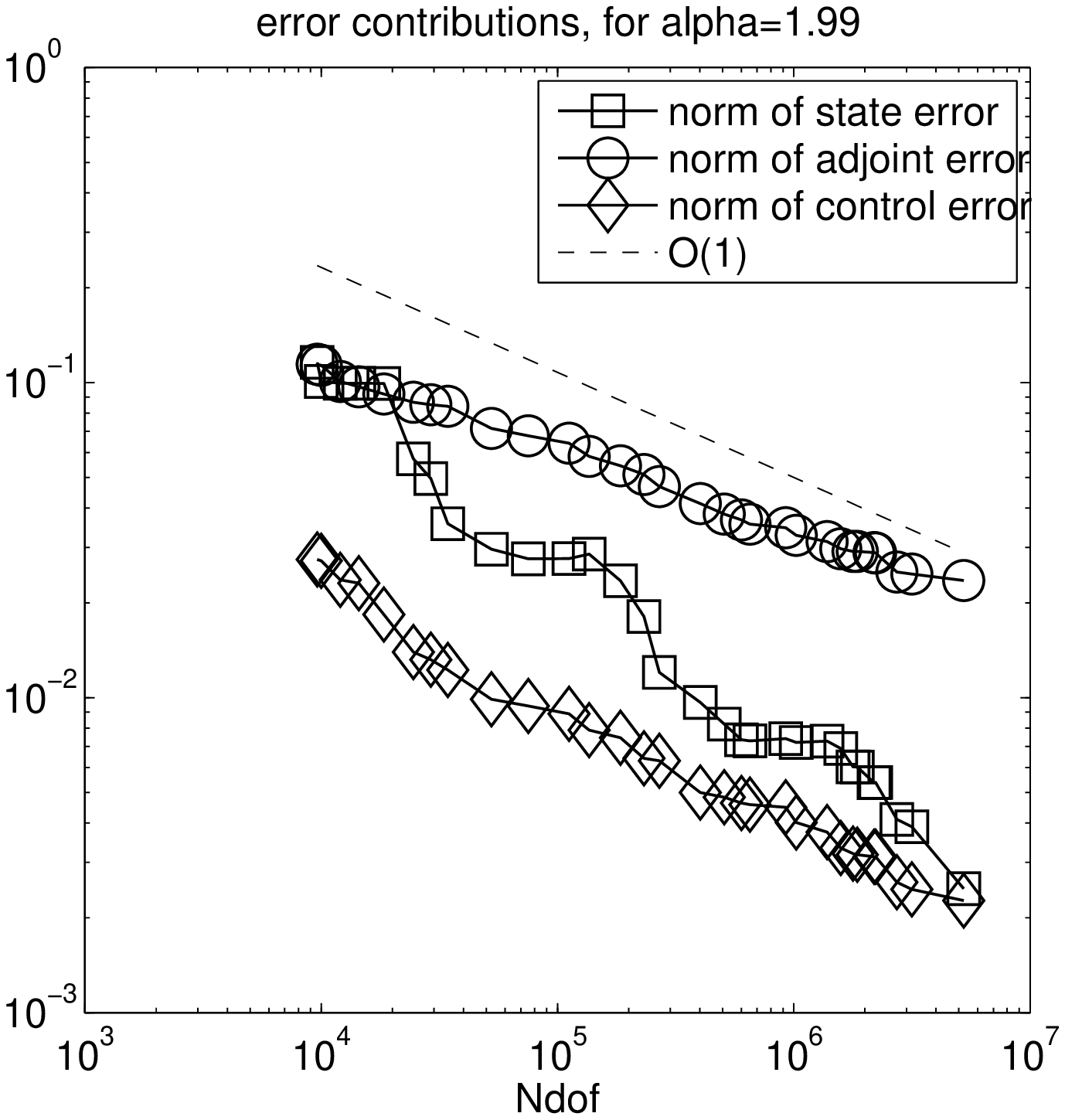}}
\psfrag{estimator contributions, for alpha=1.99}{\huge Estimator contributions, for $\alpha=1.99$}
\psfrag{Ndof}{\huge Ndof}
\psfrag{state estimator}{\LARGE $\E_{\mathsf{y}}$}
\psfrag{adjoint estimator}{\LARGE $\E_{\mathsf{p}}$}
\psfrag{control estimator}{\LARGE $\E_{\mathsf{u}}$}
\psfrag{O(1)}{\LARGE Ndof$^{-1/3}$}
\scalebox{0.23}{\includegraphics{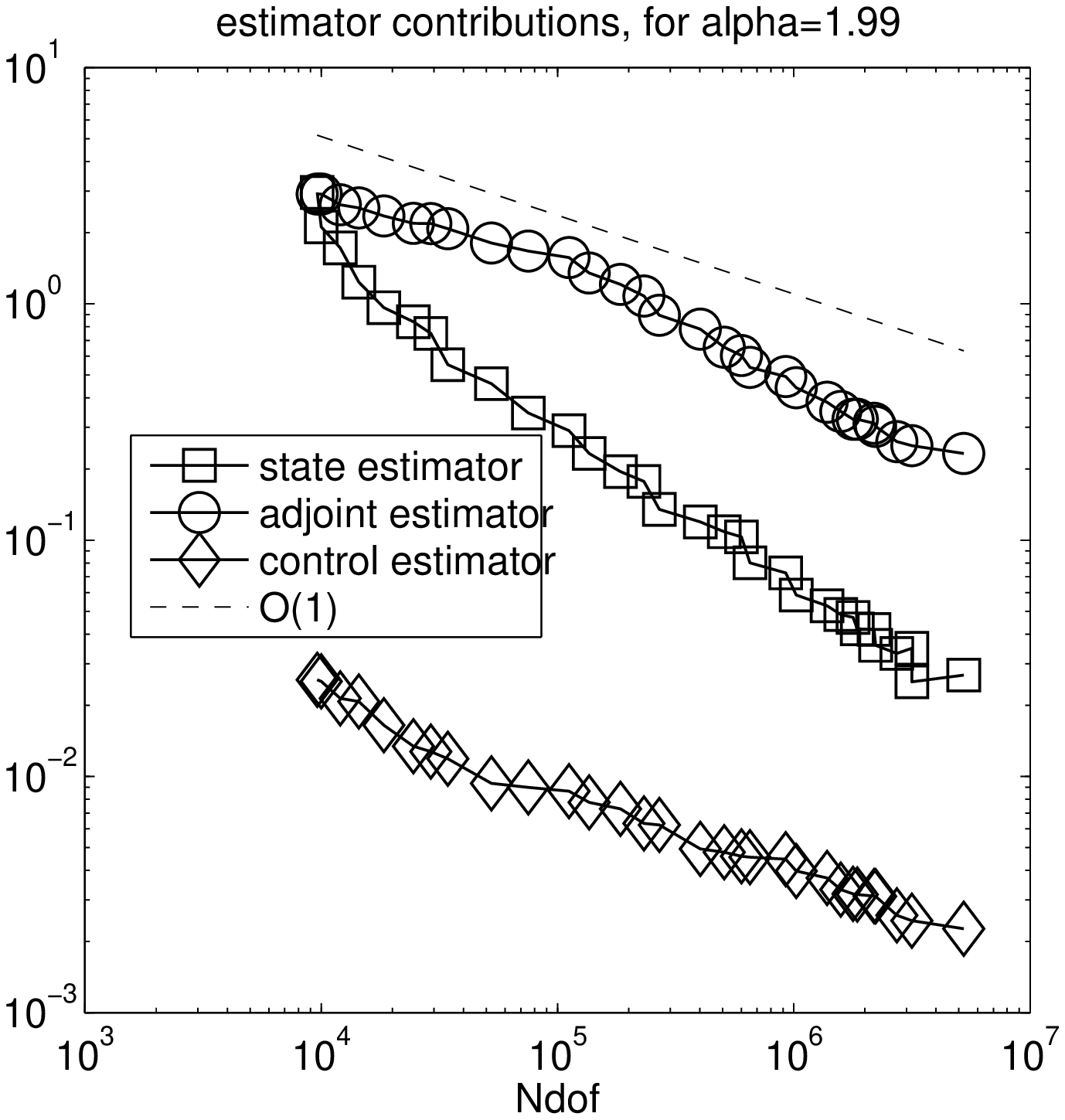}}
\psfrag{total effectivity, for alpha=1.99}{\huge Effectivity index, for $\alpha=1.99$}
\psfrag{Ndof}{\huge Ndof}
\psfrag{total effectivity index ede}{\LARGE $\E_{\mathsf{ocp}}/\|(e_{\ysf},e_{\psf},e_{\usf})\|_{\Omega}$}
\psfrag{O(1)}{\LARGE Ndof$^{-1/3}$}
\scalebox{0.23}{\includegraphics{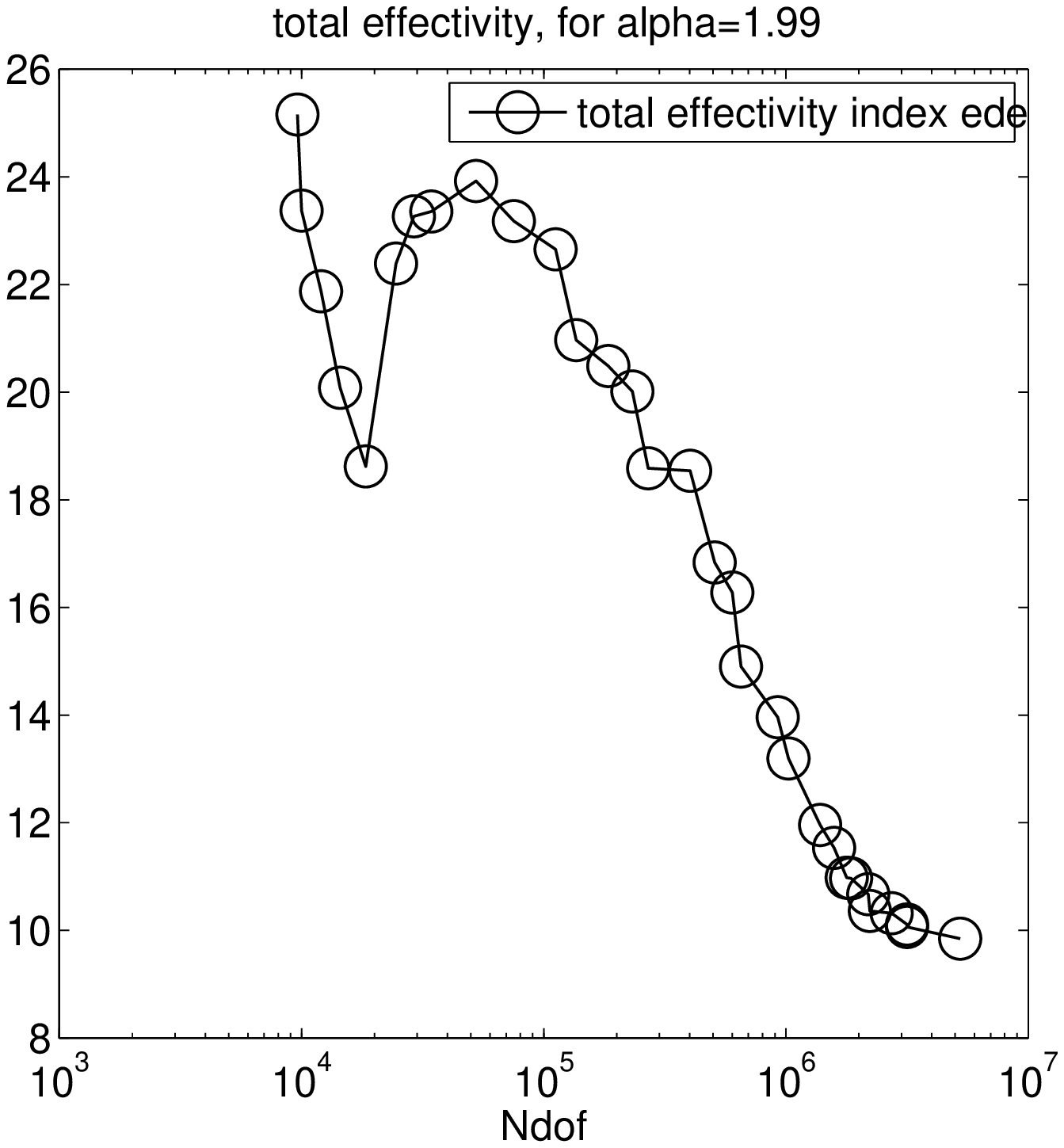}}
\scalebox{0.08}{\includegraphics{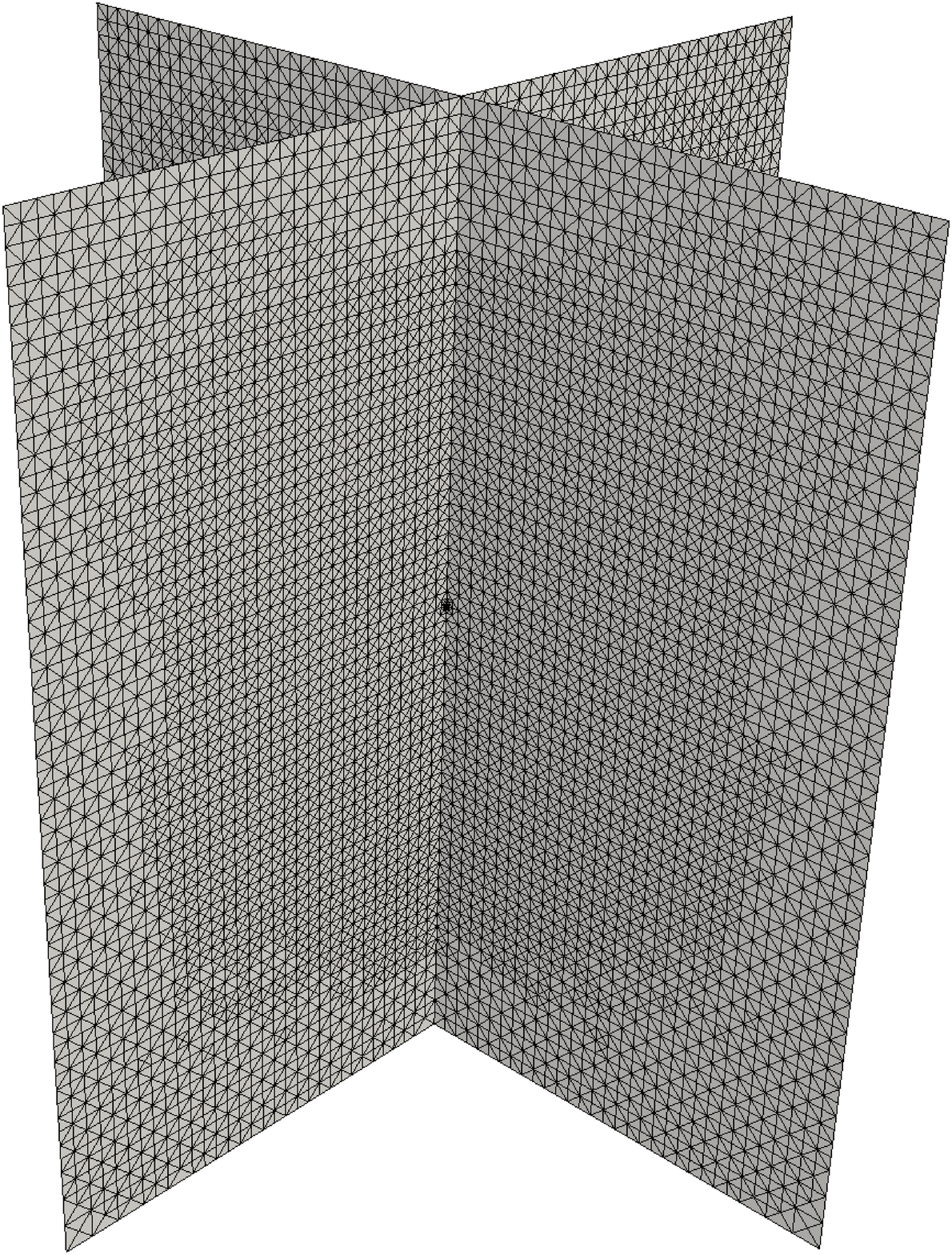}}
\end{center}
\caption{\RR{Example 6: For $\alpha = 1.99$ and adaptive refinement using a maximum strategy, we show the convergence rates for the total error $\|(e_{\ysf},e_{\psf},e_{\usf})\|_{\Omega}$, error estimator $\E_{\mathsf{ocp}}$, their individual contributions, effectivity indices and slices of the final adaptively refined mesh.}}
\label{Fig:Ex6}
\end{figure}
\begin{figure}[!h]
\begin{center}
\psfrag{total estimator and error, for alpha=1.99}{\huge $\|(e_{\ysf},e_{\psf},e_{\usf})\|_{\Omega}$ and $\E_{\mathsf{ocp}}$, for $\alpha=1.99$}
\psfrag{Ndof}{\huge Ndof}
\psfrag{total error ypu}{\LARGE $\|(e_{\ysf},e_{\psf},e_{\usf})\|_{\Omega}$}
\psfrag{total estimator ypu}{\LARGE $\E_{\mathsf{ocp}}$}
\psfrag{O(1)}{\LARGE Ndof$^{-1/3}$}
\scalebox{0.23}{\includegraphics{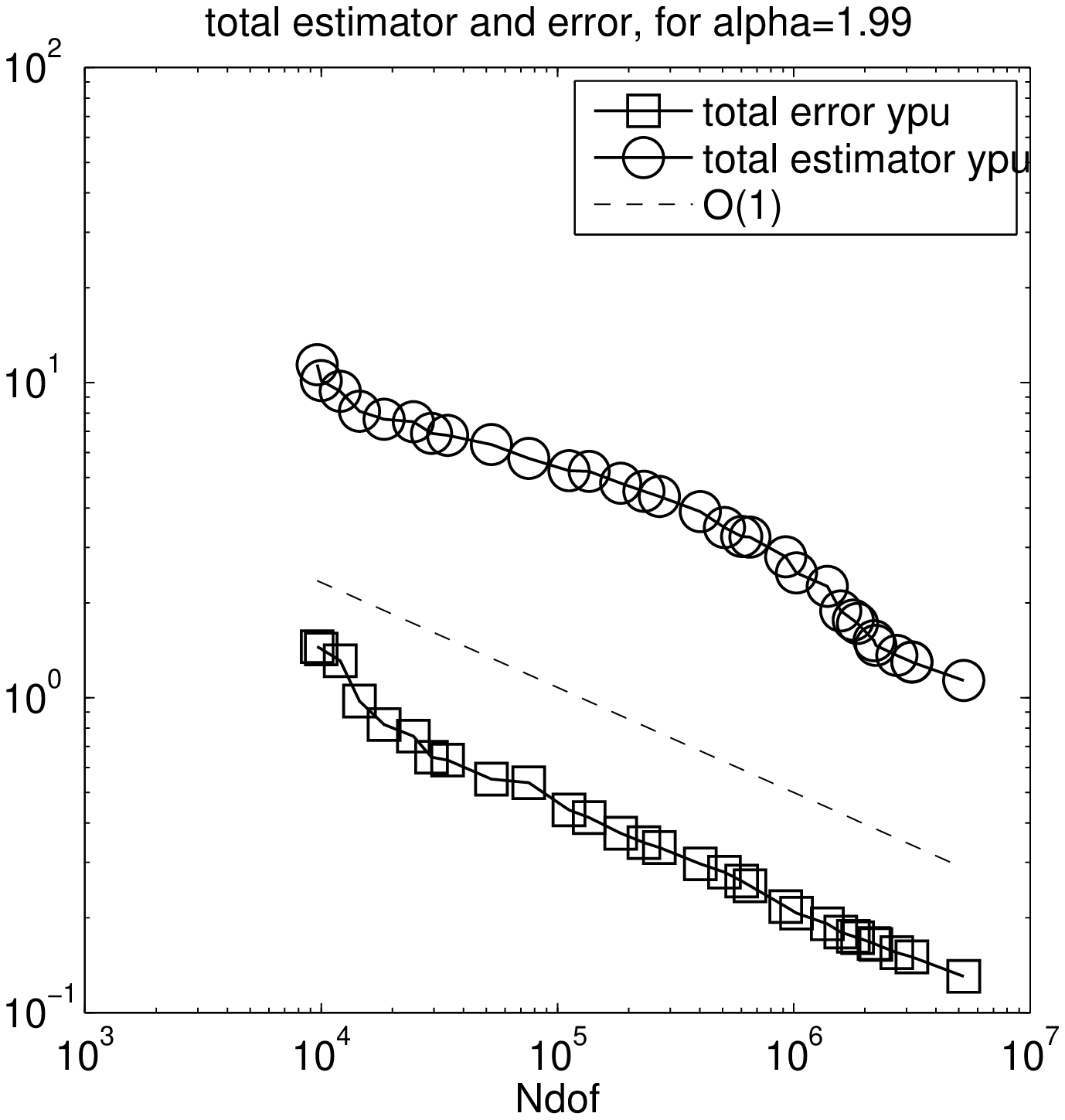}}
\psfrag{error contributions, for alpha=1.99}{\huge Error contributions, for $\alpha=1.99$}
\psfrag{Ndof}{\huge Ndof}
\psfrag{norm of state error}{\LARGE $\|e_\ysf\|_{L^{\infty}(\Omega)}$}
\psfrag{norm of adjoint error}{\LARGE $\|\nabla e_\psf\|_{L^{2}(\rho,\Omega)}$}
\psfrag{norm of control error}{\LARGE $\|e_\usf\|_{L^{2}(\Omega)}$}
\psfrag{O(1)}{\LARGE Ndof$^{-1/3}$}
\scalebox{0.23}{\includegraphics{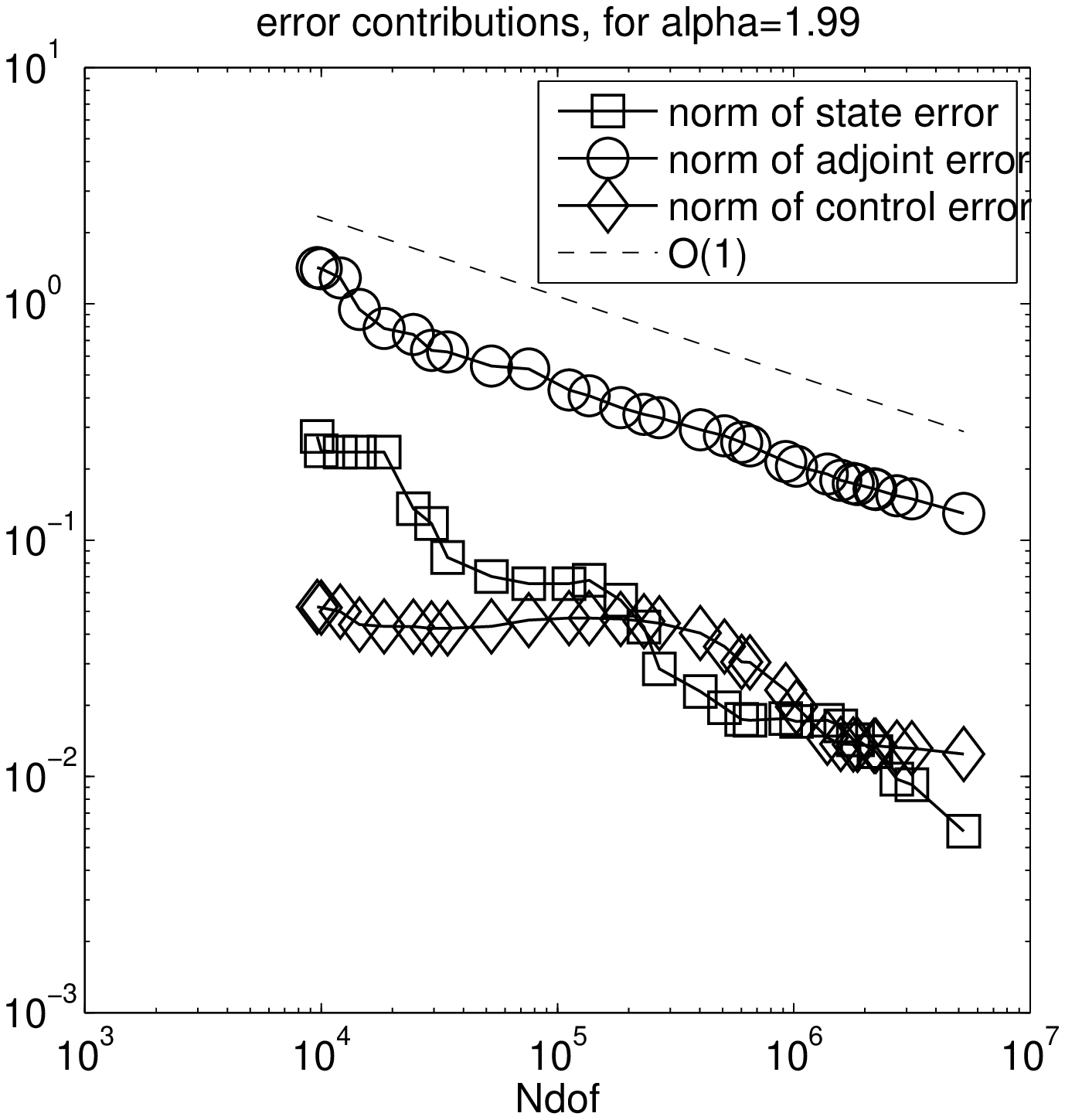}}
\psfrag{estimator contributions, for alpha=1.99}{\huge Estimator contributions, for $\alpha=1.99$}
\psfrag{Ndof}{\huge Ndof}
\psfrag{state estimator}{\LARGE $\E_{\mathsf{y}}$}
\psfrag{adjoint estimator}{\LARGE $\E_{\mathsf{p}}$}
\psfrag{control estimator}{\LARGE $\E_{\mathsf{u}}$}
\psfrag{O(1)}{\LARGE Ndof$^{-1/3}$}
\scalebox{0.23}{\includegraphics{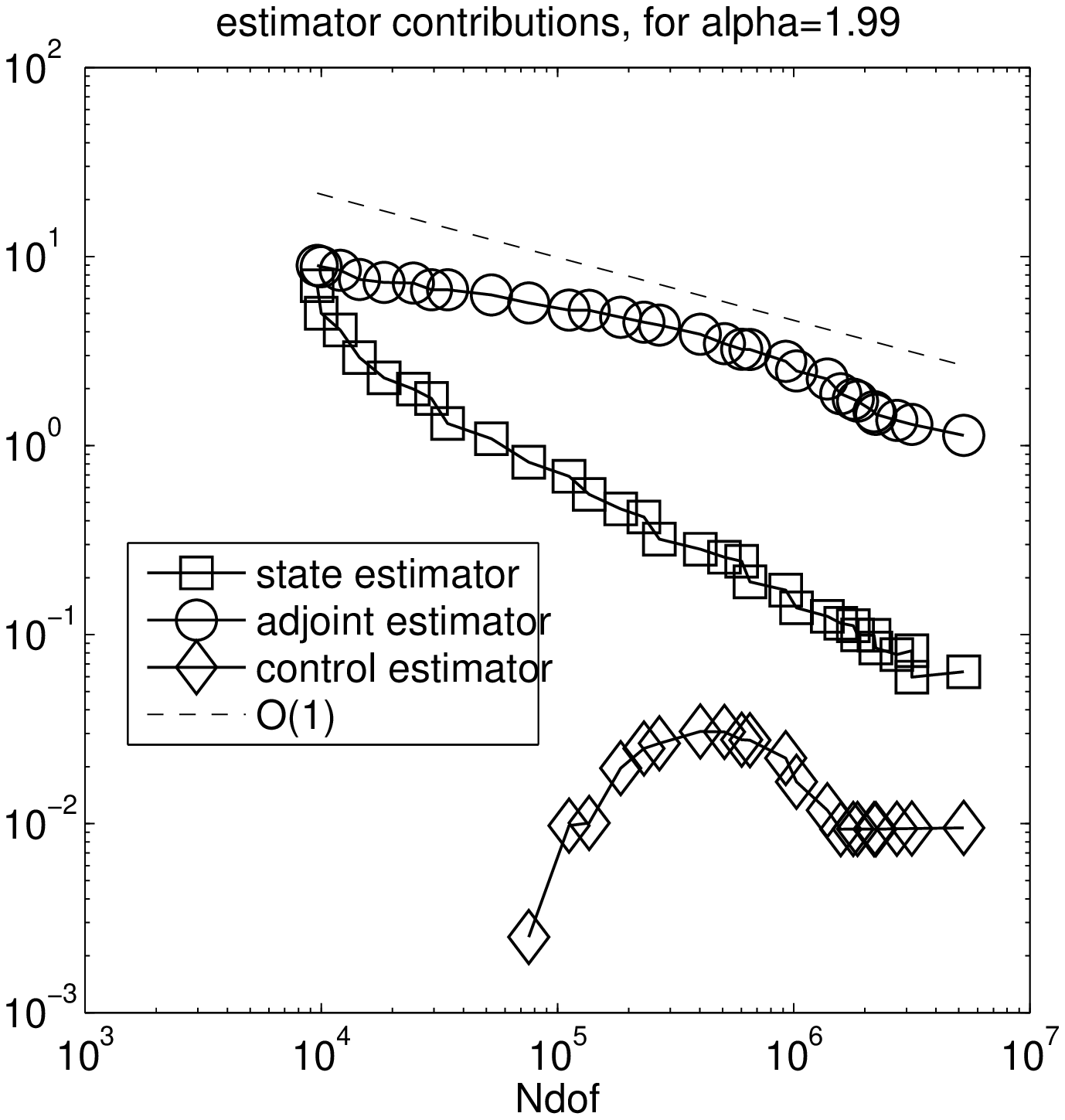}}
\psfrag{total effectivity, for alpha=1.99}{\huge Effectivity index, for $\alpha=1.99$}
\psfrag{Ndof}{\huge Ndof}
\psfrag{total effectivity index ede}{\LARGE $\E_{\mathsf{ocp}}/\|(e_{\ysf},e_{\psf},e_{\usf})\|_{\Omega}$}
\psfrag{O(1)}{\LARGE Ndof$^{-1/3}$}
\scalebox{0.23}{\includegraphics{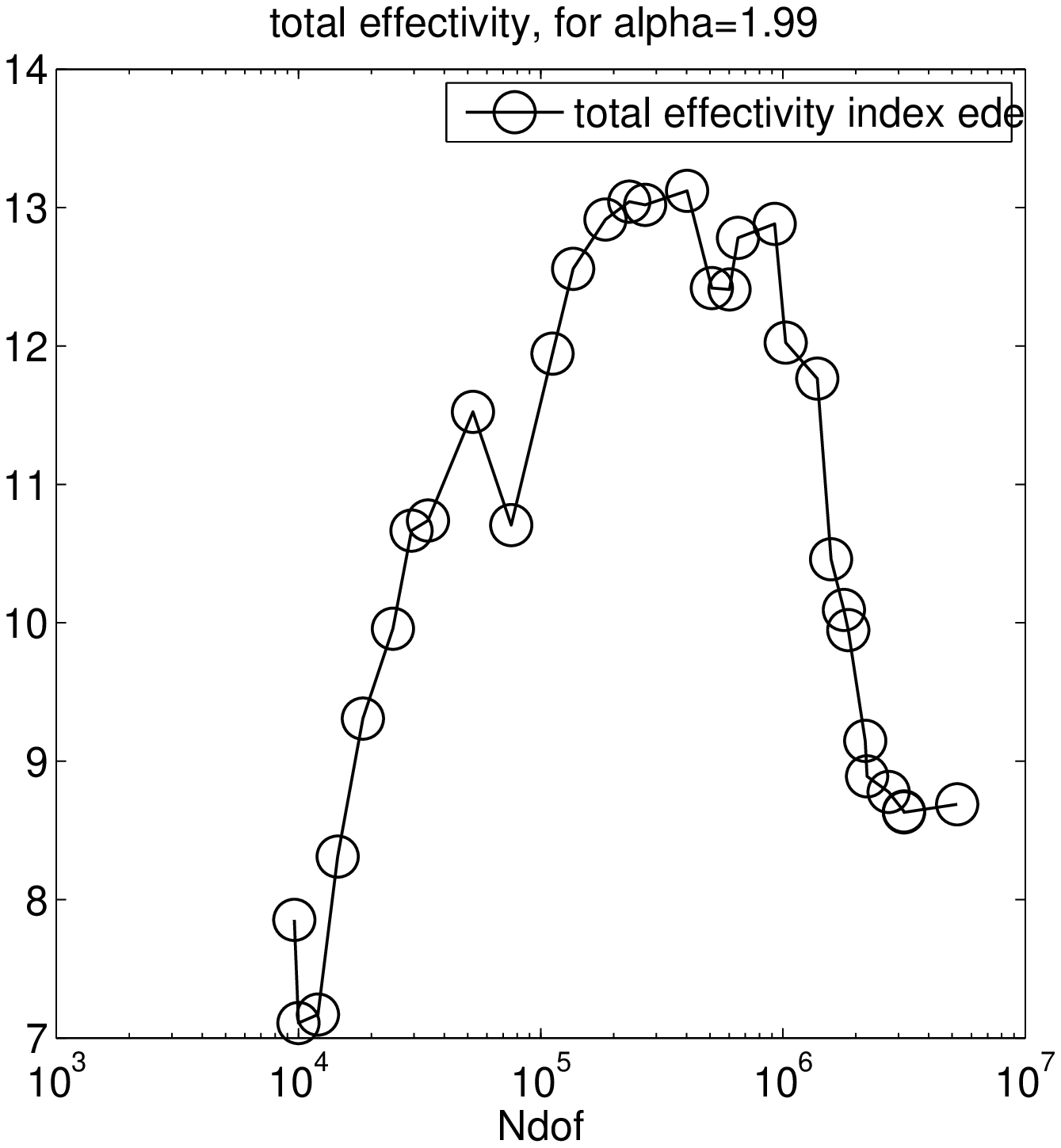}}
\scalebox{0.08}{\includegraphics{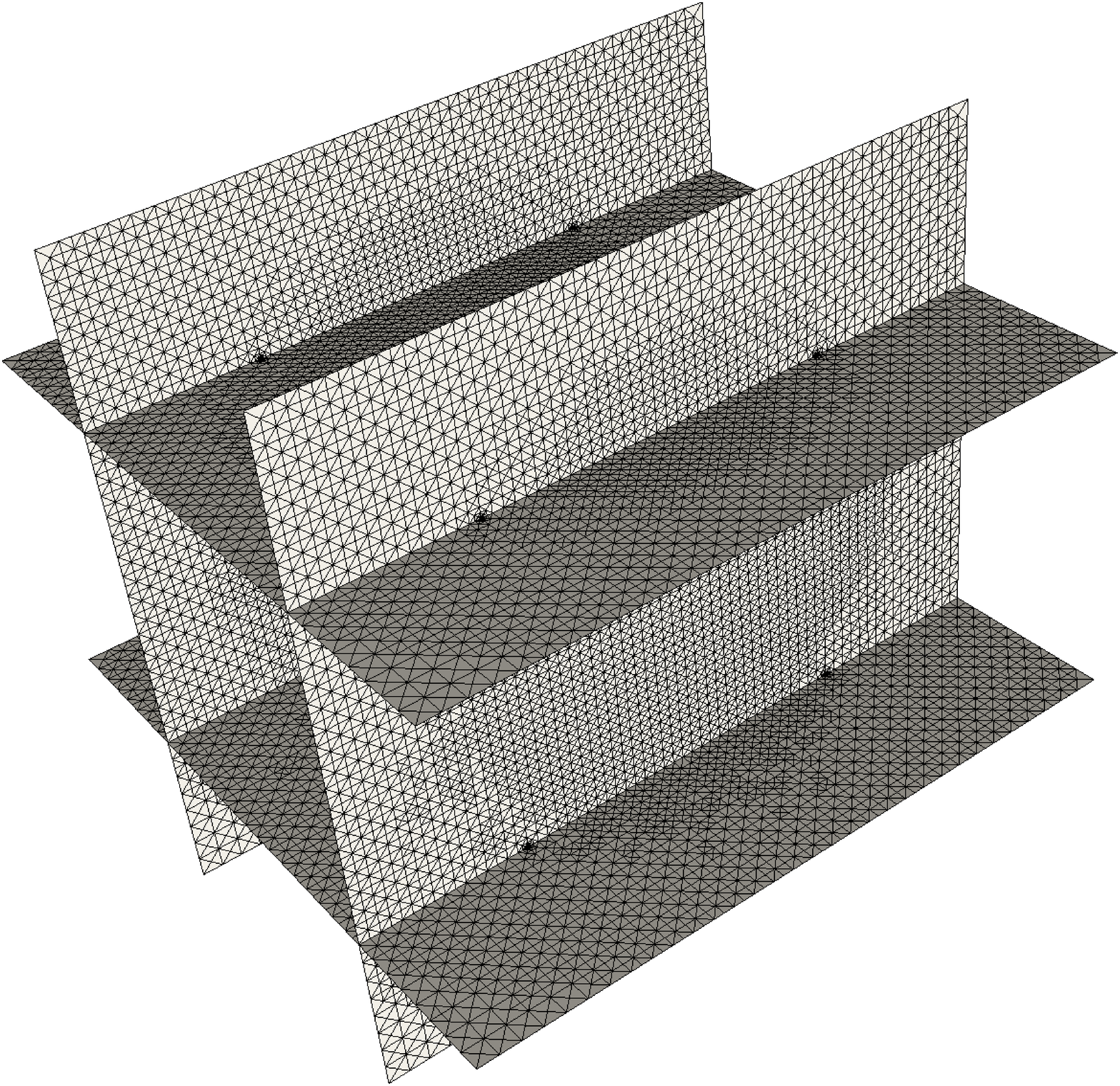}}
\end{center}
\caption{\RR{Example 7: For $\alpha = 1.99$ and adaptive refinement using a maximum strategy, we show the convergence rates for the total error $\|(e_{\ysf},e_{\psf},e_{\usf})\|_{\Omega}$, error estimator $\E_{\mathsf{ocp}}$, their individual contributions, effectivity indices and slices of the final adaptively refined mesh.}}
\label{Fig:Ex7}
\end{figure}
\subsection{Conclusions}
From the presented numerical examples several general conclusions can be drawn:
\begin{enumerate}[$\bullet$]
  \item Most of the refinement occurs near the observation points, which attests to the efficiency of the devised estimators.
  
  \item A larger value of $\alpha$ together with the maximum refinement strategy delivers the best results. This might be due to the fact that a careful examination of the derivation of \eqref{eq:efficiencyglobal} reveals that the constant in this inequality depends on $h_{\T}^{\frac\alpha2+1-\frac{n}2}$ so that, the larger $\alpha$ the smaller its value.
  
  \item The contribution $\E_{\psf}(\bar{\psf}_{\T},\bar{\ysf}_{\T},\T)$ to the error estimator is most of the time the dominating one. We believe that this shows the very singular nature of the problem that defines the adjoint variable. Nevertheless, our estimator is able to deliver optimal rates of convergence.
\end{enumerate}

\section*{Acknowledgement}
The authors would like to thank Harbir Antil (George Mason University) for insightful discussions.

\bibliographystyle{plain}
\bibliography{biblio}

\def\cprime{$'$} \def\cprime{$'$} \def\cprime{$'$} \def\cprime{$'$}
  \def\cprime{$'$} \def\cprime{$'$}
\begin{thebibliography}{10}

\bibitem{MR2137495}
Y.~Achdou.
\newblock An inverse problem for a parabolic variational inequality arising in
  volatility calibration with {A}merican options.
\newblock {\em SIAM J. Control Optim.}, 43(5):1583--1615, 2005.

\bibitem{AGM}
J.P. Agnelli, E.M. Garau, and P.~Morin.
\newblock {\it {A} posteriori} error estimates for elliptic problems with
  {D}irac measure terms in weighted spaces.
\newblock {\em ESAIM Math. Model. Numer. Anal.}, 48(6):1557--1581, 2014.

\bibitem{MR3215609}
H.~Aimar, M.~Carena, R.~Dur{\'a}n, and M.~Toschi.
\newblock Powers of distances to lower dimensional sets as {M}uckenhoupt
  weights.
\newblock {\em Acta Math. Hungar.}, 143(1):119--137, 2014.

\bibitem{AObook}
M.~Ainsworth and J.T. Oden.
\newblock {\em A posteriori error estimation in finite element analysis}.
\newblock Pure and Applied Mathematics (New York). Wiley-Interscience, New
  York, 2000.

\bibitem{MUMPS1}
P.R. Amestoy, I.S. Duff, and J.-Y. L'Excellent.
\newblock Multifrontal parallel distributed symmetric and unsymmetric solvers.
\newblock {\em Comput. Methods in Appl. Mech. Eng.}, 184(2‚Äì4):501 -- 520,
  2000.

\bibitem{NUMPS2}
P.R. Amestoy, I.S. Duff, J.-Y. L'Excellent, and J.~Koster.
\newblock A fully asynchronous multifrontal solver using distributed dynamic
  scheduling.
\newblock {\em SIAM J. Matrix Anal. Appl.}, 23(1):15--41 (electronic), 2001.

\bibitem{AOS2}
H.~Antil, E.~Ot\'arola, and A.J. Salgado.
\newblock Some applications of weighted norm inequalities to the analysis of
  optimal control problems.
\newblock arXiv:1505.03919, 2015.

\bibitem{Rodolfo1}
R.~Araya, E.~Behrens, and R.~Rodr\'iguez.
\newblock A posteriori error estimates for elliptic problems with dirac delta
  source terms.
\newblock {\em Numerische Mathematik}, 105(2):193--216, 2006.

\bibitem{Rodolfo2}
R.~Araya, E.~Behrens, and R.~Rodr\'iguez.
\newblock An adaptive stabilized finite element scheme for a water quality
  model.
\newblock {\em Computer Methods in Applied Mechanics and Engineering},
  196(29–30):2800 -- 2812, 2007.

\bibitem{BBMRV}
R.~Becker, M.~Braack, D.~Meidner, R.~Rannacher, and B.~Vexler.
\newblock Adaptive finite element methods for {PDE}-constrained optimal control
  problems.
\newblock In {\em Reactive Flows, Diffusion and Transport}. Springer, 2007.

\bibitem{BV:09}
O.~Benedix and B.~Vexler.
\newblock A posteriori error estimation and adaptivity for elliptic optimal
  control problems with state constraints.
\newblock {\em Comput. Optim. Appl.}, 44(1):3--25, 2009.

\bibitem{BrettElliott}
C.~Brett, A.~Dedner, and C.~Elliott.
\newblock Optimal control of elliptic {PDE}s at points.
\newblock {\em IMA J. Numer. Anal.}, 2015.
\newblock DOI:10.1093/imanum/drv040.

\bibitem{Casas:85}
E.~Casas.
\newblock {$L^2$} estimates for the finite element method for the {D}irichlet
  problem with singular data.
\newblock {\em Numer. Math.}, 47(4):627--632, 1985.

\bibitem{MR3449612}
L.~Chang, W.~Gong, and N.~Yan.
\newblock Numerical analysis for the approximation of optimal control problems
  with pointwise observations.
\newblock {\em Math. Methods Appl. Sci.}, 38(18):4502--4520, 2015.

\bibitem{CiarletBook}
P.G. Ciarlet.
\newblock {\em The finite element method for elliptic problems}.
\newblock SIAM, Philadelphia, PA, 2002.

\bibitem{DAngelo:SINUM}
C.~D'Angelo.
\newblock Finite element approximation of elliptic problems with {D}irac
  measure terms in weighted spaces: applications to one- and three-dimensional
  coupled problems.
\newblock {\em SIAM J. Numer. Anal.}, 50(1):194--215, 2012.

\bibitem{deldia}
E.~Dari, R.~G. Dur{\'a}n, and C.~Padra.
\newblock Maximum norm error estimators for three-dimensional elliptic
  problems.
\newblock {\em SIAM J. Numer. Anal.}, 37(2):683--700, 2000.

\bibitem{Dauge:92}
M.~Dauge.
\newblock Neumann and mixed problems on curvilinear polyhedra.
\newblock {\em Integral Equations Operator Theory}, 15(2):227--261, 1992.

\bibitem{MR3022214}
A.~Demlow and E.H. Georgoulis.
\newblock Pointwise a posteriori error control for discontinuous {G}alerkin
  methods for elliptic problems.
\newblock {\em SIAM J. Numer. Anal.}, 50(5):2159--2181, 2012.

\bibitem{demlow2014maximum}
A.~Demlow and N.~Kopteva.
\newblock Maximum-norm a posteriori error estimates for singularly perturbed
  elliptic reaction-diffusion problems.
\newblock {\em Numer. Math.}, pages 1--36, 2015.

\bibitem{MR1393904}
W.~D{\"o}rfler.
\newblock A convergent adaptive algorithm for {P}oisson's equation.
\newblock {\em SIAM J. Numer. Anal.}, 33(3):1106--1124, 1996.

\bibitem{Javier}
J.~Duoandikoetxea.
\newblock {\em Fourier analysis}, volume~29 of {\em Graduate Studies in
  Mathematics}.
\newblock American Mathematical Society, Providence, RI, 2001.

\bibitem{Guermond-Ern}
A.~Ern and J.-L. Guermond.
\newblock {\em Theory and practice of finite elements}, volume 159 of {\em
  Applied Mathematical Sciences}.
\newblock Springer-Verlag, New York, 2004.

\bibitem{FKS:82}
E.B. Fabes, C.E. Kenig, and R.P. Serapioni.
\newblock The local regularity of solutions of degenerate elliptic equations.
\newblock {\em Comm. Part. Diff. Eqs.}, 7(1):77--116, 1982.

\bibitem{GT}
D.~Gilbarg and N.~S. Trudinger.
\newblock {\em Elliptic partial differential equations of second order}.
\newblock Classics in Mathematics. Springer-Verlag, Berlin, 2001.
\newblock Reprint of the 1998 edition.

\bibitem{GU}
V.~Gol{\cprime}dshtein and A.~Ukhlov.
\newblock Weighted {S}obolev spaces and embedding theorems.
\newblock {\em Trans. Amer. Math. Soc.}, 361(7):3829--3850, 2009.

\bibitem{Grafakos}
L.~Grafakos.
\newblock {\em Modern Fourier analysis}.
\newblock Graduate texts in mathematics. Springer, 2009.

\bibitem{Grisvard}
P.~Grisvard.
\newblock {\em Elliptic problems in nonsmooth domains}, volume~24 of {\em
  Monographs and Studies in Mathematics}.
\newblock Pitman (Advanced Publishing Program), Boston, MA, 1985.

\bibitem{HH:08}
M.~Hinterm\"{u}ller and R.H.W. Hoppe.
\newblock Goal-oriented adaptivity in control constrained optimal control of
  partial differential equations.
\newblock {\em SIAM J. Control Optim.}, 47(4):1721--1743, 2008.

\bibitem{HHIK}
M.~Hinterm\"{u}ller, R.H.W. Hoppe, Y.~Iliash, and M.~Kieweg.
\newblock An a posteriori error analysis of adaptive finite element methods for
  distributed elliptic control problems with control constraints.
\newblock {\em ESAIM: Control Optim. Calc. of Var.}, 14:540--560, 7 2008.

\bibitem{HPUU:09}
M.~Hinze, R.~Pinnau, M.~Ulbrich, and S.~Ulbrich.
\newblock {\em Optimization with {PDE} constraints}, volume~23 of {\em
  Mathematical Modelling: Theory and Applications}.
\newblock Springer, New York, 2009.

\bibitem{HT:10}
M.~Hinze and F.~Tr{\"o}ltzsch.
\newblock Discrete concepts versus error analysis in {PDE}-constrained
  optimization.
\newblock {\em GAMM-Mitt.}, 33(2):148--162, 2010.

\bibitem{IK:08}
K.~Ito and K.~Kunisch.
\newblock {\em Lagrange multiplier approach to variational problems and
  applications}.
\newblock SIAM, 2008.

\bibitem{JK:95}
D.~Jerison and C.E. Kenig.
\newblock The inhomogeneous {D}irichlet problem in {L}ipschitz domains.
\newblock {\em J. Funct. Anal.}, 130(1):161--219, 1995.

\bibitem{JK:81}
D.S. Jerison and C.E. Kenig.
\newblock The {N}eumann problem on {L}ipschitz domains.
\newblock {\em Bull. Amer. Math. Soc. (N.S.)}, 4(2):203--207, 1981.

\bibitem{MR3012036}
J.~Jost.
\newblock {\em Partial differential equations}, volume 214 of {\em Graduate
  Texts in Mathematics}.
\newblock Springer, New York, third edition, 2013.

\bibitem{KRS}
K.~Kohls, A.~R\"{o}sch, and K.G. Siebert.
\newblock A posteriori error analysis of optimal control problems with control
  constraints.
\newblock {\em SIAM J. Control Optim.}, 52(3):1832--1861, 2014.

\bibitem{KMR}
V.A. Kozlov, V.G Maz'ya, and J.~Rossmann.
\newblock {\em {Elliptic boundary value problems in domains with point
  singularities}}.
\newblock American Mathematical Society, Providence, Rhode Island, USA, 1997.

\bibitem{DLeykekhman_DMeidner_BVexler_2013a}
D.~Leykekhman, D.~Meidner, and B.~Vexler.
\newblock Optimal error estimates for finite element discretization of elliptic
  optimal control problems with finitely many pointwise state constraints.
\newblock {\em Comput. Optim. Appl.}, 55(3):769--802, 2013.

\bibitem{MR0271512}
J.-L. Lions.
\newblock {\em Optimal control of systems governed by partial differential
  equations.}
\newblock Die Grundlehren der mathematischen Wissenschaften, Band 170.
  Springer-Verlag, New York-Berlin, 1971.

\bibitem{LiuYan}
W.~Liu and N.~Yan.
\newblock A posteriori error estimates for distributed convex optimal control
  problems.
\newblock {\em Adv. in Comput. Math.}, 15(1-4):285--309, 2001.

\bibitem{JCarlos}
J.C.~De los Reyes.
\newblock {\em Numerical {PDE}-constrained optimization}.
\newblock Springer, 2015.

\bibitem{MR2641539}
V.~Maz'ya and J.~Rossmann.
\newblock {\em Elliptic equations in polyhedral domains}, volume 162 of {\em
  Mathematical Surveys and Monographs}.
\newblock American Mathematical Society, Providence, RI, 2010.

\bibitem{MRW:15}
Ch. Meyer, A.~Rademacher, and W.~Wollner.
\newblock Adaptive optimal control of the obstacle problem.
\newblock {\em SIAM J. Sci. Comput.}, 37(2):918--945, 2015.

\bibitem{MR1770058}
P.~Morin, R.H. Nochetto, and K.G. Siebert.
\newblock Data oscillation and convergence of adaptive {FEM}.
\newblock {\em SIAM J. Numer. Anal.}, 38(2):466--488 (electronic), 2000.

\bibitem{Muckenhoupt}
B.~Muckenhoupt.
\newblock Weighted norm inequalities for the {H}ardy maximal function.
\newblock {\em Trans. Amer. Math. Soc.}, 165:207--226, 1972.

\bibitem{rhn}
R.H. Nochetto.
\newblock Pointwise a posteriori error estimates for elliptic problems on
  highly graded meshes.
\newblock {\em Math. Comp.}, 64(209):1--22, 1995.

\bibitem{NOS}
R.H. Nochetto, E.~Ot\'arola, and A.J. Salgado.
\newblock A {PDE} approach to fractional diffusion in general domains: A priori
  error analysis.
\newblock {\em Found. Comput. Math.}, 15(3):733--791, 2015.

\bibitem{NOS2}
R.H. Nochetto, E.~Ot{\'a}rola, and A.J. Salgado.
\newblock Piecewise polynomial interpolation in {M}uckenhoupt weighted
  {S}obolev spaces and applications.
\newblock {\em Numer. Math.}, 2015.
\newblock DOI:10.1007/s00211-015-0709-6.

\bibitem{MR2249676}
R.H. Nochetto, A.~Schmidt, K.G. Siebert, and A.~Veeser.
\newblock Pointwise a posteriori error estimates for monotone semi-linear
  equations.
\newblock {\em Numer. Math.}, 104(4):515--538, 2006.

\bibitem{NSV:09}
R.H. Nochetto, K.G. Siebert, and A.~Veeser.
\newblock Theory of adaptive finite element methods: an introduction.
\newblock In {\em Multiscale, nonlinear and adaptive approximation}. Springer,
  2009.

\bibitem{NV}
R.H. Nochetto and A.~Veeser.
\newblock Primer of adaptive finite element methods.
\newblock In {\em Multiscale and Adaptivity: Modeling, Numerics and
  Applications, CIME Lectures}. Springer, 2011.

\bibitem{MR2193509}
R.~Rannacher and B.~Vexler.
\newblock A priori error estimates for the finite element discretization of
  elliptic parameter identification problems with pointwise measurements.
\newblock {\em SIAM J. Control Optim.}, 44(5):1844--1863, 2005.

\bibitem{MR2238754}
R.~Sacchi and A.~Veeser.
\newblock Locally efficient and reliable a posteriori error estimators for
  {D}irichlet problems.
\newblock {\em Math. Models Methods Appl. Sci.}, 16(3):319--346, 2006.

\bibitem{Savare:98}
G.~Savar{\'e}.
\newblock Regularity results for elliptic equations in {L}ipschitz domains.
\newblock {\em J. Funct. Anal.}, 152(1):176--201, 1998.

\bibitem{Scott:73}
L.R. Scott.
\newblock Finite element convergence for singular data.
\newblock {\em Numer. Math.}, 21:317--327, 1973/74.

\bibitem{Seidman:12}
T.I. Seidman, M.K. Gobbert, D.W. Trott, and M.~Kru{\v{z}}{\'{\i}}k.
\newblock Finite element approximation for time-dependent diffusion with
  measure-valued source.
\newblock {\em Numer. Math.}, 122(4):709--723, 2012.

\bibitem{Tbook}
F.~Tr{\"o}ltzsch.
\newblock {\em Optimal Control of Partial Differential Equations}.
\newblock AMS, 2010.

\bibitem{Turesson}
B.O. Turesson.
\newblock {\em Nonlinear potential theory and weighted {S}obolev spaces}.
\newblock Springer, 2000.

\bibitem{MR1844451}
A.~Unger and F.~Tr{\"o}ltzsch.
\newblock Fast solution of optimal control problems in the selective cooling of
  steel.
\newblock {\em ZAMM Z. Angew. Math. Mech.}, 81(7):447--456, 2001.

\bibitem{Verfurth2}
R.~Verf{\"u}rth.
\newblock A posteriori error estimators for the {S}tokes equations.
\newblock {\em Numer. Math.}, 55(3):309--325, 1989.

\bibitem{Verfurth}
R.~Verf\"{u}rth.
\newblock {\em A Review of A Posteriori Error Estimation and Adaptive
  Mesh-Refinement Techniques}.
\newblock John Wiley, 1996.

\bibitem{VW:08}
B.~Vexler and W.~Wollner.
\newblock Adaptive finite elements for elliptic optimization problems with
  control constraints.
\newblock {\em SIAM J. Control Optim.}, 47(1):509--534, 2008.

\end{thebibliography}

\end{document}